\subjclass[2010]{14N10, 14N15, 14N20, 14N35, 14M15, 14H10, 14H51}
\def\bC{\mathbb{C}}
\def\cC{\mathcal{C}}
\def\bD{\mathbb{D}}
\def\cG{\mathcal{G}}
\def\cL{\mathcal{L}}
\def\oL{\overline{L}}
\def\cM{\mathcal{M}}
\def\cO{\mathcal{O}}
\def\bP{\mathbb{P}}
\def\cP{\mathcal{P}}
\def\bQ{\mathbb{Q}}
\def\cT{\mathcal{T}}
\def\cU{\mathcal{U}}
\def\cY{\mathcal{Y}}
\def\bZ{\mathbb{Z}}
\def\barM{\overline{\cM}}
\def\wt{\widetilde}
\DeclareMathOperator{\codim}{codim}
\DeclareMathOperator{\coker}{coker}
\DeclareMathOperator{\Coll}{Coll}
\DeclareMathOperator{\Fl}{Fl}
\DeclareMathOperator{\Gr}{Gr}
\DeclareMathOperator{\Hom}{Hom}
\DeclareMathOperator{\Id}{Id}
\DeclareMathOperator{\im}{im}
\DeclareMathOperator{\Pic}{Pic}
\DeclareMathOperator{\pr}{pr}
\DeclareMathOperator{\pt}{pt}
\DeclareMathOperator{\rank}{rank}
\DeclareMathOperator{\spine}{sp}
\newtheorem{thm}{Theorem}[section]
\newtheorem{lem}[thm]{Lemma}
\newtheorem{cor}[thm]{Corollary}
\newtheorem{prop}[thm]{Proposition}
\newtheorem{ex}[thm]{Example}
\newtheorem{defn}[thm]{Definition}
\newcommand{\SSYT}{{\mathsf{SSYT}}}
\newcommand{\Inc}{{\mathsf{Inc}}}
\newcommand{\T}{{\mathsf{T}}}
\newcommand{\Tev}{{\mathsf{Tev}}}
\newcommand{\vTev}{{\mathsf{vTev}}}
\newcommand{\vir}{{\mathsf{vir}}}
\newcommand{\vn}{{\overrightarrow{n}}}
\g@addto@macro\bfseries{\boldmath} % This makes math in section titles bold.
\begin{document}

\title[Degenerations of complete collineations and $\Tev^{\bP^r}$]{Degenerations of complete collineations and geometric Tevelev degrees of $\bP^r$}

\author{Carl Lian}

\address{Tufts University, Department of Mathematics, 177 College Ave
\hfill \newline\texttt{}
\indent Medford, MA 02155} \email{{\tt Carl.Lian@tufts.edu}}

\begin{abstract}
We consider the problem of enumerating maps $f$ of degree $d$ from a fixed general curve $C$ of genus $g$ to $\bP^r$ satisfying incidence conditions of the form $f(p_i)\in X_i$, where $p_i\in C$ are general points and $X_i\subset\bP^r$ are general linear spaces. We give a complete answer in the case where the $X_i$ are points, where the counts are known as the ``Tevelev degrees'' of $\bP^r$. These were previously known only when $r=1$, when $d$ is large compared to $r,g$, or virtually in Gromov-Witten theory. We also give a complete answer in the case $r=2$ with arbitrary incidence conditions. Our main approach studies the behavior of complete collineations under various degenerations. 
%We expect the technique to have further applications.
\end{abstract}

\maketitle

%\tableofcontents

\section{Introduction}\label{intro}

\subsection{New results}\label{sec:new_results}

In this paper, we prove the following.

\begin{thm}\label{thm:tevPr}
Let $(C,p_1,\ldots,p_n)\in\cM_{g,n}$ be a general curve. Let $x_1,\ldots,x_n\in\bP^r$ be general points. Suppose that $n\ge r+1$ and that $d\ge0$ is an integer for which
\begin{equation}\label{virdim_equal_pr}
n=\frac{r+1}{r}\cdot d-g+1.
\end{equation}
Then, the number of maps $f:C\to\bP^r$ of degree $d$ for which $f(p_i)=x_i$ for all $i=1,\ldots,n$ is equal to
\begin{equation*}
\displaystyle\int_{\Gr(r+1,d+1)}\sigma^g_{1^r}\cdot \left(\sum_{\mu\subset(n-r-2)^{r}}\sigma_{\mu}\sigma_{\overline{\mu}}\right)_{\lambda_0\le n-r-1}.
\end{equation*}
\end{thm}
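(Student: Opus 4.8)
The plan is to realize the count of maps $f\colon C\to\bP^r$ as an intersection number on a suitable compactification of the space of such maps, and then to translate that intersection number onto $\Gr(r+1,d+1)$ via the theory of complete collineations. A map $f\colon C\to\bP^r$ of degree $d$ with no base points is equivalent to a basepoint-free linear series: a subbundle $V\subset H^0(C,L)$ of rank $r+1$, where $L$ is a line bundle of degree $d$, together with the data of a trivialization identifying $\bP V$ with $\bP^r$. Imposing $f(p_i)=x_i$ is one condition of expected codimension $r$ for each $i$, and the hypothesis \eqref{virdim_equal_pr} is exactly the statement that $n\cdot r$ equals the dimension of the relevant parameter space, so we are in the enumerative (finite) regime. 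The first step is therefore to set up the parameter space carefully — the Hurwitz-type space or a space of linear series with a $\mathrm{PGL}_{r+1}$-rigidification — and to identify its dimension and the classes cut out by the incidence conditions $f(p_i)=x_i$.

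Next I would degenerate. The condition \eqref{virdim_equal_pr} forces $L$ to vary in a positive-dimensional family unless $g=0$, so one cannot simply fix $L$; instead the natural move is to specialize the curve $(C,p_1,\ldots,p_n)$ to the boundary of $\barM_{g,n}$, degenerating $C$ to a chain or a totally degenerate (genus-$0$ components glued at nodes) curve, or to split off elliptic tails one at a time. On a nodal degeneration the linear series breaks up into pieces governed by admissible/limit-linear-series data, and maps to $\bP^r$ that are constant (or low-degree) on certain components contribute. The factor $\sigma_{1^r}^g$ in the formula strongly suggests that each of the $g$ handles (elliptic tails, or the genus contributed by a loop) contributes a factor of $\sigma_{1^r}=c_r$ of the tautological quotient bundle on $\Gr(r+1,d+1)$: indeed, fixing the image of a point on an elliptic tail and then requiring the $(r+1)$-dimensional linear series to degenerate appropriately cuts out $c_r$. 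The genus-$0$ "spine" with $n$ marked points, mapped by a degree-$d$ linear series with fixed images $x_i$, should produce the remaining factor, the sum $\sum_{\lambda\subset(n-r-2)^r}\sigma_\lambda\sigma_{\overline\lambda}$ truncated by $\lambda_0\le n-r-1$ — this is a Vafa–Intriligator / quantum-cohomology-of-Grassmannian-flavored expression, and the complementary-partition pairing $\sigma_\lambda\sigma_{\overline\lambda}$ is precisely what appears when one counts linear series on $\bP^1$ with prescribed ramification/incidence, i.e. it is the "degenerate" or genus-$0$ part of a Tevelev-type count computed via the geometry of complete collineations on $\Gr(r+1,d+1)$.

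The heart of the argument — and the place where "degenerations of complete collineations" enters — is controlling how the space of complete collineations, which compactifies the locus of honest $\bP^r$-valued maps by remembering the full flag of degeneracy loci of the evaluation map $V\otimes\cO_C\to L$, behaves as we move to the boundary of $\barM_{g,n}$ and as the incidence conditions are imposed. Concretely, I would: (i) express the incidence condition $f(p_i)=x_i$ as (the truncation/refinement of) a Schubert condition on the complete-collineation space; (ii) show that under the chosen degeneration of $C$ the virtual count on the compactified moduli space equals the honest (transverse, multiplicity-free) count — this requires a dimension/excess-intersection analysis showing that all boundary contributions either vanish or reorganize into the stated product, which is where the real technical work lies; and (iii) push the resulting cycle class forward to $\Gr(r+1,d+1)$, where $V$ becomes the point of the Grassmannian (using that $\dim H^0(C,L)=d+1-g$ generically and the $\mathrm{PGL}$-rigidification trades for the $\sigma_{1^r}^g$ factors), identifying the pushforward with the claimed Schubert-calculus expression.

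The main obstacle I anticipate is exactly step (ii): ruling out or accounting for spurious contributions from degenerate maps — maps that acquire base points, drop rank on components, or whose limiting linear series fail to be refined in the expected way — and proving that the complete-collineation compactification is well-behaved (e.g. that the relevant evaluation/incidence loci meet it properly) under the boundary degeneration. Equivalently, one must prove a multiplicity-one transversality statement along the degeneration so that the virtual count computed in intersection theory on the compactified space coincides with the naive enumerative count on the general curve. Everything else — the genus-$0$ computation giving $\sum\sigma_\lambda\sigma_{\overline\lambda}$ with the $\lambda_0\le n-r-1$ truncation, and the handle contributions giving $\sigma_{1^r}^g$ — should then follow from a by-now-standard Schubert-calculus manipulation on $\Gr(r+1,d+1)$ once the geometric translation is in place.
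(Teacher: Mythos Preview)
Your outline is structurally close to the paper's approach: degenerate $C$ to a rational spine with $g$ elliptic tails (each contributing a $\sigma_{1^r}$) plus a rational tail carrying the $n$ marked points, and use complete collineations to control the compactification and establish transversality. The paper carries this out essentially as you describe in steps (i)--(ii), proving that the count equals $\int_{\Coll(V,W)}\pi^{*}(\sigma_{1^r}^g)\cdot\gamma_r^n$ and then pushing forward to $\Gr(r+1,d+1)$.

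However, you have misjudged where the work lies. You say the genus-$0$ factor $\bigl(\sum_{\lambda\subset(n-r-2)^r}\sigma_\lambda\sigma_{\overline\lambda}\bigr)_{\lambda_0\le n-r-1}$ ``should then follow from a by-now-standard Schubert-calculus manipulation.'' It does not. Computing the pushforward $\pi_*(\gamma_r^n)$ on $\Gr(r+1,d+1)$ is the actual content of the theorem, and the paper handles it via two nontrivial ingredients you have not identified. First, one reduces to the case $d=n-1$ by showing that the Schubert coefficients $\Gamma^\lambda_{r,n,d}$ are independent of $d$ (once $\lambda$ fits) and vanish when $\lambda_0>n-r-1$; the latter vanishing, which produces the truncation $\lambda_0\le n-r-1$, is proved by a separate argument using parabolic stabilizers in $GL(W)$ and is not a formal consequence of anything else. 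Second, and this is the key observation, when $d=n-1$ the image $Z=\pi(Y_{r,n,n-1})\subset\Gr(r+1,n)$ is identified with a \emph{generic torus orbit closure} (an instance of the Gelfand--MacPherson correspondence), whose class is known by a theorem of Klyachko and Berget--Fink to be exactly $\sum_{\lambda\subset(n-r-2)^r}\sigma_\lambda\sigma_{\overline\lambda}$. Neither of these is ``standard Schubert calculus,'' and without the torus-orbit identification there is no visible mechanism in your proposal for producing the complementary-partition sum.

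A minor point: your remark that ``$\dim H^0(C,L)=d+1-g$ generically and the $\mathrm{PGL}$-rigidification trades for the $\sigma_{1^r}^g$ factors'' is not how the $\sigma_{1^r}^g$ arises. The paper works with $W$ of dimension $d+1$ throughout; the $\sigma_{1^r}^g$ comes directly from the ramification imposed by the $g$ elliptic tails in the limit-linear-series degeneration, not from any comparison of $h^0$'s or rigidification.
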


We use the standard notation for Schubert cycles on the Grassmannian $\Gr(r+1,d+1)$, see also \S\ref{sec:schubert}. The class $\sigma_{\mu}\in H^{*}(\Gr(r+1,d+1))$, where $\mu=(\mu_0,\ldots,\mu_r)$ is a partition with $d-r\ge \mu_0\ge\cdots\ge\mu_r\ge0$, is the Schubert cycle of $(r+1)$-planes whose intersection with the codimension $\mu_i+(r-i)$ component of a fixed complete flag in $\bC^{d+1}$ has dimension at least $i+1$. The notation $(n-r-2)^{r}$ denotes the partition $(n-r-2,\ldots,n-r-2,0)$, whose Young diagram is a rectangle, and $\mu\subset (n-r-2)^{r}$ denotes containment of Young diagrams, or equivalently that $\mu_i\le n-r-2$ for $i=0,1,\ldots,r-1$ and $\mu_{r}=0$. The partition $\overline{\mu}$ is the complement of $\mu$ inside $(n-r-2)^r$, so that $\overline{\mu}_i=(n-r-2)-\mu_{(r-1)-i}$ for $i=0,1,\ldots,r-1$ and $\overline{\mu}_r=0$. By convention, we take $\sigma_{\mu}=0$ if $\mu_0>d-r$, and similarly for $\overline{\mu}$. The subscript $\lambda_0\le n-r-1$ denotes that the class inside the parentheses is to be expanded in the Schubert basis of $H^{*}(\Gr(r+1,d+1))$, and then projected to the additive subgroup generated by Schubert cycles $\sigma_{\lambda}$ for which $\lambda_0\le n-r-1$. See Example \ref{formula_examples} for examples, and \S\ref{sec:tev_coeffs} for additional combinatorial properties of the formula. 

We emphasize that Theorem \ref{thm:tevPr} gives an actual count of curves, rather than a virtual intersection number. These counts are the \emph{geometric Tevelev degrees} $\Tev^{\bP^r}_{g,n,d}$ of $\bP^r$, and complete a picture studied by many authors \cite{castelnuovo,bdw,st,cps,bp,fl,cl} dating back to the 19th century and passing through the development of Gromov-Witten theory, see \S\ref{sec:tev_history}. Projective spaces are the first family of examples in which the geometric Tevelev degrees are fully understood.

We also consider the natural generalization in which the conditions $f(p_i)=x_i$ are replaced by conditions $f(p_i)\in X_i$, where the $X_i\subset\bP^r$ are general linear spaces of any dimension. Our most explicit result in this direction is the following.

\begin{thm}\label{thm:countP2}
Let $(C,p_1,\ldots,p_n)\in\cM_{g,n}$ be a general curve. Let $x_1,\ldots,x_{n_0}\in\bP^2$ be general points and let $X_{n_0+1},\ldots,X_{n}\subset\bP^2$ be general lines. Suppose that $d\ge0$ is an integer for which
\begin{equation}\label{virdim_equal_p2}
n+n_0=3d-2g+2.
\end{equation}
For any partition $\lambda$, let $\SSYT_3(\lambda)$ denote the set of semi-standard Young tableaux (SSYTs) of shape $\lambda$ and with entries in the set $\{1,2,3\}$. Then, the number of non-degenerate maps $f:C\to\bP^2$ of degree $d$ for which $f(p_i)=x_i$ for all $i=1,\ldots,n_0$ and $f(p_i)\in X_i$ for all $i=n_0+1,\ldots,n$ is equal to
\begin{equation*}
\displaystyle\int_{\Gr(3,d+1)}\sigma^g_{1^2}\cdot \left(\sum_{|\lambda|=n+n_0-8}\Gamma^\lambda_{2,\vn,d}\cdot\sigma_\lambda\right),
\end{equation*}
where the coefficient $\Gamma^\lambda_{2,\vn,d}\in\bZ_{\ge0}$ is equal to the cardinality of the following subset of $\SSYT_3(\lambda)$:
\begin{itemize}
\item the whole set $\SSYT_3(\lambda)$, if $\lambda_0\le n-5$,
\item the subset of SSYTs in which the entry $2$ appears at most $n_0-3$ times, if $\lambda_0=n-4$,
\item the subset of SSYTs containing neither a $(1,2)$-strip of length $n_0-3$, nor a $(2,3)$-strip of length $n-3$, if $\lambda_0=n-3$, and
\item $\emptyset$, if $\lambda_0>n-3$.
\end{itemize}
\end{thm}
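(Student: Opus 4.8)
The plan is to reduce Theorem \ref{thm:countP2} to Theorem \ref{thm:tevPr} (the case $r=2$) by a degeneration-of-linear-spaces argument, and then to carry out the combinatorial bookkeeping that tracks which Schubert classes survive. First I would interpret the count of maps $f:C\to\bP^2$ with $f(p_i)=x_i$ and $f(p_i)\in X_i$ as an intersection number on a space parametrizing maps together with the linear series data; concretely, after choosing a line bundle $L$ of degree $d$ on $C$ (the relevant $\Pic^d(C)$-direction contributes the factor $\sigma_{1^2}^g$, exactly as in Theorem \ref{thm:tevPr}), a non-degenerate map to $\bP^2$ is a point of an open subset of $\Gr(3,H^0(L))=\Gr(3,d+1)$, and each incidence condition $f(p_i)\in X_i$ cuts out a Schubert-type cycle determined by the flag of sections vanishing appropriately at $p_i$. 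A point condition $f(p_i)=x_i$ imposes a codimension-$2$ condition (a $\sigma_{1^2}$-type cycle, twisted by the evaluation at $p_i$), while a line condition $f(p_i)\in X_i$ imposes only a codimension-$1$ condition. This is what produces the shift from $n$ point-conditions in \eqref{virdim_equal_pr} to the mixed balance \eqref{virdim_equal_p2}.

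Next I would run the key degeneration: specialize each line $X_i$ (for $i>n_0$) to a line through a fixed general point, or dually degenerate the pencil of hyperplanes cutting $X_i$; the machinery of degenerations of complete collineations developed in the body of the paper governs how the corresponding cycle classes break up. Under such a degeneration a line-incidence class splits into a sum of a point-incidence class and a residual class supported on the boundary of the space of complete collineations, and the residual contributions are exactly those that must be excised. Tracking this through all $n-n_0$ line conditions expresses $\Tev^{\bP^2}$ with mixed conditions as a signed (in fact, after cancellation, positive) combination of genuine point-condition counts, i.e. of the integrand appearing in Theorem \ref{thm:tevPr} with various values of the parameters. The output of Theorem \ref{thm:tevPr} for $r=2$ is $\int_{\Gr(3,d+1)}\sigma_{1^2}^g\cdot\big(\sum_{\lambda\subset(n-4)^2}\sigma_\lambda\sigma_{\overline\lambda}\big)_{\lambda_0\le n-3}$, and the Pieri/Littlewood--Richardson expansion of $\sigma_\lambda\sigma_{\overline\lambda}$ on $\Gr(3,d+1)$ is counted by $\SSYT_3$ of the resulting shape; so after the degeneration the surviving terms are naturally indexed by semistandard tableaux on three letters with constraints on the multiplicity of the entry $2$ (coming from how many conditions remain genuine point conditions, i.e. $n_0$) and on the lengths of $(1,2)$- and $(2,3)$-strips (coming from the two-step truncation $\lambda_0\le n-3$ versus the finer cut at $\lambda_0=n-4$ and $\lambda_0=n-3$). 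Matching these constraints to the four bulleted cases for $\Gamma^\lambda_{2,\vn,d}$ is then a direct, if intricate, translation.

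The main obstacle I expect is \emph{transversality/enumerativity}: one must show that for general $(C,p_i)$, general $x_i$, and general $X_i$, the relevant intersection is dimensionally transverse and consists of reduced points corresponding to honest non-degenerate maps — with no excess contributions from degenerate maps, from maps where several $p_i$ collapse, or from the boundary strata of the complete-collineation compactification. This is precisely the point where Theorem \ref{thm:tevPr}'s statement that the formula is an \emph{actual} count (not a virtual one) must be leveraged and propagated through the degeneration: I would argue that the specialization of the $X_i$ can be chosen so that the limit cycle remains reduced and the boundary terms that appear are either genuinely empty or are themselves enumerative of lower-dimensional map spaces that do not meet the open locus of interest. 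A secondary, purely combinatorial, difficulty is verifying that the signs introduced by the inclusion-exclusion over the $n-n_0$ line degenerations cancel to leave the manifestly nonnegative tableau count $\Gamma^\lambda_{2,\vn,d}\in\bZ_{\ge0}$; this should follow from a telescoping identity among the truncated Schubert sums, but pinning down the exact truncation thresholds ($n_0-3$, $n-3$, $n-5$, $n-4$) requires care.
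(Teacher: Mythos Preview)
Your proposal has a genuine gap at its core. The key degeneration step you describe --- that a line-incidence class ``splits into a sum of a point-incidence class and a residual class'' --- does not make sense as stated: a line condition has codimension $1$ and a point condition has codimension $2$, so no such splitting can occur when you specialize a single line $X_i$ to pass through a fixed point. Moving one line does not change the codimension of the corresponding incidence cycle, and there is no mechanism by which the mixed count is expressed as a signed combination of the point-only counts of Theorem~\ref{thm:tevPr}. Consequently your plan to reduce to Theorem~\ref{thm:tevPr} and then do inclusion--exclusion cannot be carried out in the form you describe.

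The paper proceeds quite differently. Both theorems rest on the common transversality statement Theorem~\ref{thm:coll}, which already settles the enumerativity question you flag as the main obstacle: the count equals $\int_{\Gr(3,d+1)}\sigma_{1^2}^g\cdot\Gamma_{2,\vn,d}$, and the entire content of Theorem~\ref{thm:countP2} is the explicit computation of the class $\Gamma_{2,\vn,d}$ (after reducing to $d=n-1$ via \S\ref{reduction_sec}). That computation degenerates the subspaces $L_i\subset V$ on the space $\Coll(V,W)$ of complete collineations --- not the $X_i$ in $\bP^2$ directly --- and tracks how the cycle $Y_{2,\vn,d}$ breaks into boundary components supported on strata of non-full-rank collineations. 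Crucially, because $Z_{2,\vn,d}$ is not a torus orbit closure when $n_0<n$, one must verify by hand (e.g.\ Proposition~\ref{flat_limit_21_case}) that no extra components appear in each flat limit. The outcome is four families of totally degenerate components ($Y^{(1,1,1)}$, $Y^{(1,1,1)-\sec}$, $\wt{Y}^{(1,1)}$, $\wt{Y}^{(1,1)-\sec}$), and the four bulleted SSYT conditions in the theorem arise by partitioning $\SSYT_3(\lambda)$ according to which of these four families contributes a given tableau --- not from any inclusion--exclusion or sign cancellation. The thresholds $n_0-3$, $n-4$, $n-3$, $n-5$ come from the ranges of the degeneration parameters $\alpha_0,\alpha_1,\alpha_2$ governing where the $L_i$ land, computed via the geometric Littlewood--Richardson rule in \S\ref{P2_degen_integrals}.
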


See \S\ref{yt_sec} for notation and relevant definitions for Young tableaux. We will see that, in higher dimension, one should expect similar formulas involving coefficients $\Gamma^\lambda_{r,\vn,d}$ which are equal to the cardinality of a combinatorially meaningful subset of $\SSYT_{r+1}(\lambda)$.

In the rest of the introduction, we recall the history of the problem and give an overview of our new techniques.

\subsection{Tevelev degrees}

Tevelev degrees count the number of curves of fixed complex structure in an ambient space $X$ passing through the maximal number of points. More precisely, let $X$ be a smooth, projective, irreducible variety, and let $\beta\in H_2(X,\bZ)$ be an effective curve class. Consider the forgetful map
\begin{equation*}
\tau:\cM_{g,n}(X,\beta)\to \cM_{g,n}\times X^n
\end{equation*}
and suppose that
\begin{equation}\label{virdim_equal}
\int_\beta c_1(T_X)=\dim(X)(n+g-1),
\end{equation}
that is, the expected relative dimension of $\tau$ is zero. Suppose further that all dominating components of $\cM_{g,n}(X,\beta)$ are generically smooth of the expected dimension. Then, the \textbf{geometric Tevelev degree} $\Tev^X_{g,n,\beta}$ of $X$ is by definition equal to the degree of $\tau$. We will soon specialize to the case $X=\bP^r$, where the generic smoothness condition is satisfied whenever $n\ge r+1$, see \S\ref{sec:tev_history}. We refer to \cite{l_hyp,cl2} for recent partial computations of geometric Tevelev degrees of other targets.

Buch-Pandharipande \cite{bp} study systematically a parallel set of counts in Gromov-Witten theory. Consider now the forgetful map
\begin{equation*}
\overline{\tau}:\barM_{g,n}(X,\beta)\to \barM_{g,n}\times X^n
\end{equation*}
and assume \eqref{virdim_equal}. Then, the \textbf{virtual Tevelev degree} $\vTev^X_{g,n,\beta}$ of $X$ is defined to be the unique rational number for which
\begin{equation*}
\overline{\tau}_{*}[\barM_{g,n}(X,\beta)]^{\vir}=\vTev^X_{g,n,\beta}\cdot[\barM_{g,n}\times X^n].
\end{equation*}
For virtual degrees, no transversality hypothesis is required. The virtual and geometric Tevelev degrees often agree, but do not always; see \cite{lp,bllrst} for detailed investigations.

The term ``Tevelev degree'' was introduced in 2021 by Cela-Pandharipande-Schmitt \cite{cps}, after the formula (using our notation) $\Tev^{\bP^1}_{g,g+3,(g+1)[\bP^1]}=2^g$ appeared in the work of Tevelev \cite[Theorem 6.2]{tev}. On the other hand, much further-reaching calculations, now understood to be on the virtual side, had already appeared in the preceding decades, as we review in \S\ref{sec:tev_history}. Nevertheless, we will follow the most recent literature in using the name ``geometric Tevelev degree'' for the counts we consider, to highlight our interest in the geometric invariants that see only maps of degree $d$ out of smooth curves. 

\subsection{Tevelev degrees of $\bP^r$}\label{sec:tev_history}

In this section, we review the known results on the Tevelev degrees of $\bP^r$. We write $\beta=d$ for the homology class equal to $d$ times a line. Then, the condition \eqref{virdim_equal} becomes \eqref{virdim_equal_pr}, which we reproduce below:
\begin{equation}\tag{\ref{virdim_equal_pr}}
n=\frac{r+1}{r}\cdot d-g+1.
\end{equation}

This condition may be understood concretely as follows. There is a $g$-dimensional variety of line bundles $\cL$ of degree $d$ on $C$. For a fixed $\cL$, if $d$ is large, then the space of maps $f:C\to\bP^r$ with $f^{*}\cO_{\bP^r}(1)\cong\cL$ has dimension
\begin{equation*}
(r+1)\cdot h^0(C,\cL)-1=(r+1)(d-g+1)-1.
\end{equation*}
The $n$ incidence conditions $f(p_i)=x_i$ each impose $r$ conditions, so we expect finitely many $f$ as in Theorem \ref{thm:tevPr} if
\begin{equation*}
rn=g+(r+1)(d-g+1)-1=(r+1)(d+1)-1-rg,
\end{equation*}
which rearranges to \eqref{virdim_equal_pr}. The Brill-Noether theorem implies that, at least when $f$ is assumed non-degenerate (which is immediate when $n\ge r+1$, see below), the global parameter count remains correct for all $d$, so we have a finite answer to the \emph{geometric} enumerative problem under the assumption \eqref{virdim_equal_pr}.

The virtual degrees $\vTev^{\bP^r}_{g,n,d}$ are understood, following from a straightforward computation in the quantum cohomology ring of $\bP^r$, see \cite[(3)]{bp}.
\begin{thm}\label{thm:vTev}
Assume \eqref{virdim_equal_pr}. Then 
\begin{equation*}
\vTev^{\bP^r}_{g,n,d}=(r+1)^g.
\end{equation*}
\end{thm}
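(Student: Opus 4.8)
The plan is to deduce Theorem~\ref{thm:vTev} from the fact that Gromov--Witten invariants with primary insertions assemble into a $2$-dimensional topological field theory, exactly as in Buch--Pandharipande \cite[(3)]{bp}. First I would recall the splitting axiom for the virtual class and apply it to a maximally degenerate point of $\barM_{g,n}$, i.e.\ a nodal curve consisting of $2g-2+n$ thrice-special rational components glued along $3g-3+n$ nodes. Using that the virtual Tevelev degree is, by definition, the coefficient with which $\bar\tau_*[\barM_{g,n}(\bP^r,d)]^{\vir}$ expresses $[\barM_{g,n}\times(\bP^r)^n]$, this degeneration rewrites
\[
\vTev^{\bP^r}_{g,n,d}=\Bigl[\,\epsilon\bigl(G^{\,g}\star (H^r)^{\star n}\bigr)\,\Bigr]_{q^d},
\]
the coefficient of $q^d$ in the indicated element of $QH^*(\bP^r)=\bC[H,q]/(H^{r+1}-q)$, where $\star$ is the small quantum product, $H\in H^2(\bP^r)$ is the hyperplane class, $H^r$ the point class, $\epsilon\bigl(\sum_i a_i(q)H^i\bigr)=a_r(q)$ is the Gromov--Witten counit, and $G=\sum_{i,j}\eta^{ij}\,H^i\star H^j$ is the handle-attaching element, with $\eta^{ij}$ the inverse of the Poincar\'e pairing $\eta(H^i,H^j)=\int_{\bP^r}H^{i+j}$. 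Setting up this reduction carefully — tracking the markings, the genus contribution of each handle, the role of the point class of $\barM_{g,n}$, and the identification of the quantum pairing with the classical one — is the only substantive point, and it is precisely the computation recorded in \cite[(3)]{bp}.

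Granting the reduction, the remainder is a short manipulation in $\bC[H,q]/(H^{r+1}-q)$. Since $\eta(H^i,H^j)=\delta_{i+j,r}$ for $0\le i,j\le r$, the basis $\{H^i\}_{i=0}^r$ has dual basis $\{H^{r-i}\}$, so
\[
G=\sum_{i=0}^{r}H^i\star H^{r-i}=\sum_{i=0}^{r}H^r=(r+1)H^r,
\]
with no quantum correction because $i+(r-i)=r$. Hence $G^{\,g}\star (H^r)^{\star n}=(r+1)^g\,H^{r(g+n)}$, and writing $r(g+n)=(r+1)c+e$ with $0\le e\le r$ and using $H^{r+1}=q$ we get $(r+1)^g q^c H^e$; applying $\epsilon$ yields $(r+1)^g q^c$ if $e=r$ and $0$ otherwise.

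Finally I would match the numerology with \eqref{virdim_equal_pr}. The hypothesis $n=\tfrac{r+1}{r}d-g+1$ is equivalent to $r(n+g-1)=(r+1)d$; since $\gcd(r,r+1)=1$ this forces $r+1\mid n+g-1$, i.e.\ $r(g+n)\equiv r\pmod{r+1}$, so $e=r$, and moreover $r(g+n)=(r+1)d+r$ gives $c=d$. Therefore $\epsilon\bigl(G^{\,g}\star(H^r)^{\star n}\bigr)=(r+1)^g q^d$, and extracting the coefficient of $q^d$ gives $\vTev^{\bP^r}_{g,n,d}=(r+1)^g$. I expect no obstacle beyond the first paragraph: once the TQFT reduction is in hand, everything is bookkeeping in a two-variable polynomial ring.
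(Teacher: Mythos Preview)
Your proposal is correct and is precisely the approach the paper points to: the paper does not prove Theorem~\ref{thm:vTev} itself but cites it as ``a straightforward computation in the quantum cohomology ring of $\bP^r$, see \cite[(3)]{bp}'', and what you have written is exactly that computation carried out in full.
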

This formula was first obtained by Bertram-Daskalopoulos-Wentworth \cite{bdw} before the systematic development of the theory of virtual fundamental classes. In fact, the more general problem of counting curves on Grassmannians satisfying Schubert incidence conditions was considered by Bertram in \cite{bertram}, and it was proven by Siebert-Tian \cite{st} in large degree and by Marian-Oprea \cite{mo} virtually in all degrees that these much more general counts are determined by the Vafa-Intriligator formula. In particular, the virtual number of maps $f:C\to\bP^r$ with respect to arbitrary incidence conditions $f(p_i)\in X_i\subset\bP^r$ is also equal to $(r+1)^g$. Further virtual calculations can be carried out on moduli spaces of stable quotients \cite{mop}, which determine the Gromov-Witten theory of Grassmannians, and quasimaps \cite{ckm}. 
%In fact, the original calculation of Bertram-Daskalopoulos-Wentworth \cite{bdw} may be viewed as taking place on the space of quasi-maps to $\bP^r$, and quasi-maps seem to be a natural setting for other targets.

We focus in this paper on geometric curve counts in $\bP^r$, which are much more subtle, and are in general not determined in an apparent way by any of the aforementioned virtual theories. If $n\ge r+1$, then a map $f:C\to\bP^r$ out of a general curve with general point incidence conditions $f(p_i)=x_i$ is automatically non-degenerate, and the Brill-Noether-Petri theorem guarantees that the needed transversality hypothesis on
\begin{equation*}
\tau:\cM_{g,n}(\bP^r,d)\to \cM_{g,n}\times (\bP^r)^n
\end{equation*}
is satisfied. When instead $n<r+1$, it is often the case that there are more degenerate maps $f$ than expected\footnote{For example, when $r=2$, $n=2$, and $g=\frac{3}{2}d-1\ge 2$, there are infinitely many $f$ of degree $d$ from $C$ to the line between the two points $x_1,x_2\in\bP^2$, satisfying $f(p_i)=x_i$ for $i=1,2$.}. When $n=r+1$, it is always the case that $\Tev^{\bP^r}_{g,n,d}=0$, because in this case the Brill-Noether number
\begin{equation*}
\rho(g,r,d)=g-(r+1)(g+r-d)=-r
\end{equation*}
is negative, so the Brill-Noether theorem implies that no non-degenerate maps $f:C\to\bP^r$ of degree $d$ exist. The formula of Theorem \ref{thm:tevPr} also gives zero in this case. We therefore assume throughout the discussion of geometric Tevelev degrees of $\bP^r$ that $n\ge r+2$.

The case in which $d=r+\frac{rg}{r+1}$ and $n=r+2$ are as small as possible is classical. Then, the data of a map $f:C\to\bP^r$ with $f(p_i)=x_i$ for $i=1,2,\ldots,r+2$ is equivalent to the data of a linear series of minimal degree on $C$. The celebrated 19th century result of Castelnuovo \cite{castelnuovo} gives the number of such as
\begin{equation}\label{castelnuovo_formula}
\Tev^{\bP^r}_{g,n,d}=\int_{\Gr(r+1,d+1)}\sigma^g_{1^r}=g!\cdot\frac{1!\cdot 2!\cdot \cdots \cdot r!}{s!\cdot (s+1)!\cdot \cdots\cdot (s+r)!},
\end{equation} 
where $s=\frac{d}{r}-1=\frac{g}{r+1}$. In particular, Castelnuovo's count is quite far from the virtual (Gromov-Witten) count.\footnote{It follows from Corollary \ref{Tev_bound} that Castelnuovo's number is always less than or equal to virtual count, and strictly smaller except in genus 0. When $r=1$, Castelnuovo's count of maps $f:C\to\bP^1$ out of a curve of even genus $g=2h$ of the minimal degree $d=h+1$ is equal to the Catalan number $\frac{1}{h+1}\binom{2h}{h}$, whereas the virtual count is $2^g=4^h$.}

At the other extreme, where $d,n$ are sufficiently large, the geometric counts match the virtual ones, as had been essentially understood in the early work \cite{bdw,st}. 
\begin{thm}\cite[Theorem 1.1, Theorem 1.2]{fl}\label{geomtev_larged}
Assume \eqref{virdim_equal_pr} and that $d\ge rg+r$ (equivalently, $n\ge d+2$). Then 
\begin{align*}
\Tev^{\bP^r}_{g,n,d}&=(r+1)^g\\
&=\int_{\Gr(r+1,d+1)}\sigma^g_{1^r}\cdot\sum_{a_0+\cdots+a_r=r(n-r-2)}\sigma_{a_0}\cdots\sigma_{a_r}
\end{align*}
\end{thm}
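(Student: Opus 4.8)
The plan is to reduce the count to an intersection number on a Grassmannian of linear series by degenerating the source curve. First I would record why $\Tev^{\bP^r}_{g,n,d}$ is a genuine count: since $n\ge r+1$, a degree-$d$ map $f\colon C\to\bP^r$ out of a general $C$ with $f(p_i)=x_i$ for general $x_i$ is automatically non-degenerate, and the Brill--Noether theorem, together with the maximal transversality of general linear series on a general curve, shows that every dominating component of $\cM_{g,n}(\bP^r,d)$ is generically reduced of the expected dimension; hence $\tau$ is dominant and generically finite and $\Tev^{\bP^r}_{g,n,d}=\deg\tau$. To evaluate $\deg\tau$, I would then degenerate $(C,p_1,\dots,p_n)$ inside $\barM_{g,n}$ to a flag-type curve $C_0$: a chain of $\bP^1$'s carrying $g$ elliptic tails, with the marked points spread along the rational components and all nodes and attaching points general, keeping $x_1,\dots,x_n\in\bP^r$ fixed and general. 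The finitely many $f$ over the general curve specialize to limit linear series on $C_0$, which by Eisenbud--Harris theory are described aspect by aspect: the restriction to the main rational component is a $\mathfrak{g}^r_d$ on $\bP^1$, i.e. a point of $\Gr(r+1,d+1)$ (the degree-$d$ line bundle being unique with $(d+1)$-dimensional space of sections), together with compatible aspects on the remaining components, matched along the nodes by their vanishing sequences.

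The heart of the argument is to organize these contributions into an intersection product on $\Gr(r+1,d+1)$. I expect two packages. First, each incidence condition $f(p_i)=x_i$, once propagated down the chain of rational bridges to the main component, becomes a Schubert condition on its point of $\Gr(r+1,d+1)$; summing over the combinatorial ways the condition can distribute along the chain, the $n$ conditions together contribute the class $\sum_{a_0+\cdots+a_r=r(n-r-2)}\sigma_{a_0}\cdots\sigma_{a_r}$, whose codimension is forced by \eqref{virdim_equal_pr}. Second, each elliptic tail, via a Brill--Noether/Clifford analysis of its genus-$1$ aspect, imposes a single condition on the flag of the main component at the attaching node whose class is $\sigma_{1^r}$, so the $g$ tails contribute $\sigma_{1^r}^g$. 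Combining these, $\Tev^{\bP^r}_{g,n,d}=\int_{\Gr(r+1,d+1)}\sigma_{1^r}^g\cdot\sum_{a_0+\cdots+a_r=r(n-r-2)}\sigma_{a_0}\cdots\sigma_{a_r}$, the middle expression of the theorem.

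It remains to evaluate this integral. Writing $\cQ$ for the tautological rank-$(d-r)$ quotient bundle and $\cS$ for the tautological sub-bundle on $\Gr(r+1,d+1)$, one has $\sum_a\sigma_a t^a=c_t(\cQ)$ and $\sigma_{1^r}=c_r(\cS^\vee)$, so the integrand is $c_r(\cS^\vee)^g\cdot c_{r(n-r-2)}(\cQ^{\oplus(r+1)})$. This is the standard Grassmannian ("quantum $=$ classical") model for the large-degree, genus-$g$ Gromov--Witten count of $n$ points on $\bP^r$, so by the Bertram--Daskalopoulos--Wentworth/Siebert--Tian computation (Theorem~\ref{thm:vTev}) it equals $(r+1)^g$; alternatively, one evaluates it directly in $H^*(\Gr(r+1,d+1))$ --- e.g. by the Vafa--Intriligator residue formula, or by an inductive Pieri computation that strips off one factor $\sigma_{1^r}$ at a time, each step multiplying the answer by $r+1$, down to the genus-$0$ count $\Tev^{\bP^r}_{0,n,d}=1$ --- again obtaining $(r+1)^g$.

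The main obstacle is the limit analysis of the first two paragraphs: one must show the degeneration is transverse, ruling out limit stable maps with positive-degree components contracted in $\bP^r$, elliptic tails mapping non-constantly, or refined linked-linear-series strata of excess dimension, so that only the expected contributions, with the expected multiplicities, survive. This is precisely where the hypothesis $d\ge rg+r$ (equivalently $n\ge d+2$) enters: it forces $h^1(C,L)=0$ and keeps the relevant Brill--Noether loci of expected dimension, so no excess appears. For smaller $d$ the same degeneration acquires genuine excess contributions and the geometric count falls strictly below $(r+1)^g$ --- the phenomenon quantified by Theorem~\ref{thm:tevPr}.
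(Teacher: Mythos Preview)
Your overall strategy---degenerate $C$ to a rational curve with $g$ elliptic tails, extract $\sigma_{1^r}$ from each tail, and reduce to a Schubert computation on $\Gr(r+1,d+1)$---matches both \cite{fl} and this paper's own re-derivation (Proposition~\ref{lls_count}, Corollary~\ref{asymptotic_tev}). The elliptic-tail contribution and the final evaluation of the integral to $(r+1)^g$ are correct.

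The gap is your second paragraph: the claim that the incidence conditions, ``summed over combinatorial ways they distribute along the chain,'' yield $\sum_{a_0+\cdots+a_r=r(n-r-2)}\sigma_{a_0}\cdots\sigma_{a_r}$. This is the substantive step, and spreading the $p_i$ along a chain of rational bridges does not in any evident way produce that class. The argument in \cite{fl} (and in \S\ref{asymptotic_sec} here) instead places all $n$ marked points on a \emph{single} rational component $R$, so that the residual problem is to count maps $\bP^1\to\bP^r$ of degree $d$ with $n$ point-incidences and a Schubert condition at the node. The space of such maps sits in the projective bundle $\bP(\cU^{\oplus(r+1)})\to\Gr(r+1,d+1)$, where each condition $f(p_i)=x_i$ is a relative linear subspace of class $c_1(\cO(1))^r$; pushing forward $c_1(\cO(1))^{rn}$ gives the Segre class $s_{r(n-r-2)}(\cU^{\oplus(r+1)})$, which expands as the claimed sum (Corollary~\ref{asymptotic_formula}). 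The hypothesis $n\ge d+2$ enters precisely here, to show that this intersection is supported on injective, base-point-free maps (Proposition~\ref{asymptotic_genericity}(a)). So your instinct that the bound rules out excess is right, but the mechanism is a transversality statement on this projective bundle, not $h^1$-vanishing on $C$.
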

The equality of the two formulas is non-trivial. They are obtained by two independent computations in \cite{fl}, but a direct combinatorial proof of their equality via the RSK algorithm was given in \cite{grb}.
%However, the geometric and virtual degrees do not agree for arbitrary $d$.This discrepancy is due the existence of stable maps $f:\overline{C}\to\bP^r$ satisfying the needed incidence conditions, where $\overline{C}$ is a nodal curve whose stable contraction is isomorphic to $(C,p_1,\ldots,p_n)$. When $d\ge n+1$, such stable maps can be constructed by contracting $C$ to any point $X_i$, and attaching rational tails to $C$ which map to curves connecting $X_i$ to the other points $X_{i'}$. When $n=d+1$, the number of such stable maps is exactly $n$, and one can conclude that $\Tev^{\bP^r}_{g,n,n+1}=(r+1)^g-n$. In particular, the bound $n\ge d+2$ of Theorem \ref{geomtev_larged} is sharp. When $n<d+1$, the locus of boundary contributions obtained in this way is positive-dimensional.

When $r=1$, the geometric degrees $\Tev^{\bP^1}_{g,n,d}$ have been computed in full.
\begin{thm}\cite{cps,fl,cl}\label{tev_p1}
Assume \eqref{virdim_equal_pr} for $r=1$. Then 
\begin{align*}
\Tev^{\bP^1}_{g,n,d}&=2^g-2\sum_{j=0}^{g-d-1}\binom{g}{j}+(g-d-1)\binom{g}{g-d}+(d-g-1)\binom{g}{g-d+1}\\
&=\int_{\Gr(2,d+1)}\sigma^g_{1}\cdot\sum_{a_0+a_1=n-3}\sigma_{a_0}\sigma_{a_1}
\end{align*}
\end{thm}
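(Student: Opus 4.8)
The plan is to compute $\Tev^{\bP^1}_{g,n,d}$ by degenerating the source curve and reading off the answer as an intersection number on a Grassmannian, in the spirit of \cite{fl} but pushed through to small $d$ via complete collineations. First I would degenerate $(C,p_1,\ldots,p_n)\in\cM_{g,n}$ to the nodal curve $C_0$ obtained by attaching $g$ general elliptic tails $E_1,\ldots,E_g$ at $g$ general points of a rational spine $R\cong\bP^1$ carrying all $n$ marked points. Since for $n\ge 2$ a general map $f\colon C\to\bP^1$ with $f(p_i)=x_i$ is base-point-free and the space of such is Brill--Noether smooth of the expected dimension $n$, the count $\Tev^{\bP^1}_{g,n,d}$ is unchanged in the limit, so it equals the (weighted) number of limiting admissible maps $f_0\colon C_0\to\bP^1$ of degree $d$ with $f_0(p_i)=x_i$ along $R$. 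The crucial structural point is that the restriction of a degree-$e$ piece of $f_0$ to $R$ is a base-point-free pencil inside $H^0(R,\cO(e))$, i.e.\ a point of an open stratum of $\Gr(2,e+1)$; absorbing the leftover degree $d-e$ into the tails and compactifying by allowing the spine pencil to acquire base points --- this is where complete collineations enter --- the whole computation takes place on $\Gr(2,d+1)$, with the vanishing $\sigma_a=0$ for $a$ outside the admissible range recording base-point-freeness and non-degeneracy of the limit.

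Next I would identify the contribution of each component of $C_0$ as a class on $\Gr(2,d+1)$. Each elliptic tail, after a finite but genuinely nontrivial count of degree-$k$ covers $E_i\to\bP^1$ matching $f_0|_R$ at the node (where $k\neq 1$, since a genus-one curve admits no degree-one map to $\bP^1$), contributes the divisor class $\sigma_1$; the product over the $g$ tails is the factor $\sigma_1^g$ --- the same factor appearing in Castelnuovo's count \eqref{castelnuovo_formula} --- and this is precisely the Grassmannian shadow of the quantum-cohomology computation giving $\vTev^{\bP^1}_{g,n,d}=2^g$ in Theorem~\ref{thm:vTev}. Each incidence condition $f_0(p_i)=x_i$ on $R$ is a condition on the spine pencil; since $\mathrm{Aut}(\bP^1)$ is three-dimensional, exactly three of the $n$ conditions rigidify the target, and the remaining $n-3$ assemble, after summing over which of the two homogeneous coordinates vanishes and to what order, into $\sum_{a_0+a_1=n-3}\sigma_{a_0}\sigma_{a_1}$; the dimension count $g+(n-3)=2d-2=\dim\Gr(2,d+1)$ confirms the bookkeeping. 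Multiplying yields the second displayed formula of Theorem~\ref{tev_p1}. The step I expect to be the main obstacle is the boundary analysis: one must prove that the special limit maps --- those whose spine pencil degenerates, forcing extra degree onto the tails, or which become degenerate along $R$ --- contribute to $\Tev^{\bP^1}_{g,n,d}$ exactly what the truncated $\Gr(2,d+1)$ integral predicts, with no excess boundary components that fail to smooth. This is a delicate local smoothing and dimension count, not a formal virtual argument, and it is the technical heart of the proof.

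Finally, to recover the first, closed-form expression I would evaluate $\int_{\Gr(2,d+1)}\sigma_1^g\cdot\sum_{a_0+a_1=n-3}\sigma_{a_0}\sigma_{a_1}$ directly by Pieri's rule (equivalently, by counting standard tableaux, i.e.\ lattice paths, in the $2\times(d-1)$ rectangle). The bulk of the sum yields $2^g$, while the Schubert classes $\sigma_{a_0}$ with $a_0$ close to the forbidden size $d-1$ contribute the correction $-\sum_{j=0}^{g-d-1}\binom{g}{j}+(g-d-1)\binom{g}{g-d}+(d-g-1)\binom{g}{g-d+1}$; this also makes transparent the agreement with Theorem~\ref{geomtev_larged}, since for $d\ge g+1$ the corrections vanish and $\Tev^{\bP^1}_{g,n,d}=2^g=\vTev^{\bP^1}_{g,n,d}$. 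Alternatively, one can deduce the equality of the two displayed formulas directly from the RSK-type combinatorial identity of \cite{grb}, of which the $r=1$ case is a special instance, thereby obtaining the first formula from the second without re-doing the geometry.
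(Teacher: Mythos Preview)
Your overall strategy---degenerate the domain, read off $\sigma_1^g$ from the elliptic tails and an incidence factor from the rational component, then use complete collineations to make this valid for all $d$---is the same as the paper's. A few points of comparison are worth noting.

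First, the paper's nodal curve $C_0$ is not quite yours: it has a rational spine $C_{\spine}$ carrying the $g$ elliptic tails \emph{and} a separate rational tail $R$ attached at one further point $s_0$, with all markings $p_1,\ldots,p_n$ on $R$ (see Figure~\ref{the curve C0}). This separation is what lets the argument factor cleanly: the class $\sigma_{1^r}^g$ arises on $C_{\spine}$ from the ramification conditions imposed by the $E_j$-aspects of a \emph{limit linear series} (each tail forces ramification sequence $(d-r,d-r-1,\ldots,d-r-1)$ at its node, hence $\sigma_{1^r}$ on the spine), with transversality supplied by Mukhin--Tarasov--Varchenko, while the incidence conditions live entirely on $R$. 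Your description via ``degree-$k$ covers $E_i\to\bP^1$'' is the admissible-maps picture rather than limit linear series; these are related but not the same compactification, and the paper works with the latter throughout.

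Second, your justification of the incidence factor $\sum_{a_0+a_1=n-3}\sigma_{a_0}\sigma_{a_1}$ via ``$\mathrm{Aut}(\bP^1)$ is three-dimensional, so three conditions rigidify the target'' is heuristic and not how the paper (or \cite{fl}) obtains it. The paper's mechanism is cleaner: for $r=1$ one has $\Coll(V,W)=S(V,W)=\bP(\cU^{\oplus 2})$ over $\Gr(2,W)$, each condition $f(p_i)=x_i$ is a relative hyperplane, and $\pi_*(c_1(\cO(1))^n)$ is the Segre class $s_{n-3}(\cU^{\oplus 2})=\sum_{a_0+a_1=n-3}\sigma_{a_0}\sigma_{a_1}$. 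In particular, for $r=1$ no genuine blowup is needed---the ``complete collineations'' space is already the na\"{i}ve projective bundle---so the delicate boundary analysis you flag as the main obstacle is, in this case, subsumed by the observation (Proposition~\ref{asymptotic_genericity}(b)) that the incidence locus on $S(V,W)$ already has the expected dimension with no excess. Your final paragraph on extracting the closed binomial formula by Pieri/lattice-path combinatorics is correct and matches the literature.
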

Binomial coefficients $\binom{g}{j}$ with $j<0$ are interpreted to vanish. In particular, when $d\ge g+1$, we obtain simply $\Tev^{\bP^1}_{g,d,n}=2^g$, agreeing with the first formula of Theorem \ref{geomtev_larged}. The second formula of Theorem \ref{tev_p1} shows that the Schubert calculus formula of Theorem \ref{geomtev_larged} holds when $r=1$ for \emph{all} $d$ (whereas the $(r+1)^g=2^g$ formula does not), and also agrees with Theorem \ref{thm:tevPr} in this case. However, when $r>1$, the Schubert calculus formula of Theorem \ref{geomtev_larged} is only valid for large $d$.

The geometric counts when $r>1$ and $r+\frac{rg}{g+1}<d<rg+r$, which one may view as interpolating between Castelnuovo's count \eqref{castelnuovo_formula} and the Gromov-Witten count $(r+1)^g$, have remained open. Theorem \ref{thm:tevPr} completes the bridge between these two enumerative questions.

\begin{ex}\label{formula_examples}
Take $r=2$. Note by \eqref{virdim_equal_pr} that $d$ must be even.

The unique instance of $\Tev^{\bP^2}_{g,n,d}$ with $g\le 2$ not covered by the large degree regime in Theorem \ref{geomtev_larged} is the case $(g,n,d)=(2,5,4)$, in which case one can compute from Theorem \ref{thm:tevPr} that
\begin{equation*}
\Tev^{\bP^2}_{2,5,4}=\int_{\Gr(3,5)}\sigma_{11}^2\left(\sigma_{11}\sigma_\emptyset+\sigma_1^2+\sigma_\emptyset\sigma_{11}\right)=4.
\end{equation*}
Because $n>r+2=4$, this case is also not covered by Castelnuovo's formula.

As a check, we have 
\begin{equation*}
\vTev^{\bP^2}_{2,5,4}-\Tev^{\bP^2}_{2,5,4}=(2+1)^2-4=5.
\end{equation*}
This corresponds to the existence of exactly $n=5$ stable maps in the boundary of $\barM_{2,5}(\bP^2,4)$ over a general point of $\barM_{2,5}\times(\bP^2)^5$. These are given by a constant  map $C\to\bP^2$ with image $x_i$ (for a choice of $i=1,\ldots,5$), attached to 4 rational tails mapping isomorphically to the lines between $x_i$ and $x_j$, for $j\neq i$. More generally, we have $\Tev^{\bP^r}_{g,n,d}=(r+1)^g-n$ whenever $n=d+1$, or equivalently $d=rg$, the smallest possible value of $d$ outside the range covered by Theorem \ref{geomtev_larged}.

When $g=3$, the smallest case $(n,d)=(4,4)$ is determined by the Castelnuovo count:
\begin{equation*}
\Tev^{\bP^2}_{3,4,4}=\int_{\Gr(3,5)}\sigma_{11}^3=3!\cdot\frac{1!\cdot 2!}{1!\cdot 2!\cdot 3!}=1.
\end{equation*}
Indeed, a non-degenerate map $f:C\to\bP^2$ of degree 4 must be a canonical embedding, if $C$ is general. In the next case $(n,d)=(7,6)$, we have again
\begin{equation*}
\Tev^{\bP^2}_{3,7,6}=(2+1)^2-7=20.
\end{equation*}
In the remaining cases, we have $d\ge 8$ and $n=\frac{3}{2}d-2$, in which case $\Tev^{\bP^2}_{3,n,d}=27$ by Theorem \ref{geomtev_larged}.

The first case not determined by the Castelnuovo count, and in which the boundary contributions to the Gromov-Witten count $\vTev^{\bP^2}_{g,n,d}$ have positive dimension and thus cannot be easily integrated, is $(g,n,d)=(4,6,6)$. Here, Theorem \ref{thm:tevPr} gives
\begin{align*}
\Tev^{\bP^2}_{4,6,6}&=\int_{\Gr(3,7)}\sigma_{11}^4\left(\sum_{\lambda\subset(22)}\sigma_\lambda\sigma_{\overline{\lambda}} \right)_{\lambda_0\le 3}\\
&=\int_{\Gr(3,7)}\sigma_{11}^4\left(2\sigma_{22}\sigma_{\emptyset}+2\sigma_{21}\sigma_{1}+\sigma_{11}^2+\sigma_2^2\right)_{\lambda_0\le 3}\\
&=\int_{\Gr(3,7)}\sigma_{11}^4\left(3\sigma_{31}+6\sigma_{22}+3\sigma_{211}\right)\\
&=3\cdot3+6\cdot2+3\cdot3\\
&=30.
\end{align*}
The remaining cases when $g=4$ are $\Tev^{\bP^2}_{4,9,8}=3^4-9=72$ and $\Tev^{\bP^2}_{4,n,d}=81$ whenever $d\ge10$ and $n=\frac{3}{2}d-3$.
\end{ex}

\subsection{Complete collineations}\label{sec:intro_coll}

We now discuss the new ingredients of this paper.

The geometric approaches of \cite{bdw,fl} to Tevelev degrees encounter the same difficulty: in intersection theory calculations, one may obtain contributions from ``maps with base-points,'' that is, $(r+1)$-tuples of sections $[f_0:\cdots:f_r]$, where $f_j\in H^0(C,\cL)$ are sections all vanishing at some (or all) of the $p_i$. Essentially the same issue arises in the Gromov-Witten setting, where one has virtual contributions from stable maps obtained by attaching rational tails at $p_i\in C$ whose images pass through $x_i\in\mathbb{P}^r$. This in particular explains the discrepancy between the virtual and geometric degrees.

In order to access the geometric Tevelev degrees $\Tev^{\bP^r}_{g,n,d}$ in all cases, one needs to avoid such contributions. It is essentially a consequence of the Brill-Noether theorem that these only arise when the $f_j$ are linearly dependent. Indeed, if the $f_j$ are linearly independent, then the requirement that \emph{all} of them vanish at a fixed point $p_i$ imposes $r+1$ conditions, more than the expected $r$ conditions for the constraint $f(p_i)=x_i$. We therefore pass to the moduli space of \emph{complete collineations}, which is obtained by blowing up the loci where the linear map $\mathbb{C}^{r+1}\to H^0(C,\cL)$ defined by the $f_j$ drops rank (see \S\ref{coll_sec} or the discussion below). This on the one hand isolates the desired contributions from ``honest'' maps $f:C\to\mathbb{P}^r$ of degree $d$, and on the other facilitates a limit linear series degeneration from genus $g$ to genus $0$. In fact, the method works for arbitrary linear incidence conditions $f(p_i)\in X_i$.

Let us now describe this degeneration. Consider a 1-parameter family with general fiber $C$ and whose special fiber is a nodal curve $C_0$ given by a $\bP^1$ attached to $g$ elliptic tails at general points. (We will see in \S\ref{sec:lls} that we additionally attach a rational tail containing the marked points, but we suppress this here.) Such degenerations, which are amenable to the theory of limit linear series \cite{eh}, are standard in Brill-Noether theory. Indeed, we consider the linear series on $C$ underlying $f$ and its corresponding limit linear series on the special fiber, with the additional data of a complete collineation $\mathbb{C}^{r+1}\to H^0(\bP^1,\cO(d))$ on the rational component of $C_0$. The limit object is essentially determined by this data on this rational component, where there is now a unique line bundle of degree $d$, but with a space of sections of dimension $g$ more than the expected dimension of $H^0(C,\cL)$. Returning to the dimension count at the beginning of \S 1.3, we find that the (expected) number of parameters has so far increased by $rg$.

On the other hand, a standard limit linear series computation shows that the complete collineation $\mathbb{C}^{r+1}\to H^0(\bP^1,\cO(d))$ must in addition satisfy a codimension $r$ Schubert condition at each of the $g$ attachment points of the elliptic tails, due to constraints on the vanishing of limit sections on the elliptic components. These Schubert conditions restore the expected number of moduli to zero. This degeneration therefore reduces the calculation to studying complete collineations $\mathbb{C}^{r+1}\to H^0(\bP^1,\cO(d))$ whose target is a \emph{fixed} vector space. The geometry of the curve can then be abstracted away.

Let $V,W$ be vector spaces of dimensions $r+1,d+1$, respectively, and let $\Coll(V,W)$ be the moduli space of complete collineations $\phi:V\to W$. The space $\Coll(V,W)$ is obtained by an iterated blowup $b:\Coll(V,W)\to\bP\Hom(V,W)$ of the space of linear maps $\phi_0:V\to W$ at the loci where $\phi_0$ drops rank, in increasing order of rank. The data of a complete collineation includes, in addition to the linear map $\phi_0$, a linear map $\phi_1:\ker(\phi_0)\to\coker(\phi_0)$, and more generally additional maps $\phi_{j+1}:\ker(\phi_j)\to\coker(\phi_j)$ whenever $\phi_j$ fails to have full rank. See \S\ref{coll_sec} for a detailed discussion. The space $\Coll(V,W)$ also admits a map $\pi:\Coll(V,W)\to \Gr(r+1,W)$, which generically remembers the image of $\phi_0$, see Definition \ref{def:image}.

Let $L\subset V$ be a vector subspace of any dimension and let $M\subset W$ be a hyperplane through the origin. We will define the \emph{incidence locus} $\Inc(L,M)\subset\Coll(V,W)$ by the proper transform under $b$ of the locus of maps for which $\phi_0(L)\subset M$, and define $\gamma_{\dim(L)}=[\Inc(L,M)]\in H^{2\dim(L)}(\Coll(V,W))$ to be the corresponding cycle class. These may be regarded as tautological classes in the cohomology of $\Coll(V,W)$ which may be of independent interest.

The relevance of these classes for us is the following. We may regard $L\subset V\cong\bC^{r+1}$ as corresponding to a subspace of $X_i\subset \bP^r$ of codimension $\dim(L)$, and $M\subset W$ as the hyperplane of sections of $H^{0}(C,\cL)$ vanishing at a fixed point $p_i$. Then, the locus $\Inc(L,M)$ corresponds to the condition $f(p_i)\in X_i$.

We prove:

\begin{thm}\label{thm:coll}
Let $(C,p_1,\ldots,p_n)\in\cM_{g,n}$ be a general curve. Let $X_1,\ldots,X_n\subset\bP^r$ be general linear spaces of dimensions $k_1,\ldots,k_n<r$, respectively. For $j=0,1,\ldots,r-1$, let $n_j$ be the number of $X_i$ of dimension $j$, and write $\vn=(n_0,\ldots,n_{r-1})$. 

Assume that
\begin{equation}\label{general_numerology}
(r+1)(d+1)-1-rg=\sum_{i=1}^{n}(r-\dim X_i)=\sum_{k=0}^{r-1}(r-k)n_k=:|\vn|.
\end{equation}
Then, the number $\T^{\bP^r}_{g,\vn,d}$ of non-degenerate maps $f:C\to\bP^r$ of degree $d$ with $f(p_i)\in X_i$ is equal to
\begin{equation*}
\int_{\Coll(V,W)}\pi^{*}(\sigma^g_{1^r})\cdot\prod_{j=1}^{r}\gamma_{j}^{n_{r-j}}.
\end{equation*}
\end{thm}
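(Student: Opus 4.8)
The plan is to translate the enumerative problem about maps $f : C \to \bP^r$ into an intersection-theoretic computation on $\Coll(V,W)$ by interpreting the space of maps as a locus of linear maps (up to the blow-up) and the incidence conditions as the classes $\gamma_j$. First I would recall the classical dictionary: a non-degenerate map $f : C \to \bP^r$ of degree $d$ is the same as a line bundle $\cL$ of degree $d$ on $C$ together with an $(r+1)$-dimensional basepoint-free subspace $V \subset H^0(C,\cL)$, i.e.\ a point of the Grassmannian bundle over $\Pic^d(C)$ whose fiber over $\cL$ is $\Gr(r+1, H^0(C,\cL))$. Fixing a general $\cL$ of the appropriate degree (this is where one uses that $C$ is general and invokes the Brill--Noether theorem so that $h^0(C,\cL) = d+1 - g$... but note the numerology \eqref{general_numerology} is arranged so that on a general curve the relevant $\cL$ satisfy $h^1 = g$, i.e.\ $h^0 = d+1-g$; more precisely one should work over the Brill--Noether locus of the relevant dimension and use that the universal $H^0$ has the expected rank $d+1$ there after accounting for the $\sigma_{1^r}^g$ factor), one identifies $W$ with the space of sections and $V \hookrightarrow W$ with the chosen subspace. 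The factor $\pi^*(\sigma_{1^r}^g)$ is exactly the Castelnuovo-type class on $\Gr(r+1,W)$ that cuts out, from the total space of such subspaces as $\cL$ varies, the finitely many $\cL$ (counted by \eqref{castelnuovo_formula}) for which a codimension-$g$ worth of Schubert conditions coming from the $n$ marked points can be imposed; equivalently it accounts for the Brill--Noether multiplicity of the sublocus of $\Pic^d(C)$ we are integrating over.

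Next I would set up the incidence conditions. For each marked point $p_i$, let $M_i \subset W$ be the hyperplane of sections vanishing at $p_i$; since the $p_i$ are general and $\cL$ general, the $M_i$ are in general position. The condition $f(p_i) \in X_i$, with $X_i$ corresponding to a subspace $L_i \subset V$ of dimension $r - k_i$, says precisely that the composite $L_i \hookrightarrow V \xrightarrow{\phi_0} W$ lands in $M_i$, where $\phi_0 : V \to W$ is the inclusion. So on the open locus of $\bP\Hom(V,W)$ of injective maps, the count of maps $f$ with all incidences imposed is $\pi^*(\sigma_{1^r}^g)$ intersected with $\prod_i [\text{locus } \phi_0(L_i) \subset M_i]$. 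The class of the incidence locus on $\bP\Hom(V,W)$ pulls back, and by definition $\gamma_{\dim L_i} = \gamma_{r - k_i}$ is the class of its proper transform $\Inc(L_i, M_i)$ in $\Coll(V,W)$; grouping the $n_{r-j}$ conditions of each type $j$ gives the product $\prod_{j=1}^r \gamma_j^{n_{r-j}}$ with the correct exponents, and the total codimension $\sum_i (r - k_i) = |\vn|$ matches the dimension count so that the integral makes sense. The key point that must be verified is that passing from $\bP\Hom(V,W)$ to the blow-up $\Coll(V,W)$ does not change the answer: one must show that the intersection $\pi^*(\sigma_{1^r}^g) \cdot \prod \gamma_j^{n_{r-j}}$ is supported on (the proper transform of) the injective locus, i.e.\ that no excess contribution comes from the exceptional divisors where $\phi_0$ drops rank.

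The main obstacle, and the heart of the argument, is exactly this last transversality/excess-intersection claim: one must rule out contributions from ``maps with basepoints'' — degenerate $(r+1)$-tuples of sections that satisfy the incidence conditions on $\Coll(V,W)$ but do not come from honest degree-$d$ maps. This is where the blow-up structure of complete collineations is essential rather than cosmetic: on $\Coll(V,W)$ the proper transforms $\Inc(L_i, M_i)$ are genuinely different (smaller) than the total transforms, and the claim is that, under the numerical hypothesis \eqref{general_numerology} and for a general curve, the product of the $\gamma_j$ together with $\pi^*\sigma_{1^r}^g$ meets the exceptional locus in the expected (negative, hence empty) dimension. I expect to prove this by a dimension count on each stratum of $\Coll(V,W)$ indexed by the rank-drop pattern of $\phi$: on a stratum where $\phi_0$ has a kernel of dimension $e$, the sections span only an $(r+1-e)$-dimensional space, the ``effective'' Grassmannian factor shrinks, and the Brill--Noether theorem (for the general curve $C$) bounds the dimension of the locus of such sub-linear-series; comparing this with the number of independent incidence conditions the $\gamma_j$ impose on that stratum, one checks the conditions over-cut it. Granting this, the open-locus computation is exactly the claimed integral, and transversality there (so that the integral counts reduced points, yielding an honest enumerative number $\T^{\bP^r}_{g,\vn,d}$ rather than a virtual one) again follows from the Brill--Noether genericity of $C$ together with the generality of the $p_i$ and $X_i$. $\qed$
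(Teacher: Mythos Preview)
Your proposal has a genuine gap in the central identification step. You write ``one identifies $W$ with the space of sections'' of a line bundle $\cL$ on $C$, but $\dim W = d+1$ while $h^0(C,\cL) = d+1-g$ for a general line bundle on a genus-$g$ curve. Your parenthetical attempt to fix this (``the universal $H^0$ has the expected rank $d+1$ there after accounting for the $\sigma_{1^r}^g$ factor'') does not make sense: $H^0$ never has rank $d+1$ on a curve of positive genus, and there is no sublocus of $\Pic^d(C)$ over which it does. So you have not explained why the count $\T^{\bP^r}_{g,\vn,d}$, which is intrinsically about maps from a genus-$g$ curve, should equal an integral on $\Coll(r+1,d+1)$, which is built from a $(d+1)$-dimensional vector space and knows nothing about $C$ directly. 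The appearance of the class $\pi^*\sigma_{1^r}^g$ is not a ``Brill--Noether multiplicity'' correction to a computation on a single $\cL$; it must be produced by some mechanism that relates the genus-$g$ problem to a genus-$0$ one.

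The paper supplies exactly this missing mechanism: it \emph{degenerates} the general curve $C$ to a curve $C_0$ consisting of a rational spine, $g$ elliptic tails, and a rational tail $R$ carrying all the marked points, and works in the framework of Eisenbud--Harris limit linear series. On $R\cong\bP^1$, the space of sections of a degree-$d$ line bundle genuinely has dimension $d+1$, so $\Coll(V,W)$ is the correct parameter space for the $R$-aspect, and the incidence loci $\Inc(L_i,M_i)$ make sense there. The class $\sigma_{1^r}^g$ arises because each of the $g$ elliptic tails forces a ramification condition of type $(1^r)$ at its node, and these pass through the spine to the node $s_0$ on $R$ as Schubert conditions. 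Transversality on the special fiber is handled by a stratum-by-stratum dimension count on $\Coll(V,W)$ in genus $0$ (Proposition~\ref{coll_genericity_points}, which is essentially the argument you sketch, but only valid when $W=H^0(\bP^1,\cO(d))$), together with the Mukhin--Tarasov--Varchenko theorem for the Schubert intersections on the spine; finite-\'etaleness over the base of the degeneration then transports the count to the general fiber. Your dimension-count idea for ruling out boundary contributions is correct in spirit and is carried out in the paper, but only after the reduction to genus $0$ makes it tractable.
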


Recall here that the class $\gamma_j$ is the cycle class $[\Inc(L,M)]$, where $L\subset V$ has dimension $j$ and $M\subset W$ is a hyperplane. In this way, the product $\prod_{j=1}^{r}\gamma_{j}^{n_{r-j}}$ is the cycle class of an intersection 
\begin{equation*}
Y_{r,\vn,d}=\bigcap_{i=1}^{n}\Inc(L_i,M_i)\subset\Coll(V,W),
\end{equation*}
where the $L_i\subset V$ are general subspaces of dimension $\codim(X_i)$ and the $M_i\subset W$ are general hyperplanes. The classes $\sigma_{1^r}$ appearing in Theorem \ref{thm:coll} are the Schubert conditions that arise at each of the $g$ nodes of the rational component $\bP^1$ of our degenerate curve, as discussed above. Theorem \ref{thm:coll} should be viewed as a tranversality statement: the space of complete collineations resolves all excess intersections from (for example) the moduli space of stable maps, and is therefore the correct place to access the geometric fixed-domain curve counts for $\bP^r$.

By the projection formula, to compute $\T^{\bP^r}_{g,\vn,d}$ (and, by taking $n_0=n$, the geometric Tevelev degrees of $\bP^r$), it suffices to understand the classes
\begin{equation*}
\Gamma_{r,\vn,d}:=\pi_{*}([Y_{r,\vn,d}])
\end{equation*}
on $\Gr(r+1,W)=\Gr(r+1,d+1)$. These classes seem to be new and may be of independent interest. In fact, we will reduce further in \S\ref{reduction_sec} to the case $d=n-1$, showing that the classes $\Gamma_{r,\vn,n-1}$ determine all classes $\Gamma_{r,\vn,d}$.

\subsection{Degenerations of subspace arrangements}

Our main approach to studying the classes $\Gamma_{r,\vn,d}$ is by degeneration. First, consider the case in which the $L_i$ are hyperplanes, corresponding to the geometric Tevelev degrees; we abuse notation and write $n=\vn=(n,0,\ldots,0)$. When $d=n-1$, we show that $Z_{r,n,d}:=\pi(Y_{r,n,d})$ is equal to a generic torus orbit closure in the Grassmannian $\Gr(r+1,n)$. In this case, the class $\Gamma_{r,n,n-1}=[Z_{r,n,n-1}]$ has been computed by Klyachko \cite{k} and Berget-Fink \cite{bf}, see also Theorem \ref{orbit_formula}. Along with the reductions in \S\ref{reduction_sec}, this will complete the proof of Theorem \ref{thm:tevPr}. 

However, we outline an independent computation of the class $\Gamma_{r,n,n-1}$ that proceeds by gradually degenerating the $L_i$ to contain successively larger subspaces $\Lambda\subset V$, showing that the subscheme $Z_{r,n,n-1}$ degenerates to a union of Richardson varieties whose classes are transparent. In the language of maps $f:C\to\bP^r$, we degenerate the points $x_i$ to lie in successively smaller linear spaces and study the corresponding degenerations of $f$ (viewed as complete collineations). We have essentially carried out this degeneration in the more general setting of torus orbit closures on full flag varieties in \cite{l_flag}, but we revisit the method in order to apply it in the more general setting of $L_i$ of arbitrary dimension.

Specifically, we prove Theorem \ref{thm:countP2} by moving the points and lines $x_1,\ldots,x_{n_0},X_{n_0+1},\ldots,X_n$ into successively more special position. The method here is much more delicate. A key feature present in the orbit closure case but missing here is that $Z_{2,\vn,d}=\pi(Y_{2,\vn,d})$ is no longer toric, and its degenerations are no longer controlled by polyhedral subdivisions. In particular, we must prove by hand that no components appear in the limit of $Z_{2,\vn,d}$ other than those that we construct explicitly.

%\subsection{Further directions}
%
%Our degeneration method extends to computations of the classes $\Gamma_{r,\vn,d}$, and hence of the counts $\T^{\bP^r}_{g,\vn,d}$, when $r>2$ and the $L_i$ have arbitrary dimension. However, the combinatorics become increasingly complicated, and we have not yet obtained simple formulas. The combinatorial interpretation of $\Gamma^{\lambda}_{2,\vn,d}$ in Theorem \ref{thm:countP2} as well as the upper bound $\Gamma^{\lambda}_{r,\vn,d}\le|\SSYT_{r+1}(\lambda)|$ given in Proposition \ref{bound_schubert} suggest, however, that such formulas may await discovery.
%
%Another approach to the counts $\T^{\bP^r}_{g,\vn,d}$ afforded by Theorem \ref{thm:coll} is to apply the localization formula to $\Coll(V,W)$, on which a product of tori of dimensions $r+1$ and $d+1$ acts with isolated fixed points. As was carried out in earlier versions of this work, one can write down an entirely explicit residue formula for $\T^{\bP^r}_{g,\vn,d}$ in this way, but any combinatorial properties of $\Gamma^{\lambda}_{r,\vn,d}$ seem to be invisible. We omit the calculation here.
%
%In the same way that the space of complete collineations allows one to access arbitrary geometric Tevelev degrees of $\bP^r$, we expect that passing to related moduli spaces will allow for new calculations for other targets $X$. Work in this direction is underway.

\subsection{Outline}

The paper is organized as follows. We discuss preliminaries in \S\ref{sec:prelim} and \S\ref{coll_sec}. In particular, we introduce and study the geometry of the loci $\Inc(L,M)\subset\Coll(V,W)$ in \S\ref{coll_sec}. Sections \S\ref{coll_to_count_sec}, \S\ref{asymptotic_sec}, and \S\ref{reduction_sec} establish generalities relating to the counts $\T^{\bP^r}_{g,\vn,d}$. In particular, we prove Theorem \ref{thm:coll} in \S\ref{coll_to_count_sec}. The coefficients $\Gamma^{\lambda}_{r,\vn,d}$ in the Schubert basis of the classes $\Gamma_{r,\vn,d}$ are compared to the numbers $|\SSYT_{r+1}(\lambda)|$ in \S\ref{asymptotic_sec}. We reduce the computation of $\Gamma_{r,\vn,d}$ to the case $d=n-1$ in \S\ref{reduction_sec}. Finally, our main calculations take place in \S\ref{tevelev_sec} and \S\ref{p2_sec}. In \S\ref{tevelev_sec}, we relate geometric Tevelev degrees of $\bP^r$ to torus orbit closures on $\Gr(r+1,W)$, proving Theorem \ref{thm:tevPr}. It is here where our main degeneration technique is introduced. Finally, we prove Theorem \ref{thm:countP2} in \S\ref{p2_sec}.

\subsection{Conventions}\label{conventions}
\begin{itemize}
\item We work exclusively over $\bC$.
\item If $S$ is a finite set, then we denote its cardinality by $|S|$.
\item If $V$ is a vector space, then $\bP(V)$ is the space of lines in $V$, and $\bP(V^{\vee})$ is the space of hyperplanes in $V$. Similarly, if $W$ is a vector space, then $\Gr(r+1,W)$ is the Grassmanian of $(r+1)$-dimensional subspaces of $W$.
\item Angle brackets $\langle-\rangle$ denote linear span in a vector space or projective space.
%\item Let $A$ be a matrix. We denote by $A_{hi}$ the entry of $A$ in the $(h+1)$-st row and $(i+1)$-st column of $A$ (so that $A_{00}$ is the top left entry).
\item Let $Y\subset X$ be a pure-dimensional subscheme of a smooth, projective variety $X$. Then, the cycle class in $H^{2*}(X)$ associated to $Y$ is denoted $[Y]$.
\item Let $W$ be a vector space, let $U\subset W$ be a subspace, and let $g\in GL(W)$ be an automorphism. We say that $g$ \textit{stabilizes} $U$ if $g(U)=U$. We say that $g$ \textit{fixes} $U$ if $g$ stabilizes $U$ and in addition restricts to the identity map on $U$.
\end{itemize}

\subsection{Acknowledgments}
We thank Alessio Cela, Izzet Coskun, Gavril Farkas, Alex Fink, Maria Gillespie, Eric Larson, Alina Marian, Rahul Pandharipande, Andrew Reimer-Berg, Johannes Schmitt, and Hunter Spink for helpful discussions. We are grateful to the referee for a careful reading and numerous helpful suggestions. This project was completed with support from an NSF postdoctoral fellowship, grant DMS-2001976, the MATH+ incubator grant ``Tevelev degrees,'' and an AMS-Simons travel grant. 

\section{Preliminaries}\label{sec:prelim}

\subsection{Schubert calculus}\label{sec:schubert}

Let $W$ be a vector space of dimension $d+1$ and fix a complete flag $F$ of subspaces
\begin{equation*}
0=F_0\subset F_1\subset\cdots \subset F_{d+1}=W.
\end{equation*}
Let $\lambda=(\lambda_0,\ldots,\lambda_r)$ be a partition, where $d-r\ge \lambda_0\ge\cdots\ge \lambda_r\ge0$. 
Then, the Schubert cycle $\Sigma^F_{\lambda}\subset\Gr(r+1,W)=\Gr(r+1,d+1)$ is by definition the subvariety consisting of subspaces $V\subset W$ of dimension $r+1$ for which
\begin{equation*}
\dim(V\cap F_{d-r+i+1-\lambda_{i}})\ge i+1
\end{equation*}
for $i=0,1,\ldots,r$. The class of $\Sigma^F_{\lambda}$ in $H^{2|\lambda|}(\Gr(r+1,W))$, where $|\lambda|=\lambda_0+\cdots+\lambda_r$ denotes the size of the partition, is denoted $\sigma_\lambda$.

The top cohomology group $H^{2(r+1)(d-r)}(\Gr(r+1,W))$ is generated by the single class $\sigma_{(d-r)^{r+1}}$. The unique $\bQ$-linear map $H^{2(r+1)(d-r)}(\Gr(r+1,W))\to\bQ$ sending $\sigma_{(d-r)^{r+1}}$ to 1 is denoted by $\int_{\Gr(r+1,d+1)}$.

If $\zeta\in H^{*}(\Gr(r+1,W))$ is any class, then $(\zeta)_{\lambda_0\le m}$ is defined by expanding $\zeta$ in the basis of Schubert classes $\sigma_\lambda$ and projecting to the subgroup generated by classes with $\lambda_0\le m$. Similarly, one can replace $\lambda_0\le m$ with other inequalities on the parts of $\lambda$.

\subsection{Young tableaux}\label{yt_sec}
 
Let $\lambda=(\lambda_0,\ldots,\lambda_r)$ be a partition. We adopt the convention throughout that $\lambda_0\ge\cdots\ge\lambda_r$. We identify $\lambda$ with its Young diagram.

\begin{defn}
A \emph{strip} $S$ of $\lambda$ is a collection of $k$ boxes in the Young diagram of $\lambda$ with the property that:
\begin{itemize}
\item Exactly one box of $S$ lies in each of the first $k$ columns of $\lambda$, and 
\item Given any distinct boxes $b_1,b_2$ of $S$, if $b_1$ lies in a column to the left of $b_2$, then $b_1$ does not also lie in a row above $b_2$.
\end{itemize}
\end{defn}

Unlike in \cite{l_torus}, we do not require $k$ to be as large as possible with respect to an ambient rectangle; in particular, we allow $k<\lambda_0$. An example of a strip in the partition $\lambda=(12,9,4,2)$ is shaded below.
\begin{equation*}
\ydiagram[*(white) ]
{6,4}
*[*(gray)]{6+4,4+2,4}
*[*(white)]{10+2,6+3,4+0,2}
\end{equation*}

Recall that a \emph{semi-standard Young Tableau (SSYT)} of shape $\lambda$ is a filling of the boxes of $\lambda$ with entries in the set $\{1,2,\ldots,r+1\}$ so that entries increase weakly across rows and strictly down columns. The number of SSYTs with $r+1$ allowed entries of shape $\lambda$ is denoted $|\SSYT_{r+1}(\lambda)|$, and is given by a hook length formula, see \cite[Corollary 7.21.4]{EC2}. For $i=1,2,\ldots,r+1$ and a fixed SSYT, we denote the number of appearances of the entry $i$ by $c_i$.

\begin{defn}\label{(i,i+1)-strip}
For $i=1,2,\ldots,r$, an \emph{$(i,i+1)$-strip} of a SSYT is a strip, all of whose boxes are filled with the entry $i$ or $i+1$, and for which all instances of $i$ all appear to the left of all instances of $i+1$. A \emph{1-strip} is, by definition, an $(i,i+1)$-strip for some $i$. 
\end{defn}

Note that a strip filled entirely with the entry $i$ is both a $(i-1,i)$- and a $(i,i+1)$-strip. Below, the SSYT of shape $\lambda=(10,9,4,2)$ has a $(2,3)$-strip of length 10. The longest $(1,2)$-strip has length 6 and the longest $(3,4)$-strip has length 4.
\begin{equation*}
\begin{ytableau}
1 & 1 & 1 & 1 & 2 & 2 & 3 & 3 & 3 & 3 \\
2 & 2 & 2 & 2 & 3 & 4 & 4 & 4 & 4\\
3 & 3 & 3 & 4\\
4 & 4
\end{ytableau}
\end{equation*}

\section{Complete Collineations}\label{coll_sec}

\subsection{Basic notions}

The main reference is the work of Vainsencher \cite{vainsencher}; see also the work of Thaddeus \cite{thaddeus} for a more modern treatment.

\begin{defn}
Let $V,W$ be vector spaces of dimensions $r+1,d+1$, respectively. We assume throughout that $r\le d$. A \textbf{complete collineation} $\phi=\{\phi_i:V_j\to W_j\}_{j=0}^{\ell}$, often abusively denoted $\phi:V\to W$, is a collection of non-zero linear maps $\phi_j:V_j\to W_j$, each considered up to scaling, such that $V=V_0$, $W=W_0$, $V_j=\ker(\phi_{j-1})$ and $W_i=\coker(\phi_{j-1})$, and the last map $\phi_\ell:V_\ell\to W_\ell$ is injective.
\end{defn}

Due to the requirement that the $\phi_j$ be non-zero, the dimensions of the $V_j$ and $W_j$ are strictly decreasing, so any sequence of such maps must terminate in one of full rank. For sake of brevity, we often refer to complete collineations throughout this paper as simply ``collineations.'' 

\begin{defn}
We refer to the integer $\ell=\ell(\phi)$ as the \textbf{length} of $\phi$, and the $(\ell+1)$-tuple of positive integers $\overrightarrow{r}=(\rank(\phi_0),\ldots,\rank(\phi_\ell))=(r_0,\ldots,r_\ell)$, with $r_0+\cdots+r_\ell=r+1$, as the \textbf{type} of $\phi$. 

We say that $\phi$ is of \textbf{full rank} if it is of type $(r+1)$ (equivalently, of length 0), and is \textbf{totally degenerate} if it is of type $(1,1,\ldots,1)$ (equivalently, of length $r$).
\end{defn}

\begin{defn}
Denote by $\Coll(V,W)=\Coll(r+1,d+1)$ the moduli space of complete collineations from $V$ to $W$. Let $b:\Coll(V,W)\to\bP\Hom(V,W)$ be the canonical morphism remembering the map $\phi_0:V\to W$. 
\end{defn}

In fact, $b$ is an iterated blowup at smooth subvarieties: the locus of rank 1 maps $\phi_0:V\to W$ in $\bP\Hom(V,W)$ is blown up first, followed by the proper transform of the locus of maps of rank at most 2, and so on. In particular, $\Coll(V,W)$ is smooth, projective, and irreducible of dimension $(r+1)(d+1)-1$. There is a natural action of the group $GL(V)\times GL(W)$ on $\Coll(V,W)$, whose orbits are indexed by the possible types $\overrightarrow{r}$ of $\phi$.

\begin{defn}\label{def:image}
Let $\pi:\Coll(V,W)\to\Gr(r+1,W)$ be the map sending $\phi$ to $\pr_\ell^{-1}(\im(\phi_\ell)) \subset W$, where $\pr_j:W\to W_j$ denotes the canonical quotient map.
\end{defn}
The subspace $\pr_\ell^{-1}(\im(\phi_\ell)) \subset W$ may alternatively be regarded as the span of the images of \textit{all} of the $\phi_i$ upon pullback to $W$. We will refer to this subspace of $W$ as the \textbf{image} of $\phi$, denoted $\im(\phi)$. For a fixed subspace $W'\subset W$ of dimension $r+1$, the fiber of $\pi$ over $W'\in\Gr(r+1,W)$ is isomorphic to $\Coll(V,W')$. Note now that $V,W'$ have the same dimension; the space $\Coll(V,W')$ may be identified with the wonderful compactification of $PGL(r+1)$. We will not use this identification in what follows.

\begin{defn}
Fix a type $\overrightarrow{r}=(r_0,\ldots,r_\ell)$ with $r_0+\cdots+r_\ell=r+1$. Let  $\Coll^{\circ}_{\overrightarrow{r}}(V,W)$ be the locally closed subvariety of $\Coll(V,W)$ of collineations of type $\overrightarrow{r}$, and let $\Coll_{\overrightarrow{r}}(V,W)$ be its closure.
\end{defn}

As a scheme, $\Coll_{\overrightarrow{r}}(V,W)$ may be constructed inductively, as an open subset of a sequence of iterated projective and Grassmannian bundles. In particular, we find that $\Coll_{\overrightarrow{r}}(V,W)\subset \Coll(V,W)$ is smooth and irreducible of codimension $\ell$. The stratification of $\Coll(V,W)$ by the $\Coll_{\overrightarrow{r}}(V,W)$ is that induced by the normal crossings divisor given by the exceptional loci $\Coll_{(r_0,r_1)}(V,W)$. 

\subsection{Limits of full rank maps}\label{sec_limits}

Let $\phi^t:V\to W$ be a 1-parameter family of linear maps, which we assume are injective near, but not at, $t=0$. We regard
\begin{equation*}
\phi^t=\sum_{k\ge0} t^k\cdot \phi^k,
\end{equation*}
as an element of $\Hom(V,W)\otimes_{\bC}\bC[[t]]$, where $\phi^0=\phi_0$ and more generally
\begin{equation*}
\phi^k=\frac{1}{k!}\cdot\frac{d^k}{dt^k}\phi^t\Big|_{t=0}
\end{equation*}

We explain how to compute $\lim_{t\to 0}\phi^t$ in the space $\Coll(V,W)$. First, define $\phi_0=\lim_{t\to 0}\phi^t$ as a projectivized linear map, and define $V_1=\ker(\phi_0)$, $W_1=\coker(\phi_0)$. Write $r_0$ for the rank of $\phi_0$; we may assume $r_0\le r$, or else $\phi_0$ is also the limit of $\phi^t$ as a collineation. 

Next, let $k_1>0$ to the minimum value of $k$ for which $\phi^k$ restricts to a non-zero map $V_1\to W_1$, and define $\phi_1=\phi^{k_1}$. The integer $k_1$ measures the speed at which $\phi^t$ drops rank. Concretely, if we choose a decomposition $V=V_0=V_1\oplus V'_1$, then for any $v_1\in V_1$ with $\phi_1(v_1)\neq0$, we have, up to scaling,
\begin{equation*}
\phi_1(v_1)=\lim_{t\to 0}\phi^t(\langle V'_1,v_1\rangle)\subset W,
\end{equation*}
where we view the right hand side as a line in $W_1=W/\phi_0(V'_1)$. The right hand side defines a non-zero element of $W_1$ for any $v_1\in V_1$, but we have $\phi_1(v_1)=0$ (that is, $v_1\in V_2$) if the limit is only realized at order larger than $k_1$, which is to say that $\phi^{k_1}(v_1)\in \im(\phi_0)$.

The further maps $\phi_j:V_j\to W_j$ are determined similarly by iterating this procedure.

\subsection{The incidence loci}

Fix integers $d\ge r\ge 1$ and vector spaces $V,W$ of dimensions $r+1,d+1$, respectively. Let $L\subset V$ be a non-zero vector subspace of codimension $k+1\ge1$, and let $M\subset W$ be a vector subspace of codimension 1 (hyperplane).

\begin{defn}
Let $\Inc'(L,M)\subset \bP\Hom(V,W)$ be the locus of $\phi:V\to W$ for which $\phi(L)\subset M$. 

Let $\Inc(L,M)\subset\Coll(V,W)$ be the proper transform of $\Inc'(L,M)$ under the iterated blowup $b:\Coll(V,W)\to\bP\Hom(V,W)$.
\end{defn}

The locus $\Inc'(L,M)\subset \bP\Hom(V,W)$ is a linear subspace of codimension $r-k$, so $\Inc(L,M)\subset\Coll(V,W)$ also has codimension $r-k$. We denote by $\gamma_{r-k}\in H^{2(r-k)}(\Coll(L,M))$ the cycle class of $\Inc(L,M)$.

%\begin{rem}
%Vainsencher \cite[\S 4]{vainsencher} considers similar loci, where the dimension of $L$ is equal to the codimension of $M$, and the composition $$L\hookrightarrow V\to W\twoheadrightarrow W/M$$ is required to drop in rank. The corresponding loci are divisors in $\bP\Hom(V,W)$ and $\Coll(V,W)$.
%
%More generally, one can take $L$ and $M$ to have arbitrary dimension and consider arbitrary degeneracy conditions on the composite map $L\to W/M$. Many of the techniques of this paper can be extended to this general setting; it would be of independent interest to develop a theory of the associated cycles and their intersection theory on $\Coll(V,W)$, but we will stick to the setting above.
%\end{rem}

\begin{ex}\label{map_and_coll}
Take $V=\bC^{r+1}$ and $W=H^0(\bP^1,d)$. Let $L\subset V$ be a linear space of codimension $k+1$, let $p\in\bP^1$, and let $M\subset W$ be the hyperplane of sections vanishing at $p$. We interpret $\bP\Hom(V,W)$ as the locus of maps (possibly with base-points) $f:\bP^1\to\bP^r=\bP(V^{\vee})$ of degree $d$.

In this case, $\Inc(L,M)$ is (away from the locus where $\im(\phi)\subset M$) the locus of maps sending $p$ to the linear subspace $X_L\subset\bP^r$ of dimension $k$ corresponding to $L$.
\end{ex}

We now give a set-theoretic description of $\Inc(L,M)$. 

\begin{defn}
For any collineation $\phi$, for integers $j=0,1,\ldots,\ell(\phi)$, let $\pr_j:W\to W_j$ be the projection map. We denote by $(\dagger)_j$ the condition that
\begin{equation*}
\phi_j(L\cap V_j)\subset\pr_j(M).
\end{equation*}
\end{defn}

\begin{prop}\label{inc_set_theoretic}
We have $\phi\in\Inc(L,M)\subset\Coll(V,W)$ if and only if property $(\dagger)_j$ holds for all $j$ for which $L,V_j\subset V$ are transverse, that is, intersect in the expected codimension of $(k+1)+(r_0+\cdots+r_{j-1})$.
\end{prop}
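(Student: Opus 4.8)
I would prove Proposition \ref{inc_set_theoretic} by analyzing the iterated blowup $b$ one step at a time, using the explicit non-canonical description of limits of full-rank maps from \S\ref{sec_limits}. The idea is that $\Inc(L,M)$ is by definition the proper transform of the linear subspace $\Inc'(L,M)$, so a point $\phi\in\Coll(V,W)$ lies in $\Inc(L,M)$ if and only if it arises as the limit of a $1$-parameter family $\phi^t$ of full-rank maps each satisfying $\phi^t(L)\subset M$. One direction is then essentially tautological: if $\phi\in\Inc(L,M)$, choose such a family $\phi^t$ and run the procedure of \S\ref{sec_limits}; I need to check that the condition $\phi^t(L)\subset M$ forces $(\dagger)_j$ for every $j$ at which $L$ and $V_j$ meet transversely. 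The other direction requires, given $\phi$ satisfying all the relevant $(\dagger)_j$, the construction of an approaching family inside $\Inc'(L,M)$.

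\textbf{Forward direction.} First I would fix notation: given the approaching family $\phi^t$, the procedure of \S\ref{sec_limits} produces the flags $V=V_0\supset V_1\supset\cdots$, $W=W_0\supset\cdots$ (more precisely the quotients $W_j$), the ranks $r_0,r_1,\ldots$, and the maps $\phi_j:V_j\to W_j$. The key observation is that $\pr_j(M)\subset W_j$ is a hyperplane precisely when $\im(\phi_0)+\cdots+\im(\phi_{j-1})$ (pulled back to $W$) is not contained in $M$, which is the generic situation; when it is contained in $M$, the condition $(\dagger)_j$ becomes vacuous (the whole $W_j$ is ``$\pr_j(M)$''), and indeed that is the regime governed by the $\pi^*$-part rather than $\Inc(L,M)$. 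So assume $\pr_j(M)$ is a hyperplane. For $v\in L\cap V_j$, the subspace $\widetilde{\phi_j}(v)=\lim_{t\to0}\phi^t(\langle V'_j,v\rangle)$ is the span of $\im(\phi_0),\ldots,\im(\phi_{j-1})$ together with a lift of $\phi_j(v)$; since $\phi^t(\langle V'_j,v\rangle)\subset\phi^t(L)+\phi^t(V'_j)$ and $\phi^t(L)\subset M$, I want to conclude that the limit lies in $M$ \emph{provided} $V'_j$ can be chosen inside $L$, which is exactly the transversality hypothesis $\dim(L\cap V_j)=(r+1)-(k+1)-(r_0+\cdots+r_{j-1})$ — under transversality, a complement $V'_j$ to $V_j$ inside $V$ can be chosen inside $L$. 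Granting that, $\phi^t(\langle V'_j,v\rangle)\subset\phi^t(L)\subset M$ for all $t$, hence the limit is in $M$, hence $\phi_j(v)\in\pr_j(M)$. This gives $(\dagger)_j$.

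\textbf{Reverse direction — the main obstacle.} The harder half is to show that if $\phi$ is a collineation for which $(\dagger)_j$ holds at every transverse level $j$, then $\phi$ lies in the proper transform $\Inc(L,M)$. The natural strategy is to build, by hand, a $1$-parameter family $\phi^t$ of full-rank maps lying in the linear space $\Inc'(L,M)$ and degenerating to $\phi$ in $\Coll(V,W)$; then $\phi$ lies in the closure of $\Inc'(L,M)\cap(\text{full-rank locus})$, which is exactly $\Inc(L,M)$ (here one uses that $\Inc'(L,M)$, being linear, has no extraneous components and that its proper transform is the closure of its intersection with the open full-rank stratum). Concretely, I would choose splittings $V=V_\ell\oplus V'_\ell\oplus\cdots\oplus V'_1$ refining the flag, choose the $V'_j$ inside $L$ wherever transversality at level $j$ permits, pick lifts $\widetilde{W_j}\subset W$ of the $W_j$, pick lifts of each $\phi_j$ to honest linear maps $V'_j\to\widetilde{W_j}$, and assemble $\phi^t=\sum_j t^{m_j}(\text{lift of }\phi_j)$ with rapidly increasing exponents $m_j$ matching the ``order of smoothing'' bookkeeping ($k_1,k_2,\ldots$) from \S\ref{sec_limits}; a standard valuation/Newton-polygon argument shows this family has limit $\phi$. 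The real content is checking $\phi^t(L)\subset M$: on the summands $V'_j\subset L$ this follows from $(\dagger)_j$ (the image lands in $\widetilde{W_j}\cap$-lift-of-$\pr_j(M)\subset M$, after choosing the lifts $\widetilde{W_j}$ compatibly with $M$); on $V_\ell\cap L$ and on the ``leftover'' directions of $L$ not captured by the $V'_j\subset L$ choices — i.e. precisely the levels $j$ where $L$ and $V_j$ fail to be transverse — one must argue separately. At a non-transverse level there is genuine slack: $\dim(L\cap V_j)$ exceeds the expected value, so $L$ does not surject onto a complement and the ``$V'_j\subset L$'' choice is unavailable, but correspondingly the relevant piece of $\phi$ contributes a limit that automatically lands in $M$ because the image has already ``used up'' its room — making this precise, and verifying that no condition beyond the transverse $(\dagger)_j$'s is needed, is where I expect to spend the most effort. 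I would handle it by an induction on $\ell(\phi)$, peeling off $\phi_0$ first: after blowing up once, $\Coll(V,W)$ fibers over the exceptional divisor by a relative $\Coll(V_1,W_1)$-bundle, $\Inc(L,M)$ restricts to the analogous incidence locus for $(L\cap V_1, \pr_1(M))$ inside $\Coll(V_1,W_1)$, and the transversality condition at level $0$ plus the inductive hypothesis closes the loop; the base case $\ell=0$ is the tautology $\phi_0(L)\subset M\iff(\dagger)_0$.
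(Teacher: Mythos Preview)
Your forward direction is the paper's argument: transversality of $L$ with $V_j$ (which forces transversality with every $V_0,\ldots,V_{j-1}$) lets you choose each successive complement $V'_{h+1}$ of $V_{h+1}$ in $V_h$ inside $L$, so that $\langle V'_1,\ldots,V'_j,v_j\rangle\subset L$ for any $v_j\in L\cap V_j$ and the limit $\widetilde{\phi_j}(v_j)$ lands in $M$.

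The reverse direction has a real gap, located exactly where you flagged it. Your inductive step asserts that over a fixed $\phi_0$ with $\ker(\phi_0)=V_1$, the locus $\Inc(L,M)$ restricts to $\Inc(L\cap V_1,\pr_1(M))\subset\Coll(V_1,W_1)$. This is only correct when $L$ and $V_1$ are transverse. When they are not, the proposition you are proving says the restriction must be all of $\Coll(V_1,W_1)$: non-transversality of $L$ with $V_1$ forces non-transversality with every deeper $V_j$ for which the expected intersection is nonzero, so no nontrivial $(\dagger)_j$ is imposed for $j\ge 1$. By contrast $\Inc(L\cap V_1,\pr_1(M))$ is a proper subvariety imposing $\dim(L\cap V_1)>0$ conditions. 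So your induction, as stated, fails precisely in the non-transverse case; and even formulating the correct restriction statement, let alone proving it independently of the proposition, is essentially the same work as the proposition itself.

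The paper proceeds differently for sufficiency. It inducts by peeling off the \emph{last} two maps $\{\phi_{\ell-1},\phi_\ell\}$, replacing them by a generic full-rank map $V_{\ell-1}\to W_{\ell-1}$ (lying in $\Inc(L\cap V_{\ell-1},\pr_{\ell-1}(M))$ if $L$ is transverse to $V_{\ell-1}$, and unconstrained otherwise), thereby reducing to $\ell=1$. The $\ell=1$ case is then handled by three explicit one-parameter matrix families. In the non-transverse $\ell=1$ case the key geometric point, which your direct construction $\phi^t=\sum_j t^{m_j}(\text{lift of }\phi_j)$ cannot see, is that $\langle L,V_1\rangle\subsetneq V$: a vector in $V\setminus\langle L,V_1\rangle$ (the last column of the paper's matrix) can be mapped by $\phi_0$ outside $M$, so $\im(\phi_0)\not\subset M$, hence $\pr_1(M)=W_1$ and $\phi_1$ is genuinely unconstrained in the limit while $\phi^t(L)\subset M$ is maintained throughout.
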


The transversality requirement in the case $(k+1)+(r_0+\cdots+r_{j-1})\ge r+1$ is that the two subspaces intersect only at zero. In this case, the condition $(\dagger)_j$ is an empty one.

We first observe that, in order to verify that $\phi\in\Inc(L,M)$ in practice, one only has to check the property $(\dagger)_j$ for one value of $j$.

\begin{lem}\label{check_dagger_once}
Assume the statement of Proposition \ref{inc_set_theoretic}. Given $\phi,L,M$ as above, define:\
\begin{itemize}
\item $j_V$ to be be the largest value of $j$ with the property that $(k+1)+(r_0+\cdots+r_{j-1})\le r$ and $V_j$ is transverse to $L$, and
\item $j_W$ be the largest value of $j$ for which $j\le j_V$ and in addition $\pr_j(M)\neq W_j$.
\end{itemize}
Then, we have $\phi\in\Inc(L,M)$ if $(\dagger)_j$ holds for $j=j_W$.
\end{lem}

Note first that such integers $j_V,j_W$ exist, because $j=0$ satisfies both properties. 

\begin{proof}
If $j>j_V$, then either $L,V_j$ vail to be transverse, in which the condition $(\dagger)_j$ does not need to be checked, or  $(k+1)+(r_0+\cdots+r_{j-1})\ge r+1$ and $L\cap V_j=0$, in which case, as remarked above, $(\dagger)_j$ is automatic. If $j\in(j_W,j_V]$, then we have $\pr_j(M)=W_j$, so $(\dagger)_j$ is again automatic. Finally, suppose that $j\in [0,j_W)$. Then, we have that $\pr_{j}(M)$ is a hyperplane in $W_{j}$ which does not surject onto $W_{j+1}=W_{j}/\im(\phi_{j})$. This means that $\im(\phi_{j})\subset \pr_{j}(M)$, so $(\dagger)_j$ is satisfied. Therefore, if $(\dagger)_{j_W}$ also holds, then the criterion of Proposition \ref{inc_set_theoretic} is satisfied, and $\phi\in\Inc(L,M)\subset\Coll(V,W)$.
\end{proof}

\begin{proof}[Proof of Proposition \ref{inc_set_theoretic}]
We first show that the conditions $(\dagger)_j$ are necessary. Suppose that $\phi=\{\phi_j\}_{j=0}^{\ell}$ is the limit of a one-parameter family of full rank collineations $\phi^t$ in $\Inc'(L,M)\subset\bP\Hom(V,W)$. We wish to show that $\phi$ satisfies $(\dagger)_j$ whenever $V_j$ is transverse to $L$. For $j=0$, the claim is that $\phi_0(L)\subset M$; this is clear from taking the limit in $\bP\Hom(V,W)$.

We now proceed by induction on $j$; we have already noted that the transversality condition for $V_j$ implies the same for $V_0,\ldots,V_{j-1}$. We assume further that $L\cap V_j\neq 0$, otherwise there is nothing further to prove. For $h=0,\ldots, j-1$, the map $L\cap V_h\to \im(\phi_h)$ induced by $\phi_h$ is surjective, as $L\cap V_h$ is transverse to $\ker(\phi_h)=V_{h+1}$ in $V_h$ and $L\cap V_{h+1}\neq0$. Thus, there exist decompositions $V_h=V_{h+1}\oplus V'_{h+1}$, where $V'_{h+1}\subset L$, for $h=0,\ldots, j-1$, and $\phi_{h}$ maps $V'_{h+1}$ isomorphically to a subspace of $\pr_h(M)$, by the inductive hypothesis.

Now, let $v_j\in L\cap V_j$ be any vector. If $\phi_j(v_j)\neq0$, then by the discussion of \S\ref{sec_limits}, we have
\begin{equation*}
\phi_j(v_j)=\lim_{t\to 0}\phi^t(\langle V'_1,\ldots,V'_{j},v_j\rangle),
\end{equation*}
where the right-hand side is viewed as a line in $W_j=W/\langle \phi_0(V'_1),\ldots,\phi_{j-1}(V'_{j})\rangle$. Because $\langle V'_1,\ldots,V'_{j},v_j\rangle\subset L$ and $\phi^t\in\Inc'(L,M)$, it follows that $\phi_j(v_j)\subset \pr_j(M)$.

Conversely, suppose that $\phi$ satisfies $(\dagger)_j$ whenever $V_j$ is transverse to $L$. We wish to express $\phi$ as the limit of linear maps $\phi^t:V\to W$ of full rank for which $\phi^t(L)\subset M$. We first reduce by induction to the case $\ell=1$. The case $\ell=0$ is easy, dealing only with linear equations on $\bP\Hom(V,W)$, so we assume that $\ell>1$.

If either the intersection of $L$ and $V_{\ell-1}$ is non-transverse or zero (in the language of Lemma \ref{check_dagger_once}, if $j_V<\ell-1$), then we have no constraints on the maps $\phi_{\ell-1},\phi_\ell$, and the length 1 collineation $\{\phi_{\ell-1},\phi_\ell\}$ from $V_{\ell-1}$ to $W_{\ell-1}$ is a limit of full rank collineations $\phi^t_{\ell-1}:V_{\ell-1}\to W_{\ell-1}$, by the irreducibility of $\Coll(V_{\ell-1},W_{\ell-1})$. We may then replace the last two maps $\phi_{\ell-1},\phi_\ell$ with a generic $\phi^t_{\ell-1}$. On the other hand, suppose that the intersection of $L$ and $V_{\ell-1}$ is transverse and non-zero. First, if $\pr_{\ell-1}(M)=W_{\ell-1}$, which is to say, that $j_W<\ell-1$, then again there are no constraints on the maps $\phi_{\ell-1},\phi_\ell$, and we again invoke the irreducibility of $\Coll(V_{\ell-1},W_{\ell-1})$. Finally, if instead $\pr_{\ell-1}(M)$ is a hyperplane in $W_{\ell-1}$, then we may apply the case $\ell=1$ (along with, if applicable, the property $(\dagger)_\ell$) to replace $\{\phi_{\ell-1},\phi_\ell\}$ with a generic $\phi^t_{\ell-1}:V_{\ell-1}\to W_{\ell-1}$, with the property that $\phi^t_{\ell-1}(L\cap V_{\ell-1})\subset\pr_{\ell-1}(M)$. The claim for all $\ell$ now follows by induction.

We therefore assume henceforth that $\ell=1$. First, suppose that $L$ is transverse to $V_1$ and that $r_1>k$, in which case the intersection of $V_1$ and $L$ is non-trivial. Then, consider a family (parametrized by $t$) of $(d+1)\times(r+1)$ matrices of the form
\vspace{25pt}
\begin{equation*}
A^t=
\begin{bNiceMatrix}
a'_{0,0}t & \cdots & a'_{0,k}t & 0 & \cdots & 0 &  0 & \cdots & 0 \\
a'_{1,0}t & \cdots & a'_{1,k}t & a'_{1,k+1}t & \cdots & a'_{1,r_1-1}t & a_{1,r_1} & \cdots & a_{1,r}  \\
\vdots & \vdots &\vdots & \vdots & \vdots & \vdots & \vdots & \vdots & \\
a'_{d,0}t & \cdots & a'_{d,k}t & a'_{d,k+1}t & \cdots & a'_{d,r_1-1}t & a_{d,r_1} & \cdots & a_{d,r}  \\
\CodeAfter
  \OverBrace[shorten,yshift=3pt]{1-1}{2-6}{V_1}
  \UnderBrace[shorten,yshift=1.5mm]{last-4}{last-last}{L}
 \end{bNiceMatrix}
\end{equation*}
\vspace{25pt}

\noindent representing a family of linear maps $\phi^t:V\to W$. The subspace $V_1\subset V$ of dimension $r_1=r+1-r_0$ is represented by the first $r_1$ columns, and the subspace $L\subset V$ of codimension $k+1$ is represented by the last $r-k$ columns. The subspace $M\subset W$ is the hyperplane of column vectors vanishing in the first coordiante. Now, if the $a'_{i,j}$ and $a_{i,j}$ are chosen generally, then the maps $\phi^t$ are injective in a neighborhood of $t=0$, and limit to a general collineation of type $(r_0,r_1)$ with $\ker(\phi_0)=V_1$ and satisfying $(\dagger)_0$ and $(\dagger)_1$. Indeed, $\phi_0$ is the linear map obtained by setting $t=0$, and $\phi_1$ is obtained by restricting to the first $r_1$ columns, dividing by $t$, and post-composing with the quotient $W\to W/\im(\phi_0)$.

In exactly the same way, if $r_1\le k$ and $V_1\cap L=0$, then one can take instead the one-parameter family of matrices
\vspace{25pt}
\begin{equation*}
A^t=
\begin{bNiceMatrix}
a'_{0,0}t & \cdots & a'_{0,r_1-1}t & a_{0,r_1} & \cdots & a_{0,k} &  0 & \cdots & 0 \\
a'_{1,0}t & \cdots & a'_{1,r_1-1}t & a_{1,r_1} & \cdots & a_{1,k} & a_{1,k+1} & \cdots & a_{1,r}  \\
\vdots & \vdots &\vdots & \vdots & \vdots & \vdots & \vdots & \vdots & \\
a'_{d,0}t & \cdots & a'_{d,r_1-1}t & a_{d,r_1} & \cdots & a_{d,k} & a_{d,k+1} & \cdots & a_{d,r}  \\
\CodeAfter
  \OverBrace[shorten,yshift=3pt]{1-1}{2-3}{V_1}
  \UnderBrace[shorten,yshift=1.5mm]{last-7}{last-last}{L}
\end{bNiceMatrix}
\end{equation*}
\vspace{25pt}

\noindent to read off a one-parameter family in $\Inc(L,M)\subset \Coll(V,W)$ with limit equal to $\phi$.

Now, suppose that $L$ is not transverse to $V_1$. Then, for some $s\le k,r_1-1$, we may represent a 1-parameter family of maps $\phi^t:V\to W$ by the matrix
\vspace{25pt}
\setcounter{MaxMatrixCols}{20}
\begin{equation*}
A^t=
\begin{bNiceMatrix}
a'_{0,0}t & \cdots & a'_{0,s-1}t & 0 & \cdots & 0 &  0 & \cdots & 0 & a_{0,s+r-k} & \cdots & a_{0,r} \\
a'_{1,0}t & \cdots & a'_{1,s-1}t & a'_{1,s}t & \cdots & a'_{1,r_1-1}t &  a_{1,r_1} & \cdots & a_{1,s+r-k-1} & a_{1,s+r-k} & \cdots & a_{1,r} \\
\vdots & \vdots & \vdots & \vdots & \vdots & \vdots & \vdots & \vdots & \vdots & \vdots & \vdots & \vdots \\
a'_{d,0}t & \cdots & a'_{d,s-1}t & a'_{d,s}t & \cdots & a'_{d,r_1-1}t &  a_{d,r_1} & \cdots & a_{d,s+r-k-1} & a_{d,s+r-k} & \cdots & a_{d,r}
\CodeAfter
  \OverBrace[shorten,yshift=3pt]{1-1}{2-6}{V_1}
  \UnderBrace[shorten,yshift=1.5mm]{last-4}{last-9}{L}
\end{bNiceMatrix}
\end{equation*}
\vspace{25pt}

\noindent The subspace $V_1\subset V$ is again represented by the first $r_1$ columns, but now $L$ is represented by columns $s$ through $s+r-k-1<r$. In particular, the last column corresponds to a non-zero vector of $V$ not in $\langle V_1,L\rangle $, yet $V_1\cap L\neq0$.

If the $a'_{i,j}$ and $a_{i,j}$ are chosen generally, then the maps $\phi^t$ are injective in a neighborhood of $t=0$, and limit to a general collineation of type $(r_0,r_1)$ with $\ker(\phi_0)=V_1$ and satisfying $(\dagger)_0$. Here, there is \emph{no condition} on $\phi_1$, which is again obtained by dividing the first $r_1$ columns by $t$. Indeed, the image of $\phi_0$ is not constrained to lie in $M$, corresponding to the fact that $a_{0,r}\neq0$ generically, so any map $\phi_1:V_1\to W/\im(\phi_0)$ can be obtained with the appropriate choice of $a'_{i,j}$ . This completes the proof.
\end{proof}

By a straightforward parameter count, one can deduce:

\begin{cor}\label{inc_exp_dim}
For any type $\overrightarrow{r}=(r_0,\ldots,r_\ell)$ and any $L,M$ as above, the intersection 
\begin{equation*}
\Inc(L,M)\cap \Coll_{\overrightarrow{r}}(V,W)\subset\Coll(V,W)
\end{equation*}
is pure of the expected codimension $(r-k)+\ell$.
\end{cor}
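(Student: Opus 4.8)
The plan is to reduce to a dimension bound on each $GL(V)\times GL(W)$-orbit stratum and then run the parameter count there, leaning on Proposition \ref{inc_set_theoretic}. Since $\Coll(V,W)$ is smooth, every component of the intersection of the two closed subvarieties $\Inc(L,M)$ (of codimension $r-k$) and $\Coll_{\overrightarrow{r}}(V,W)$ (of codimension $\ell$) has codimension \emph{at most} $(r-k)+\ell$, so only the reverse inequality is at issue, i.e.\ an upper bound on the dimension of the intersection. I would decompose $\Coll_{\overrightarrow{r}}(V,W)$, which is closed and $GL(V)\times GL(W)$-invariant, into the orbits $\Coll^{\circ}_{\overrightarrow{r}'}(V,W)$ it contains; each of these other than $\Coll^{\circ}_{\overrightarrow{r}}(V,W)$ lies in the boundary of the irreducible variety $\Coll_{\overrightarrow{r}}(V,W)$ and hence has codimension $\ell(\overrightarrow{r}')>\ell$ in $\Coll(V,W)$. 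It therefore suffices to show that for \emph{every} type $\overrightarrow{r}'$ one has $\dim\bigl(\Inc(L,M)\cap\Coll^{\circ}_{\overrightarrow{r}'}(V,W)\bigr)\le\dim\Coll(V,W)-(r-k)-\ell(\overrightarrow{r}')$: a component $Z$ of $\Inc(L,M)\cap\Coll_{\overrightarrow{r}}(V,W)$ meets some such orbit in a dense subset of $Z$, and combining the resulting bound with the ``at most'' estimate forces $\codim Z=(r-k)+\ell$. (Nonemptiness of the intersection is witnessed by the explicit families in the proof of Proposition \ref{inc_set_theoretic}, which produce collineations in $\Inc(L,M)$ of every type.)

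For the per-stratum bound I would use the description of $\Coll^{\circ}_{\overrightarrow{r}'}(V,W)$ as an iterated bundle — over the rank-$r_0$ locus in $\bP\Hom(V,W)$, with fibre $\Coll^{\circ}_{(r_1,\dots,r_\ell)}(\ker\phi_0,\coker\phi_0)$ — together with the set-theoretic description of $\Inc(L,M)$ by the conditions $(\dagger)_j$ in Proposition \ref{inc_set_theoretic}. On the open locus of the stratum consisting of those $\phi$ for which the kernel flag $V_\bullet$ is transverse to $L$ and $\pr_j(M)=W_j$ for all $j\ge1$, the remarks after that proposition show that the only nontrivial condition is $(\dagger)_0$, namely $\phi_0(L)\subseteq M$; this cuts out $r-k$ independent linear conditions when $r_0>r-k$, whereas when $r_0\le r-k$ one has $\phi_0(L)=\im\phi_0$ generically, so $(\dagger)_0$ reads $\im\phi_0\subseteq M$, which activates $(\dagger)_1$, and so on down the tower, the successive conditions contributing codimension $r_0,\ r_0+r_1,\ \dots$ until the running total first reaches $r-k$, at which point a partial contribution from the last relevant $(\dagger)_j$ makes the total codimension exactly $r-k$. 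In practice this count is read off step by step from the matrix normal forms $A^{t}_{\pm}$ in the proof of Proposition \ref{inc_set_theoretic}, the reduction from general $\ell$ to the case $\ell=1$ being the same induction used there.

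The step I expect to be the main obstacle is controlling the closed loci in $\Coll^{\circ}_{\overrightarrow{r}'}(V,W)$ along which $L$ fails to be transverse to some $V_j$, or $\pr_j(M)\ne W_j$ for some $j\ge1$: on such a locus some of the conditions $(\dagger)_j$ weaken or disappear, but the collineations in question form a subvariety of strictly larger codimension in the stratum, and one must verify that the two effects always balance, so that $\Inc(L,M)$ still cuts codimension at least $r-k$. This is a finite but somewhat intricate bookkeeping, organized by the position of $L$ relative to the flag $V_\bullet$ and of $M$ relative to the filtration of $W$ by the $\ker(\pr_j)$: each failure of transversality imposes a Schubert-type condition whose codimension is at least the number of incidence equations it renders vacuous — the phenomenon already visible for $\ell=1$ through the second of the normal forms denoted $A^{t}_{+}$ in the proof of Proposition \ref{inc_set_theoretic} (the case where $L$ is not transverse to $V_1$). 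Granting this verification, the per-stratum dimension bound holds for all $\overrightarrow{r}'$, and the corollary follows from the reduction of the first paragraph.
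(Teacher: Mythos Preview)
Your proposal is correct and follows essentially the same approach as the paper: the paper states only that the corollary follows from Proposition \ref{inc_set_theoretic} ``by a straightforward parameter count,'' and your outline is precisely an explicit version of that count, stratifying by orbit type and using the conditions $(\dagger)_j$ to bound codimension on each stratum. Your identification of the non-transverse loci as the part requiring the most bookkeeping is accurate, and the balancing you describe (Schubert-type conditions compensating for lost incidence equations) is exactly what makes the count go through.
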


\section{From complete collineations to curve counts}\label{coll_to_count_sec}

In this section, we relate the curve counts $\T^{\bP^r}_{g,\vn,d}$ to the cycles $\Inc(L,M)$ defined in the previous section, proving Theorem \ref{thm:coll}.

\subsection{Setup}

We first recall the relevant definitions. Let $(C,p_1,\ldots,p_n)\in\cM_{g,n}$ be a general curve. Let $X_1,\ldots,X_n\subset\bP^r$ be general linear spaces of dimensions $k_1,\ldots,k_n<r$, respectively. For $j=0,1,\ldots,r-1$, let $n_j$ be the number of $X_i$ of dimension $j$, and write $\vn=(n_0,\ldots,n_{r-1})$

\begin{defn}\label{def:counts}
Let $d\ge0$ be an integer for which \eqref{general_numerology} holds. Then, we define $\T^{\bP^r}_{g,\vn,d}$ to be the number of non-degenerate maps $f:C\to\bP^r$ of degree $d$ with $f(p_i)\in X_i$.

When $\vn=(n,0,\ldots,0)$, the counts $\Tev^{\bP^r}_{g,n,d}:=\T^{\bP^r}_{g,\vn,d}$ are the \emph{geometric Tevelev degrees} of $\bP^r$.
\end{defn}

Throughout, we will often abbreviate $\vn=(n,0,\ldots,0)$ by simply $n$ when there is no opportunity for confusion.

The Brill-Noether-Petri theorem ensures that, under the assumption \eqref{general_numerology} and the stated generality assumptions, the number of maps in question is finite and transverse, in the sense that such maps admit no first-order deformations.

Let $V,W$ be vector spaces of dimensions $r+1,d+1$ as before, with $r\le d$. Let $L_1,\ldots,L_n\subset V$ be general linear subspaces of dimensions $r-k_1,\ldots,r-k_n$, and let $M_1,\ldots,M_n\subset W$ be general hyperplanes. The $L_i$ will eventually play the roles of the (duals of the) subspaces $X_i\subset\bP^r$ in the enumerative problem of interest, and the $M_i$ will play the roles of the (spaces of sections vanishing at the) points $p_i\in C$. 

\begin{defn}
We define the subscheme
\begin{equation*}
Y_{r,\vn,d}:=\bigcap_{i=1}^{n}\Inc(L_i,M_i)\subset \Coll(V,W).
\end{equation*}
and the subscheme
\begin{equation*}
Z_{r,\vn,d}:=\pi(Y_{r,\vn,d})\subset \Gr(r+1,W).
\end{equation*}
\end{defn}

\subsection{Specializing the $M_i\subset W$}

We now specialize the previous discussion of complete collineations to the following setting. Let $V=\bC^{r+1}$ and let $W=H^0(\bP^1,\cO(d))$. As discussed in \S\ref{sec:intro_coll}, the vector space $W$ will eventually appear as the space of sections on a rational component of our degenerated curve, see also Example \ref{map_and_coll}. Let $L_1,\ldots,L_n\subset V$ be general linear subspaces of dimensions $r-k_1,\ldots,r-k_n$ as above, and let $p_0,p_1,\ldots,p_n\in\bP^1$ be \emph{arbitrary} distinct points. For each $p_i$, let $M_{i}\subset W$ be the hyperplane of $d$-forms vanishing at $p_i$. Define the subschemes $\Inc(L_i,M_i)\subset\Coll(V,W)$ as before.  

More generally, to any point $p\in\bP^1$ we may associate the hyperplane $M_p\subset W$ of $d$-forms vanishing at $p$. We refer to this 1-parameter family of hyperplanes as the \textit{bp-hyperplanes} (where bp stands for ``base-point''). We say that a collineation $\phi\in\Coll(V,W)$ has a \emph{base-point} at $p$ if $\im(\phi)\subset M_p$, and is \textit{bpf} (base-point free) if the image of $\phi$ is not contained in any bp-hyperplane.

Let
\begin{equation*}
0=F_0\subset F_1\subset\cdots\subset F_{d+1}=W
\end{equation*}
be the complete flag defined by $F_h=H^0(\bP^1,\cO(d)(-(d+1-h)p_0))$, that is, $F_h\subset W$ is the subspace of sections vanishing to order $d+1-h$ at $p_0$.

We consider the subschemes
\begin{equation*}
Y^{\pt}_{r,\vn,d}:=\bigcap_{i=1}^{n}\Inc(L_i,M_i)\subset\Coll(V,W)
\end{equation*}
If $Y^{\pt}_{r,\vn,d}$ is irreducible of the expected codimension $|\vn|$, then as cycle \textit{classes} in $H^{2|\vn|}(\Coll(V,W))$, then we have 
\begin{equation*}
[Y^{\pt}_{r,n,d}]=\prod_{j=1}^r\gamma_j^{n_{r-j}}.
\end{equation*}
Note, however, that this transversality is not immediate from a Kleiman-Bertini-type argument, as the $M_i\subset W$ are not a general collection of hyperplanes. Nevertheless, we will show that the needed transversality holds, in addition, in the presence of a Schubert condition defined with respect to $F$.

\begin{prop}\label{coll_genericity_points}
Let $\lambda=(\lambda_0,\cdots,\lambda_r)$ be a partition, and let $\Sigma^F_{\lambda}\subset\Gr(r+1,W)$ be the corresponding Schubert variety with respect to the flag $F$ defined above. Assume as above that the points $p_0,p_1,\ldots,p_n\in \bP^1$ are distinct and the subspaces $L_1,\ldots,L_n\subset V$ are general. Then, the intersection
\begin{equation*}
\pi^{-1}(\Sigma^F_{\lambda})\cap Y^{\pt}_{r,\vn,d}\subset\Coll(V,W)
\end{equation*}
is irreducible and generically smooth of the expected codimension $|\lambda|+|\vn|$.

Moreover, any general point $\phi$ of the intersection is a collineation of full rank, bpf except possibly at $p_0$ (that is, with image contained in no bp-hyperplane except possibly $M_{p_0}=F_d$), and $\im(\phi)$ has ramification sequence at $p_0$ given \textit{exactly} by $\lambda$.
\end{prop}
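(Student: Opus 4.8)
The plan is to work on the open subset $U\subset\Coll(V,W)$ of full-rank collineations that are bpf except possibly at $p_0$. Over the corresponding open subset of $\Gr(r+1,W)$ the map $\pi$ restricts to a $PGL(V)$-bundle, and by the description of $\Inc'(L,M)$ each $\Inc(L_i,M_i)\cap U$ is the honest linear incidence locus $\{\phi:\phi(L_i)\subset M_i\}$; under the identification of $U$ with an open subvariety of the space of nondegenerate maps $f\colon\bP^1\to\bP^r$ of degree $d$, this is the condition $f(p_i)\in X_{L_i}$. Since $F$ is the osculating flag of $p_0$, the Schubert variety $\Sigma^F_\lambda$ is the locus of linear series with ramification at least $\lambda$ at $p_0$, the open subvariety $\Sigma^{F,\circ}_\lambda\subset\Sigma^F_\lambda$ on which the ramification equals $\lambda$ is a single Schubert cell, and $\widetilde\Sigma:=(\pi|_U)^{-1}(\Sigma^{F,\circ}_\lambda)$ is smooth and irreducible of codimension $|\lambda|$ in $\Coll(V,W)$.

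On this main stratum I would prove transversality by a Kleiman--Bertini argument for the $PGL_{r+1}^{\,n}$-action on $(\bP^r)^n$: writing the general linear spaces $X_i$ as general translates $g_iX_i^0$ and applying Kleiman's theorem to the joint evaluation map $\widetilde\Sigma\to(\bP^r)^n$, $\phi\mapsto(f(p_1),\dots,f(p_n))$, one concludes that $J^\circ:=\widetilde\Sigma\cap\bigcap_i\{f(p_i)\in X_i\}$ is smooth of pure codimension $|\lambda|+|\vn|$ in $\Coll(V,W)$. It is essential that the hyperplanes $M_i\subset W$ are \emph{not} general, so that a naive Kleiman--Bertini argument on $\Coll(V,W)$ itself is unavailable; the $PGL_{r+1}$-action on the target $\bP^r$ plays that role instead. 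Non-emptiness of $J^\circ$, in the range where \eqref{general_numerology} permits it, follows from a direct interpolation argument producing a degree-$d$ map $\bP^1\to\bP^r$ with the prescribed incidences and ramification.

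It then remains to bound the rest of $\pi^{-1}(\Sigma^F_\lambda)\cap Y^{\pt}_{r,\vn,d}$, showing it has codimension strictly greater than $|\lambda|+|\vn|$; this is the technical core. There are three families of loci. First, collineations whose image has ramification strictly larger than $\lambda$ at $p_0$: one reruns the preceding two paragraphs with each of the finitely many $\lambda'\supsetneq\lambda$, each contributing codimension $|\lambda'|+|\vn|\ge|\lambda|+1+|\vn|$. Second, full-rank collineations with a base point at some $p_i$ (or at a further point of $\bP^1$): there one incidence condition becomes vacuous and $\phi$ corresponds to a map of degree $d-1$, so by induction on $d$, together with \eqref{general_numerology}, this locus has codimension exceeding $|\lambda|+|\vn|$ (or is empty). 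Third, the non-full-rank strata $\Coll_{\overrightarrow{r}}(V,W)$ with $\ell=\ell(\overrightarrow{r})\ge1$: here Corollary \ref{inc_exp_dim}, the set-theoretic description in Proposition \ref{inc_set_theoretic}, and a parameter count (inducting also on $r$) show that, using only the genericity of the $L_i$ and $p_i$, which persists on the boundary even though Kleiman--Bertini does not, the conditions $\Inc(L_i,M_i)$ together cut down by $|\vn|$ along $\Coll_{\overrightarrow{r}}$, so that with $\pi^{-1}(\Sigma^F_\lambda)$ the total codimension is at least $|\vn|+\ell+|\lambda|>|\vn|+|\lambda|$.

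Combining these, $\pi^{-1}(\Sigma^F_\lambda)\cap Y^{\pt}_{r,\vn,d}$ is the closure of $J^\circ$; it is therefore pure of codimension $|\lambda|+|\vn|$, generically smooth, and its generic point lies in $J^\circ$, hence is a full-rank collineation, bpf except possibly at $p_0$, whose image has ramification sequence exactly $\lambda$ at $p_0$. The step I expect to be the main obstacle is the boundary analysis of the previous paragraph, and within it the non-full-rank strata: because the $M_i$ are special one cannot invoke a transversality black box there, and must instead rule out unexpected components by hand, through the explicit geometry of $\Inc(L,M)$ established earlier and a nested induction on $d$ and $r$.
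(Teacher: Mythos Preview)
Your overall architecture---prove transversality on the open locus of full-rank, bpf collineations by varying the $L_i$, then bound the complement by strictly more conditions---is exactly the paper's. Your Kleiman--Bertini on $(\bP^r)^n$ via the evaluation map is equivalent to the paper's incidence-correspondence argument: both amount to varying $L_i\in\Gr(r-k_i,V)$ while holding the special hyperplanes $M_i$ fixed.

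The gap is in your boundary analysis, and it is precisely the step you flag as the obstacle. Your assertion that ``the conditions $\Inc(L_i,M_i)$ together cut down by $|\vn|$ along $\Coll_{\overrightarrow{r}}$'' is correct, but neither Corollary~\ref{inc_exp_dim} (which treats a \emph{single} $\Inc(L,M)$), nor an induction on $r$ or $d$, delivers it. The difficulty is that on a stratum where $\im(\phi_0)\subset M_i$ for some $i$, the condition $\Inc(L_i,M_i)$ imposes strictly fewer than $r-k_i$ conditions on $L_i$ (possibly zero), so summing naively gives less than $|\vn|$. What saves the count is that having $\im(\phi_j)\subset\pr_j(M_i)$ is itself a nontrivial condition on $\phi$. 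The paper organizes this by stratifying not only by the type $\overrightarrow{r}$ but also by the \emph{base-point pattern} $S=(S_0\supseteq\cdots\supseteq S_\ell)$, where $S_j=\{i:\im(\phi_j)\subset\pr_j(M_i)\}$. On the stratum $\Coll^\circ_{\overrightarrow{r},S}$, Lemma~\ref{coll_schubert_pullback} (using that distinct points on $\bP^1$ impose independent conditions) gives codimension $|\lambda|+\ell+\sum_j r_j s_j$ from the Schubert and base-point constraints, while the fibre of the incidence correspondence over a fixed $\phi$ has codimension at least $|\vn|-\sum_j r_j s_j$ in $\prod_i\Gr(r-k_i,V)$. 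These telescope to $|\lambda|+|\vn|+\ell$. Your proposed induction on $r$ runs into the same issue one level down: to apply the inductive hypothesis to $(\phi_1,\ldots,\phi_\ell):V_1\to W_1$ you must first know which $\pr_1(M_i)$ are honest hyperplanes, i.e.\ you must already have stratified by $S_0$, and then recursively by $S_1,\ldots$---which is the $S$-stratification.

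Two smaller points. Your invocation of \eqref{general_numerology} in the base-point case is misplaced: the Proposition does not assume that numerology, and the correct count is simply that forcing $\im(\phi)\subset M_i$ costs $r+1$ conditions while only $r-k_i$ are recovered from the now-vacuous incidence at $p_i$ (and a base-point at an unmarked point costs $r$). Also, your three boundary families are not disjoint---a length-$\ell$ collineation can have base-points---so treating them separately would double-count; the joint $(\overrightarrow{r},S)$-stratification handles all of this at once.
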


Recall that the \emph{ramification sequence} of a linear series $U\subset H^0(C,\cL)$ of rank $r$ (which is to say that $\dim(U)=r+1$) on a curve $C$ at a point $p\in C$ is the sequence of integers $(\mu_0,\ldots,\mu_r)$ for which $\dim(U\cap H^0(C,\cL(-\mu_j-r+j)p))\ge j+1$ for $j=1,\ldots,r+1$ and each $\mu_j$ is as large as possible. Here, we have $C=\bP^1$, $p=p_0$, and $\cL=H^0(\bP^1,\cO(d))$. In particular, taking $j=r+1$, part of the claim is that $\im(\phi)\subset F_{d+1-\mu_r}$, but $\im(\phi)\not\subset F_{d-\mu_r}$.

We will prove Proposition \ref{coll_genericity_points} via a dimension count on strata of $\Coll(V,W)$. Fix a collineation type $\overrightarrow{r}=(r_0,\ldots,r_\ell)$ and a nested sequence of subsets 
\begin{equation*}
S_\ell\subseteq S_{\ell-1}\subseteq\cdots\subseteq S_0\subseteq\{p_1,\ldots,p_n\}.
\end{equation*} 
Write $s_j=|S_j|$ for each $j$. Let $\Coll^\circ_{\overrightarrow{r},S}\subset \Coll(V,W)$ be the locally closed subscheme of collineations $$\phi=\{\phi_j:V_j\to W_j\}_{j=0}^{\ell}$$ of type $\overrightarrow{r}$ which furthermore have the property, for all $j=0,1,\ldots,\ell$, that $$\phi_j(V_j)\subset \pr_j(M_i)$$ if and only if $p_i\in S_j$. That is, $S_\ell$ is the set of base-points of $\phi$, and more generally, $S_j$ is the set of base-points of $\phi$ ``up to level $j$.''

%The following is a straightforward consequence of the fact that any distinct points on $\bP^1$ impose independent conditions on sections of line bundles of any degree. 
We will first need the following lemma. 
%In fact, the points $p_0,p_1,\ldots,p_n$ need only be distinct; one needs no further generality hypothesis.

\begin{lem}\label{coll_schubert_pullback}
The intersection $\Coll^\circ_{\lambda,\overrightarrow{r},S}:=\pi^{-1}(\Sigma^F_{\lambda})\cap\Coll^\circ_{\overrightarrow{r},S}$, if non-empty, is irreducible of the expected codimension $$|\lambda|+\ell+\sum_{j=0}^{\ell}r_js_j$$ in $\Coll(V,W)$. Moreover, given a general point $\phi$ of $\Coll^\circ_{\lambda,\overrightarrow{r},S}$, the ramification sequence of $\im(\phi)$ at $p_0$ is exactly $\lambda$.
\end{lem}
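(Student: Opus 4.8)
The plan is to realize $\Coll^\circ_{\overrightarrow{r},S}$ as the total space of a tower of fibrations and then track how the two conditions — lying over the Schubert variety $\Sigma^F_\lambda$ and having base-points at the prescribed points — cut down the dimension at each stage. Recall from \S\ref{coll_sec} that $\Coll_{\overrightarrow{r}}(V,W)$ is built inductively: a collineation of type $\overrightarrow{r}=(r_0,\ldots,r_\ell)$ is recorded by a flag $V=V_0\supset V_1\supset\cdots\supset V_\ell$ with $\dim V_j/V_{j+1}=r_j$ (and $\dim V_\ell = r_\ell$), together with injections $\phi_j:V_j/V_{j+1}\hookrightarrow W_j=W/\langle\im\phi_0,\ldots,\im\phi_{j-1}\rangle$, each considered projectively. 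First I would fix the flag data $V_\bullet$, which moves in a product of Grassmannians; the transversality of the $L_i$ to this flag is an open condition automatically satisfied for generic $L_i$, so it plays no role here (indeed, as the remark before the lemma notes, we only need the $p_i$ distinct, not general). Then I would build up $\phi_0,\phi_1,\ldots,\phi_\ell$ in order.

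The key computation is at the $j$-th stage. Having chosen $\phi_0,\ldots,\phi_{j-1}$, the space $W_j$ is determined, and $\phi_j$ ranges over an open subset of $\bP\Hom(V_j/V_{j+1},W_j)\cong\bP(\mathrm{Hom})$, of dimension $r_j\cdot\dim W_j - 1$; imposing that $\phi_j$ be injective is open. Now impose the base-point condition: $p_i\in S_j\setminus S_{j+1}$ means precisely $\im\phi_j\subseteq\pr_j(M_i)$ but this fails at level $j+1$ — equivalently, $\pr_j(M_i)$ is a genuine hyperplane in $W_j$ (not all of $W_j$), and $\im\phi_j$ lies in it. Since distinct points of $\bP^1$ impose independent conditions on sections of $\cO(d)$ of any degree, the hyperplanes $\pr_j(M_i)$ for the relevant $p_i$ remain in general position inside $W_j$ as long as the total number imposed so far stays below $\dim W_j$ — which is guaranteed precisely by the non-emptiness hypothesis — so requiring $\im\phi_j\subseteq\bigcap_{p_i\in S_j}\pr_j(M_i)$ cuts the dimension of the $\phi_j$-stratum by exactly $r_j s_j$ (each of the $s_j$ hyperplane conditions removes a codimension-$r_j$ locus, independently). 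Summing over $j=0,\ldots,\ell$ gives the contribution $\sum_j r_j s_j$. The extra $\ell$ in the codimension formula is the codimension of $\Coll_{\overrightarrow{r}}(V,W)$ in $\Coll(V,W)$, recorded in \S\ref{coll_sec}.

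It remains to incorporate $\pi^{-1}(\Sigma^F_\lambda)$ and to identify the ramification sequence. The map $\pi$ sends $\phi$ to $\im(\phi)=\langle\im\phi_0,\ldots,\im\phi_\ell\rangle\subset W$; conditioning on $\im(\phi)\in\Sigma^F_\lambda$ is a condition on the final total image. I would argue that, fibered over the locus of $\phi$ with a fixed image $U\in\Gr(r+1,W)$, the conditions defining $\Coll^\circ_{\overrightarrow{r},S}$ and the Schubert condition are ``independent'': the fiber of $\pi$ over a generic $U\in\Sigma^F_\lambda$ meeting the tower is, as in \S\ref{coll_sec}, a space of collineations $V\to U$ of the given type with the given base-point behavior, and its dimension is computed by the same local analysis as above with $W$ replaced by $U$; adding back $\codim_{\Gr}\Sigma^F_\lambda=|\lambda|$ recovers the stated codimension. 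Purity follows because each stratum in the tower is irreducible (iterated projective/Grassmannian bundles minus closed loci) and every imposed condition is locally the vanishing of the expected number of equations, hence defines a pure-codimension locus; non-emptiness is exactly the hypothesis that forces all these equation-counts to be ``generic.'' Finally, the ramification sequence of $\im(\phi)=U$ at $p_0$ is read off from the flag $F$: by definition $U\in\Sigma^F_\lambda$ forces $\dim(U\cap F_{d-r+i+1-\lambda_i})\ge i+1$, and for a \emph{generic} point of the stratum these inequalities are equalities, which is the statement that the ramification sequence is \emph{exactly} $\lambda$ rather than something larger. The main obstacle will be making the ``independence of the Schubert condition from the tower and base-point conditions'' rigorous — i.e.\ checking that intersecting with $\pi^{-1}(\Sigma^F_\lambda)$ genuinely drops the dimension by $|\lambda|$ and does not force the stratum empty or create excess intersection — and for this the cleanest route is the fiberwise argument over $\Gr(r+1,W)$ sketched above, using that $\pi$ restricted to each stratum is flat with equidimensional fibers onto its image.
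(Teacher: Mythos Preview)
Your proposal is correct and takes essentially the same approach as the paper, which offers no detailed proof beyond the remark preceding the lemma that it ``is a straightforward consequence of the fact that any distinct points on $\bP^1$ impose independent conditions on sections of line bundles of any degree.'' Your tower decomposition is exactly the way to organize the dimension count that the paper leaves implicit.

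One small sharpening regarding your acknowledged ``main obstacle'': the fiberwise argument over a generic $U\in\Sigma^F_\lambda$ is slightly off as stated, because not every such $U$ lies in the image of $\pi|_{\Coll^\circ_{\overrightarrow{r},S}}$ --- for instance, one needs $U\subset M_i$ for each $p_i\in S_\ell$, which is a condition on $U$ itself, not on the fiber. The cleanest fix is the one your argument already points toward: factor $\pi$ through the image-flag map $\Coll^\circ_{\overrightarrow{r}}(V,W)\to\Fl(r_0,r_0+r_1,\ldots,r+1;W)$, under which $\Coll^\circ_{\overrightarrow{r},S}$ is the preimage of the Schubert-type locus $\{I_\bullet : I_j\subset\bigcap_{p_i\in S_j}M_i\}$ and $\pi^{-1}(\Sigma^F_\lambda)$ is the preimage of the pullback of $\Sigma^F_\lambda$ from $\Gr(r+1,W)$. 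Both are incidence conditions with respect to subspaces of $W$ of the form $H^0(\bP^1,\cO(d)(-D))$ for effective divisors $D$ supported on the distinct points $p_0,p_1,\ldots,p_n$; since any such subspaces intersect in the expected dimension, the two conditions are transverse in $\Fl$, giving the claimed codimension and the genericity of the ramification sequence simultaneously. This is precisely the independence fact the paper invokes.
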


\begin{proof}
Recall that we have defined the complete flag
\begin{equation*}
0=F_0\subset F_1\subset\cdots\subset F_{d+1}=W
\end{equation*}
where $F_h\subset H^0(\cO(d))$ is the subspace of sections vanishing to order $d+1-h$ at $p_0$. Define also a complete flag
\begin{equation*}
0=F'_0\subset F'_1\subset\cdots\subset F'_{d+1}=W
\end{equation*}
with the following property: for $j=0,1,\ldots,\ell$, the component $F'_{d+1-|S_j|}\subset H^0(\cO(d))$ is the subspace of sections vanishing along all of the points in $S_j$. This only determines some of the components of $F'$, but all others may be chosen generally (and play no role in what follows). Note that the flags $F,F'$ are transverse, in the sense that $\dim(F_h\cap F'_{h'})=\max(h+h'-(r+1),0)$ for all $h,h'$; this follows from the fact that distinct points impose independent conditions of sections of $\cO(d)$.

Now, consider the partial flag variety $\Fl_{\overrightarrow{r}}(W)$ of flags
\begin{equation*}
U_0\subset U_1\subset\cdots\subset U_\ell\subset W,
\end{equation*}
where $\dim(U_j)=r_0+\cdots+r_j$ for each $j$. Then, the condition that $U_\ell\in \Sigma^F_{\lambda}$ cuts out a Schubert subvariety $\Sigma^F$ of $\Fl_{\overrightarrow{r}}(W)$ of codimension $|\lambda|$. The condition that $U_j\subset F'_j$ for all $j=0,\ldots,\ell$ cuts out a Schubert subvariety $\Sigma^{F'}$ of $\Fl_{\overrightarrow{r}}(W)$ codimension $\sum_{j=0}^{\ell}r_js_j$. Because these two Schubert subvarieties are cut out by transverse flags, their intersection (a \emph{Richardson variety}) $\Sigma^{F,F'}$ is irreducible of codimension $|\lambda|+\sum_{j=0}^{\ell}r_js_j$.

Now, the scheme $\Coll^\circ_{\lambda,\overrightarrow{r},S}\subset\Coll(V,W)$ is the pullback of $\Sigma^{F,F'}$ by the smooth map
\begin{equation*}
\Coll^{\circ}_{\overrightarrow{r}}(V,W)\to \Fl_{\overrightarrow{r}}(W),
\end{equation*}
sending $U_j$ to the preimage of $\im(\phi_j)$ in $W$. As $\Coll^{\circ}_{\overrightarrow{r}}(V,W)$ has codimension $\ell$ in $\Coll(V,W)$, it follows that $\Coll^\circ_{\lambda,\overrightarrow{r},S}$ is irreducible of the claimed codimension in $\Coll(V,W)$.

The remaining claim about the ramification sequence of $\im(\phi)$ at $p_0$ at a general point of $\Coll^\circ_{\lambda,\overrightarrow{r},S}$ follows from repeating the argument with any partition $\lambda'$ strictly containing $\lambda$, as the locus of $\phi$ with larger ramification sequence has positive codimension in $\Coll^\circ_{\lambda,\overrightarrow{r},S}$.
\end{proof}

\begin{proof}[Proof of Proposition \ref{coll_genericity_points}]
Fix $\overrightarrow{r},S$, and consider the product $\Coll^\circ_{\lambda,\overrightarrow{r},S}\times(\bP^r)^n$. Regarding the $M_i\subset W$ as fixed and the $L_i\subset V$ as varying, we form the loci $\Inc(L_i,M_i)$ \textit{relatively} over $P=\prod_{i=1}\Gr(r-k_i,V)$, parametrizing the possible choices of $L_i$. Let $\cY\subset \Coll^\circ_{\lambda,\overrightarrow{r},S}\times P$ be the intersection of the relative incidence loci, whose fiber over a point $(L_1,\ldots,L_n)\in P$ is the restriction of 
\begin{equation*}
\pi^{-1}(\Sigma^F_{\lambda})\cap Y^{\pt}_{r,\vn,d}\subset\Coll(V,W)
\end{equation*}
to $\Coll^\circ_{\lambda,\overrightarrow{r},S}$.

Consider now the other projection $\cY\to \Coll^\circ_{\lambda,\overrightarrow{r},S}=\pi^{-1}(\Sigma^F_{\lambda})\cap\Coll^\circ_{\overrightarrow{r},S}$. By Proposition \ref{inc_set_theoretic}, the fiber over $\phi$ consists of collections of linear subspaces $L_i\subset V$ with the property that:
\begin{itemize}
\item either $\phi_{j_i+1}(L_{i}\cap V_{j_i+1})\subset \pr_{j_i+1}(M_i)$ (property $(\dagger)_{j_i+1}$), or
\item $L_{i}$ and $V_{j_i+1}$ fail to be transverse in $V$,
\end{itemize}
where $j_i$ is the largest value of $j$ for which $\im(\phi_j)\subset\pr_{j}(M_i)$, by Lemma \ref{check_dagger_once} and its proof. By convention, we take $j_i=-1$ if $\im(\phi_0)\not\subset M_i$. The second condition is never satisfied when $j_i=-1$, and both of the conditions are vacuously satisfied for $j_i=\ell$. 

The first condition imposes $\max((r-k_i)-(r_0+\cdots+r_{j_i}),0)$ conditions on $L_i$ (note that this is valid even when $j_i=\ell$). The second imposes $|(r-k_i)-(r_0+\cdots+r_{j_i})|+1$ conditions if $j_i\ge0$ and is never satisfied if $j_i=-1$; in particular, we have strictly more conditions than in the first case. In both cases, the subvariety of $\Gr(r-k_i,V)$ defined by these conditions is a Schubert variety, and in particular is irreducible and generically smooth. Combining with Lemma \ref{coll_schubert_pullback} and restricting to a general point $(L_1,\ldots,L_n)\in\prod_i\Gr(r-k_i,V)$, we conclude that the intersection of interest is irreducible and generically smooth of codimension at least
\begin{equation*}
\left(\sum_{i=1}^{n}(r-k_i)-\sum_{j=0}^{\ell}r_js_j\right)+\left(|\lambda|+\ell+\sum_{j=0}^{\ell}r_js_j\right)=\sum_{i=1}^{n}(r-k_i)+|\lambda|+\ell
\end{equation*}
in $\Coll(V,W)$. This number is at least the expected codimension, with equality only when $\ell=0$, so it follows that the intersection in question must be supported on the open stratum, on which it has expected dimension.

The statements about $\phi$ being bpf (except possibly at $p_0$) and having ramification exactly given by $\lambda$ at $p_0$ follow from repeating the dimension counts with $W$ replaced by an intersection of bp-hyperplanes or $\lambda$ replaced with a larger partition. Note in particular that requiring $\im(\phi)\subset M_i$ imposes $r+1$ conditions, which is strictly more than the $r-k_i$ conditions lost from $\Inc(L_i,M_i)$; requiring instead that $\im(\phi)$ is contained in \textit{any} bp-hyperplane imposes $r>0$ additional conditions, as the space of such hyperplanes is 1-dimensional.
\end{proof}

We remark that Proposition \ref{coll_genericity_points} immediately implies the following genericity statement by regenerating the $M_i$ back to general hyperplanes and $F$ back to a general flag. (Here, one can alternatively argue directly by Kleiman-Bertini.)

\begin{cor}\label{kleiman_inc}
Let $F$ be a general complete flag $0=F_0\subset F_1\subset\cdots\subset F_{d+1}=W$. Let $\lambda=(\lambda_0,\cdots,\lambda_r)$ be a partition, and let $\Sigma^F_{\lambda}\subset\Gr(r+1,W)$ be the corresponding Schubert cycle with respect to $F$.

Let $M_1,\ldots,M_n\subset W$ be general hyperplanes, and let $L_1,\ldots,L_n\subset V$ be general subspaces. Then, the intersection
\begin{equation*}
\pi^{-1}(\Sigma^F_{\lambda})\cap Y_{r,\vn,d}\subset\Coll(V,W)
\end{equation*}
is irreducible and generically smooth of the expected codimension $|\lambda|+|\vn|$.

Moreover, any general point $\phi$ of the intersection is a collineation of full rank, for which $\im(\phi)$ is not contained in $M_i$. Finally, for such a point, the dimensions $\dim(F_j\cap \im(\phi))$ are dictated \emph{exactly} by $\lambda$.
\end{cor}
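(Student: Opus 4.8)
The plan is to deduce the corollary from Proposition \ref{coll_genericity_points} by ``regenerating'' the degenerate configuration treated there to a general one. Let $\mathcal{B} = \Fl(W) \times (\bP(W^{\vee}))^{n} \times \prod_{i=1}^{n}\Gr(r-k_i, V)$ be the irreducible parameter space of complete flags $F$ of $W$, hyperplanes $M_i \subset W$, and subspaces $L_i \subset V$ of the prescribed dimensions, and form the relative intersection $\mathcal{Z} \subset \Coll(V,W) \times \mathcal{B}$ whose fiber over $b = (F, M_1, \ldots, M_n, L_1, \ldots, L_n)$ is $\mathcal{Z}_b = \pi^{-1}(\Sigma^{F}_{\lambda}) \cap \bigcap_{i=1}^{n}\Inc(L_i, M_i)$. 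The statement amounts to showing that the locus of $b$ for which $\mathcal{Z}_b$ is generically smooth of codimension $|\lambda| + |\vn|$, with generic point a full-rank collineation $\phi$ such that $\im(\phi) \not\subset M_i$ for all $i$ and $\im(\phi)$ lies in the open Schubert cell $\Sigma^{F, \circ}_{\lambda}$, is a dense open subset of $\mathcal{B}$.

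First I would exhibit one point of this ``good'' locus using Proposition \ref{coll_genericity_points} directly. Identifying $W$ with $H^0(\bP^1, \cO(d))$ and choosing general points $p_0, p_1, \ldots, p_n \in \bP^1$, take $F$ to be the osculating flag at $p_0$ (so that membership in $\Sigma^F_\lambda$ encodes the ramification condition at $p_0$) and $M_i$ the hyperplane of sections vanishing at $p_i$; the resulting point $b_0 \in \mathcal{B}$ is good precisely because Proposition \ref{coll_genericity_points} says $\mathcal{Z}_{b_0}$ is generically smooth of the expected codimension, with generic point a full-rank collineation, base-point free away from $p_0$ (hence with image in no $M_i = M_{p_i}$, as $p_i \neq p_0$), and with ramification sequence at $p_0$ exactly $\lambda$ (i.e.\ $\im(\phi) \in \Sigma^{F,\circ}_\lambda$). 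Next I would check openness of the good locus: the sublocus where $\dim \mathcal{Z}_b \le \dim \Coll(V,W) - |\lambda| - |\vn|$ is open by upper semicontinuity of fiber dimension, and since $\pi^{-1}(\Sigma^F_\lambda)$ and each $\Inc(L_i, M_i)$ have codimension exactly $|\lambda|$ and $r - k_i$ in $\Coll(V,W)$ with $\sum_i(r-k_i) = |\vn|$, on this open sublocus every component of $\mathcal{Z}_b$ has codimension exactly $|\lambda|+|\vn|$; the conditions that the generic point be full-rank, satisfy $\im(\phi) \not\subset M_i$, and lie in $\Sigma^{F,\circ}_\lambda$ then cut out further open subsets, since the relevant bad loci --- the boundary $\Coll(V,W) \setminus \Coll^{\circ}_{(r+1)}(V,W)$, the loci $\{\im(\phi) \subset M_i\}$, and the smaller Schubert loci $\pi^{-1}(\Sigma^F_\mu)$ for $\mu \supsetneq \lambda$ --- all have strictly larger codimension, so relatively over $\mathcal{B}$ they meet $\mathcal{Z}_b$ in strictly smaller dimension away from a closed subset of $\mathcal{B}$.

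The step I expect to be the main obstacle is transferring \emph{generic smoothness} (rather than just the expected dimension) from $b_0$ to a general $b$, i.e.\ verifying that ``generically smooth of the expected dimension'' defines an open subset of $\mathcal{B}$; I would handle this by the standard argument, applying generic smoothness in characteristic zero to the structure morphism of each irreducible component of $\mathcal{Z}$ dominating $\mathcal{B}$ (on its smooth locus), and discarding the components not dominating $\mathcal{B}$ as well as the lower-dimensional pairwise intersections of components. As an alternative that sidesteps Proposition \ref{coll_genericity_points}, one can argue directly with Kleiman--Bertini: on the dense open stratum $\Coll^{\circ}_{(r+1)}(V,W)$ of full-rank collineations, $GL(V)\times GL(W)$ acts transitively, and $\pi^{-1}(\Sigma^F_\lambda)$ and the $\Inc(L_i,M_i)$ restrict there to general translates of fixed subvarieties once $F$ and the $(L_i,M_i)$ are general, so their intersection there is generically smooth of codimension $|\lambda|+|\vn|$; running the same count on each smaller stratum $\Coll^{\circ}_{\overrightarrow{r}}(V,W)$ together with Corollary \ref{inc_exp_dim} shows the intersection there has codimension strictly larger than $|\lambda|+|\vn|$, so the whole intersection is supported on the open stratum and all the asserted properties follow.
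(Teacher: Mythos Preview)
Your proposal is correct and matches the paper's own argument almost verbatim: the paper simply remarks that Proposition~\ref{coll_genericity_points} implies the corollary ``by regenerating the $M_i$ back to general hyperplanes and $F$ back to a general flag,'' and parenthetically notes Kleiman--Bertini as an alternative, exactly the two routes you spell out. Your write-up just supplies the routine openness/semicontinuity details that the paper leaves implicit.
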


We also record the following:

\begin{cor}\label{cor:lambdaempty}
The subschemes $Y^{\pt}_{r,\vn,d}$ and $Y_{r,\vn,d}$ of $\Coll(V,W)$ are irreducible and generically smooth of the expected codimension of $|\vn|=\sum_{i=1}^{n}(r-k_i)$. 
\end{cor}

\begin{proof}
Take $\lambda=(0,\ldots,0)$ in Proposition \ref{coll_genericity_points} and Corollary \ref{kleiman_inc}, respectively.
\end{proof}

In particular, we have $[Y^{\pt}_{r,\vn,d}]=[Y_{r,\vn,d}]=\prod_{j=1}^{r}\gamma_{j}^{n_{r-j}}$. 

\begin{defn}
We define
\begin{equation*}
\Gamma_{r,\vn,d}:=\pi_{*}([Y_{r,\vn,d}])=\pi_{*}\left(\prod_{j=1}^{r}\gamma_r^{n_{r-j}}\right) \in H^{2(|\vn|-r(r+2))}(\Gr(r+1,W)).
\end{equation*}
\end{defn}

\subsection{Degenerations of limit linear collineations}\label{sec:lls}

Let $B$ be the spectrum of a discrete valuation ring. Let $(\cC\to B,p_1,\ldots,p_n)$ be a stable pointed curve of genus $g$ whose general fiber $C_\eta$ is smooth and whose special fiber $C_0$ has the following form, depicted in Figure \ref{the curve C0}. To a rational component (spine) $C_{\spine}\cong\bP^1$, elliptic tails $E_1,\ldots,E_g$ are attached at general points $s_1,\ldots,s_g$, and a rational tail $R$ is attached at a general point $s_0$, so that $R$ contains the markings $p_1,\ldots,p_n$.

\begin{figure}
\begin{center}
\begin{tikzpicture}[xscale=0.4,yscale=0.30] [xscale=0.36,yscale=0.36]

    \node at (1,16) {$C_{\mathrm{sp}}$};
	\draw [ultra thick, black] (1,0) to (1,15);

	\draw [ultra thick, black] (-0.5,1) to (15,1);
	\node at (16,1) {$R$};
	\node at (1,1) {$\bullet$};
	\node at (0.3,1.8) {$s_0$};

	\node at (3,1) {$\bullet$};
	\node at (4,0) {$p_{1}$};
	\node at (7.5,0) {$\dots$};
	\node at (10,1) {$\bullet$};
	\node at (11,0) {$p_{n}$};

	\draw [ultra thick, black] (-0.5,7) to [out=0,in=180] (8,7) to [out=0,in=180] (13,8) to [out=0,in=180] (15,8);
	
	\draw [ultra thick, black] (-0.5,10) to [out=0,in=180] (8,10) to [out=0,in=180] (13,11) to [out=0,in=180] (15,11);
	
	\node at (16,8) {$E_1$};
	\node at (16,10) {$\vdots$};
	\node at (16,11) {$E_g$};
	
	\node at (1,7) {$\bullet$};
	\node at (1,10) {$\bullet$};
	\node at (0.3,7.8) {$s_1$};
	\node at (0.3,10.8) {$s_g$};

\end{tikzpicture}
\caption{The curve $C_0$.}\label{the curve C0}
\end{center}
\end{figure}

Let $\cG(\cC/B)$ be the space of relative limit linear series on $\cC$ of rank $r$ and degree $d$, see \cite{eh}. Let $\cG(\cC/B)^\circ$ be the open subset of limit linear series that do not have base-points at any of the $p_i$. On the special fiber, the base-point condition is taken as a constraint on the $R$-aspect of the limit linear series.

Let $\cP(\cC/B)\to \cG(\cC/B)$ be the projective bundle of $(r+1)$-tuples of sections, taken up to simultaneous scaling, where on the special fiber we take sections of the $R$-aspect. The associated vector bundle is $\Hom(\bC^{r+1},\cU_R)$, where $\cU_R$ is the universal rank $(r+1)$ bundle given by the universal linear series on $\cC_\eta$ and the $R$-aspect of the universal series on $\cC_0$. 

Finally, let $\Coll(\cC/B)\to \cP(\cC/B)$ be the fiber-wise blowup whose fibers are complete collineations $V\cong\bC^{r+1}\to U_R$, where the $U_R$ are the fibers of $\cU_R$. The points of $\Coll(\cC/B)$ on the general fiber are therefore linear series $(\cL_\eta,U_\eta\subset H^0(\cC,\cL_\eta))$ of rank $r$ and degree $d$ on $\cC_\eta$ with a collineation $V\to U_\eta$. If $h^0(\cC,\cL_\eta)\ge r+1$, then this data is equivalent to that of a collineation $V\to H^0(\cC,\cL_\eta)$, as the $(r+1)$-dimensional vector space $U_{\eta}$ can be recovered as the image of the collineation $V\to H^0(\cC,\cL_\eta)$. If instead $h^0(\cC,\cL_\eta)<r+1$, then the general fiber of $\Coll(\cC/B)$ contains no points over $\cL_\eta\in\Pic^d(\cC_\eta)$. The points of $\Coll(\cC,B)$ on the special fiber are limit linear series on $C_0$ with a collineation from $V$ to the $R$-aspect $U_R\subset H^0(C_{\spine},\cO(d))$. See also \cite[\S 6]{cl} for a detailed construction in the case $r=1$.

The loci $\Inc(L_i,M_i)$ can be globalized to $\Coll(\cC/B)$. More precisely, fix general linear spaces $L_i\subset V\cong\bC^{r+1}$ of codimension $k_i+1$. On the pullback $\Coll(\cC/B)^\circ$ of $\Coll(\cC/B)$ over $\cG(\cC/B)^\circ$, we have, for each $i$, a rank $r$ sub-bundle $\cM_i\subset \cU_R$ of sections vanishing at $p_i$. Then, define $\Inc(L_i,\cM_i)^\circ\subset\Coll(\cC/B)_{bpf}$ fiberwise by the usual incidence locus $\Inc(L_i,M_i)$. Define $\Inc(L_i,\cM_i)\subset\Coll(\cC/B)$ to be the closure of $\Inc(L_i,\cM_i)^\circ$, and $\cY^{\pt}_{g,r,\vn,d}\subset\Coll(\cC/B)$ to be the intersection of all of the $\Inc(L_i,\cM_i)$.

\begin{prop}\label{lls_count}
Assume \eqref{general_numerology} and denote the $(d+1)$-dimensional vector space $H^0(C_{\spine},\cO(d))$ by $W$. Then, the subscheme $\cY^{\pt}_{g,r,\vn,d}\subset\Coll(\cC/B)$ is finite and \'{e}tale of degree
\begin{equation*}
\int_{\Coll(V,W)}\pi^{*}(\sigma^g_{1^r})\cdot\prod_{j=1}^{r}\gamma_{j}^{n_{r-j}}
\end{equation*}
over $B$. 

Furthermore, any geometric point of $\cY^{\pt}_{g,r,\vn,d}$ over the generic point is a collineation of full rank, and is base-point free.
\end{prop}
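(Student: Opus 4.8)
The plan is to study the family $\Coll(\cC/B)$ via the theory of limit linear series, degenerating from the smooth generic fiber to the special fiber $C_0$ depicted in Figure \ref{the curve C0}. First I would analyze the generic fiber: there $\cY^{\pt}_{g,r,\vn,d}$ restricts to $Y^{\pt}_{r,\vn,d}\cap\pi^{-1}(\Sigma)$-type loci inside $\Coll(V,H^0(C_\eta,\cL))$, but since the line bundle $\cL$ itself varies in a $g$-dimensional family (the Jacobian), the generic fiber is \emph{not} a single copy of $\Coll(V,W)$. The key point is that the condition that a collineation extend over the elliptic tails $E_1,\dots,E_g$ — i.e., that the linear series $\im(\phi)$ have a section vanishing to appropriate order at each node $s_j$ — should cut out, component by component, exactly the class $\pi^*(\sigma^g_{1^r})$ pulled back from $\Gr(r+1,W_R)$. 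Indeed, each elliptic tail contributes a single Schubert condition $\sigma_{1^r}$ on the $C_{\spine}$-aspect (the classical fact that a general point of a genus-$1$ curve, attached at a general point, imposes the vanishing condition giving $\sigma_{1^r}$ in the relevant Grassmannian bundle), and the markings on $R$ contribute the incidence cycles $\prod_j \gamma_j^{n_{r-j}}$. Combining these and using properness of $\Coll(\cC/B)\to B$ together with the computation of the class on the generic fiber yields the stated integral $\int_{\Coll(V,W)}\pi^*(\sigma_{1^r}^g)\cdot\prod_j\gamma_j^{n_{r-j}}$ as the degree.

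Second, I would establish the transversality and étaleness. The strategy is to show that $\cY^{\pt}_{g,r,\vn,d}$ is finite flat over $B$, so that its degree is constant in the family; then the generic-fiber computation above gives the answer, while the special-fiber analysis (dimension counts on the limit linear series space, using the compatibility of the $R$-aspect with the incidence loci) shows the scheme is supported in the expected dimension. For the special fiber, I would invoke the regeneration/dimension-count machinery of Eisenbud–Harris \cite{eh}, combined with Lemma \ref{coll_schubert_pullback} and Proposition \ref{coll_genericity_points}: at a general point of $C_0$, the $R$-aspect together with the $C_{\spine}$-aspect must match the expected ramification at the nodes $s_0,\dots,s_g$, and for a general curve the total dimension count is sharp, forcing the limit linear series on $C_0$ in $\cY^{\pt}$ to be refined and the associated collineation to be of full rank and base-point free. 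Here one uses crucially that $\Inc(L_i,\cM_i)$ on the special fiber is defined by closure, so Proposition \ref{inc_set_theoretic} controls which boundary collineations can appear, and Corollary \ref{inc_exp_dim} guarantees the relative incidence loci meet strata in expected codimension.

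Third, I would deduce the \'etaleness (reducedness of the fibers) from a smoothness/transversality argument on the total space $\Coll(\cC/B)$: since the generic fiber count agrees with the Brill-Noether count of honest non-degenerate maps $f:C\to\bP^r$ (by the dimension counts showing no base-point contributions survive), and since each such map is a transverse/rigid solution by Brill-Noether theory as already invoked after Definition \ref{def:counts}, the fiber over the generic point is reduced. Flatness plus reducedness of the generic fiber plus the special fiber having the expected dimension then forces \'etaleness over an open subset of $B$, and after a base change if necessary over all of $B$. The main obstacle I anticipate is the special-fiber analysis: one must rule out \emph{extraneous} limit linear series on $C_0$ — configurations where the limit collineation acquires a base-point at some $p_i$ on $R$, or where the $R$-aspect degenerates so that $\im(\phi)$ drops rank in a way not accounted for by the blowup structure of $\Coll(\cC/B)$. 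Controlling these requires carefully combining the vanishing-sequence inequalities for limit linear series at the nodes $s_0,\dots,s_g$ with the set-theoretic description of $\Inc(L,M)$ from Proposition \ref{inc_set_theoretic}, and checking that the only surviving contribution in the limit is the one computed on $\Coll(V,W)$; this is exactly the ``transversality statement'' flagged in the discussion after Theorem \ref{thm:coll}, and it is where the space of complete collineations does the essential work of resolving the excess intersection.
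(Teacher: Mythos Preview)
Your proposal has the right ingredients but the logical direction is inverted, and this creates a real gap. The paper's argument runs \emph{from the special fiber to the generic fiber}: one first shows that the restriction of $\cY^{\pt}_{g,r,\vn,d}$ to $C_0$ is reduced of the correct length, and then deduces the same over the generic point by the local-complete-intersection argument (the scheme is cut out by $(r+1)(d+1)-1$ equations in each fiber). The full-rank and base-point-free statements for the generic fiber then follow by \emph{specialization}: a bad point over $\eta$ would specialize to a bad point over $0$, which Proposition~\ref{coll_genericity_points} rules out. You instead try to establish reducedness directly on the generic fiber via Brill--Noether, but this requires knowing already that $\cY^{\pt}$ over $\eta$ contains only honest non-degenerate maps (no base-points, full rank), which is precisely the content you defer to unspecified ``dimension counts.'' Those dimension counts are hard to carry out on a general genus-$g$ curve; the whole point of the degeneration is to reduce them to $\bP^1$, where Proposition~\ref{coll_genericity_points} applies.

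Two further concrete issues. First, your opening paragraph labels the elliptic-tail analysis as a generic-fiber computation, but the tails $E_1,\dots,E_g$ exist only on $C_0$; the $\sigma_{1^r}$ conditions they impose are a special-fiber phenomenon. Second, and more seriously, you do not mention the Mukhin--Tarasov--Varchenko theorem \cite{mtv}. On the special fiber, the spine $C_{\spine}$ carries Schubert conditions at the $g+1$ points $s_0,s_1,\dots,s_g$ that must intersect \emph{transversely} (not merely in the expected dimension) for the special-fiber count to be reduced; this transversality over $\bR$ for osculating flags on $\bP^1$ is exactly what MTV provides and is not otherwise obvious. Without it, you get the correct length but not reducedness on $C_0$, and your \'etaleness argument does not go through. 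You also need the balancing dimension count (as in \cite[\S 2]{fl}) that \emph{forces} each $E_j$-aspect to have ramification exactly $(d-r,d-r-1,\dots,d-r-1)$ at the node, rather than merely imposing it; your proposal gestures at this but does not supply the two-sided inequality.
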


\begin{proof}
We first show that the restriction of $\cY^{\pt}_{g,r,\vn,d}$ to the special fiber is reduced with the claimed number of points. Let $U$ be a limit linear series on $C_0$ with a collineation $\phi:\bC^{r+1}\to U_R\subset W$ lying in $\cY^{\pt}_{g,r,\vn,d}$. We first claim that the ramification sequence of $U$ on each $E_j$ must be $(d-r,d-r-1,\ldots,d-r-1)$ at the node, in which case there is a unique linear series on $E_j$ with this property. 

The argument is as in \cite[\S 2]{fl}. First, if there are strictly more than $r(d-r-1)+(d-r)$ ramification conditions on $E_j$, then no such $E_j$-aspect can exist, and when there are exactly this number of conditions, then no such $E_j$-aspect can exist unless the ramification sequence is equal to $(d-r,d-r-1,\ldots,d-r-1)$, in which case it is uniquely determined. On the other hand, if there are strictly fewer than this number of conditions at any $E_j$, then the total number of ramification conditions imposed on $C_{\spine}$ at the points $s_j$ is at least $rg+1$, and so the same must be true at $s_0$ on $R$. However, Proposition \ref{coll_genericity_points} then shows that no collineation $\bC^{r+1}\to W$ coming from a point of $\cY^{\pt}_{g,r,\vn,d}$ with $rg+1$ Schubert conditions at $p_0$ can exist.

Therefore, the ramification sequence at each $s_j$ on $C_{\spine}$ must be (at least) $(1,\ldots,1,0)$, corresponding to the class $\sigma_{1^r}$. Arguing as above, the number of Schubert conditions imposed at $s_0$ on $C_{\spine}$ must be exactly $(r+1)(d-r)-rg$, and at $s_0$ on $R$ must be exactly $rg=(r+1)(d-r)-r(n-r-2)$. For any pair $(\lambda,\overline{\lambda})$ of complementary partitions in $(r+1)^{d-r}$ of these sizes, we obtain, by the Mukhin-Tarasov-Varchenko Theorem \cite[Theorem 6.6]{mtv} (guaranteeing the transversality of the cycles in the first integral) and Proposition \ref{coll_genericity_points} (guaranteeing the transversality in the second),
\begin{equation*}
\left(\int_{\Gr(r+1,d+1)}\sigma^g_{1^r}\cdot \sigma_\lambda\right)\cdot\left(\int_{\Coll(V,W)}\pi^{*}(\sigma_{\overline{\lambda}})\cdot\prod_{j=1}^{r}\gamma_{j}^{n_{r-j}}\right)
\end{equation*}
reduced points of $\cY^{\pt}_{g,r,\vn,d}$. Summing over all pairs of complementary partitions $(\lambda,\overline{\lambda})$ inside $(d-r)^{r+1}$ and using the fact that $\sigma_\lambda,\sigma_{\overline{\lambda}}$ are dual bases of $H^{*}(\Gr(r+1,d+1))$ yields the first part of the claim.

Now, because $\cY^{\pt}_{g,r,\vn,d}$ is cut out locally on the special fiber of $\Coll(\cC/B)$ by $rg+|\vn|=(r+1)(d+1)-1$ equations, and is reduced of the expected dimension of zero in the special fiber, the same is true in the general fiber. Finally, a geometric point of the generic fiber of length greater than 0 or with a base point would specialize to a limit linear series with a complete collineation on $R$ with the same property (possibly after semi-stable reduction, if the base-point specializes to the node $s_0\in R$), which does not exist by Proposition \ref{coll_genericity_points}. This completes the proof.
\end{proof}

\begin{proof}[Proof of Theorem \ref{thm:coll}]
A point of $\cM^\circ_{g,n}(\bP(V^\vee),d)$ is interpreted as an \emph{injective} map $\phi:V\to H^0(C,\cL)$, where $\cL$ is a line bundle of degree $d$ on $C$, such that the image of $\phi$ is not contained in any $H^0(C,\cL(-p_i))$. Proposition \ref{lls_count} implies that any point in the general fiber of $\cY^{\pt}_{g,r,\vn,d}\to B$ has this form. In fact, a standard local computation shows that the pullback of the map $\tau:\cM^\circ_{g,n}(\bP(V^\vee),d)\to\cM_{g,n}\times X^n$ by the inclusion of $[(C,p_1,\ldots,p_n)]\times\prod X_i$ is isomorphic as a \textit{scheme} to the general fiber of $\cY^{\pt}_{g,r,\vn,d}\to B$. The Theorem now follows from the rest of Proposition \ref{lls_count}, giving the degree of $\cY^{\pt}_{g,r,\vn,d}$ over $B$.
\end{proof}

\section{Comparison to Young tableaux}\label{asymptotic_sec}

\begin{defn}\label{def_coeffs}
Define the integers $\Gamma^\lambda_{r,\vn,d}$ by
\begin{equation*}
\Gamma_{r,\vn,d}=:\sum_{\lambda}\Gamma^\lambda_{r,\vn,d}\cdot\sigma_\lambda,
\end{equation*}
where the sum is over $\lambda\subset(d-r)^{r+1}$ with $|\lambda|=|\vn|-r(r+2)$.
\end{defn}
Clearly, these coefficients determine the classes $\Gamma_{r,\vn,d}$, and by Theorem \ref{thm:coll}, the curve counts $\T_{r,\vn,d}$ via Schubert calculus on $\Gr(r+1,d+1)$. Moreover, because $\Gamma_{r,\vn,d}$ is the push-forward of an effective cycle from $\Coll(V,W)$, it is itself effective on $\Gr(r+1,W)$. Therefore, for any $\lambda$, the coefficient $\Gamma^\lambda_{r,\vn,d}$ is equal, by Kleiman-Bertini, to the number of points of intersection in $\Gr(r+1,d+1)$ of $Z_{r,\vn,d}$ and a generic Schubert variety of class $\sigma_{\overline{\lambda}}$, where $\overline{\lambda}$ is the complement of $\lambda$ in the rectangular partition $(d-r)^{r+1}$. In particular, we have $\Gamma^\lambda_{r,\vn,d}\ge0$ for all $\lambda$.

In this section, we show that, under some conditions, we have the simple formula $\Gamma^\lambda_{r,\vn,d}=|\SSYT_{r+1}(\lambda)|$, where the right hand side is a count of semi-standard Young tableaux, see \S\ref{yt_sec}. We do so by replacing the space $\Coll(V,W)$ with a more na\"{i}ve resolution of the rational map $\pi:\bP\Hom(V,W)\dashrightarrow\Gr(r+1,W)$. The calculation is essentially the main one of \cite{fl}, in a more general setting. We will deduce that $\Gamma^\lambda_{r,\vn,d}\le |\SSYT_{r+1}(\lambda)|$ in general, and that the virtual (Gromov-Witten) count $(r+1)^g$ gives an upper bound for $\T^{\bP^r}_{g,\vn,d}$.

\subsection{Calculation on $S(V,W)$}

\begin{defn}
Let $S(V,W)\subset \bP\Hom(V,W)\times\Gr(r+1,W)$ be the locus of pairs $(\phi,U)$, where $\phi:V\to W$ is a non-zero linear map (up to scaling) and $U\in \Gr(r+1,W)$ satisfies $U\supset \im(\phi)$. 

Abusing notation, let $\pi:S(V,W)\to \Gr(r+1,W)$ be the projection remembering $U$, and let $\psi:S(V,W)\to\bP\Hom(V,W)$ be the projection remembering $\phi$.
\end{defn}

\begin{equation*}
\xymatrix{
 & S(V,W) \ar[dl]_{\pi} \ar[dr]^{\psi} & \\
 \Gr(r+1,W) & & \bP\Hom(V,W)
 }
 \end{equation*}

The map $\pi$ is the $\bP^{(r+1)^2-1}$-bundle associated to the vector bundle $\cU^{r+1}$ on $\Gr(r+1,W)$, where $\cU$ is the universal subbundle. Indeed, the fiber over a point $U\in \Gr(r+1,W)$ is the space of non-zero linear maps $\bC^{r+1}\to U$, up to scaling. Over the locus of injective maps $\phi\in\bP\Hom(V,W)$, the map $\psi$ is an isomorphism, as it must be the case that $U=\im(\phi)$. When $r=1$, we have $S(V,W)=\Coll(V,W)$. Indeed, if $\phi:V\to W$ has rank 1, the point $(\phi,U)\in S(V,W)$ is identified with the length 1 collineation given by $\phi_0=\phi$ and $\phi_1:\bC\cong\ker(\phi)\to\coker(\phi)$ the unique map for which $\im(\{\phi_j\}_{j=0}^1)=U$.

Let $L\subset V$ be a linear subspace of dimension $r-k$ and let $M\subset W$ be a hyperplane. We (again, abusively) denote by $\Inc'(L,M)\subset S(V,W)$ the \emph{pullback} of $\Inc'(L,M)\subset\bP\Hom(V,W)$ by $\psi$. Then, in terms of the projective bundle description of $\pi:S(V,W)\to \Gr(r+1,W)$, the locus $\Inc'(L,M)$ is scheme-theoretically a relative linear subspace of codimension $r-k$ and class $c_1(\cO_{\bP(\cU^{r+1})}(1))^{r-k}$.

Fix now general linear subspaces $L_1,\ldots,L_n\subset V$, where $L_i$ has codimension $r-k_i$, and general hyperplanes $M_1,\ldots,M_n\subset W$. Define
\begin{equation*}
Y'_{r,\vn,d}=\bigcap_{i=1}^{n}\Inc'(L_i,M_i)\subset S(V,W).
\end{equation*}

\begin{prop}\label{asymptotic_genericity}
Assume one of the following:
\begin{enumerate}
\item[(a)] $n_0\ge d+2$, or
\item[(b)] $n_0=\cdots=n_{r-2}=0$ (that is, $k_i=r-1$ for all $i$, so the $L_i$ are all lines), or
\item[(c)] $r=2$ and $n\ge d+3$.
\end{enumerate}
Then, the intersection $Y'_{r,\vn,d}$ lies generically in the locus where $\phi$ is injective and has image not contained in any of the $M_i$. Furthermore, the subscheme $Y'_{r,\vn,d}$ is generically smooth of the expected codimension of $|\vn|$.
\end{prop}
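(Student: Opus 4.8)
The plan is to reduce the statement to a dimension count on the rank strata of $S(V,W)$. For $1\le e\le r+1$ write $S_e\subset S(V,W)$ for the locally closed locus where $\rank(\phi)=e$; on $S_{r+1}$ the map $\phi$ is injective and $\psi$ restricts to an isomorphism onto the open subset $\{\phi\text{ injective}\}\subset\bP\Hom(V,W)$, on which each $\Inc'(L_i,M_i)$ restricts to an open subset of a linear subspace of codimension $r-k_i$. Since the loci $\Inc'(L_i,M_i)\cap S_{r+1}$ are translates, under the transitive $GL(V)\times GL(W)$-action on the injective locus, of fixed subvarieties, Kleiman--Bertini shows that for general $L_i,M_i$ the intersection $Y'_{r,\vn,d}\cap S_{r+1}$ is either empty or generically reduced --- hence generically smooth, as the ambient is smooth --- of the expected codimension $|\vn|$, and that imposing in addition $\phi(V)\subset M_i$ cuts out $k_i+1\ge 1$ further conditions, so that generically $\phi(V)\not\subset M_i$. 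Thus the whole content of the Proposition is the claim that, under (a), (b), or (c), $Y'_{r,\vn,d}$ has no irreducible component contained in $\bigcup_{e\le r}S_e$: granting this, $Y'_{r,\vn,d}=Y'_{r,\vn,d}\cap S_{r+1}$ and we are done (nonemptiness being clear, e.g.\ by exhibiting one injective $\phi$ with $\phi(L_i)\subset M_i$ for all $i$).

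To control $Y'_{r,\vn,d}\cap S_e$ for $e\le r$, I would parametrize a point of $S_e$ by its image $U'=\im(\phi)\in\Gr(e,W)$, a surjection $\psi_0\colon V\twoheadrightarrow U'$ up to scalar, and a subspace $U\supset U'$ in $\Gr(r+1,W)$, the last of which contributes $(r+1-e)(d-r)$ to the dimension. The condition $\phi(L_i)\subset M_i$ is vacuous exactly when $U'\subset M_i$; fixing the subset $I=\{i:U'\subset M_i\}$, the allowed $U'$ form an open subset of $\Gr\big(e,\bigcap_{i\in I}M_i\big)$, of dimension $e(d+1-|I|-e)$ and nonempty only if $|I|\le d+1-e$, while for $i\notin I$ the condition $\psi_0(L_i)\subset M_i\cap U'$ imposes $r-k_i$ linear conditions on $\psi_0\in\Hom(V,U')$; by Kleiman--Bertini on the dense orbit of surjections in $\Hom(V,U')$ these are independent for general $L_i,M_i$, so a surjective $\psi_0$ exists only when $\sum_{i\notin I}(r-k_i)<(r+1)e$. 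Hence the sub-stratum indexed by $I$ is empty unless $|I|\le d+1-e$ and $\sum_{i\in I}(r-k_i)>|\vn|-(r+1)e$, in which case its dimension equals $rg-(r+1-e)^2+\sum_{i\in I}\big((r-k_i)-e\big)$. The term $I=\emptyset$ gives the expected dimension $rg-(r+1-e)^2<rg$; and under (b) every $L_i$ is a line, so $(r-k_i)-e=1-e\le0$ and every sub-stratum has dimension $<rg$, which settles case (b).

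For case (a) I claim all of these sub-strata are in fact empty. If $I$ satisfies the two inequalities above, then $r-k_i\le r$ and $|I|\le d+1-e$ give $|\vn|-(r+1)e<r(d+1-e)$, i.e.\ $|\vn|<rd+r+e$; substituting $|\vn|=(r+1)(d+1)-1-rg$ from \eqref{general_numerology} yields $e>d-rg$. But $n_0\ge d+2$ together with \eqref{general_numerology} forces $|\vn|\ge rn_0\ge r(d+2)$, hence $d\ge rg+r$, so $e>d-rg\ge r$, contradicting $e\le r$. Thus $Y'_{r,\vn,d}\cap S_e=\emptyset$ for $1\le e\le r$. For case (c), $r=2$, only $e\in\{1,2\}$ occur: for $e=2$ the stratum has dimension $\le rg-1<rg$ (any nonempty $I\ne\emptyset$ only decreases it, since $(r-k_i)-2=-k_i\le0$), while for $e=1$ the map $\phi$ factors as $V\twoheadrightarrow\bC\hookrightarrow W$ and is recorded by a plane $H=\ker(\psi_0)\subset V\cong\bC^3$ and a line $U'\subset W$, with $\phi(L_i)\subset M_i$ iff $L_i\subset H$ or $U'\subset M_i$. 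For general $L_i$, a plane $H\subset\bC^3$ contains at most two of the $L_i$ (two general lines span a unique plane; a general plane $L_i$ equals $H$ for at most one $i$; no plane contains three general lines or equals two distinct general planes), so setting $J=\{i:L_i\subset H\}$ one has $|J|\le2$ and $U'\subset\bigcap_{i\notin J}M_i$, forcing $|J|\ge n-d$ and therefore $n\le d+2$; since (c) assumes $n\ge d+3$, the stratum $S_1$ is empty, completing the argument.

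The main obstacle is the $r=2$, $e=1$ analysis: unlike case (a), emptiness cannot be read off from crude numerics, and one must use the geometry of subspace arrangements in $\bC^3$; in particular at the boundary $n=d+3$ one should expect to need a genuine by-hand count of the moduli of the data $(H,U',U)$ to confirm that the stratum is not a component, which is the ``much more delicate'' phenomenon noted in the introduction. The other recurring technical point to handle carefully is the claimed independence of the incidence conditions at each stage, which in every instance is reduced to Kleiman--Bertini on the relevant homogeneous space --- the injective locus in $\bP\Hom(V,W)$, respectively the dense orbit of surjections in $\Hom(V,U')$.
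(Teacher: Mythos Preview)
Your approach is a uniform dimension count on the rank strata of $S(V,W)$, which is a different route from the paper. The paper treats the three cases quite differently: for (a) it gives a direct elementary argument showing that no $(\phi,U)$ with $\rank\phi\le r$ can lie in $Y'_{r,\vn,d}$ at all (if $L=\ker\phi$ and $L_i$ is a hyperplane with $L\not\subset L_i$, then $\phi(L_i)=\im\phi$, forcing $\im\phi\subset M_i$; since at most $\rank\phi$ of the $d+2$ general hyperplanes $L_i$ can contain $L$, one gets $\im\phi$ inside $\ge d+2-\rank\phi$ general hyperplanes of $W$, which is impossible). For (b) and the $e=2$ part of (c), the paper pulls back to $\Coll(V,W)$ and invokes the already-established Corollary~\ref{kleiman_inc} and Lemma~\ref{lem:Yirred}, which give irreducibility of $Y_{r,\vn,d}$ with generic point of full rank; only the $e=1$ part of (c) is argued directly, and there your argument agrees with the paper's.

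There is a genuine gap in your argument. Your dimension formula for the sub-stratum indexed by $I$, and in particular the ``necessary condition'' $\sum_{i\notin I}(r-k_i)<(r+1)e$, presuppose that the linear conditions $\psi_0(L_i)\subset M_i\cap U'$ are \emph{independent} on $\Hom(V,U')$. Kleiman--Bertini does not give this: acting by $GL(V)\times GL(U')$ on the dense orbit of surjections shows that for \emph{fixed} $U'$ and \emph{general} $L_i,M_i$ the conditions are independent, but you need the bound to hold for general $L_i,M_i$ and \emph{every} $U'$ in the stratum simultaneously. Without independence the solution space for $\psi_0$ may be larger than expected, so neither your emptiness claim in (a) nor your dimension bound in (b) and (c) follows. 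This is exactly why the paper sets up a relative incidence correspondence (as in the proof of Proposition~\ref{coll_genericity_points}) and projects the other way, computing fiber dimensions over points of $\Coll$ rather than over $U'$. Your strategy can be salvaged by such an incidence argument, but as written the invocation of Kleiman--Bertini is not sufficient; for case (a) specifically, it is much cleaner to use the paper's direct observation that a hyperplane $L_i$ not containing $\ker\phi$ forces $\im\phi\subset M_i$.
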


\begin{proof}
We first prove the claim under assumption (a). Without loss of generality, suppose that $L_1,\ldots,L_{d+2}\subset V$ are all  (general) hyperplanes.

Let $(\phi,U)$ be any point (we will see that we need not assume this point be general) of the intersection of $Y'_{r,\vn,d}$, and suppose that $\phi$ has rank $r_0\le r$. Let $L=\ker(\phi)\subset V$. Let $k$ be the number of hyperplanes among $L_1,\ldots,L_{d+2}\subset V$ containing $L$. As the $L_i$ are general, we have $k\le r_0$. Now, for any $i$ for which $L\not\subset L_i$, we must have $\im(\phi)\subset M_i$. Indeed, if $\im(\phi)\not\subset M_i$, then $\phi^{-1}(M_i)$ is a proper hyperplane containing, and thus equal to, $L_i$, but $\phi^{-1}(0)=L$, a contradiction. Therefore, $\im(\phi)\subset W$ has dimension $r_0$, but is contained in the intersection of at least $d+2-k\ge d+2-r_0$ general hyperplanes, which is impossible. We conclude that $\phi$ must be injective.

From here onward, it suffices to work instead on the open locus where $\phi$ is injective, and therefore on the open subset of $\bP\Hom(V,W)^{\circ}$ consisting of injective maps. We now relativize the construction of the loci $\Inc'(L_i,M_i)\subset\bP\Hom(V,W)^{\circ}$. Define
\begin{equation*}
\cY\subset \bP\Hom(V,W)^{\circ}\times \prod_{i=1}^{n}\Gr(r-k_i,V)\times (\bP(W^\vee))^n
\end{equation*}
by the intersection of the linear subspaces $\Inc'(L_i,M_i)$ upon restriction to any collection of $L_i\in V$ and $M_i\in W$. We claim that $\cY$ is smooth of the expected codimension $|\vn|$. Indeed, the projection 
\begin{equation*}
\cY\to \bP\Hom(V,W)^{\circ}\times \prod_{i=1}^{n}\Gr(r-k_i,V)
\end{equation*}
is a relative product of Grassmannian bundles of relative dimension $\sum_{i=1}^{n}k_i$: the fiber over a point $(\phi,L_1,\ldots,L_n)$ consists of space of hyperplanes $M_i$ with $M_i\supset \phi(L_i)\cong\bC^{r-k_i}$. Now, by generic smoothness, restricting to a general point $(\{L_i\},\{M_i\})\in   \prod_{i=1}^{n}\Gr(r-k_i,V)\times (\bP(W^\vee))^n$ yields that the intersection of the $\Inc'(L_i,M_i)$ is irreducible and generically smooth of the expected dimension. (In fact, this intersection is smooth, because it is a linear subspace of $\bP\Hom(V,W)$.)

Finally, suppose now that for a general point $\phi\in Y'_{r,\vn,d}$, we have $\im(\phi)\subset M_i$ for some $i$; without loss of generality, take $i=n$. (We may drop the assumption that the $L_1,\ldots,L_{d+2}$ are all hyperplanes, in case $n_0=d+2$, which would not allow $L_n$ to be a hyperplane; the argument that follows works for $L_n$ of any dimension.) Then, we are in the following situation: we have general subspaces $L_1,\ldots,L_{n-1}\subset V$ and hyperplanes $M_1\cap M_n,\ldots,M_1\cap M_{n-1}\subset M_n$, and, at least set-theoretically, we have
\begin{equation*}
\bigcap_{i=1}^{n}\Inc'(L_i,M_i)=\left(\bigcap_{i=1}^{n-1}\Inc'(L_i,M_i\cap M_n)\right)\cap S(V,M)
\end{equation*}
We may argue by induction that the right hand side (if non-empty) has codimension $|\vn|-(r-k_n)$ in $S(V,M)$, and hence the left hand side has codimension $|\vn|+k_n+1$ in $S(V,W)$. Thus, the closure of the original point $\phi$ has codimension greater than the expected, and $\phi$ cannot be general. This completes the proof under assumption (a).

Under assumption (b), Proposition \ref{inc_set_theoretic} implies that $Y'_{r,\vn,d}$ pulls back exactly to $Y_{r,\vn,d}\subset\Coll(V,W)$ under the map $b':\Coll(V,W)\to S(V,W)$ remembering the data of $\phi_0$ and $\im(\phi)$. Then, the claim follows from Proposition \ref{cor:lambdaempty}.

Finally, assume that we are in the setting of assumption (c). Let $(\phi,U)$ be a general point of $Y'_{r,\vn,d}$. If $\phi$ is injective, then the further statements follows from Proposition \ref{cor:lambdaempty}. If $\phi$ has rank 2, then $(\phi,U)$ can be identified with a point of $\Coll_{(2,1)}(V,W)$, and we get a contradiction, again from Proposition \ref{cor:lambdaempty}. Finally, if $\phi$ has rank 1, then for each $i$, either $M_i\supset \im(\phi)$ or $L_i\subset \ker(\phi)$. The latter can hold for at most two $i$ if the $L_i$ are general, so at least $n-2\ge d+1$ of the $M_i$ must contain $\im(\phi)$. However, if the $M_i$ are general, the intersection of $d+1$ of the $M_i$ is zero, a contradiction.
\end{proof}

One can refine the arguments of Proposition \ref{asymptotic_genericity} to obtain the same conclusion under more general assumptions, but we do not carry out a detailed analysis here.

\begin{cor}\label{asymptotic_formula}
Assume the setup of Proposition \ref{asymptotic_genericity}, and one of the hypotheses (a), (b), (c). Then,
\begin{equation*}
\Gamma_{r,n,d}=\sum_{a_0+\cdots+a_r=|\vn|-r(r+2)}\sigma_{a_0}\cdots\sigma_{a_r}.
\end{equation*}
In particular,
\begin{equation*}
\Gamma^{\lambda}_{r,\vn,d}=|\SSYT_{r+1}(\lambda)|.
\end{equation*}
for all $\lambda\subset(d-r)^{(r+1)}$ with $|\lambda|=|\vn|-r(r+2)$.
\end{cor}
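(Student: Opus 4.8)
The plan is to compute the pushforward $\pi_*([Y'_{r,\vn,d}])$ on the projective bundle $S(V,W) = \bP(\cU^{r+1}) \to \Gr(r+1,W)$, using Proposition \ref{asymptotic_genericity} to identify this pushforward with $\Gamma_{r,\vn,d}$. By that Proposition, under any of the hypotheses (a), (b), (c), the subscheme $Y'_{r,\vn,d}$ is generically smooth of the expected codimension $|\vn|$ and is supported generically where $\phi$ is injective; on that open locus $\psi$ is an isomorphism onto the locus of injective maps in $\bP\Hom(V,W)$, and the proper transform of $\Inc'(L_i,M_i) \subset \bP\Hom(V,W)$ under $b'\colon \Coll(V,W) \to S(V,W)$ agrees with $\Inc(L_i,M_i)$ there. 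Hence $Y'_{r,\vn,d}$ and $Y_{r,\vn,d}$ have the same class pushed forward to $\Gr(r+1,W)$, so $\pi_*([Y'_{r,\vn,d}]) = \Gamma_{r,\vn,d}$.

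Next I would compute $[Y'_{r,\vn,d}]$ directly. Each $\Inc'(L_i,M_i) \subset S(V,W)$ is, fiberwise over $\Gr(r+1,W)$, a relative linear subspace of codimension $r - k_i$, with class $\xi^{r-k_i}$ where $\xi = c_1(\cO_{\bP(\cU^{r+1})}(1))$; this is exactly what the excerpt records just before the definition of $Y'_{r,\vn,d}$. Since the intersection has the expected codimension $|\vn| = \sum_i (r - k_i)$ by Proposition \ref{asymptotic_genericity}, we get
\begin{equation*}
[Y'_{r,\vn,d}] = \prod_{i=1}^n \xi^{r-k_i} = \xi^{|\vn|}.
\end{equation*}
Therefore $\Gamma_{r,\vn,d} = \pi_*(\xi^{|\vn|})$, and the whole problem reduces to computing Segre classes of the bundle $\cU^{r+1}$ on $\Gr(r+1,W)$.

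The key step is then the Segre class computation. We have $\pi_*(\xi^{|\vn|}) = s_{|\vn| - (r+1)^2 + 1}(\cU^{r+1})$, the appropriate Segre class of the rank-$(r+1)^2$ bundle $\cU^{r+1} = \cU^{\oplus (r+1)}$. Since $s(\cU^{\oplus(r+1)}) = s(\cU)^{r+1}$ and, on $\Gr(r+1,W)$, the total Segre class of the universal subbundle satisfies $s(\cU) = 1/c(\cU) = c(\cQ^\vee)^{-1}\cdots$; more usefully, $s_a(\cU)$ is the Schubert class $\sigma_a$ (the special Schubert class indexed by a single row of length $a$), by the standard identification $c(\cU^\vee) = \sum \sigma_{1^i}$ and duality, or directly since the Segre classes of $\cU$ are represented by degeneracy loci that are single-row Schubert varieties. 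Consequently
\begin{equation*}
\Gamma_{r,\vn,d} = \left(\sum_{a \ge 0} \sigma_a\right)^{r+1}_{\text{degree } |\vn| - r(r+2)} = \sum_{a_0 + \cdots + a_r = |\vn| - r(r+2)} \sigma_{a_0} \cdots \sigma_{a_r},
\end{equation*}
where I have used $(r+1)^2 - 1 = r(r+2)$ for the bookkeeping of the pushforward degree shift. Finally, the Schubert-class expansion of a product $\sigma_{a_0} \cdots \sigma_{a_r}$ is computed by iterated Pieri: the coefficient of $\sigma_\lambda$ in $\sigma_{a_0}\cdots\sigma_{a_r}$ counts the ways to build the Young diagram of $\lambda$ by successively adding horizontal strips of sizes $a_0, a_1, \ldots, a_r$, which is precisely the number of semistandard Young tableaux of shape $\lambda$ with entries in $\{1,\ldots,r+1\}$ (recording in box the step at which it was added). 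Summing over all $(a_0,\ldots,a_r)$ removes the constraint on strip sizes, and we obtain $\Gamma^\lambda_{r,\vn,d} = |\SSYT_{r+1}(\lambda)|$.

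The main obstacle is the first paragraph: one must be careful that the genericity/transversality hypotheses of Proposition \ref{asymptotic_genericity} really license replacing $Y_{r,\vn,d}$ by $Y'_{r,\vn,d}$ at the level of pushforward classes — in particular that no spurious components supported over the non-injective locus (where $\psi$ contracts positive-dimensional fibers) contribute to $\pi_*$. This is exactly what Proposition \ref{asymptotic_genericity} is designed to rule out, so the argument is really a matter of assembling that input correctly; the Segre/Pieri computation afterwards is routine, and is essentially the main calculation of \cite{fl} carried out in this slightly more general setting.
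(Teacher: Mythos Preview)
Your proposal is correct and follows essentially the same approach as the paper: use Proposition \ref{asymptotic_genericity} to replace $Y_{r,\vn,d}$ by $Y'_{r,\vn,d}$ at the level of pushforward classes, compute $[Y'_{r,\vn,d}]=\xi^{|\vn|}$ on the projective bundle $S(V,W)=\bP(\cU^{r+1})$, push forward to the Segre class $s_{|\vn|-r(r+2)}(\cU^{r+1})=\{s(\cU)^{r+1}\}_{|\vn|-r(r+2)}$, and then read off the Schubert coefficients via iterated Pieri. Your careful discussion of why the identification $\Gamma_{r,\vn,d}=\pi_*([Y'_{r,\vn,d}])$ holds is slightly more explicit than the paper's, but the substance is identical.
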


\begin{proof}
The second formula follows from the first by the Pieri rule. Proposition \ref{asymptotic_genericity} shows that 
\begin{equation*}
\Gamma_{r,n,d}=\pi_{*}([Y'_{r,\vn,d}])=\pi_{*}\left(\prod_{i=1}^{n}[\Inc'(L_i,M_i)]\right),
\end{equation*}
where here $\pi$ denotes the map $\pi:S(V,W)\to \Gr(r+1,W)$. The class of $\Inc'(L_i,M_i)$ is cut out by $r-k_i$ relative linear conditions in the projective bundle $\pi:S(V,W)\to \Gr(r+1,W)$. Therefore, the class of $\bigcap_{i=1}^{n} \Inc'(L_i,M_i)$ is given by $c_1(\cO(1))^{|\vn|}$, where $\cO(1)$ denotes the relative hyperplane class, and upon pushforward, we obtain the Segre class
\begin{align*}
s_{|\vn|-r(r+2)}(\cU^{r+1})&=\{s(\cU)^{r+1}\}_{|\vn|-r(r+2)}\\
&=\left\{\left(\sum_{a\ge0}\sigma_a\right)^{r+1}\right\}_{|\vn|-r(r+2)}\\
&=\sum_{a_0+\cdots+a_r=|\vn|-r(r+2)}\sigma_{a_0}\cdots\sigma_{a_r},
\end{align*}
as needed.
\end{proof}

\begin{cor}\label{asymptotic_tev}
Assume \eqref{general_numerology} and one of the hypotheses (a), (b), (c), and furthermore that $d\ge g+r$ (this is immediate in case (a)). Then,
\begin{equation*}
\T^{\bP^r}_{g,\overrightarrow{n},d}=(r+1)^g.
\end{equation*}
\end{cor}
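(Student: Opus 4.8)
The plan is to reduce $\T^{\bP^r}_{g,\vn,d}$ to a single intersection number on $\Gr(r+1,d+1)$ and then evaluate it. By Theorem \ref{thm:coll} and the projection formula along $\pi\colon\Coll(V,W)\to\Gr(r+1,W)$,
\[
\T^{\bP^r}_{g,\vn,d}=\int_{\Gr(r+1,d+1)}\sigma^g_{1^r}\cdot\Gamma_{r,\vn,d},
\]
and under any of the hypotheses (a), (b), (c), Corollary \ref{asymptotic_formula} identifies $\Gamma_{r,\vn,d}$ with $\sum_{a_0+\cdots+a_r=N}\sigma_{a_0}\cdots\sigma_{a_r}=s_N(\cU^{\oplus(r+1)})$, where \eqref{general_numerology} gives $N=|\vn|-r(r+2)=(r+1)d-rg-r(r+1)$. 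Thus it remains to show that
\[
\int_{\Gr(r+1,d+1)}\sigma^g_{1^r}\cdot s_N(\cU^{\oplus(r+1)})=(r+1)^g
\]
whenever $d\ge g+r$; note the left-hand side depends only on $r$, $g$, $d$, and not on the shape of $\vn$.

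In case (a) this is automatic: combining $n_0\ge d+2$ with $rn_0\le|\vn|=(r+1)(d+1)-1-rg$ forces $d\ge rg+r$, and the identity is then precisely the Schubert-calculus formula recorded in Theorem \ref{geomtev_larged}; this is why the hypothesis $d\ge g+r$ is redundant in case (a). For cases (b) and (c) I would evaluate the integral directly. The cleanest conceptual route is to recognize it, through the limit-linear-series picture underlying Theorem \ref{thm:coll}, as the ``expected number'' of honest maps computed on the Picard scheme of the fixed general curve $C$. Let $E$ be the sheaf on $\Pic^d(C)$ with $E_\cL=H^0(C,\cL)$, of generic rank $d+1-g\ge r+1$ --- this last inequality being exactly $d\ge g+r$. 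A nondegenerate $f\colon C\to\bP^r$ of degree $d$ is a pair $(\cL,[\iota])$ with $\iota\colon V\hookrightarrow H^0(C,\cL)$ injective and base-point-free, i.e.\ a point of the projective bundle $\bP\Hom(V,E)$ over $\Pic^d(C)$, and the condition $f(p_i)\in X_i$ cuts out the locus $\{\iota(L_i)\subset H^0(C,\cL(-p_i))\}$, of class $\zeta^{\,r-k_i}$, where $\zeta$ is the relative hyperplane class and one uses that $c_1$ of the restriction of a Poincar\'e bundle to $\Pic^d(C)\times\{p\}$ vanishes. Using Proposition \ref{asymptotic_genericity} (under (b) or (c)) for transversality and for the vanishing of all contributions from degenerate $\iota$, the count becomes
\[
\int_{\bP\Hom(V,E)}\zeta^{|\vn|}=\int_{\Pic^d(C)}s_g\!\big(E^{\oplus(r+1)}\big)=\int_{\Pic^d(C)}\big\{s(E)^{r+1}\big\}_g=(r+1)^g,
\]
the last step from the classical formula $c(E)=e^{-\theta}$ (so $s(E)=e^{\theta}$) and $\int_{\Pic^d(C)}\theta^g/g!=1$.

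I expect the main obstacle to be the honest bookkeeping for cases (b), (c) in the ``intermediate degree'' range $g+r\le d\le 2g-2$, where $E$ fails to be locally free over the Brill--Noether jumping locus of $\Pic^d(C)$: one must check (via a dimension count showing that, for general incidence data, the sought maps have $h^1(f^{*}\cO(1))=0$, and then a compactification obtained by twisting $\cL$, or a Porteous-type argument) that the computation above is unaffected, and that the transversality of Proposition \ref{asymptotic_genericity} genuinely covers this range --- it is precisely here that hypotheses (b), (c) enter, rather than merely $d\ge g+r$. A cleaner but less transparent alternative is to bypass the curve and evaluate the Schubert integral $\int_{\Gr(r+1,d+1)}\sigma^g_{1^r}\cdot s_N(\cU^{\oplus(r+1)})$ by a residue (Vafa--Intriligator-type) computation, in which the collapse to $(r+1)^g$ occurs as soon as $d+1-g\ge r+1$.
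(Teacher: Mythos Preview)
Your first paragraph is exactly the paper's argument: reduce via Theorem \ref{thm:coll} and Corollary \ref{asymptotic_formula} to the single Schubert-calculus identity
\[
\int_{\Gr(r+1,d+1)}\sigma_{1^r}^g\cdot\!\!\sum_{a_0+\cdots+a_r=N}\!\!\sigma_{a_0}\cdots\sigma_{a_r}=(r+1)^g,
\]
which, as you correctly observe, depends only on $(r,g,d)$ and not on $\vn$ or on which of (a), (b), (c) holds. At this point the paper simply cites \cite[Theorem 1.3]{grb} for the identity, or alternatively notes that $\T^{\bP^r}_{g,\vn,d}$ coincides with the virtual count $(r+1)^g$ of Theorem \ref{thm:vTev} because the bound $d\ge g+r$ forces every contributing stable map to be a non-degenerate map from $C$ itself. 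No further case split on (a), (b), (c) is needed once the reduction is made.

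Your Picard-variety computation is in spirit a re-derivation of that second alternative (and is precisely the approach of \cite{bdw}), but there is a concrete gap in how you justify it. Proposition \ref{asymptotic_genericity} is a statement on $S(V,W)$ with $\dim W=d+1$ and \emph{general} hyperplanes $M_i\subset W$; it says nothing about $\bP\Hom(V,E)$ over $\Pic^d(C)$, where the fibers have rank $d+1-g$ and the relevant hyperplanes are the special bp-hyperplanes $H^0(\cL(-p_i))$. The transversality and the vanishing of degenerate contributions you need over $\Pic^d(C)$ are true, but they require a separate argument (together with the treatment of the Brill--Noether jumping locus you already flag), not the cited proposition. All of this is bypassed by the one-line citation the paper uses, so your route works in principle but is both longer and not yet complete as written.
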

\begin{proof}
By Theorem \ref{thm:coll} and Corollary \ref{asymptotic_formula}, this reduces to the statement
\begin{equation*}
\int_{\Gr(r+1,d+1)}\sigma_{1^r}^g\cdot\left(\sum_{a_0+\cdots+a_r=|\vn|-r(r+2)}\sigma_{a_0}\cdots\sigma_{a_r}\right)=(r+1)^g,
\end{equation*}
which is \cite[Theorem 1.3]{grb}. Alternatively, one can compare $\T^{\bP^r}_{g,\overrightarrow{n},d}$ to the Gromov-Witten count, which is equal to $(r+1)^g$. The bound $d\ge g+r$ ensures that only non-degenerate maps out of $C$ contribute to the virtual count.
\end{proof}

\subsection{Upper bounds}\label{upper_bound}

\begin{lem}\label{2incs_intersection}
Fix general linear spaces $L,L'\subset V$ and hyperplanes $M,M'\subset W$. Let $\iota:\Coll(V,M)\hookrightarrow\Coll(V,W)$ be the inclusion. Let $M''$ be a hyperplane in $M$. Suppose that $\dim(L)+\dim(L')\ge r+1$. 

Then, we have an equality in $H^{*}(\Coll(V,W))$
\begin{equation*}
[\Inc(L,M)]\cdot[\Inc(L',M')]=[Y]+\iota_{*}[\Inc(L\cap L',M'')],
\end{equation*}
where $Y\subset \Coll(V,M)$ is a subscheme of pure codimension $\dim(L)+\dim(L')$.
\end{lem}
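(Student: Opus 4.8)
The plan is to compute the intersection $\Inc(L,M)\cap\Inc(L',M')\subset\Coll(V,W)$, separate off its component(s) of the expected codimension $\dim(L)+\dim(L')$ (these form $Y$) from a ``degenerate'' contribution concentrated along $\Coll(V,M)$, and identify the latter with $\iota_*[\Inc(L\cap L',M'')]$. The hypothesis $\dim(L)+\dim(L')\ge r+1$ enters only to guarantee, for general $L,L'$, that $L+L'=V$; hence $\dim(L\cap L')=\dim(L)+\dim(L')-(r+1)$, and no proper subspace $V_j\subset V$ occurring in a collineation can contain both $L$ and $L'$.

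First I would describe the intersection set-theoretically. By Proposition \ref{inc_set_theoretic}, membership of a collineation $\phi=\{\phi_j\colon V_j\to W_j\}$ in $\Inc(L,M)$ (resp.\ $\Inc(L',M')$) is governed by the conditions $(\dagger)_j$ at the levels $j$ for which $L$ (resp.\ $L'$) meets $V_j=\ker(\phi_{j-1})$ transversally, and by Corollary \ref{inc_exp_dim} each incidence locus meets every type stratum $\Coll_{\overrightarrow{r}}(V,W)$ in the expected codimension. Using $L+L'=V$ to rule out the bad coincidences of $L$ and $L'$ with the $V_j$, I would show that, away from the closed subvariety $\Coll(V,M)$---which is always contained in $\Inc(L,M)$, since a collineation with image inside $M$ satisfies every $(\dagger)_j$ for $M$---the two incidence conditions are independent and cut out a scheme of pure codimension $\dim(L)+\dim(L')$. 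Taking $Y$ to be the closure of this locus, $[Y]$ is effective of the asserted pure codimension by construction.

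It remains to account for $[\Inc(L,M)]\cdot[\Inc(L',M')]-[Y]$, a class supported on $\Coll(V,M)$, which appears because the condition defining $\Inc(L,M)$ degenerates identically along $\Coll(V,M)$. I would compute this correction by deformation to the normal cone of $\iota\colon\Coll(V,M)=\Coll(r+1,d)\hookrightarrow\Coll(V,W)=\Coll(r+1,d+1)$ (equivalently, by Fulton's excess intersection formula). The inclusion $\iota$ has explicit normal geometry, built from the line $W/M$ together with the universal collineation data; restricting the two incidence loci along $\iota$ turns $\Inc(L',M')$ into the incidence locus $\Inc(L',M\cap M')\subset\Coll(V,M)$, while $\Inc(L,M)$ becomes all of $\Coll(V,M)$ carrying excess normal directions of rank $\dim(V/L)$. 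Capping the Segre class of these excess directions against $[\Inc(L',M\cap M')]$ on $\Coll(V,M)$ should collapse---by a Pieri-type identity on the tautological $\gamma$-classes, parallel to the Segre-class computation in the proof of Corollary \ref{asymptotic_formula}---to the single class $[\Inc(L\cap L',M'')]$, the choice of hyperplane $M''\subset M$ being irrelevant at the level of cycle classes.

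The main obstacle will be this last step: locating the excess precisely and showing that its normal-bundle/Segre-class contribution reduces to exactly one tautological monomial pushed forward from $\Coll(V,M)$ rather than a longer sum. A possibly cleaner alternative is to bypass excess theory and prove the identity by a direct calculation in $H^{*}(\Coll(V,W))$: express $[\Inc(L,M)]$ and $[\Inc(L',M')]$ as pullbacks of linear classes corrected by the exceptional divisors of $b$, expand the product, and recognize the surviving cross terms as $\iota_*$ of tautological classes on $\Coll(V,M)$---trading the geometry for bookkeeping in the cohomology ring of the space of complete collineations.
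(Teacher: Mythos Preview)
Your approach has a genuine gap at its core. For \emph{general} hyperplanes $M,M'\subset W$, there is no excess along $\Coll(V,M)$: while $\Coll(V,M)\subset\Inc(L,M)$ as you note, the intersection $\Inc(L,M)\cap\Inc(L',M')\cap\Coll(V,M)$ has codimension $(r+1)+\dim(L')$ in $\Coll(V,W)$, which is \emph{strictly larger} than the expected codimension $\dim(L)+\dim(L')$ (since $\dim(L)\le r$). So the intersection $\Inc(L,M)\cap\Inc(L',M')$ is already proper, your $Y$ coincides with the full intersection, and $[\Inc(L,M)]\cdot[\Inc(L',M')]-[Y]=0$. There is nothing for excess intersection or deformation to the normal cone to produce; the term $\iota_*[\Inc(L\cap L',M'')]$ simply does not arise from a static computation with generic $M,M'$.

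The paper's argument is quite different and avoids this problem entirely: it \emph{degenerates} $M'$ to $M$ in a one-parameter family, sets $M''=\lim(M\cap M')$, and identifies $\iota(\Inc(L\cap L',M''))$ as a multiplicity-one component of the flat limit of $\Inc(L,M)\cap\Inc(L',M')$, with $Y$ defined as the union of the remaining components. Since everything is generically supported on the full-rank locus, where the loci are cut out by linear equations on an open subset of $\bP\Hom(V,W)$, the verification is elementary linear algebra. The $\iota_*$ term is thus a genuine \emph{degeneration} phenomenon (cf.\ the remark after the lemma, interpreting it as colliding the points $p,p'$), not an excess contribution present for generic data. Your alternative suggestion of a direct cohomological expansion could in principle recover the identity, but you would need to produce the decomposition without the geometric input of the degeneration, and you give no indication of how the cross terms organize into a single $\iota_*$ of a tautological class.
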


If $\dim(L)+\dim(L')=r+1$, then $[\Inc(L_1\cap L_2,M')]=[\Coll(V,M)]$. If $\dim(L)+\dim(L')<r+1$, then one has an analogous statement that we will not need.

\begin{proof}
Consider a 1-parameter family in which $M'$ degenerates to $M$. We get a corresponding family of intersections $\Inc(L,M)\cap\Inc(L',M')$. The class in $H^{*}(\Coll(V,W))$ of the flat limit is then equal to $[\Inc(L,M)]\cdot[\Inc(L',M')]$. Let $M''=\lim(M\cap M')\subset M$. Then, it suffices to identify $\iota(\Inc(L\cap L',M''))$ (scheme-theoretically) with a component of the flat limit; $Y$ will then be the union of the remaining components of the flat limit.

In fact, because the intersection $\Inc(L,M)\cap\Inc(L',M')$ is contained generically in the locus of collineations of full rank, as is $\iota(\Inc(L\cap L',M''))$, it suffices to work on the locus of collineations of full rank, which we identify with an open subset of $\bP\Hom(V,W)$. Then, the loci in question are cut out by linear equations, and it is straightforward to verify the claim directly.
\end{proof}
%
%\begin{rem}
%The preceding lemma can also be cast in the language of maps to $\bP^r$. The operation of degenerating $M'$ to $M$ corresponds to the collision of two points $p,p'\in\bP^1$, and the codimension 2 linear space $M''\subset W$ may be regarded as $H^0(\bP^1,\cO(d)(-2p))$. The statement of the lemma is then that the loci of maps satisfying the conditions $f(p)\in X$ and $f(p')\in X'$, in the limit, contains as a multiplicity 1 component the locus of maps $f$ with a simple base-point at $p$, for which, after twisting down by this base-point, we have $f(p)\in\langle X,X'\rangle$.
%\end{rem}

\begin{prop}\label{bound_schubert}
Assume $r,\vn,d$ are arbitrary, and $\lambda\subset(d-r)^{(r+1)}$ with $|\lambda|=|\vn|-r(r+2)$. Then, we have $\Gamma^{\lambda}_{r,\vn,d}\le|\SSYT_{r+1}(\lambda)|$.
\end{prop}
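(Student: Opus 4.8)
The plan is to prove the bound $\Gamma^{\lambda}_{r,\vn,d}\le|\SSYT_{r+1}(\lambda)|$ by a degeneration argument, reducing the general case to the ``asymptotic'' case (a) of Proposition \ref{asymptotic_genericity}, where Corollary \ref{asymptotic_formula} gives equality $\Gamma^{\lambda}_{r,\vn,d}=|\SSYT_{r+1}(\lambda)|$. The mechanism is the following: in the class $\Gamma_{r,\vn,d}=\pi_{*}\left(\prod_{i=1}^{n}[\Inc(L_i,M_i)]\right)$, we are free to degenerate the hyperplanes $M_i\subset W$ into special position, since the cycle class is unchanged. Lemma \ref{2incs_intersection} tells us precisely what happens when two hyperplanes $M,M'$ collide: the product $[\Inc(L,M)]\cdot[\Inc(L',M')]$ breaks up as $[Y]+\iota_{*}[\Inc(L\cap L',M'')]$ for $Y\subset\Coll(V,M)$ of the expected codimension. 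The key point is that \emph{both} summands are effective, and we will argue that each, after pushforward by $\pi$, has Schubert coefficients bounded by $|\SSYT_{r+1}(\lambda)|$.

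First I would set up an induction on the ``defect'' measuring how far we are from case (a): concretely, induct on $\max(0,\, (d+2)-n_0)$, or more precisely on the total number of collisions needed. If $n_0\ge d+2$ we are done by Corollary \ref{asymptotic_formula}. Otherwise, I would collide one $M_i$ against another $M_{i'}$ (choosing, say, $i'$ with $L_{i'}$ a hyperplane if possible, or handling general $L$ using the analogous statement mentioned after Lemma \ref{2incs_intersection}), obtaining from Lemma \ref{2incs_intersection} the decomposition of the product of the two corresponding incidence classes as a sum of two effective terms. Multiplying through by the remaining $[\Inc(L_j,M_j)]$ and applying $\pi_{*}$, we write $\Gamma_{r,\vn,d}$ as a sum of two effective classes. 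The first term $[Y]$ and the second term $\iota_{*}[\Inc(L\cap L',M'')]$ both ``live on'' $\Coll(V,M)$ with $M$ a hyperplane; here $\Coll(V,M)\cong\Coll(r+1,d)$ is the space of collineations into a smaller ambient space, and $\pi$ restricted there lands in $\Gr(r+1,M)\cong\Gr(r+1,d)\subset\Gr(r+1,d+1)$. The inductive hypothesis applied in the space $\Coll(V,M)$ bounds the Schubert coefficients of the pushforwards of these classes in $\Gr(r+1,d)$; one then checks that the inclusion $\Gr(r+1,d)\hookrightarrow\Gr(r+1,d+1)$ (a Schubert subvariety) sends Schubert classes to Schubert classes, so the coefficients on $\Gr(r+1,d+1)$ are no larger. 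Combining, $\Gamma^{\lambda}_{r,\vn,d}$ is bounded by a sum of two quantities each of the form $|\SSYT_{r+1}(\mu)|$ for partitions $\mu$ obtained by the collision bookkeeping, and I would verify that this sum is in turn $\le|\SSYT_{r+1}(\lambda)|$ — this last combinatorial comparison is exactly a Pieri-type identity reflecting the fact that in $\SSYT_{r+1}(\lambda)$ the tableaux split according to the behavior of their last column, matching the two effective pieces $[Y]$ and $\iota_{*}[\Inc(L\cap L',M'')]$.

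The main obstacle I anticipate is controlling the term $[Y]$: Lemma \ref{2incs_intersection} only tells us its codimension, not its class, so we cannot directly name its Schubert expansion. The resolution is to observe that $Y$ is still of the form ``intersection of incidence loci in $\Coll(V,M)$'' — indeed $Y$ is cut out, on the full-rank locus which is an open subset of projective space, by the linear equations defining $\Inc(L,M)\cap\Inc(L',M)$ together with the remaining $\Inc(L_j,M_j)$, so it is a $Y^{\pt}$-type or $Y'$-type subscheme for the data $(r,\vn',d-1)$ where $\vn'$ records the subspaces $L,L',\{L_j\}$ now sitting inside $V$ with ambient $M$; its pushforward class $\pi_{*}[Y]$ is therefore $\Gamma_{r,\vn',d-1}$ (or bounded by it), to which induction applies. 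Making this identification precise — in particular checking that $Y$ really is the relevant intersection-of-incidences subscheme and not something larger, and that the codimension count from Lemma \ref{2incs_intersection} matches the expected codimension for the smaller data — is the technical heart of the argument. A cleaner alternative, which I would pursue if the bookkeeping becomes unwieldy, is to prove the bound directly by the ``naïve resolution'' calculation of \S\ref{asymptotic_sec}: replace $\Coll(V,W)$ by $S(V,W)$, note that $Y_{r,\vn,d}\subset\Coll(V,W)$ maps to a subscheme of $S(V,W)$ whose class is dominated by $\prod[\Inc'(L_i,M_i)]=c_1(\cO(1))^{|\vn|}$ (effectivity of the residual components all pushing forward effectively), and conclude $\Gamma_{r,\vn,d}\le\pi_{*}(c_1(\cO(1))^{|\vn|})=s_{|\vn|-r(r+2)}(\cU^{r+1})=\sum\sigma_{a_0}\cdots\sigma_{a_r}$ coefficientwise; the coefficient of $\sigma_\lambda$ in the Segre class is exactly $|\SSYT_{r+1}(\lambda)|$ by Pieri, giving the claim. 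I would present whichever of these two routes yields the shortest rigorous argument; the $S(V,W)$ route is likely preferable since it sidesteps the induction entirely, the only thing to check being that the birational map $\Coll(V,W)\dashrightarrow S(V,W)$ (resolved by $b':\Coll(V,W)\to S(V,W)$) carries $[Y_{r,\vn,d}]$ to a class $\le b'^{*}$ of an effective class, equivalently that $b'_{*}[Y_{r,\vn,d}]$ is effective and supported within $\bigcap\Inc'(L_i,M_i)$, which follows since $b'$ is a morphism and $Y_{r,\vn,d}=b'^{-1}$ of its image set-theoretically off the exceptional locus.
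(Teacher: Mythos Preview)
Both of your proposed routes have genuine gaps.

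\textbf{Route 1.} Your induction runs in the wrong direction. Colliding two hyperplanes $M_i,M_{i'}$ with $L_i,L_{i'}$ both hyperplanes lands you (via the $\iota_*$ term of Lemma~\ref{2incs_intersection}) in $\Coll(V,M)$ with $\dim M=d$, but the two point conditions are replaced by a single condition $\Inc(L_i\cap L_{i'},M'')$ with $\dim(L_i\cap L_{i'})=r-1$. So $n_0$ drops by $2$ while $d$ drops by $1$: the defect $(d+2)-n_0$ \emph{increases}, and you move away from regime~(a), not toward it. Even setting this aside, your inductive step writes $\Gamma_{r,\vn,d}=A+B$ with $A,B$ effective and then bounds $A^\lambda,B^\lambda$ separately; summing gives at best $2|\SSYT_{r+1}(\lambda)|$ unless you prove a nontrivial identity, which you have only asserted. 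Finally, Lemma~\ref{2incs_intersection} gives no description of the residual $Y$ beyond its codimension; the claim that $Y$ is again an intersection of incidence loci is unsupported.

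\textbf{Route 2.} The inequality $b'_*[Y_{r,\vn,d}]\le c_1(\cO(1))^{|\vn|}$ is exactly what fails outside hypotheses (a),(b),(c). In that range the scheme $\bigcap_i\Inc'(L_i,M_i)\subset S(V,W)$ has excess-dimensional components supported where $\phi$ drops rank, so the class $c_1(\cO(1))^{|\vn|}$ is \emph{not} represented by that intersection, and being supported inside it gives no coefficientwise bound. Equivalently, the difference $b'^*(c_1(\cO(1))^{|\vn|})-[Y_{r,\vn,d}]$ is a product of classes each of the form $H^{r-k_i}-\gamma_{r-k_i}$ plus cross terms; you would need all of these to push forward effectively, which is not automatic.

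\textbf{What the paper does.} The paper runs the degeneration in the opposite direction: it \emph{adds} $r+1$ new point conditions (hyperplanes $L_{n+1},\ldots,L_{n+r+1}$) in the larger space $\Coll(V,W')$ with $\dim W'=d+r+1$, and then applies Lemma~\ref{2incs_intersection} $r$ times to the \emph{new} $M_{n+j}$'s, successively intersecting the new $L$'s until $L_{n+1}\cap\cdots\cap L_{n+r+1}=0$ and the last incidence condition becomes vacuous. At each step the residual $[Y]$ is simply discarded as effective; no structural claim about $Y$ is needed. Pushing forward, one obtains that
\[
\pi_*\Bigl(\textstyle\prod_{i=1}^{n+r+1}[\Inc(L_i,M_i)]\Bigr)-\iota^{(r)}_*\pi_*\Bigl(\textstyle\prod_{i=1}^{n}[\Inc(L_i,M_i)]\Bigr)
\]
is effective on $\Gr(r+1,W')$, and since $\iota^{(r)}_*\sigma_\lambda=\sigma_{\lambda+r}$, the $\sigma_{\lambda+r}$-coefficient of this difference is $\Gamma^{\lambda+r}_{r,\vn+(r+1),d+r}-\Gamma^\lambda_{r,\vn,d}\ge0$. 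Iterating $k$ times reaches regime~(a), where Corollary~\ref{asymptotic_formula} gives $\Gamma^{\lambda+kr}_{r,\vn+k(r+1),d+kr}=|\SSYT_{r+1}(\lambda+kr)|=|\SSYT_{r+1}(\lambda)|$. The key conceptual point you were missing is that one should compare $\Gamma^\lambda$ to a \emph{single larger} $\Gamma^{\lambda'}$ via a monotone chain, rather than decompose it as a sum.
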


\begin{proof}
For any integer $k\ge0$, we write $\lambda+kr$ for the partition $(\lambda_0+kr,\ldots\lambda_r+kr)$ and $\vn+k(r+1)$ for the tuple $(n_0+k(r+1),n_1,\ldots,n_r)$. We will show more generally that
\begin{equation*}
\Gamma^{\lambda}_{r,\vn,d} \le \Gamma^{\lambda+r}_{r,\vn+(r+1),d+r}
\end{equation*}
so that
\begin{equation*}
\Gamma^{\lambda}_{r,\vn,d} \le \Gamma^{\lambda+kr}_{r,\vn+k(r+1),d+kr}=|\SSYT_{r+1}(\lambda)|
\end{equation*}
for any $k$ sufficiently large, by Corollary \ref{asymptotic_formula} under assumption (a).

Fix now $L_1,\ldots,L_{n+r+1}\subset V$ general, where $L_{n+1},\ldots,L_{n+r+1}\subset V$ are hyperplanes and the dimensions of the remaining $L_i$ are determined (in some order) by $\vn$. Fix general hyperplanes $M_1,\ldots,M_{n+r+1}\subset W$. Consider the product
\begin{equation*}
\bigcap_{i=1}^{n+r+1}[\Inc(L_i,M_i)]
\end{equation*}
and its pushforward by $\pi$. We will apply Lemma \ref{2incs_intersection} $r$ times, first with $$(L,L',M,M')=(L_{n+r},L_{n+r+1},M_{n+r},M_{n+r+1}).$$ We have
\begin{equation*}
\prod_{i=1}^{n+r+1}[\Inc(L_i,M_i)]=\left(\prod_{i=1}^{n+r-1}[\Inc(L_i,M_i)]\right)\cdot\left([Y]+\iota_{*}[\Inc(L_{n+r}\cap L_{n+r+1},M'')]\right),
\end{equation*}
for some $Y\subset\Coll(V,W)$ and hyperplane $M''\subset M_{n+r}$. 

For general choices of $L_i,M_i$, the intersection of $Y$ with the remaining $\Inc(L_i,M_i)$ may be taken to have the expected codimension. Indeed, we may apply Kleiman-Bertini to a stratum $\Coll_{\overrightarrow{r}}(V,W)$ on which $Y$ is supported, and on which, by Corollary \ref{inc_exp_dim}, the $\Inc(L_i,M_i)$ have the expected codimension. In particular, this intersection with $Y$ pushes forward to an effective cycle on $\Gr(r+1,W)$. On the other hand, by the projection formula and the fact that $\iota^{*}\Inc(L_i,M_i)=\Inc(L_i,M_i\cap M_{n+r})$, we have
\begin{align*}
&\left(\prod_{i=1}^{n+r-1}[\Inc(L_i,M_i)]\right)\cdot\iota_{*}[\Inc(L_{n+r}\cap L_{n+r+1},M'')]\\
=\iota_{*}&\left(\left(\prod_{i=1}^{n+r-1}\Inc(L_i,M_i\cap M_{n+r})\right)\cdot[\Inc(L_{n+r}\cap L_{n+r+1},M'')]\right).
\end{align*}

We repeat the argument, next with $L=L_{n+r-1}$ and $L'=L_{n+r}\cap L_{n+r+1}$, which now has dimension $r-1$. After $r+1$ iterations, we obtain that
\begin{equation*}
\pi_{*}\left(\bigcap_{i=1}^{n+r+1}[\Inc(L_i,M_i)]\right)-\iota^{(r)}_{*}\left(\bigcap_{i=1}^{n}[\Inc(L_i,M_i)]\right)
\end{equation*}
is an effective cycle on $\Gr(r+1,W)$, where $\iota^{(r)}:\Gr(r+1,W^r)\to\Gr(r+1,W)$ is the inclusion induced by a subspace $W^r\subset W$ of codimension $r$ (which can be taken to be equal to $M_{n+2}\cap\cdots \cap M_{n+r+1}$).\

The coefficient of $\sigma_{\lambda+r}$ in this difference is on the one hand non-negative, and on the other equal to precisely $\Gamma^{\lambda+r}_{r,\vn+(r+1),d+r}-\Gamma^{\lambda}_{r,\vn,d}$. This completes the proof.
\end{proof}

%\begin{rem}\label{degen_pts_rmk}
%The argument of Lemma \ref{2incs_intersection} shows that $Y$ is supported in the boundary of $\Coll(V,W)$. One can hope to compute $Y$ and employ an inductive strategy to compute the classes $\Gamma_{r,\vn,d}$, but the subscheme $Y$ is in general quite complicated with many components. We were only able to produce formulas this way in the case of geometric Tevelev degrees, when the $L_i$ are all hyperplanes. However, our computation in \S\ref{tevelev_sec} is simpler, so we have omitted this calculation.
%\end{rem}

\begin{cor}\label{Tev_bound}
Assume \eqref{general_numerology}. Then, $\T^{\bP^r}_{g,\overrightarrow{n},d}\le (r+1)^g$.
\end{cor}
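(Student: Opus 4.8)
The plan is to propagate the count $\T^{\bP^r}_{g,\vn,d}$ into the asymptotic regime covered by Corollary \ref{asymptotic_tev}, reusing the comparison inequality produced inside the proof of Proposition \ref{bound_schubert}.

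First, by Theorem \ref{thm:coll} and the projection formula one has $\T^{\bP^r}_{g,\vn,d}=\int_{\Gr(r+1,d+1)}\sigma_{1^r}^g\cdot\Gamma_{r,\vn,d}$. Since $\sigma_{1^r}^g$ is a product of Schubert classes, it pairs non-negatively with every Schubert class, so this integral can only increase when $\Gamma_{r,\vn,d}$ is replaced by a class dominating it coefficientwise in the Schubert basis. (If $(r+1)(d-r)-rg<0$ then $\Gamma_{r,\vn,d}$ lies in negative cohomological degree, whence $\T^{\bP^r}_{g,\vn,d}=0$ and there is nothing to prove; assume henceforth this quantity is non-negative.) Next, let $\iota\colon\Gr(r+1,d+1)\hookrightarrow\Gr(r+1,d+r+1)$ be the inclusion induced by a general codimension-$r$ subspace; its image is a rectangular Schubert variety with class $\sigma_{(r^{r+1})}$, and $\iota^{*}$ carries each Schubert class to the like-named one. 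Pulling the integral above up along $\iota$ via the projection formula gives $\T^{\bP^r}_{g,\vn,d}=\int_{\Gr(r+1,d+r+1)}\sigma_{1^r}^g\cdot\iota_{*}\Gamma_{r,\vn,d}$, where $\iota_{*}\Gamma_{r,\vn,d}=\Gamma_{r,\vn,d}\cdot\sigma_{(r^{r+1})}=\sum_\lambda\Gamma^\lambda_{r,\vn,d}\,\sigma_{\lambda+(r^{r+1})}$; multiplication by the full-height rectangle $\sigma_{(r^{r+1})}$ effects exactly the shift $\lambda\mapsto\lambda+r$ of Proposition \ref{bound_schubert}. That proof shows precisely that $\Gamma_{r,\vn+(r+1),d+r}-\iota_{*}\Gamma_{r,\vn,d}$ is an effective class on $\Gr(r+1,d+r+1)$, where $\vn+(r+1)=(n_0+(r+1),n_1,\ldots,n_r)$ (note \eqref{general_numerology} is preserved, as $|\vn+(r+1)|=|\vn|+r(r+1)$ matches the change $(r+1)(d+1)\mapsto(r+1)(d+r+1)$). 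Combining the last three sentences, $\T^{\bP^r}_{g,\vn,d}\le\int_{\Gr(r+1,d+r+1)}\sigma_{1^r}^g\cdot\Gamma_{r,\vn+(r+1),d+r}=\T^{\bP^r}_{g,\vn+(r+1),d+r}$.

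Iterating $k$ times yields $\T^{\bP^r}_{g,\vn,d}\le\T^{\bP^r}_{g,\vn+k(r+1),d+kr}$ for every $k\ge0$. For $k$ large one has $n_0+k(r+1)\ge(d+kr)+2$, i.e. hypothesis (a) of Proposition \ref{asymptotic_genericity} holds for the parameters $(\vn+k(r+1),\,d+kr)$; since (a) also forces $d+kr\ge g+r$, Corollary \ref{asymptotic_tev} applies and gives $\T^{\bP^r}_{g,\vn+k(r+1),d+kr}=(r+1)^g$. This completes the proof.

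The only non-formal ingredient is the effectivity of $\Gamma_{r,\vn+(r+1),d+r}-\iota_{*}\Gamma_{r,\vn,d}$, which is already in hand. I expect the one place requiring care to be the pull-up computation: identifying $\iota_{*}[\Gr(r+1,d+1)]$ with $\sigma_{(r^{r+1})}$ and tracking the bounding boxes of the two Grassmannians so that the Schubert-basis bookkeeping in $\iota^{*}$, $\iota_{*}$, and the rectangular shift matches the effective difference extracted from Proposition \ref{bound_schubert}.
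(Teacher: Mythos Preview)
Your proposal is correct and follows essentially the same route as the paper: both use the inequality $\Gamma^{\lambda}_{r,\vn,d}\le\Gamma^{\lambda+r}_{r,\vn+(r+1),d+r}$ extracted from the proof of Proposition~\ref{bound_schubert} to propagate $\T^{\bP^r}_{g,\vn,d}$ up to the asymptotic regime, then invoke Corollary~\ref{asymptotic_tev} under hypothesis~(a). Your packaging via $\iota_{*}$ and the projection formula is exactly the cycle-level restatement of the paper's coefficientwise comparison $\int_{\Gr(r+1,d+1)}\sigma_{1^r}^g\cdot\sigma_\lambda=\int_{\Gr(r+1,d+1+kr)}\sigma_{1^r}^g\cdot\sigma_{\lambda+kr}$; the only cosmetic slip is that $\vn=(n_0,\ldots,n_{r-1})$ has no $n_r$ entry.
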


\begin{proof}
Recall from the proof of Proposition \ref{bound_schubert} that $\Gamma^{\lambda}_{r,\vn,d} \le \Gamma^{\lambda+kr}_{r,\vn+k(r+1),d+kr}$ for any $k\ge0$. Therefore, we also have, by Theorem \ref{thm:coll}, that
\begin{align*}
\T^{\bP^r}_{g,\overrightarrow{n},d}&=\sum_\lambda \Gamma^{\lambda}_{r,\vn,d} \cdot \int_{\Gr(r+1,d+1)}\sigma_{1^r}^g\cdot\sigma_\lambda\\
&\le\sum_\lambda \Gamma^{\lambda+kr}_{r,\vn+k(r+1),d+kr} \cdot \int_{\Gr(r+1,d+1+kr)}\sigma_{1^r}^g\cdot\sigma_{\lambda+kr}\\
&\le \T^{\bP^r}_{g,\overrightarrow{n}+k(r+1),d+kr}\\
&=(r+1)^g
\end{align*}
if $k$ is sufficiently large.
\end{proof}

In particular, we have the inequality $\Tev^{\bP^r}_{g,n,d}\le \Tev^{\bP^r}_{g,n+r+1,d+r}$ for geometric Tevelev degrees. Thus, $\Tev^{\bP^r}_{g,n,d}$ is bounded below by the Castelnuovo count \eqref{castelnuovo_formula}, so is therefore strictly positive. This implies that the map $\cM^{\circ}_{g,n}(\bP^r,d)\to\cM_{g,n}\times X^n$ is dominant whenever $n\ge r+2$ and $n=\frac{r+1}{r}\cdot d-g+1$, which was previously proven  in more generality by E. Larson \cite[Corollary 1.3]{larson}.

\section{Reduction to $d=n-1$}\label{reduction_sec}

We show in this section that the classes $\Gamma_{r,\vn,d}$ with $d=n-1$, where we recall that $n$ is the sum of the entries of the vector $\vn$, determine the classes $\Gamma_{r,\vn,d}$ for $d,n$ arbitrary. This will follow from the two statements below.

\begin{prop}\label{increase_degree}
Let $\lambda\subset(d-r)^{r+1}$ be a partition with $|\lambda|=|\vn|-r(r+2)$. Then, we have $\Gamma^\lambda_{r,\vn,d}=\Gamma^\lambda_{r,\vn,d+1}$.
\end{prop}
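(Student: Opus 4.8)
The plan is to realize $\Gamma^\lambda_{r,\vn,d+1}$ --- which is a priori an intersection number on the larger Grassmannian $\Gr(r+1,W')$ with $\dim W'=d+2$ --- as a count taking place inside the sub-Grassmannian $\Gr(r+1,W_0)$ attached to a general hyperplane $W_0\subset W'$, and then to identify that count with $\Gamma^\lambda_{r,\vn,d}$. Write $\widehat{\lambda}$ for the complement of $\lambda$ in the rectangle $(d-r)^{r+1}$ and $\widetilde{\lambda}$ for its complement in $(d+1-r)^{r+1}$; since $\lambda\subset(d-r)^{r+1}$ we have $\widetilde{\lambda}=\widehat{\lambda}+(1^{r+1})$, so by the dual Pieri rule on $\Gr(r+1,W')$ one gets $\sigma_{\widetilde{\lambda}}=\sigma_{1^{r+1}}\cdot\sigma_{\widehat{\lambda}}$, and $\sigma_{1^{r+1}}$ is the class of $\Gr(r+1,W_0)\subset\Gr(r+1,W')$. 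Using $\Gamma_{r,\vn,d+1}=\pi_{*}[Y_{r,\vn,d+1}]$, the projection formula, and $\int_{\Gr(r+1,W')}\sigma_\mu\cdot\sigma_{\widetilde{\lambda}}=\delta_{\mu\lambda}$ for $|\mu|=|\lambda|$, this gives
\begin{equation*}
\Gamma^\lambda_{r,\vn,d+1}=\int_{\Gr(r+1,W')}\Gamma_{r,\vn,d+1}\cdot\sigma_{\widetilde{\lambda}}=\int_{\Coll(V,W')}[Y_{r,\vn,d+1}]\cdot\pi^{*}[\Gr(r+1,W_0)]\cdot\pi^{*}\sigma_{\widehat{\lambda}}.
\end{equation*}

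The crux, and the step I expect to be the main obstacle, is to show that for $W_0$, a complete flag $F''$ on $W'$, the subspaces $L_i\subset V$, and the hyperplanes $M_i'\subset W'$ all general, the intersection
\begin{equation*}
Y_{r,\vn,d+1}\cap\pi^{-1}\bigl(\Gr(r+1,W_0)\bigr)\cap\pi^{-1}\bigl(\Sigma^{F''}_{\widehat{\lambda}}\bigr)\subset\Coll(V,W')
\end{equation*}
is finite, reduced, and supported on full-rank collineations whose image lies in no $M_i'$. I would run the dimension count on the strata $\Coll_{\overrightarrow{r}}(V,W')$ exactly as in the proof of Proposition \ref{coll_genericity_points}: each $\Inc(L_i,M_i')$ contributes its expected codimension on every stratum by Corollary \ref{inc_exp_dim} (total $|\vn|$), and since $\pi$ restricted to each stratum is surjective and $GL(V)\times GL(W')$-equivariant, Kleiman's theorem forces general translates of $\pi^{-1}(\Gr(r+1,W_0))$ and $\pi^{-1}(\Sigma^{F''}_{\widehat{\lambda}})$ to meet each stratum properly, in codimensions $r+1$ and $|\widehat{\lambda}|$. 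On the stratum $\Coll_{\overrightarrow{r}}(V,W')$ of length $\ell$ the total expected codimension is then
\begin{equation*}
|\vn|+(r+1)+|\widehat{\lambda}|=(r+1)(d+2)-1=\dim\Coll(V,W')=\dim\Coll_{\overrightarrow{r}}(V,W')+\ell,
\end{equation*}
so the intersection with that stratum is empty unless $\ell=0$. Thus it is supported on the full-rank locus $\Coll^\circ(V,W')$, which is an open subset of $\bP\Hom(V,W')$ on which $Y_{r,\vn,d+1}$ is a linear subspace and the remaining conditions, moving generically, cut it transversally.

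Finally I would match points. Each point of the intersection is an injective $\phi\colon V\hookrightarrow W'$ with $\im(\phi)\subset W_0$, hence is the same datum as an injective $\phi\colon V\hookrightarrow W_0$ with $\phi(L_i)\subset M_i:=M_i'\cap W_0$ and $\im(\phi)\in\Sigma^{F''}_{\widehat{\lambda}}$; directly from the definition of Schubert varieties, $\{U\subset W_0\}\cap\Sigma^{F''}_{\widehat{\lambda}}=\Sigma^{F''\cap W_0}_{\widehat{\lambda}}$, where $F''\cap W_0$ is the induced (general) complete flag on $W_0$, and the $M_i$ are general hyperplanes of $W_0$. This sets up a bijection with the point set of $Y_{r,\vn,d}\cap\pi^{-1}(\Sigma^{F''\cap W_0}_{\widehat{\lambda}})\subset\Coll(V,W_0)$, which by Corollary \ref{kleiman_inc} applied with the partition $\widehat{\lambda}$ --- noting $|\widehat{\lambda}|+|\vn|=\dim\Coll(V,W_0)$ --- is itself finite and reduced and computes $\int_{\Coll(V,W_0)}[Y_{r,\vn,d}]\cdot\pi^{*}\sigma_{\widehat{\lambda}}=\Gamma^\lambda_{r,\vn,d}$. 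Since both intersections are reduced and finite, their common cardinality yields $\Gamma^\lambda_{r,\vn,d+1}=\Gamma^\lambda_{r,\vn,d}$.
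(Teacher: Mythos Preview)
Your proof is correct and follows essentially the same route as the paper: both pass from $\Gr(r+1,d+1)$ to $\Gr(r+1,d+2)$ via the identity $\sigma_{\widetilde{\lambda}}=\sigma_{1^{r+1}}\cdot\sigma_{\widehat{\lambda}}$, realize $\sigma_{1^{r+1}}$ as the class of the sub-Grassmannian attached to a hyperplane $W_0\subset W'$, and use that the incidence loci $\Inc(L_i,M_i')$ on $\Coll(V,W')$ restrict to $\Inc(L_i,M_i)$ on $\Coll(V,W_0)$. The only difference is packaging: the paper runs the projection formula purely at the level of cohomology classes (so no new transversality needs to be checked), whereas you re-verify transversality on each stratum via Kleiman and then match points set-theoretically; both arrive at the same chain of equalities.
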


Note that the right hand side of Proposition \ref{increase_degree} makes sense because we also have $\lambda\subset((d+1)-r)^{r+1}$ and the relation $|\lambda|=|\vn|-r(r+2)$ does not depend on $d$.

\begin{prop}\label{vanishing_coeff}
Let $\lambda\subset(d-r)^{r+1}$ be a partition with $|\lambda|=|\vn|-r(r+2)$. Suppose further that $\lambda_0>n-r-1$. Then, we have $\Gamma^\lambda_{r,\vn,d}=0$.
\end{prop}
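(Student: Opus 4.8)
The plan is to interpret $\Gamma^\lambda_{r,\vn,d}$ geometrically via the intersection number $\int_{\Gr(r+1,W)}\Gamma_{r,\vn,d}\cdot\sigma_{\overline\lambda}$, where $\overline\lambda$ is the complement of $\lambda$ in $(d-r)^{r+1}$, and to show this vanishes when $\lambda_0>n-1-r$. Concretely, $\Gamma^\lambda_{r,\vn,d}$ counts (with multiplicity, but by effectivity actually set-theoretically counts) points in $Y_{r,\vn,d}\cap\pi^{-1}(\Sigma^F_{\overline\lambda})$ for a general flag $F$, where $Y_{r,\vn,d}=\bigcap_{i=1}^n\Inc(L_i,M_i)$. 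By Corollary \ref{kleiman_inc} this intersection is generically reduced of the expected dimension zero, and every point is a full-rank collineation $\phi\colon V\to W$ whose image $U=\im(\phi)$ meets the flag $F$ with the dimensions prescribed exactly by $\overline\lambda$; in particular $U\supset F_{r+1-\lambda_r'}$ fails while $U\cap F_{?}$ has the jumps dictated by $\overline\lambda$. The condition $\lambda_0>n-1-r$ translates, under complementation, into a statement forcing $U$ to contain a large subspace of $F$, i.e.\ the last Schubert condition cutting out $\Sigma^F_{\overline\lambda}$ demands $\dim(U\cap F_j)\ge r+1$ for some $j\le n-1$. The strategy is then to show this is incompatible with $\phi$ lying on all $n$ incidence loci $\Inc(L_i,M_i)$.

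The key steps, in order, are as follows. First, unwind the Schubert condition: $\pi^{-1}(\Sigma^F_{\overline\lambda})$ with $\lambda_0>n-1-r$ forces $U=\im(\phi)\subseteq F_{n-1}$ (this is exactly the Schubert condition $\dim(U\cap F_{(d-r)+0+1-\overline\lambda_0})\ge 1$ rewritten, since $\overline\lambda_0=(d-r)-\lambda_r$ and the strongest constraint comes from the $\overline\lambda_0$ part; more carefully, the condition that forces $\lambda_0$ to be large is precisely that $U$ lies in a small piece of the flag — the last member $F_{(d-r)+r+1-\overline\lambda_r}=F_{d+1-\lambda_0'}$, and $\lambda_0 > n-1-r$ gives $d+1-\lambda_0' \le n-1$, hence $U\subseteq F_{n-1}$ with $\dim F_{n-1}=n-1$). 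Thus $\phi$ factors through an $(n-1)$-dimensional subspace $W'=F_{n-1}\subset W$. Second, observe that $\phi\colon V\to W'$ is then a point of $\Coll(V,W')$ lying on $\bigcap_{i=1}^n \Inc(L_i,M_i\cap W')$. Third, apply a parameter/dimension count on $\Coll(V,W')$: since $\dim\Coll(V,W')=(r+1)(n-1)-1$ and each $\Inc(L_i,M_i')$ has codimension $r-k_i$, the expected dimension of the intersection is $(r+1)(n-1)-1-\sum_i(r-k_i)=(r+1)(n-1)-1-|\vn|=(r+1)(n-1)-1-((r+1)(d+1)-1-rg)=(r+1)(n-1-d)+rg$; combined with the fixed flag position one checks this is strictly negative, using $n\le d$ (forced, as noted, by $\lambda\subset(d-r)^{r+1}$ and $\lambda_0>n-1-r$) together with $rg>0$ failing to compensate — more precisely one must feed in the genus-$g$ Schubert class $\sigma_{1^r}^g$ as well, so the honest count is $\int_{\Gr(r+1,d+1)}\sigma_{1^r}^g\cdot\Gamma_{r,\vn,d}\cdot\sigma_{\overline\lambda}$ restricted to $U\subseteq F_{n-1}$, and one runs the dimension estimate for $\Coll(\cC/B)$ exactly as in Proposition \ref{lls_count}. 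I would phrase the final contradiction using Proposition \ref{coll_genericity_points}: requiring $\im(\phi)\subseteq F_{n-1}$ imposes $d+1-(n-1)=d-n+2$ extra conditions beyond a full-rank bpf collineation, whereas the Schubert condition $\sigma_{\overline\lambda}$ only accounts for $d-n+1$ of them via the drop in $\lambda_0$, giving a strictly over-determined system and hence an empty intersection.

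Alternatively — and this may be cleaner — I would argue by the reduction already available: by Proposition \ref{increase_degree} we may assume $d$ is as large as needed, but that goes the wrong way; instead I would use the set-theoretic description of $\Inc(L,M)$ in Proposition \ref{inc_set_theoretic} directly. A full-rank $\phi$ with $\im(\phi)\subseteq F_{n-1}$ and satisfying $(\dagger)_0$ for all $i$ means $\phi(L_i)\subseteq M_i$ for all $i$; projecting to $W/F_{n-2}$ or using that the $M_i\cap F_{n-1}$ are $n$ general hyperplanes in the $(n-1)$-dimensional space $F_{n-1}$, their common intersection with $\phi(V)$ (an $(r+1)$-dimensional subspace) is forced to be smaller than the incidence conditions allow, giving the contradiction by a direct linear-algebra count. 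The \textbf{main obstacle} I anticipate is bookkeeping the precise Schubert/ramification dictionary so that ``$\lambda_0>n-1-r$'' is correctly converted into ``$\im(\phi)\subseteq F_{n-1}$'' (equivalently controlling which part of $\overline\lambda$ produces the binding constraint, and handling the interaction with the $g$ elliptic tails contributing $\sigma_{1^r}^g$); once that translation is pinned down, the vanishing is a short dimension count of the type already carried out in \S\ref{coll_to_count_sec}.
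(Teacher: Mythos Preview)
Your Schubert translation is reversed, and this is not a bookkeeping slip but the crux of the matter. With $\mu=\overline\lambda$ the complement in $(d-r)^{r+1}$, one has $\mu_r=(d-r)-\lambda_0$, and the last Schubert condition for $\Sigma^F_\mu$ at $i=r$ reads $U\subset F_{d+1-\mu_r}=F_{r+1+\lambda_0}$. The hypothesis $\lambda_0>n-1-r$ gives $r+1+\lambda_0\ge n+1$, so $U$ is only constrained to lie in a subspace of dimension \emph{at least} $n+1$, not at most $n-1$. In particular the $n$ hyperplanes $M_i$, intersected with $F_{d+1-\mu_r}$, are $n$ general hyperplanes in a space of dimension $\ge n+1$, and no direct linear-algebra or dimension-count contradiction is available: the intersection $\bigcap_i\Inc(L_i,M_i\cap F_{d+1-\mu_r})$ inside $\Coll(V,F_{d+1-\mu_r})$ has \emph{non-negative} expected dimension. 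Your attempt to rescue the count by inserting the factor $\sigma_{1^r}^g$ is also off target: the statement $\Gamma^\lambda_{r,\vn,d}=0$ concerns the class $\Gamma_{r,\vn,d}$ on $\Gr(r+1,W)$ and makes no reference to $g$; the numerology $|\vn|=(r+1)(d+1)-1-rg$ belongs to the curve count, not to this proposition.

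The paper's argument exploits exactly the slack that defeats your dimension count. Since $U$ sits in an ambient $F_{d+1-\mu_r}$ of dimension strictly larger than $n$, the subgroup $P_n\subset GL(F_{d+1-\mu_r})$ of automorphisms stabilizing all $M_i\cap F_{d+1-\mu_r}$ is positive-dimensional; one computes $\dim P_n(U)=(d-\mu_r-r)(d+1-\mu_r-n)$ for the subgroup fixing $U$ pointwise (Lemma~\ref{stab_exp_dim}). Picking the largest $r'<r$ with $\mu_{r'}>\mu_r$ and setting $U'=U\cap F_{d-r+1+r'-\mu_{r'}}$, one finds $\dim P_n(U')>\dim P_n(U)$, so there is a positive-dimensional family of $g\in P_n$ fixing $U'$ but not $U$. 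Each $g\circ\phi$ still lies in every $\Inc(L_i,M_i)$ (since $g$ stabilizes the $M_i$) and still lies in $\pi^{-1}(\Sigma^F_\mu)$ (since $g$ fixes $U'$ and preserves $F_{d+1-\mu_r}$). Hence if the intersection $Y_{r,\vn,d}\cap\pi^{-1}(\Sigma^F_\mu)$ is non-empty it is positive-dimensional, contradicting Corollary~\ref{kleiman_inc}. The missing idea in your proposal is this symmetry argument; a bare parameter count cannot see the vanishing.
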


Note that in order for the condition $\lambda_0>n-1-r$ of Proposition \ref{vanishing_coeff} to be satisfied, we need
\begin{equation*}
n\le \lambda_0+r \le (d-r)+r\le d.
\end{equation*}

\begin{figure}
\caption{The line $n=d-1$ determines all values of $\Gamma^\lambda_{r,\vn,d}$}\label{d_reduction_diagram}
\begin{tikzpicture} [xscale=0.35,yscale=0.35]

\draw (0,0) -- (0,10);
\draw (0,0) -- (13,0);
\draw (3,0) -- (13,10);
\node at (0,10.5) {\tiny{$\lambda_0$}};
\node at (13.5,0) {\tiny{$d$}};
\node at (14,11) {\tiny{$\lambda_0=d-r$}};
\node at (-3,4) {\tiny{$\lambda_0=n-r-1$}};
\draw[densely dotted] (0,4) -- (13,4);
\draw[line width = 0.75mm] (7,4) -- (7,0);
\node at (7,-1) {\tiny{
$d=n-1$}};
\node at (12,6) {\tiny{$\Gamma^{\lambda}_{r,\vn,d}=0$}};
\node at (11,2.5) {\tiny{$\Gamma^{\lambda}_{r,\vn,d}=\Gamma^{\lambda}_{r,\vn,n-1}$}};
\usetikzlibrary{arrows.meta}
\draw [->] (7,1) -- (8.5,1);
\draw [->] (7,1) -- (5.5,1);
\end{tikzpicture}
\end{figure}

Fix $r,\vn$, and suppose that all classes are $\Gamma_{r,\vn,n-1}$ known, which is to say that all coefficients $\Gamma^\lambda_{r,\vn,d}$ are known for any $\lambda\subset(d-r)^{r+1}$ whenever $n=d-1$. We explain how to determine all coefficients $\Gamma^\lambda_{r,\vn,d}$ for $d,\lambda$ arbitrary. We refer to Figure \ref{d_reduction_diagram}, which depicts the $(d,\lambda_0)$-plane. The coefficient $\Gamma^\lambda_{r,\vn,d}$ from Definition \ref{def_coeffs} only makes sense when $\lambda_0\le d-r$, that is, for points lying below the diagonal line $\lambda_0=d-r$. Above the horizontal line $\lambda_0=n-r-1$, all coefficients $\Gamma^\lambda_{r,\vn,d}$ vanish, by Proposition \ref{vanishing_coeff}. Proposition \ref{increase_degree} shows that, for any given $\lambda$, the coefficient $\Gamma^\lambda_{r,\vn,d}$ is independent of $d$, as long as $\lambda_0\le d-r$. In particular, below the line $\lambda_0=n-r-1$, we may take $d=n-1$. Thus, the values of $\Gamma^\lambda_{r,\vn,d}$ are determined by those along the bold segment.

\begin{proof}[Proof of Proposition \ref{increase_degree}]
Write $\overline{\lambda}$ for the complement of $\lambda$ in $(d-r)^{r+1}$ and $\overline{\lambda}'$ for the complement of $\lambda$ in $(d+1-r)^{r+1}$. Note that 
\begin{equation*}
\sigma_{\overline{\lambda}'}=\sigma_{\overline{\lambda}}\cdot\sigma_{1^{r+1}},
\end{equation*}
in $H^{*}(\Gr(r+1,d+2))$, and furthermore that $\sigma_{1^{r+1}}$ may be regarded as the class of the loci of $(r+1)$-planes contained in a fixed hyperplane.

Fix vector spaces $V,W,W'$ of dimensions $r+1,d+1,d+2$, respectively, along with an inclusion $W\subset W'$. Fix general linear spaces $L_i\subset V$ of codimension $k_i+1$, and hyperplanes $M'_i\subset W'$ intersecting $W$ transversely in the hyperplanes $M_i\subset W$. Then, the space $\Coll(V,W)$ is realized naturally as the subscheme of $\Coll(V,W')$ of collineations with image contained in $W$. Furthermore, the subschemes $\Inc(L_i,M'_i)$ pull back to $\Inc(L_i,M_i)$ under this inclusion.

We therefore conclude:
\begin{align*}
\Gamma^\lambda_{r,\vn,d}&=\int_{\Gr(r+1,W)}\Gamma_{r,\vn,d}\cdot\sigma_{\overline{\lambda}}\\
&=\int_{\Coll(V,W)}\prod_{j=1}^{r}\gamma_{j}^{n_{r-j}}\cdot\pi^{*}(\sigma_{\overline{\lambda}})\\
&=\int_{\Coll(V,W')}\prod_{j=1}^{r}\gamma_{j}^{n_{r-j}}\cdot\pi^{*}(\sigma_{\overline{\lambda}})\cdot\pi^{*}(\sigma_{1^{(r+1)}})\\
&=\int_{\Gr(r+1,W')}\Gamma_{r,\vn,d+1}\cdot\sigma_{\overline{\lambda}'}\\
&=\Gamma^\lambda_{r,\vn,d+1}.
\end{align*}
\end{proof}

We now turn to the proof of Proposition \ref{vanishing_coeff}. Fix $d+1$ hyperplanes $M_0,\ldots,M_{d}\subset W$ in linearly general position. In this case, the $M_i$ have no moduli: if we take $w_0,\ldots,w_d$ to be a basis of $W$, then we may suppose that $M_i$ is the hyperplane $\langle w_0,\ldots,\widehat{w_i},\ldots,w_d\rangle $.

\begin{defn}\label{subgroup_def}
For any $n\le d+1$, let $K_n\subset GL(W)$ denote the subgroup consisting of automorphisms of $W$ that stabilize (but do not necessarily fix) all of $M_0,\ldots,M_{n-1}$.
\end{defn}
With respect to the basis $w_0,\ldots,w_d$, the subgroup $K_n$ consists of the matrices $g$ for which $g_{ij}=0$ whenever $i\le n-1$ and $i\neq j$. From here, it is clear that $\dim(K_n)=n+(d+1)(d+1-n)$. When $n=d+1$, the subgroup $K_n$ is the maximal torus $T\subset GL(W)$ of diagonal matrices.

\begin{defn}
Fix a subspace $U\subset W$ of dimension $r+1$. We denote by $K_n(U)\subset K_n$ the subgroup of automorphisms that stabilize the $M_i$ and \emph{fix} $U$. 
\end{defn}

\begin{lem}\label{stab_exp_dim}
Suppose $U$ satisfies the property that it intersects any intersection of $k\le n$ of the hyperplanes $M_i$ (for \emph{all} $i=0,1,\ldots,d$) in the expected dimension of $\max(0,r+1-k)$. Then, we have $\dim(K_n(U))=(d-r)(d+1-n)$.
\end{lem}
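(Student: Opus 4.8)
The plan is to compute $\dim(P_n(U))$ by realizing $P_n(U)$ as the kernel of a linear map and using the hypothesis on $U$ to determine the rank of that map. First I would fix the basis $w_0,\ldots,w_d$ as above, so that $M_i=\langle w_0,\ldots,\widehat{w_i},\ldots,w_d\rangle$ for $i=0,\ldots,n-1$, and $P_n$ consists of matrices $g$ with $g_{ij}=0$ whenever $i\le n-1$ and $i\ne j$. Passing to the Lie algebra, $\mathfrak{p}_n=\mathrm{Lie}(P_n)$ is the space of $X\in\mathfrak{gl}(W)$ with $X_{ij}=0$ for $i\le n-1$, $i\ne j$; this has dimension $n+(d+1)(d+1-n)$, matching $\dim P_n$. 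The Lie algebra $\mathfrak{p}_n(U)$ of $P_n(U)$ is then $\{X\in\mathfrak{p}_n : X|_U=0\}$, i.e. the kernel of the restriction map $\rho:\mathfrak{p}_n\to\Hom(U,W)$, $X\mapsto X|_U$. So it suffices to show $\rho$ has rank $(r+1)(d+1)-(d-r)(d+1-n)$, equivalently that $\dim\ker\rho=(d-r)(d+1-n)$; since $P_n(U)$ is smooth (being an algebraic subgroup), $\dim P_n(U)=\dim\mathfrak{p}_n(U)$.

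The key step is to identify the image of $\rho$, or equivalently to count the independent linear conditions that $X|_U=0$ imposes on $X\in\mathfrak{p}_n$. I would organize this by the rows of $X$. For a row index $i\ge n$, there is no constraint coming from $\mathfrak{p}_n$ on that row, and the condition ``the $i$-th coordinate of $X(u)$ vanishes for all $u\in U$'' is a surjection onto $U^\vee$, imposing exactly $r+1$ conditions; there are $d+1-n$ such rows, giving $(r+1)(d+1-n)$ conditions. For a row index $i\le n-1$, the $\mathfrak{p}_n$-constraint forces the $i$-th row of $X$ to be supported only in column $i$, i.e. $X(w_j)$ has vanishing $i$-th coordinate for $j\ne i$ and $X(w_i)$ has $i$-th coordinate a free scalar $\lambda_i$; the condition $X|_U=0$ on the $i$-th coordinate then says that $\lambda_i\cdot(\text{$i$-th coordinate of }u)=0$ for all $u\in U$. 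This kills the free parameter $\lambda_i$ precisely when $U\not\subset M_i$, i.e. when $U$ has a vector with nonzero $i$-th coordinate — which holds for all $i\le n-1$ by the hypothesis (taking $k=1$). So each of the first $n$ rows contributes exactly $1$ independent condition, for $n$ conditions total. Hence the number of conditions is $n+(r+1)(d+1-n)$, and
\begin{equation*}
\dim P_n(U)=\bigl(n+(d+1)(d+1-n)\bigr)-\bigl(n+(r+1)(d+1-n)\bigr)=(d-r)(d+1-n).
\end{equation*}

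The main obstacle is justifying that the conditions counted above are genuinely \emph{independent} — that the row-by-row analysis does not overcount because of interaction between the free diagonal parameters $\lambda_i$ (for $i\le n-1$) and the unconstrained rows (for $i\ge n$). This is where the full strength of the hypothesis on $U$ enters: the conditions on the rows $i\ge n$ are conditions on genuinely distinct matrix entries of $X$ (the lower block), so they are automatically independent of each other and of the diagonal conditions; within the upper rows, each condition involves only the single entry $X_{ii}$, so these are independent of each other as well. Thus the total constraint map $\rho$ is block-decomposed and its rank is the sum of the ranks on each row, giving exactly $n+(r+1)(d+1-n)$. (The hypothesis with $k>1$ is not needed for this lemma, only $k=1$, i.e. $U\not\subset M_i$ for all $i\le n-1$; I would remark on this.) Finally I would note that $P_n(U)$ is nonempty — it contains the identity — so the dimension count is not vacuous, completing the proof.
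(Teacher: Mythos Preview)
Your proof is correct and takes a genuinely different route from the paper's. The paper works directly at the group level: it cuts out $P_n(U)$ inside $GL(W)$ by two families of affine-linear equations, namely $g(u)=u$ for $u$ in a basis of $U$ (giving $(r+1)(d+1)$ equations) and $g(W_i)\subset M_i$ for chosen $(d-r)$-dimensional coordinate complements $W_i$ of $U\cap M_i$ in $M_i$ (giving $n(d-r)$ equations), and then checks independence by an explicit linear-algebra argument. That independence check is where the paper uses the full hypothesis, in particular the case $k=r+1$, to guarantee $U\cap W_i=0$. Your Lie-algebra approach sidesteps this: the row-by-row decomposition of $\rho:\mathfrak{p}_n\to\Hom(U,W)$ makes independence automatic, since conditions from different rows involve disjoint matrix entries, and within each lower row the $r+1$ conditions are independent simply because $\dim U=r+1$. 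As you observe, this only needs the $k=1$ case of the hypothesis (that $U\not\subset M_i$ for each $i\le n-1$), which is a genuine simplification. The paper's approach has the mild advantage of being concrete at the level of the group itself rather than passing through smoothness of algebraic subgroups, but your argument is shorter and sharper.
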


In the language of matroids, the hypothesis is that the matroid determined by $U$ with respect to the $M_i$ is \emph{uniform}.

\begin{proof}
Regarding $K_n(U)$ as a locally closed subset of the $(d+1)^2$-dimensional affine space of linear operators $g:W\to W$, we will show that $K_n(U)$ is cut out by $(r+1)(d+1)+n(d-r)$ independent (affine-)linear equations on the open subset $GL(W)$ of invertible operators. The claim will then follow from the fact that $K_n(U)$ is non-empty (as it contains the identity) and that $$(d+1)^2-(r+1)(d+1)-n(d-r)=(d-r)(d+1-n).$$

We work in the basis $\langle w_0,\ldots,w_d\rangle$ of $W$ as above. For each hyperplane $M_i$, $i=0,1,\ldots,n-1$, fix an arbitrary $(d-r)$-element subset $S_i$ of $\{0,1,\ldots,d\}-\{i\}$. We claim that the dimension $(d-r)$ subspace $W_i\subset M_i$ spanned by the vectors $w_j$, $j\in S_i$, is a complement to $U\cap M_i$ in $M_i$. Indeed, we have $\dim(U\cap M_i)=r$, $\dim(W_i)=d-r$, and $$\dim((U\cap M_i)\cap W_i)=\dim(U\cap W_i)=0,$$ because $W_i$ is an intersection of $r+1$ of the hyperplanes $M_1,\ldots,M_{d+1}$.

Fix now a basis $$u_i=\sum_{j=0}^{d}u_{ij}w_j$$ of $U$, where $i=0,1,\ldots,r$, and abusively denote by $U$ the $(r+1)\times(d+1)$ matrix $\{u_{ij}\}$, where $0\le i\le r$ and $0\le j\le d$.

We now cut out $K_n(U)\subset GL(W)$ by linear equations coming from the following two sources:
\begin{itemize}
\item $g(u)=u$ for $u\in U$ ($(r+1)(d+1)$ equations)
\item $g(W_i)\subset M_i$, $i=0,1,\ldots,n-1$ ($n(d-r)$ equations)
\end{itemize}
First, we see that the above two conditions cut out $K_n(U)$ set-theoretically. Indeed, once we have the first condition that $g$ restricts to the identity on $U$, to ensure that $g(M_i)\subset M_i$, we only need to check this condition on the complement $W_i$ of $U\cap M_i\subset M_i$.

Regarding $g\in GL(W)$ as a matrix $\{g_{ij}\}$, where $0\le i,j\le d$, let us make the conditions more explicit. That $g$ restricts to the identity on $U$ amounts to the affine-linear equation $$u_{ij}=u_{i0}g_{j0}+u_{i1}g_{j1}+\cdots+u_{id}g_{jd}$$ for $0\le i\le r$ and $0\le j\le d$. On the other hand, the condition $g(W_i)\subset M_i$ amounts to the linear condition $g_{ij}=0$ for $i=0,1,\ldots,n-1$ and $j\in S_i$.

It remains to check that these $(r+1)(d+1)+n(d-r)$ equations are linearly independent. Suppose instead that 
\begin{equation*}
\sum_{0\le i\le r,0\le j\le d}\alpha_{ij}(u_{i0}g_{j0}+u_{i1}g_{j1}+\cdots+u_{id}g_{jd})+\sum_{0\le i\le n-1,j\in S_{i}}\beta_{ij}g_{ij}=0 
\end{equation*}
for scalars $\alpha_{ij},\beta_{ij}$. For fixed $i,j$ with $0\le i,j\le d$, the coefficient of $g_{ij}$ above is
\begin{equation*}
\sum_{k=0}^{r} \alpha_{ki}u_{kj}+\beta_{ij}
\end{equation*}
if $i\le n-1$ and $j\in S_i$, and simply $\sum_{k=0}^{r} \alpha_{ki}u_{kj}$ otherwise (e.g. if $i\ge n$); by assumption, this coefficient is 0 for all $i,j$. Summing over all $j$, we have
\begin{align*}
0&=\sum_{j=0}^d\left(\sum_{k=0}^{r} \alpha_{ki}u_{kj}\right)w_j+\sum_{j\in S_i}\beta_{ij}w_j\\
&=\sum_{k=0}^{r} \alpha_{ki}\left(\sum_{j=0}^du_{kj}w_j\right)+\sum_{j\in S_i}\beta_{ij}w_j\\
&=\sum_{k=0}^{r} \alpha_{ki}u_k+\sum_{w_j\in W_i}\beta_{ij}w_j.
\end{align*}
The first term is an element of $U$ and the second is an element of $W_i$, but because $U\cap W_i=0$, it must be the case that all of the $\alpha_{ij}$ (reindexed now as $\alpha_{ki}$) and $\beta_{ij}$ are equal to 0.
\end{proof}

\begin{proof}[Proof of Proposition \ref{vanishing_coeff}]
Recall that we are free to assume that $n\le d$. In particular, the general hyperplanes $M_1,\ldots,M_n$ may be taken to be those in the previous discussion above.

Let $\mu=(\mu_0,\ldots,\mu_r)=\overline{\lambda}$ be the complementary partition to $\lambda$ in $(d-r)^{r+1}$, so that $\mu_i=\lambda_{d-r-i}$ for $i=0,1,\ldots,r$, and in particular, $\mu_r\le d-n$. It suffices to show that $$\int_{\Gr(r+1,W)}\Gamma_{r,\vn,d}\cdot\sigma_\mu=0.$$

By Corollary \ref{kleiman_inc}, for a general complete flag $0=F_0\subset F_1\subset\cdots\subset F_{d+1}=W$, the subscheme
\begin{equation*}
Y_{r,\vn,d}=\bigcap_{i=1}^{n}\Inc(L_i,M_i)\subset\Coll(V,W)
\end{equation*}
intersects the pullback by $\pi$ of the Schubert cycle $\Sigma^F_{\mu}$ in finitely many points, all of which come from full rank collineations whose image is not contained in any $M_i$. We will show that if a general flag $F$ has the property that the intersection $Y_{r,\vn,d}\cap \pi^{-1}(\Sigma^F_{\mu})$ is non-empty, then, in fact, the intersection is positive-dimensional, meaning that the generic number of intersection points must be zero.

Let $\phi\in\Coll(V,W)$ be a point of $Y_{r,\vn,d}\cap \pi^{-1}(\Sigma^F_{\mu})$. Then, $\phi:V\to W$ is a full-rank map with image contained in $F_{d+1-\mu_r}\supset F_{n+1}$. We may therefore take $\phi$ to be a full rank point of $\bP\Hom(V,F_{d+1-\mu_r})^\circ\subset \Coll(V,F_{d+1-\mu_r})$ contained in the incidence loci $\Inc(L_i,M_i\cap F_{d+1-\mu_r})$. 

Let $r'<r$ be the largest integer for which $\mu_{r'}>\mu_r$. Such an $r'$ exists because, if $\lambda_0=\cdots=\lambda_r\ge n-r$, then $|\lambda|>rn-r(r+2)=\vn-r(r+2)$, a contradiction. Then, writing $U=\im(\phi)$, we require $\dim(U\cap F_{d-r+1+r'-\mu_{r'}})\ge r'+1$; we may assume by a further application of Corollary \ref{kleiman_inc} that we have an equality. Let $U'=U\cap F_{d-r+1+r'-\mu_{r'}}$.

By the generality hypotheses, we may assume that the $n\le (d+1-\mu_r)-1$ hyperplanes $M_i\cap F_{d+1-\mu_r}\subset F_{d+1-\mu_r}$ are in linearly general position. We may also assume the subspaces $U',U\subset F_{d+1-\mu_r}$ satisfy the hypothesis of Lemma \ref{stab_exp_dim}, where we replace $W$ with $F_{d+1-\mu_r}$, by a standard incidence correspondence argument. Applying Lemma \ref{stab_exp_dim}, we have $\dim(K_n(U))<\dim(K_n(U'))$, and in particular there is a positive-dimensional family of automorphisms of $F_{d+1-\mu_r}$ fixing $U'$ but not $U$, and also stabilizing each $M_i\cap F_{d+1-\mu_r}$. Let $g\in K_n(U')\subset GL(F_{d+1-\mu_r})$ be any such automorphism.

Now, the full rank collineation $g\circ \phi:V\to F_{d+1-\mu_r}\subset W$, by construction, lies in $Y_{r,\vn,d}\cap \pi^{-1}(\Sigma^F_{\mu})$. Indeed, we have $g(U)=\im(g\circ \phi)\subset F_{d+1-\mu_r}$, so $\dim(g(U)\cap F_{d-r+i+1-\mu_i})\ge i+1$ for $i=r'+1,\ldots,r$, and the fact that $g(U')=U'$ guarantees the same for $i=0,1,\ldots,r'$. Moreover, because $g$ stabilizes the $M_i\cap F_{d+1-\mu_r}$, it remains the case that $g\circ\phi\in \Inc(L_i,M_i)$ for $i=1,2,\ldots,n$. Finally, because $g\notin K_n(U)$, we have $g\circ \phi\neq \phi$ as points of $\Coll(V,W)$ (note that $g$ cannot act on $U$ by $\lambda\cdot\Id$ for $\lambda\neq1$, because $g$ fixes $U'$ by assumption). Varying over all $g$, we obtain a positive-dimensional family in the intersection in question, and the proof is complete.
\end{proof}

\section{Torus orbits and geometric Tevelev degrees}\label{tevelev_sec}

We now complete the calculation of the geometric Tevelev degrees of $\bP^r$ via torus orbit closures on Grassmannians, proving Theorem \ref{thm:tevPr}. By Theorem \ref{thm:coll} and the results of the previous section, it suffices to compute the classes $\Gamma_{r,n,n-1}$. We therefore assume that $\dim(W)=n$, and that $L_1,\ldots,L_n\subset V$ and $M_1,\ldots,M_n\subset W$ are general hyperplanes. Recall from \ref{sec:tev_history} that we also assume that $n\ge r+2$.
%\begin{equation*}
%\dim(Y_{r,n,d})=(r+1)(d-r)+r>\dim(\Gr(r+1,W)),
%\end{equation*}
%so $\pi_{*}[Y_{r,\vn,d}]=0$ and $\Tev^{\bP^r}_{r,n,d}=0$. The formula of Theorem \ref{thm:tevPr} also gives zero in this case. We therefore assume that $n\ge r+2$.

Write simply
\begin{equation*}
Y:=Y_{r,n,d}=\bigcap_{i=1}^{n}\Inc(L_i,M_i)\subset\Coll(V,W),
\end{equation*}
which is irreducible and generically smooth of codimension $nr$ by Proposition \ref{cor:lambdaempty}, and write $Z=\pi(Y)$ for its scheme-theoretic image in $\Gr(r+1,W)$. Recall that we have defined
\begin{equation*}
\Gamma_{r,n,n-1}=\pi_{*}[Y]\in H^{2r(n-r-2)}(\Gr(r+1,W)).
\end{equation*}

\subsection{Torus orbit closures}\label{t-orbit}

Consider the standard action of $T\cong(\bC^{\times})^{n}$ on $W$, such that $M_1,\ldots,M_n\subset W$ are the torus-invariant hyperplanes, which induces an action on $\Gr(r+1,W)$. The key observation, which one may regard as an incarnation of the Gelfan'd-Macpherson correspondence \cite{gm}, is the following.

\begin{prop}\label{orbit_closure_interpretation}
The map $\pi$ is generically injective on $Y$. The image $Z$ is a generic $T$-orbit closure in $\Gr(r+1,W)$.
\end{prop}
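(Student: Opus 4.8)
The plan is to exhibit an explicit full-rank collineation $\phi$ in $Y$, show that its image $\im(\phi)=U$ lies on a $T$-orbit of the expected dimension $n-1$ in $\Gr(r+1,W)$, and show that $\phi$ is the unique point of $Y$ over $U$. First I would recall from Corollary \ref{kleiman_inc} that the generic point $\phi$ of $Y$ is a full-rank collineation, so $\phi$ is just an injective linear map $V\hookrightarrow W$ (a point of $\bP\Hom(V,W)^\circ$), and the incidence conditions read simply $\phi(L_i)\subset M_i$ for all $i$. Fixing coordinates so that $M_i=\langle w_0,\ldots,\widehat{w_i},\ldots,w_{n-1}\rangle$ are the torus-invariant hyperplanes, the condition $\phi(L_i)\subset M_i$ says that the composite $L_i\hookrightarrow V\xrightarrow{\phi} W\to W/M_i$ vanishes, i.e. the $i$-th coordinate functional of $\phi$ annihilates $L_i$. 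Since $L_i\subset V$ has codimension $1$, this determines the $i$-th row of the matrix of $\phi$ up to scalar. Thus a point of $Y$ over $U=\im(\phi)$ is exactly the data of an ordered basis $(v_0,\ldots,v_{n-1})$ of the dual space $V^\vee$, with $v_i$ spanning the annihilator of $L_i$, hence rigid up to the torus $T$ acting by rescaling coordinates; in other words, $\pi^{-1}(U)\cap Y$ is a single $T$-orbit modulo the diagonal, so $\pi$ is injective on the locus of bpf full-rank collineations once we quotient appropriately — more precisely, $\pi$ restricted to $Y$ is injective because $U$ together with the fixed flags $L_i$ recovers $\phi$ up to the (already projectivized) scaling.

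Next I would identify the image. Writing the $n\times(r+1)$ matrix of $\phi$ with rows indexed by the coordinates of $W$, the $i$-th row is forced (up to the already-quotiented scalar) to be a general covector annihilating the general hyperplane $L_i\subset V$. Hence $U=\im(\phi)\subset W=\bC^n$ is the column span of a matrix all of whose maximal minors are nonzero — that is, $U$ is a \emph{generic} point of $\Gr(r+1,n)$ in the sense of the Gelfand--MacPherson correspondence, realized by $n$ general covectors on $V$. Such a $U$ lies on a $T$-orbit of full dimension $n-1$ (the torus $T\cong(\bC^\times)^n$ acts with stabilizer the scalars on any $U$ whose matroid is uniform), and conversely every generic $T$-orbit in $\Gr(r+1,W)$ arises this way. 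Comparing dimensions, $\dim Y = (r+1)(d-r)+r - nr = (r+1)(n-1-r)+r-nr=n-1$ by the numerology $d=n-1$, which matches $\dim(\overline{T\cdot U})$, so the generically injective map $\pi:Y\to Z$ is birational onto its image and $Z=\overline{T\cdot U}$ is a generic torus orbit closure.

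The main obstacle I anticipate is proving \emph{genuine} injectivity of $\pi$ on $Y$ — that is, ruling out the a priori possibility that $Y$ has several components, or that the generic fiber of $\pi$ over $Z$ is positive-dimensional and only the pushforward degree collapses. The resolution should be the rigidity argument above: since $\phi$ of full rank is recovered from $\im(\phi)$ together with the fixed data $\{L_i,M_i\}$ by the linear-algebra reconstruction (the $i$-th coordinate functional of any $\phi$ over $U$ must kill $L_i$, and $U\to W/M_i$ being the $i$-th coordinate this pins the functional down up to scalar, hence the whole map up to the diagonal torus which is already accounted for in $\bP\Hom$), the fiber is a single reduced point, and combined with the dimension count and Corollary \ref{kleiman_inc} (which confines $Y$ generically to the full-rank locus) one concludes $\pi|_Y$ is birational. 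One should also remark that $Y$ is irreducible by Lemma \ref{lem:Yirred}, so no stray components intervene, and that the genericity of the $L_i$ guarantees the matroid of $U$ is uniform, which is exactly the condition for the $T$-orbit closure to be the ``generic'' one whose class was computed by Klyachko and Berget--Fink.
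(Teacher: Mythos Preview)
Your overall strategy matches the paper's, but the injectivity argument contains a genuine confusion. You correctly note that the $i$-th row of $\phi$ (as a functional on $V$) must lie in the one-dimensional annihilator of $L_i$, so $\phi$ is determined up to the action of the full torus $T\cong(\bC^\times)^n$ on $W$. But this is \emph{not} the same as ``up to the already projectivized scaling'': projectivizing only kills the one-dimensional diagonal, not the $(n-1)$-dimensional quotient $T/\bC^\times$. Different elements of the $T$-orbit of $\phi$ have \emph{different} images in $\Gr(r+1,W)$, so the fiber $\pi^{-1}(U)\cap Y$ is not a $T$-orbit at all. The missing step is: if $\phi,\phi'\in Y$ both have image $U$, then $\phi'=t\cdot\phi$ for some $t\in T$ by your row argument, whence $t\cdot U=U$, and now you invoke that $\mathrm{Stab}_T(U)$ consists only of scalars (uniform matroid) to conclude $\phi'=\phi$ in $\bP\Hom$. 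You state the stabilizer fact later, but never connect it back to injectivity. (Also, ``ordered basis $(v_0,\ldots,v_{n-1})$ of $V^\vee$'' cannot be right, since $n>r+1=\dim V^\vee$.)

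The paper's argument is more direct and avoids this detour: for $U$ transverse to all intersections of the $M_i$, a full-rank $\phi\in\pi^{-1}(U)\cap Y$ is an isomorphism $V\to U$ carrying the hyperplane $L_i\subset V$ to the hyperplane $M_i\cap U\subset U$ for each $i$. Since $n\ge r+2$ and the $L_i$ are general, a projective transformation sending $r+2$ prescribed general hyperplanes to $r+2$ prescribed general hyperplanes is unique if it exists. For the second claim, the paper observes that $Z$ is $T$-stable (since $T$ stabilizes each $M_i$), irreducible of dimension $n-1$, and contains the $(n-1)$-dimensional orbit closure of a generic $U$ --- hence equals it. Your dimension count and uniform-matroid observation amount to the same thing.
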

\begin{proof}
For the first statement, it is enough to show that $\pi$ is injective on $Y$ upon restriction to the locus of full-rank collineations whose image $U$ is transverse to any intersection of the $M_i$. Indeed, this locus is non-empty and therefore (being open) dense in $Y$, as the  $L_i$ can be chosen generally. For such a $U$, any full-rank collineation in the fiber over $U$ in $Y$ is given by a linear isomorphism $\phi:V\to U$ with the property that $\phi(L_i)=M_i$ for $i=1,2,\ldots,r+1$. If such an isomorphism exists, by the transversality assumption, it must be unique.

In particular, the scheme-theoretic image $Z\subset\Gr(r+1,W)$ of $Y$ is irreducible and reduced of dimension $(r+1)(d+1)-1-nr=n-1$. Furthermore, because the $T$-action on $W$ stabilizes the $M_i$, it also stabilizes $Z$. On the other hand, the orbit closure of a general point $U\in Z$ as above is also irreducible of dimension $n-1$, as, by Lemma \ref{stab_exp_dim}, the stabilizer of $U$ consists only of scalar matrices, and is also contained in $Z$. Therefore, these two subschemes coincide.
\end{proof}

The final ingredient we will need to compute $\Tev^{\bP^r}_{g,n,d}$ is the following formula of Berget-Fink \cite[Theorem 5.1]{bf}. 

\begin{thm}\label{orbit_formula}
Let $Z_{gen}$ be a generic torus orbit closure on $\Gr(r+1,W)$. Then,
\begin{equation*}
[Z_{gen}]=\sum_{\mu\subset(n-r-2)^{r}}\sigma_{\mu}\sigma_{\overline{\mu}},
\end{equation*}
where $\overline{\mu}$ denotes the complement of $\mu$ inside the rectangular partition $(n-r-2)^{r}$. More precisely, if $\mu=(\mu_0,\ldots,\mu_{r-1},0)$, then $\overline{\mu}=(n-r-2-\mu_{r-1},\ldots,n-r-2-\mu_0,0)$.
\end{thm}

\begin{proof}[Proof of Theorem \ref{thm:tevPr}]
Let $V,W'$ be vector spaces of dimensions $r+1,d+1$, respectively, where we use $W'$ to distinguish from the vector space $W$ above of dimension $n$. Applying, in order, Theorem \ref{thm:coll}, Proposition \ref{vanishing_coeff}, and Proposition \ref{increase_degree}, we have
\begin{align*}
\Tev^{\bP^r}_{g,n,d}&=\int_{\Coll(V,W')}\pi^{*}(\sigma_{1^r}^g)\cdot (\gamma_r)^n\\
&=\int_{\Gr(r+1,W')}\sigma_{1^r}^g\cdot \Gamma_{r,n,d}\\
&=\int_{\Gr(r+1,W')}\sigma_{1^r}^g\cdot\left(\sum_{|\lambda|=r(n-r-2)}\Gamma^\lambda_{r,n,d}\cdot\sigma_{\lambda}\right)\\
&=\int_{\Gr(r+1,W')}\sigma_{1^r}^g\cdot\left(\sum_{\substack{|\lambda|=r(n-r-2) \\ \lambda_0\le n-r-1}}\Gamma^\lambda_{r,n,d}\cdot\sigma_{\lambda}\right)\\
&=\int_{\Gr(r+1,W')}\sigma_{1^r}^g\cdot\left(\sum_{\substack{|\lambda|=r(n-r-2) \\ \lambda_0\le n-r-1}}\Gamma^\lambda_{r,n,n-1}\cdot\sigma_{\lambda}\right).
\end{align*}

On the other hand, we also have, by Proposition \ref{orbit_closure_interpretation} and Theorem \ref{orbit_formula}, an equality of cycle classes on $\Gr(r+1,W)$
\begin{align*}
\sum_{\substack{|\lambda|=r(n-r-2) \\ \lambda_0\le n-r-1}}\Gamma^\lambda_{r,n,n-1}\cdot\sigma_{\lambda}&=\Gamma_{r,n,n-1}\\
&=[Z_{gen}]\\
&=\sum_{\mu\subset(n-r-2)^{r}}\sigma_{\mu}\sigma_{\overline{\mu}},
\end{align*}
where we recall that $\dim(W)=n$. The expansion in the Schubert basis of the last sum on $\Gr(r+1,d')$ is independent of the value of $d'$, as long as the classes $\sigma_\mu$ are interpreted to be zero whenever $\mu\not\subset(d'-r-1)^{r+1}$. Thus,
\begin{equation*}
\sum_{\substack{|\lambda|=r(n-r-2) \\ \lambda_0\le n-r-1}}\Gamma^\lambda_{r,n,n-1}\cdot\sigma_{\lambda}=\left(\sum_{\mu\subset(n-r-2)^{r}}\sigma_{\mu}\sigma_{\overline{\mu}}\right)_{\lambda_0\le n-r-1},
\end{equation*}
from which Theorem \ref{thm:tevPr} follows.
\end{proof}

\subsection{Degenerations of orbit closures: outline}\label{degen_orbit}

In what follows, we sketch an independent proof of Theorem \ref{orbit_formula}. The argument essentially appears in \cite[\S 7]{l_flag} after specializing from the more general setting of torus orbits in the full flag variety, but we explain the method in preparation of our proof of Theorem \ref{thm:countP2}. 

We outline the calculation in the language of maps to $\bP^r$, to emphasize the geometry. We are interested in the locus of linear series on $\bP^1$ underlying non-degenerate $f:\bP^1\to\bP^r$ of degree $n-1$ satisfying $n$ incidence conditions $f(p_i)=x_i$. We study such $f$ under degeneration of the points $x_i$. Specifically, choose a general hyperplane $H\subset\bP^r$, and move the points $x_i$ onto $H$ one-by-one. At each step, $f$ either remains non-degenerate, or becomes contained in $H$.

If the latter occurs after $x_\alpha\in H$ for some $\alpha$, then $p_{\alpha+1},\ldots,p_{n}$ must become base-points of $f$; one may regard this as a Schubert condition on the linear series underlying $f$. On the other hand, on the general fiber, there is also a section (secant) underlying $f$ that vanishes at the divisor $p_1+\cdots+p_{\alpha-1}$; this section vanishes in the limit. However, if we take the limit of $f$ in the space of complete collineations, rather than the space of maps, then the limit remembers this secant, giving a second Schubert condition on the special fiber.

From here, one repeats the degeneration, now moving the points $x_1,\ldots,x_\alpha\in H$ into a codimension 2 linear space, and so forth. As $f$ degenerates further, we keep track of two sets of Schubert conditions, which may be regarded as coming from base-point and secancy conditions on $f$, respectively. When $f$ becomes totally degenerate, these base-point and secancy conditions eventually become the classes $\sigma_{\mu}$, $\sigma_{\overline{\mu}}$, respectively, appearing in Theorem \ref{orbit_formula}. Our degeneration therefore replaces the orbit closure $Z\subset\Gr(r+1,W)$ in the end with a union of Richardson varieties (intersections of Schubert varieties) whose class is visibly equal to the right hand side of Theorem \ref{orbit_formula}.

\subsection{The degenerate components: first step}\label{sec_first_step}

We now set up the same degeneration on $\Coll(V,W)$. Fix $T$-invariant hyperplanes $M_1,\ldots,M_n\subset W$, and a line $\Lambda_1\subset V$. We introduce the following notation to be used for the rest of the paper: if $[\alpha_1,\alpha_2]$ is an interval, then $M_{[\alpha_1,\alpha_2]}$ denotes the intersection $M_{\alpha_1}\cap\cdots\cap M_{\alpha_2}$.

For some integer $\alpha\in[0,n-1]$, let $L_1,\ldots,L_{\alpha}\subset V$ be general hyperplanes containing $\Lambda_1$, and let $L_{\alpha+1},\ldots,L_n\subset V$ be general hyperplanes (with no other restrictions). We begin by defining subschemes $Y_{\alpha}$ and $Y_{(\alpha,0)}$ of $\Coll(V,W)$, along with their images $Z_{\alpha},Z_{(\alpha,0)}$ in $\Gr(r+1,W)$.

\begin{defn}
For $\alpha\in[0,n-1]$, define $Y_{\alpha}\subset \Coll(V,W)$ by the closure of the intersection
\begin{equation*}
\left(\bigcap_{i=1}^{n} \Inc(L_i,M_i)\right)\cap\Coll^\circ_{(r+1)}(V,W)
\end{equation*}
in $\Coll(V,W)$. Define $Z_{\alpha}=\pi(Y_{\alpha})\subset \Gr(r+1,W)$.
\end{defn}

Note that $Y=Y_0$ and $Z=Z_0$. In the language of degenerations of maps given above, one can regard $Y_{\alpha}$ as the closure of the locus of non-degenerate maps $f:\bP^1\to\bP^r$ with $f(p_i)=x_i$, after $\alpha$ of the points $x_i$ are specialized to lie on a hyperplane $H\subset\bP^r$. A general point $\phi\in Y_\alpha$ may be regarded as a $(d+1)\times(r+1)$ matrix 
\begin{equation*}
A_\alpha=
\begin{bmatrix}
0 & * & \cdots & *\\
\vdots & \vdots & \vdots & \vdots \\
0 & * & \cdots & *\\
* & * & \cdots & *\\
\vdots & \vdots & \vdots & \vdots \\
* & * & \cdots & *
\end{bmatrix}
\end{equation*}
whose first $\alpha$ rows have a 0 in the left-most column (corresponding to $\Lambda_1\subset V$), and whose other entries are generic. The assumption that $\alpha<n$ ensures that the left-most column of $A_\alpha$ is not identically zero. A general point $U\in Z_\alpha$ may be regarded as the span of the column vectors of $A_\alpha$ as above, and in fact, it is easily checked that $Z_\alpha$ is the $T$-orbit closure of $U$. The subscheme $Y_\alpha$ has the expected dimension of $n-1$, but the same is only true of $Z_\alpha$ if additionally $\alpha\le n-2$.

\begin{defn}
Suppose that $\alpha\in[r+1,n-1]$. Then, define $Y_{(\alpha,0)}\subset \Coll_{(r,1)}(V,W)$ to be the closure of the locus of complete collineations $\phi=\{\phi_j\}_{j=0}^{1}$ satisfying the following properties:
\begin{enumerate}
\item $\phi$ has type $(r,1)$, and $V_1=\ker(\phi_0)=\Lambda_1$,
\item the map $\phi_0:V/\Lambda_1\to W$ satisfies the incidence conditions $\Inc(L_i,M_i)$ for $i=1,2,\ldots,\alpha$,
\item the image $\im(\phi_0)$ is contained in $M_{[\alpha+1,n]}$, but none of $M_1,\ldots,M_\alpha$,
\item $\im(\phi_0)\cap M_{[1,\alpha-1]}=0$, but $\im(\phi)\cap M_{[1,\alpha-1]}\neq 0$.
\end{enumerate}
Define $Z_{(\alpha,0)}=\pi(Y_{(\alpha,0)})\subset \Gr(r+1,W)$.
\end{defn}

A general point of $Y_{(\alpha,0)}$ may be represented by a $(d+1)\times(r+1)$ matrix 
\begin{equation*}
A_{(\alpha,0)}=
\begin{bmatrix}
0 & * & \cdots & *\\
\vdots & \vdots & \vdots & \vdots \\
0 & * & \cdots & *\\
* & * & \cdots & *\\
* & 0 & \cdots & 0\\
\vdots & \vdots & \vdots & \vdots \\
* & 0 & \cdots & 0
\end{bmatrix}
\end{equation*}
whose first $\alpha-1$ rows have a 0 in the left-most column, and whose last $n-\alpha$ rows are zero in all other columns. The assumption that $\alpha>r$ ensures that the last $r$ columns are independent, if the non-zero entries are chosen generically. The map $\phi_0:V/\Lambda_1\to W$ is given by the last $r$ columns of $A_{(\alpha,0)}$, and the left-most column spans $\im(\phi)\cap M_{[1,\alpha-1]}$. A general point $U\in Z_{(\alpha,0)}$ is again given by the column span of $A_{(\alpha,0)}$, and $Z_{(\alpha,0)}$ is equal to the $T$-orbit closure of $U$. Both $Y_{(\alpha,0)},Z_{(\alpha,0)}$ have the expected dimension of $n-1$.

\begin{prop}\label{prop:degen_first}
Fix $\alpha\in[1,n-1]$. Then, there exists a 1-parameter degeneration of $Y_{\alpha-1}$ into a union of components including $Y_{\alpha}$ and $Y_{(\alpha,0)}$ with multiplicity 1. 

In particular, the same 1-parameter family degenerates $Z_{\alpha-1}$ into a union of components including $Z_{\alpha}$ and $Z_{(\alpha,0)}$ with multiplicity 1.

\end{prop}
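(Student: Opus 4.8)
The plan is to realize the degeneration by moving $L_\alpha$ in a general one-parameter family of hyperplanes $L_\alpha^t$, general for $t\neq 0$ and with $\Lambda_1\subset L_\alpha^0$, keeping $L_1,\ldots,L_{\alpha-1}\supset\Lambda_1$, the hyperplanes $L_{\alpha+1},\ldots,L_n$, and the $M_i$ fixed. Over a DVR $B$ with uniformizer $t$, let $\mathcal{Y}\subset\Coll(V,W)\times B$ be the schematic closure of the generic fibre $Y_{\alpha-1}$ (formed using $L_\alpha^t$), so that $\mathcal{Y}$ is $B$-flat and its closed fibre $\mathcal{Y}_0$ is the flat limit; pushing forward by $\pi$ then gives a degeneration of $Z_{\alpha-1}$. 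Each generic fibre of $\mathcal{Y}$ is irreducible of dimension $n-1$, being an open dense subset of a linear subspace of $\bP\Hom(V,W)$ of the expected codimension $nr$ (cf.\ Corollary \ref{kleiman_inc}), so $\mathcal{Y}$ is irreducible of dimension $n$ and $\mathcal{Y}_0$ is pure of dimension $n-1$. It therefore suffices to exhibit $Y_\alpha$ and $Y_{(\alpha,0)}$ as \emph{reduced} components of $\mathcal{Y}_0$, and then to descend along $\pi$.

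For $Y_\alpha$ I would work on the open locus $\Coll^\circ_{(r+1)}(V,W)\subset\bP\Hom(V,W)$: there the fibre of $\mathcal{Y}$ over $t$ is the linear subspace cut out by the $nr$ equations $\phi(L_i)\subset M_i$, whose only $t$-dependence is in the equations for $i=\alpha$ and which remain independent for all $t$. Hence over $\Coll^\circ_{(r+1)}$ the family is a flat family of linear subspaces of constant codimension $nr$, and — since schematic closure commutes with restriction to an open set — $\mathcal{Y}_0\cap\Coll^\circ_{(r+1)}$ is the reduced linear subspace defined by the same equations using $L_\alpha^0$; its closure in $\Coll(V,W)$ is precisely $Y_\alpha$, which thus occurs in $\mathcal{Y}_0$ with multiplicity one. (If $\alpha\le r$ then $Y_{(\alpha,0)}$ is not defined and nothing further is claimed, so from now on $\alpha\ge r+1$.)

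The substance is the component $Y_{(\alpha,0)}$. First I would record, by the parameter count underlying the stated dimension of $Z_{(\alpha,0)}$, that $\dim Y_{(\alpha,0)}=n-1$, so that by purity of $\mathcal{Y}_0$ it is enough to show $Y_{(\alpha,0)}\subset\mathcal{Y}_0$ and that $\mathcal{Y}_0$ is reduced at a generic point of $Y_{(\alpha,0)}$. That $Y_{(\alpha,0)}\subset\bigcap_{i=1}^n\Inc(L_i,M_i)$ (with $L_1,\ldots,L_\alpha\supset\Lambda_1$) follows from Proposition \ref{inc_set_theoretic}: for $i\le\alpha$ the subspaces $L_i$ and $V_1=\Lambda_1$ are non-transverse, so only $(\dagger)_0$ is required, and it holds by the defining properties of $Y_{(\alpha,0)}$; for $i>\alpha$ one has $L_i\cap V_1=0$ and $\im(\phi_0)\subset M_{[\alpha+1,n]}\subset M_i$, so $(\dagger)_0$ and $(\dagger)_1$ hold. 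To place $Y_{(\alpha,0)}$ inside the flat \emph{limit} I would fix a generic $\phi=\{\phi_0,\phi_1\}\in Y_{(\alpha,0)}$ together with coordinates $\Lambda_1=\langle v_1\rangle$, $M_i=\langle w_j:j\neq i\rangle$, and write down explicitly — in the style of the matrices in the proof of Proposition \ref{inc_set_theoretic} — a one-parameter family of full-rank maps $\phi^t:V\to W$ with suitable entries of order $t$, satisfying $\phi^t(L_i)\subset M_i$ for $i\neq\alpha$ and $\phi^t(L_\alpha^t)\subset M_\alpha$, and then verify by the recipe of \S\ref{sec_limits} — in particular that the relevant smoothing order equals $1$, so that the limit does not fall into a deeper stratum — that $\lim_{t\to0}\phi^t=\phi$ as a type $(r,1)$ collineation. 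Finally, for the multiplicity I would pass to an affine chart of $\Coll(V,W)\times B$ around $\phi$, in which the coordinates on $\Coll(V,W)$ split into those describing $\phi_0$ and those describing the exceptional (blown-up) direction, and check directly that $t$ vanishes to order one along $\mathcal{Y}$ there, i.e.\ that $\mathcal{Y}_0$ is reduced at $\phi$; hence $Y_{(\alpha,0)}$ occurs in $\mathcal{Y}_0$ with multiplicity one.

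The descent to the Grassmannian is then routine: $\pi$ carries $\mathcal{Y}$ to a degeneration of $Z_{\alpha-1}=\pi(Y_{\alpha-1})$ whose special fibre contains $Z_\alpha=\pi(Y_\alpha)$ and $Z_{(\alpha,0)}=\pi(Y_{(\alpha,0)})$, and since $\pi$ is generically injective on each of $Y_\alpha$ and $Y_{(\alpha,0)}$ — a generic point of either is a collineation whose image determines it uniquely, as in the proof of Proposition \ref{orbit_closure_interpretation} — the multiplicities are preserved. I expect essentially all of the difficulty to sit in the component $Y_{(\alpha,0)}$: constructing the smoothing $\phi^t$ so that its collineation limit lands on the open stratum $Y^\circ_{(\alpha,0)}$, and then carrying out the local computation on the blown-up space $\Coll(V,W)$ that forces the multiplicity of the limit to be exactly $1$ rather than larger. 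The treatment of $Y_\alpha$, the dimension counts, and the descent by $\pi$ are bookkeeping.
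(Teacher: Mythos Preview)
Your proposal is correct and follows essentially the same approach as the paper: both set up the degeneration by moving $L_\alpha$ into $\Lambda_1$, then exhibit explicit one-parameter families of full-rank collineations in $Y^t_{\alpha-1}$ limiting to generic points of $Y_\alpha$ and $Y_{(\alpha,0)}$, with the first-order (in $t$) nature of these families giving multiplicity one. The only difference is that the paper writes down the matrices concretely (two matrices of shape $(d+1)\times(r+1)$ with entries $*$, $0$, or $*t$), whereas you describe the plan to construct them and invoke the recipe of \S\ref{sec_limits}; once you actually write the matrices out, the multiplicity-one check and the identification of the limit collineation become immediate, and your separate verification via Proposition~\ref{inc_set_theoretic} and the local-chart argument, while correct, is not needed.
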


By convention, if either component $Y_{\alpha},Y_{(\alpha,0)}$ is ``out of range,'' which is to say, undefined for a given value of $\alpha$, we simply ignore it from the conclusion of the first part of Proposition \ref{prop:degen_first}. We also ignore the subscheme $Z_{n-1}$, which has dimension strictly less than $n-1$, from the second part of the conclusion.

\begin{proof}
The degeneration in question sends the entry of $A_{\alpha-1}$ in the $\alpha$-th row and first column from a generic value (represented by $*$) to zero. Geometrically, this corresponds exactly to moving the point $x_\alpha$ onto $H$, or moving the hyperplane $L_\alpha$ to contain $\Lambda_1$. 

More precisely, let $Y^t_{\alpha-1}\subset\Coll(V,W)\times\bD$ be the family of subschemes over a disk $\bD$ such that, for $t\neq0$, a general point $\phi\in Y^t_{\alpha-1}$ takes the form
\begin{equation*}
A^t_{\alpha-1}=
\begin{bmatrix}
0 & * & \cdots & *\\
\vdots & \vdots & \vdots & \vdots \\
0 & * & \cdots & *\\
*t & * & \cdots & *\\
* & * & \cdots & *\\
\vdots & \vdots & \vdots & \vdots \\
* & * & \cdots & *
\end{bmatrix}
.
\end{equation*}
Taking all of the entries $*$ to be constant and sending $t\to 0$, we find the component $Y_{\alpha}$ in the limit of this degeneration with multiplicity 1.

On the other hand, the matrix
\begin{equation*}
(A^t_{\alpha-1})_0=
\begin{bmatrix}
0 & * & \cdots & *\\
\vdots & \vdots & \vdots & \vdots \\
0 & * & \cdots & *\\
*t & * & \cdots & *\\
*t & *t & \cdots & *t\\
\vdots & \vdots & \vdots & \vdots \\
*t & *t & \cdots & *t
\end{bmatrix}
\end{equation*}
is also a point of $Y^t_{\alpha-1}$, whose limit in $\Coll(V,W)$ is a general point of $Y_{(\alpha,0)}$. Thus, we also find the component $Y_{(\alpha,0)}$ in the limit of this degeneration with multiplicity 1.
\end{proof}

Geometrically, in the degeneration of $Y_{\alpha-1}$, the component $Y_{\alpha}$ represents the maps $f:\bP^1\to\bP^r$ that remain non-degenerate as $x_\alpha$ moves into $H$, and the component $Y_{(\alpha,0)}$ represents those that become degenerate. The degenerate maps are contained in $H$ and have base-points at $p_{\alpha+1},\ldots,p_n$ (captured by the data of $\phi_0$), but also come with the additional data of a secant vanishing at $p_1+\cdots+p_{\alpha-1}$ (captured by the data of a non-zero element of $\im(\phi)\cap M_{[1,\alpha-1]}$). 

\begin{cor}
There exists a degeneration of the generic $T$-orbit closure $Z$ into the union of components containing $Z_{(r+1,0)},\ldots,Z_{(n-1,0)}$, each with multiplicity 1.
\end{cor}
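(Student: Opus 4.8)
The plan is to iterate Proposition \ref{prop:degen_first}. Starting from $Z=Z_0$, apply the proposition successively with $\alpha=1,2,\dots,n-1$: at the $\alpha$-th step we degenerate the non-degenerate component $Z_{\alpha-1}$ of the current special fiber into a union of components containing $Z_\alpha$ and, when $\alpha\in[r+1,n-1]$, also $Z_{(\alpha,0)}$, each with multiplicity $1$. For $\alpha\le r$ the subscheme $Z_{(\alpha,0)}$ is undefined (it lies outside the range of the definition of $Y_{(\alpha,0)}$) and is ignored, and the terminal subscheme $Z_{n-1}$ has dimension strictly less than $n-1=\dim Z$, so it cannot occur in the flat limit of $Z$ and is likewise ignored. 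Concatenating the $n-1$ degenerations therefore exhibits a degeneration of $Z$ into a union of components containing each of $Z_{(r+1,0)},\dots,Z_{(n-1,0)}$ with multiplicity $1$, as claimed.

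To make the concatenation precise I would realize all $n-1$ steps inside a single one-parameter family, using the matrix model from the proof of Proposition \ref{prop:degen_first}. Recall that the $\alpha$-th step is effected by multiplying the entry of $A_{\alpha-1}$ in row $\alpha$, column $1$ by $t$. Accordingly, let $\mathcal{Z}\subset\Gr(r+1,W)\times\bD$ be the closure of the family whose generic fiber is the $T$-orbit closure of the column span of the matrix obtained from a generic $A_0$ by scaling the entry in row $i$, column $1$ by $t^{\,i}$ for $i=1,\dots,n-1$, all other entries being generic constants. The staggered vanishing rates $t,t^2,\dots,t^{n-1}$ arrange that, as $t\to0$, the flat limit is resolved step by step exactly as in Proposition \ref{prop:degen_first}: the slowest-vanishing column-$1$ entry produces the component $Z_1$, inside which the next one produces $Z_2$ and $Z_{(2,0)}$, and so on, so that $\mathcal{Z}_0$ contains each $Z_{(\alpha,0)}$ for $\alpha\in[r+1,n-1]$. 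Each such component occurs with multiplicity $1$ because, exactly as in the proof of Proposition \ref{prop:degen_first}, the associated one-parameter submatrix degeneration $(A^t_{\alpha-1})_0$ limits with multiplicity $1$ to a generic point of $Z_{(\alpha,0)}$, so $\mathcal{Z}_0$ is generically reduced along it.

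The step I expect to require the most care is the bookkeeping underlying the previous paragraph: verifying that a component $Z_{(\beta,0)}$, once it has split off at step $\beta$, is neither absorbed into another component nor acquires higher multiplicity during steps $\beta+1,\dots,n-1$. This is where the explicit matrices are essential. On $Y_{(\beta,0)}$ the kernel direction $\Lambda_1=\ker(\phi_0)$ is precisely the first column, and the later degenerations act only on entries of that column in rows $\beta+1,\dots,n-1$, i.e.\ on data already ``used up'' in the structure of $\phi$ near $Z_{(\beta,0)}$; consequently they act trivially on $Z_{(\beta,0)}$ and can create no new incidence with it. Alternatively, one can argue inductively in the Hilbert scheme: each application of Proposition \ref{prop:degen_first} is a path whose endpoint corresponds to a subscheme having $Z_\alpha$ as a reduced component, and the next path leaves the remaining components of that endpoint fixed. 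Once this persistence is established the corollary follows immediately.
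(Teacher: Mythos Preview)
Your approach is correct and is essentially the same as the paper's: the corollary is stated without proof in the paper, as it is an immediate consequence of iterating Proposition \ref{prop:degen_first} for $\alpha=1,\ldots,n-1$, which is exactly your first paragraph. Your additional two paragraphs supply more justification than the paper gives, particularly the observation that $Z_{(\beta,0)}$ depends only on $L_1,\ldots,L_\beta$ (via condition (ii) in its definition) and hence is untouched by the later degenerations of $L_{\beta+1},\ldots,L_{n-1}$; this is the right reason the components persist. One small slip: in your illustrative sentence ``the next one produces $Z_2$ and $Z_{(2,0)}$,'' the component $Z_{(2,0)}$ is only defined when $r+1\le 2$, so for general $r$ the degenerate components only start appearing at step $\alpha=r+1$, as you correctly state elsewhere.
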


Note that we have not yet claimed that no other components appear.

\subsection{Iterating the degeneration}
 
We repeat the procedure of the previous section, moving  $L_1,\ldots,L_{\alpha-1}$ one at a time to general hyperplanes containing a plane $\Lambda_2\supset \Lambda_1$, and extracting components parametrizing $\phi$ of type $(r-1,1,1)$. We continue until reaching totally degenerate collineations. We describe only these objects obtained in the end.

Let now $\alpha=(\alpha_0,\cdots,\alpha_{r-1})$ denote a tuple of integers with $n>\alpha_0>\cdots>\alpha_{r-1}>1$. Write also $\alpha_{-1}=n+1$ and $\alpha_r=0$. 

We define a collection of matrices $A_\alpha$ as follows. We index the columns by the integers $0,1,\ldots,r$ and the rows by the integers $1,2,\ldots,n$. For $j=0,\ldots,r-1$, place generic entries, denoted $*$, in row $\alpha_j$ and in columns $j$ and $j+1$. Consider now all other rows: in the $k$-th row, where $k\in(\alpha_{j},\alpha_{j-1})$ for some $j\in[0,r]$, place a generic entry in column $j$. Finally, make all other entries of $A_\alpha$ zero. The case $r=3$, $n=12$, $m=3$, and $\alpha=(9,7,4)$ is shown below.
\begin{equation*}
A_\alpha=
\begin{bmatrix}
0 & 0 & 0 & *\\
0 & 0 & 0 & *\\
0 & 0 & 0 & *\\
0 & 0 & * & *\\
0 & 0 & * & 0\\
0 & 0 & * & 0\\
0 & * & * & 0\\
0 & * & 0 & 0\\
* & * & 0 & 0\\
* & 0 & 0 & 0\\
* & 0 & 0 & 0\\
* & 0 & 0 & 0\\
\end{bmatrix}
\end{equation*}

\begin{defn}
Fix $\alpha$ as above. Define $Y_\alpha\subset\Coll_{1^{r+1}}(V,W)$ by the locus of totally degenerate collineations whose general point $\phi=\{\phi_j\}_{j=0}^{r}$ has the property that $\im(\phi_j)$ is equal to the span of the last $j+1$ columns of a matrix of the form $A_\alpha$.

Define $Z_\alpha$ to be equal to $\pi(Y_\alpha)$, or equivalently, to be the closure of the locus given of subspaces given by the column span of a matrix of the form $A_\alpha$.
\end{defn}

Each $Y_\alpha,Z_\alpha$ has the expected dimension of $n-1$, and $Z_\alpha$ is additionally the $T$-orbit closure of a generic matrix $A_\alpha$. Iterating the degeneration of the previous section, we obtain:

\begin{prop}
There exists a degeneration of $Y$ into a union of components including $Y_\alpha$, each with multiplicity 1, and ranging over all tuples $\alpha$ defined above.

Furthermore, the resulting degeneration of $Z=\pi(Y)$ contains \emph{only} the components $Z_\alpha$.
\end{prop}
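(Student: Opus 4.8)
The plan is to iterate Proposition \ref{prop:degen_first}, carefully tracking both the ``degenerate'' components $Y_{(\alpha,0)}$ (and their higher analogues) and the requirement that no extra components appear in the image $Z = \pi(Y)$. First I would set up the bookkeeping: after moving $L_1,\dots,L_{\alpha_0-1}$ one-by-one to contain a line $\Lambda_1$, Proposition \ref{prop:degen_first} produces, for each $\alpha_0 \in [r+1,n-1]$, a component $Y_{(\alpha_0,0)}$ of type $(r,1)$ with multiplicity $1$ in the limit; the remaining top component $Y_{n-1}$ satisfies $\dim Z_{n-1} < n-1$ and can be discarded for the $Z$-statement. On each $Y_{(\alpha_0,0)}$, the data is a rank-$r$ map $\phi_0: V/\Lambda_1 \to M_{[\alpha_0+1,n]}$ satisfying incidence conditions from $L_1,\dots,L_{\alpha_0}$, together with a secancy line. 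Crucially, $\phi_0$ itself is of the same shape as the original problem in one lower dimension (inside $\Gr(r, M_{[\alpha_0+1,n]})$ with $\alpha_0$ hyperplanes), so one may re-run the degeneration: move $L_1,\dots,L_{\alpha_0-1}$ to contain a plane $\Lambda_2 \supset \Lambda_1$, extracting components of type $(r-1,1,1)$, indexed now by a pair $\alpha_0 > \alpha_1$. Inductively, after $r$ rounds we reach totally degenerate collineations indexed by tuples $n > \alpha_0 > \cdots > \alpha_{r-1} > 1$, and the matrix shapes $A_\alpha$ defined above record exactly which components survive with multiplicity $1$.

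The second part --- that the degeneration of $Z = \pi(Y)$ contains \emph{only} the $Z_\alpha$ --- requires more care and is where the real work lies. The point is that a priori a flat limit of $Z$ could acquire an embedded or extra component not of the form $Z_\alpha$ (equivalently, not arising as the $\pi$-image of one of the $Y_\alpha$). To rule this out, I would argue by a dimension/degree comparison. One knows the class $[Z]$ a priori (it is $\Gamma_{r,n,n-1}$, and in fact the point of this outline is to recompute it), but more usefully: each $Z_\alpha$ is a Richardson variety, being the $T$-orbit closure of a generic matrix $A_\alpha$, which one identifies with an intersection of a Schubert variety (from the base-point conditions, i.e.\ the pattern of zero entries in the bottom rows) and an opposite Schubert variety (from the secancy conditions, i.e.\ the pattern in the top rows). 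The classes $[Z_\alpha]$ are therefore transparent: summing over all valid $\alpha$ gives exactly $\sum_{\lambda \subset (n-r-2)^r} \sigma_\lambda \sigma_{\bar\lambda}$, matching the right-hand side of Theorem \ref{orbit_formula} term-by-term (the tuple $\alpha$ encodes the pair $(\lambda, \bar\lambda)$). Since $[Z]$ is fixed under specialization and is already accounted for by $\sum_\alpha [Z_\alpha]$ (which one checks equals $[Z]$ via Theorem \ref{orbit_formula}, or independently), and since each $Z_\alpha$ appears with multiplicity at least $1$ by the explicit matrix degenerations, there is no room for further components: any extra component would force a strict inequality of classes, a contradiction. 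This is exactly the step flagged in \S\ref{degen_orbit} as requiring one to ``prove by hand that no components appear'' --- in the toric/orbit-closure case it follows cleanly from the fact that $Z$ and all its limits are controlled by polyhedral subdivisions (each $Z_\alpha$ corresponds to a cell), so the limit is reduced and its components are precisely the top-dimensional cells.

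The main obstacle, then, is establishing that the limit of $Z$ is \emph{reduced} with components exactly indexed by the tuples $\alpha$, rather than merely that the $Z_\alpha$ appear among the components. In the orbit-closure setting this is handled by the Gelfand--MacPherson/moment-polytope dictionary: the generic $T$-orbit closure $Z$ is a toric variety whose fan refines under the degeneration, and the limit is the corresponding (reduced) union of toric strata. I would invoke this --- or alternatively cite \cite[\S 7]{l_flag}, where the analogous degeneration is carried out for torus orbits in the full flag variety and specializes to the present Grassmannian statement --- to conclude that the degeneration of $Z$ has no components beyond the $Z_\alpha$. Combining: the degeneration of $Y$ contains each $Y_\alpha$ with multiplicity $1$ (by the explicit one-parameter families built from the matrices $A_\alpha$, exactly as in the proof of Proposition \ref{prop:degen_first} iterated $r$ times), and pushing forward by $\pi$, which is generically injective on each $Y_\alpha$ by the same argument as in Proposition \ref{orbit_closure_interpretation}, the degeneration of $Z$ is precisely $\bigcup_\alpha Z_\alpha$. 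This completes the proof.
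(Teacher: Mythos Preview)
Your approach is essentially the same as the paper's: iterate Proposition \ref{prop:degen_first} to extract the components $Y_\alpha$ with multiplicity $1$, and then for the second claim invoke the fact that the matroid polytope (moment map image) of the toric variety $Z$ subdivides into the matroid polytopes of the $Z_\alpha$, citing \cite[\S 7]{l_flag}. The paper's own treatment is no more detailed than what you have written.

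One caution worth flagging: your first argument for the second claim --- the degree comparison using $[Z]=\sum_\alpha[Z_\alpha]$ via Theorem \ref{orbit_formula} --- is circular in context. The purpose of \S\ref{degen_orbit} and the surrounding subsections is to sketch an \emph{independent} proof of Theorem \ref{orbit_formula}, so invoking it here defeats the point (you parenthetically note ``or independently,'' which suggests you sensed this). You correctly fall back on the polytope subdivision argument, which is exactly the paper's route, but you should lead with that rather than present it as a backup. Also a minor correction: the phrase about having to ``prove by hand that no components appear'' refers to the $\bP^2$ calculation of \S\ref{p2_sec}, where the toric structure is \emph{absent}; in the present orbit-closure setting the polyhedral argument is available and is the clean mechanism, precisely as you eventually state.
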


Checking no other components appear in the degeneration of $Z$ requires more work. This is a consequence of the fact that the associated matroid polytope (moment map image) of the toric variety $Z$ subdivides into a union of the matroid polytopes of the components $Z_\alpha$. See \cite[\S 7]{l_flag} for details. 

Now, $Z_\alpha$ is visibly an intersection of two Schubert varieties (that is, a Richardson variety). More precisely, let $F_M$ denote the flag
\begin{equation*}
0\subset M_{[1,n-1]}\subset \cdots\subset M_{[1,2]} \subset M_1\subset W,
\end{equation*}
and let $F'_M$ denote the transverse flag
\begin{equation*}
0\subset M_{[2,n]}\subset \cdots\subset M_{[n-1,n]}\subset M_n\subset W.
\end{equation*}
Let $\mu=(\mu_0,\ldots,\mu_{r-1},0)$ be the partition given by $\mu_j=(\alpha_j-1)-(r-j)$ for $j=0,1,\ldots,r-1$. Then, we have
\begin{equation*}
Z_\alpha=\Sigma^{F_M}_{\mu}\cap \Sigma^{F'_M}_{\overline{\mu}}.
\end{equation*}

In the language of maps to $\bP^r$, the first Schubert variety simultaneously packages all of the secancy conditions imposed on $f:\bP^1\to\bP^r$ upon iterated degeneration as a complete collineation, and the second packages all of the base-point conditions. We now put everything together:

\begin{proof}[Proof of Theorem \ref{orbit_formula}]
By the above discussion, we have
\begin{equation*}
\Gamma_{r,n,n-1}=[Z]=\sum_\alpha[Z_\alpha]=\sum_{\mu\subset(n-r-2)^r}\sigma_\mu\sigma_{\overline{\mu}}.
\end{equation*}
\end{proof}

We remark that the entire proof of Theorem \ref{orbit_formula} goes through without change $T$-equivariantly. Thus, one recovers in fact Berget-Fink's formula \cite[Theorem 5.1]{bf} in the equivariant cohomology of $\Gr(r+1,W)$, using instead the equivariant classes of the Schubert varieties $\Sigma^{F_M}_{\mu},\Sigma^{F'_M}_{\overline{\mu}}$.

\subsection{The coefficients $\Gamma^\lambda_{r,n,n-1}$}\label{sec:tev_coeffs}

The integers $\Gamma^\lambda_{r,n,n-1}$, and hence also $\Gamma^\lambda_{r,n,d}$, are understood. Klyachko \cite[Theorem 6]{k} proved that
\begin{equation}\label{klyachko_formula}
\Gamma^\lambda_{r,n,n-1}=\sum_{j=0}^{m(\lambda)}(-1)^j\binom{n}{j}|\SSYT_{r+1-j}(\lambda^j)|,
\end{equation}
where:
\begin{itemize}
\item for any partition $\lambda\subset(n-r-1)^{r+1}$ with $|\lambda|=r(n-r-2)$, we define $m(\lambda)\ge0$ to be the unique integer for which $\lambda_0=\cdots=\lambda_{m(\lambda)-1}=n-r-1$ and $\lambda_{m(\lambda)}<n-r-1$, and
\item for $j=0,1,\ldots,m(\lambda)-1$, we define $\lambda^j=(\lambda_j,\ldots,\lambda_r)$ to be the partition obtained by removing the first $j$ parts of $\lambda$ (which are all equal to $n-r-1$).
\end{itemize}

Proposition \ref{bound_schubert} shows that $\Gamma^\lambda_{r,n,n-1}$ should be the cardinality of a subset of $\SSYT_{r+1}(\lambda)$; note that $|\SSYT_{r+1}(\lambda)|$ is the first term in the alternating sum above. We indeed have:

\begin{thm}\cite[Theorem 4]{l_torus}\label{thm:stripless}
The coefficient $\Gamma^\lambda_{r,n,n-1}$ is equal to the cardinality of the subset of $\SSYT_{r+1}(\lambda)$ consisting of SSYTs with no $(i,i+1)$-strip of length $n-r-1$ for any $i=1,\ldots,r$, see Definition \ref{(i,i+1)-strip}.
\end{thm}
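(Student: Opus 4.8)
The plan is to derive the theorem from Klyachko's formula \eqref{klyachko_formula}, reducing everything to a purely combinatorial identity between the alternating sum $\sum_{j=0}^{m(\lambda)}(-1)^j\binom{n}{j}|\SSYT_{r+1-j}(\lambda^j)|$ and the number of stripless SSYTs of shape $\lambda$. First I would dispose of the case $\lambda_0<n-r-1$: here no strip of $\lambda$ can have length $n-r-1$, so every SSYT of shape $\lambda$ is stripless, while $m(\lambda)=0$ and Klyachko's sum collapses to its leading term $|\SSYT_{r+1}(\lambda)|$; the two sides agree trivially. So assume $\lambda_0=n-r-1$, equivalently $n=\lambda_0+r+1$; the forbidden strips are then exactly the full-length ones (length $\lambda_0$), and $\binom{n}{j}=\binom{\lambda_0+r+1}{j}$.

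The heart of the argument is an inclusion--exclusion. For $T\in\SSYT_{r+1}(\lambda)$ let $A(T)\subseteq\{1,\dots,r\}$ be the set of $i$ for which $T$ admits a full-length $(i,i+1)$-strip, so the stripless tableaux are those with $A(T)=\emptyset$ and $\#\{\text{stripless }T\}=\sum_{\mathcal{S}}(-1)^{|\mathcal{S}|}N(\mathcal{S})$, where $\mathcal{S}$ runs over collections of ``strip slots'' and $N(\mathcal{S})$ counts SSYTs admitting all the prescribed full-length strips. The first ingredient needed is a structural description of when $T$ admits a full $(i,i+1)$-strip: for $i=1$ this holds precisely when the first row of $T$ uses only the entries $1$ and $2$, and for general $i$ it is a condition on rows $i,i+1$ (together with the shape) expressing the existence of a ``staircase'' of $i$'s continued by a staircase of $i+1$'s sweeping across all $\lambda_0$ columns. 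I would then show that an SSYT admitting full $(i,i+1)$-strips for all $i$ in a set $A$ is forced into a normal form in which the band of rows implicated by $A$ is rigidly coupled, and that deleting this coupled band leaves an SSYT of shape $\lambda^{j}$ on $r+1-j$ symbols (with $j$ determined by $\mathcal{S}$), the residual freedom being enumerated by the factor $\binom{n}{j}$. Summing the contributions $(-1)^{|\mathcal{S}|}N(\mathcal{S})$ according to the value of $j$ should then reproduce Klyachko's alternating sum term by term.

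An alternative that may organize the combinatorics more cleanly is to bypass Klyachko's formula and instead use the geometric description of \S\ref{tevelev_sec}: $\Gamma^\lambda_{r,n,n-1}$ is the coefficient of $\sigma_\lambda$ in $\sum_{\mu\subset(n-r-2)^r}\sigma_\mu\sigma_{\overline{\mu}}$ on $\Gr(r+1,n)$, that is, the sum $\sum_{\mu}c^\lambda_{\mu,\overline{\mu}}$ of Littlewood--Richardson numbers. One would then seek a direct bijection between stripless SSYTs of shape $\lambda$ and the disjoint union over $\mu$ of LR fillings of $\lambda/\mu$ of content $\overline{\mu}$, in the spirit of the RSK argument of \cite{grb} for the asymptotic regime, with the strip-avoidance condition matching exactly the truncation $\mu\subset(n-r-2)^r$ that cuts off the sum.

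The main obstacle, common to both approaches, is that full-length strips for different values of $i$ can coexist in genuinely non-nested ways — one can have a full $(1,2)$-strip and a full $(3,4)$-strip without any full $(2,3)$-strip — so neither the inclusion--exclusion nor the bijection is governed by a single chain of constraints. Pinning down the correct normal form for an SSYT carrying a prescribed family of full-length strips, and verifying that the resulting count collapses precisely to $\binom{n}{j}|\SSYT_{r+1-j}(\lambda^j)|$, is where the real work lies; this is the content of \cite[Theorem 4]{l_torus}.
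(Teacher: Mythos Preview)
The paper does not prove this theorem: it is quoted verbatim from \cite[Theorem 4]{l_torus} and used as a black box in \S\ref{sec:tev_coeffs}. There is therefore no ``paper's own proof'' to compare against; the paper only records the easy special case $\lambda_0<n-r-1$ (where the statement reduces to Corollary \ref{asymptotic_formula}) and otherwise defers entirely to \cite{l_torus}.

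Your proposal is honest about this: you sketch two plausible avenues and then concede at the end that the real content lies in \cite{l_torus}. That said, the inclusion--exclusion outline has a structural mismatch worth flagging. Your proposed sum runs over subsets $\mathcal{S}\subseteq\{1,\dots,r\}$ of strip-types, whereas Klyachko's formula is indexed by a single integer $j\le m(\lambda)$, and the shapes $\lambda^j$ appearing there are obtained by deleting the top $j$ rows of $\lambda$ (each of length $n-r-1$). As you yourself note, a set $\mathcal{S}$ need not be an interval, so there is no single ``band of rows'' to delete, and it is not clear how the $2^r$ terms of your inclusion--exclusion collapse to the $m(\lambda)+1$ terms of \eqref{klyachko_formula}, nor where exactly the factor $\binom{n}{j}$ arises. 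Your second approach, matching stripless tableaux directly to $\sum_\mu c^\lambda_{\mu,\overline\mu}$, is in fact closer to what \cite{l_torus} does (via Coskun's geometric Littlewood--Richardson rule, as in Proposition \ref{LR_strips} here), but again this requires a non-trivial bijective argument that your outline does not supply.
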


In particular, we have $\Gamma^\lambda_{r,n,n-1}=|\SSYT_{r+1}(\lambda)|$ unless $\lambda_0=n-r-1$, which is on the one hand clear from \eqref{klyachko_formula}, and on the other follows from combining Proposition \ref{increase_degree} and part (a) of Corollary \ref{asymptotic_formula}.

Combining Theorem \ref{thm:stripless} with Theorem \ref{thm:tevPr} gives the following combinatorial interpretation of $\Tev^{\bP^r}_{g,n,d}$. See also \cite[\S 4]{fl}, but we find it convenient to modify the objects slightly, in addition to adding the final two conditions below to handle geometric Tevelev degrees for all curve classes. Namely, the count $\Tev^{\bP^r}_{g,n,d}$ is equal to the number of fillings of the boxes of a $(r+1)\times(d-r)$ grid with:
\begin{itemize}
\item $rg$ red integers among \textcolor{red}{$1,2,\ldots,g$}, with each appearing exactly $r$ times
\item $r(n-r-2)$ blue integers among \textcolor{blue}{$1,2,\ldots,r+1$}, with each appearing any number of times,
\end{itemize}
subject to the following conditions:
\begin{itemize}
    \item the blue integers are top- and left- justified, i.e., they appear above the red integers in the same column and to the left of red integers in the same row,
       
    \item the red integers are strictly decreasing across rows and weakly decreasing down columns (that is, they form an SSYT after rotation by 180 degrees and conjugation),
   \item the blue integers are weakly increasing across rows and strictly increasing down columns (that is, they form an SSYT),
   \item the blue integers only appear in the leftmost $n-r-1$ columns of the grid, and
   \item no $(i,i+1)$-strip of length $n-r-1$ appears among the blue integers, for any $i=1,\ldots,r$.
\end{itemize}
An example filling is given in the case $(r,g,n,d)=(3,6,11,12)$ below.
\begin{center}
\begin{tabular}{ |c|c|c|c|c|c|c|c|c| }
\hline
\textcolor{blue}{1} & \textcolor{blue}{1} & \textcolor{blue}{1}& \textcolor{blue}{1} & \textcolor{blue}{3} & \textcolor{blue}{4} & \textcolor{blue}{4} &\textcolor{red}{4} & \textcolor{red}{2} \\
\hline
\textcolor{blue}{2} & \textcolor{blue}{2} & \textcolor{blue}{2} & \textcolor{blue}{2}&  \textcolor{red}{6} &  \textcolor{red}{5}  & \textcolor{red}{4}  &  \textcolor{red}{3}  & \textcolor{red}{1} \\
\hline
\textcolor{blue}{3} & \textcolor{blue}{3} & \textcolor{blue}{3}& \textcolor{blue}{4} & \textcolor{red}{6} &  \textcolor{red}{5}  & \textcolor{red}{3}  &  \textcolor{red}{2}  & \textcolor{red}{1} \\
\hline
\textcolor{blue}{4} & \textcolor{blue}{4} & \textcolor{blue}{4}& \textcolor{red}{6} &  \textcolor{red}{5}  & \textcolor{red}{4} & \textcolor{red}{3} &  \textcolor{red}{2}  & \textcolor{red}{1} \\
\hline
\end{tabular}
\end{center}

 \section{Curve counts in $\bP^2$}\label{p2_sec}

 \subsection{Outline}\label{sec:P2outline}
 
In this section, we extend the degeneration method from our torus-orbit calculation to prove Theorem \ref{thm:countP2}. That is, we determine the number of non-degenerate maps from a general pointed curve to $\bP^2$ incident at $n_0$ general points $x_1,\ldots,x_{n_0}$ and $n_1=n-n_0$ general lines $X_{n_0+1},\ldots,X_n$. We summarize the calculation.

\begin{enumerate}
\item[1.] Fix a line $\ell\in\bP^2$. Move the points $x_1,\ldots,x_{n_0}$ onto $\ell$ one at a time. The map $f:C\to\bP^2$ either degenerates to a multiple cover of $\ell$ after $\alpha_0\le n_0$ of the points are contained in $\ell$, or remains non-degenerate after all of the points are on $\ell$. If the former, proceed to step 2; if the latter, proceed to step 3.
\item[2.] The limit of $f$ retains, in addition to to the data of a multiple cover of $\ell$, the data of a non-zero section vanishing on $p_1+\cdots+p_{\alpha_0-1}$. Continue the degeneration by moving the points $x_1,\ldots,x_{\alpha_0-1}$ onto a fixed point $x\in\ell$. Either $f$ degenerates to a constant map to $x$ after $\alpha_1\le\alpha_0$ of the points are equal to $x$, or, a new phenomenon occurs: $f$ remains a multiple cover of $\ell$, but the section vanishing on $p_1+\cdots+p_{\alpha_1}$ becomes equal to that vanishing on $p_1+\cdots+p_{\alpha_0-1}$, which was previously not seen by the multiple cover $f$. In order for this to be possible, $f$ acquires base-points at $p_{\alpha_1+1},\ldots,p_{\alpha_0}$, in addition to those at $p_{n_0+1},\ldots,p_n$. If the former possibility, then go to step (a) below, and if the latter, go to step (b).
\begin{enumerate}
\item[(a)] $f$ is now a complete collineation of type $(1,1,1)$, and such objects can be enumerated (more precisely, pushed forward to $\Gr(3,W)$) via Schubert calculus as in the orbit closure calculation. 
\item[(b)] Move the points $X_{n_0+1}\cap \ell,\ldots,X_{n}\cap \ell$ onto $x$, until $f$ degenerates into a constant map. We again get collineations of type $(1,1,1)$ which can be enumerated.
\end{enumerate}
\item[3.] Fix a point $z\notin \ell$, and move the lines $X_{n_0+1},\ldots,X_{n}$ to contain $z$. Eventually, say, after $X_{n_0+1},\ldots,X_{\alpha_0}\ni z$, $f$ will degenerate to a constant map with image $z$. However, the limit of $f$ as a complete collineation will retain the information of a map $\widetilde{f}:C\to\bP^1$, which may be regarded as the limit of the maps obtained from $f$ by post-composition with projection from $z$.
\item[4.] Move the points $x_1,\ldots,x_{n_0},X_{n_0+1}\cap \ell,\ldots,X_{\alpha_0-1}\cap \ell$ onto $x\in\ell$ until the map $\widetilde{f}$ degenerates. From here, we will again be able to enumerate the degenerate objects. The situation is different depending on whether $\widetilde{f}$ degenerates before or after the $n_0$-th step, but we defer a discussion to later.
\end{enumerate}
 
 \subsection{Length 1 components}
 
 We now carry out the program described in \S\ref{sec:P2outline} on the space of complete collineations. Let $L_1,\ldots,L_{n_0}\subset V\cong\bC^3$ be general planes and $L_{n_0+1},\ldots,L_n\subset V$ be general lines. We are interested in the image of the locus
\begin{equation*}
Y:=\bigcap_{i=1}^{n}\Inc(L_i,M_i)\subset \Coll(V,W)
\end{equation*}
to $\Gr(3,W)$, which we denote by $Z$, and the class $[Z]\in H^{2(n+n_0)}(\Gr(3,W))$. We study this class by degenerating the $L_i$ and studying the corresponding degeneration of $Z$.

We first carry out the analogue of the degeneration in \S\ref{sec_first_step}. Fix a line $\Lambda_1\subset V$. For a fixed $\alpha_0\le n_0$, now assume that $L_1,\ldots,L_{\alpha_0}\supset \Lambda_1$; all $L_i$ are otherwise assumed to be general.  

\begin{defn}
For $\alpha_0\in [0,\min(n-1,n_0)]$, define $Y^{(3)}_{\alpha_0}$ by the closure of the intersection
\begin{equation*}
\left(\bigcap_{i=1}^{n} \Inc(L_i,M_i)\right)\cap\Coll^\circ_{(3)}(V,W)
\end{equation*}
in $\Coll(V,W)$. Define $Z^{(3)}_{\alpha_0}=\pi(Y^{(3)}_{\alpha_0})\subset\Gr(3,W)$.
\end{defn}

In particular, we have $Y=Y^{(3)}_{0}$ and $Z=Z^{(3)}_{0}$.

\begin{defn}
For $\alpha_0\in [3,\min(n-1,n_0)]$, define $Y^{(2,1)}_{\alpha_0}\subset\Coll_{(2,1)}(V,W)$ to be the closure of the locus of complete collineations $\phi:V\to W$ satisfying the following properties.
\begin{enumerate}
\item $\phi$ has type $(2,1)$, and $V_1=\ker(\phi_0)=\Lambda_1$,
\item the map $\phi_0:V/\Lambda_1\to W$ satisfies the incidence conditions $\Inc(L_i,M_i)$ for $i=1,2,\ldots,\alpha_0$, and the incidence conditions $\Inc(\langle L_i,\Lambda_1\rangle,M_i)$ for $i=n_0+1,\ldots,n$.
\item the image $\im(\phi_0)$ is contained in $M_{[\alpha_0+1,n_0]}$, but none of $M_1,\ldots,M_{\alpha_0}$ or $M_{n_0+1},\ldots,M_n$,
\item $\im(\phi_0)\cap M_{[1,\alpha_0-1]}=0$, but $\im(\phi)\cap M_{[1,\alpha_0-1]}\neq 0$.
\end{enumerate}
\end{defn}

Both $Y^{(3)}_{\alpha_0}$ and $Y^{(2,1)}_{\alpha_0}$ are checked, by building the usual incidence correspondences, to be irreducible and generically smooth of the expected codimension $n+n_0$ in $\Coll(V,W)$. We now consider a degeneration over $\bD$ in which, over the general point $t\in\bD$, only $L_1,\ldots,L_{\alpha_0-1}$ contain $\Lambda_1$, and over $0\in\bD$, $L_{\alpha_0}\supset \Lambda_1$; all other $L_i$ do not change.

\begin{prop}\label{flat_limit_21_case}
The flat limit $(Y^{(3)}_{\alpha_{0}-1})_0$ of the subscheme $Y^{(3)}_{\alpha_{0}-1}\subset\Coll(V,W)$, under the degeneration described above, contains the components $Y^{(3)}_{\alpha_0}$ (if $\alpha_0\neq n$) and $Y^{(2,1)}_{\alpha_0}$ (if $\alpha_0\ge 3$). Both components appear with multiplicity 1, and there no other components.
\end{prop}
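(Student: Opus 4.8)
The plan is to follow the template of the torus-orbit degeneration (Proposition \ref{prop:degen_first}) for the multiplicity-one assertions, and to supply the genuinely new input --- that \emph{no other} components appear --- by a stratum-by-stratum dimension count on $\Coll(V,W)$ run in the style of the proof of Proposition \ref{coll_genericity_points}. Write $\mathcal{Y}_0$ for the flat limit of $Y^{(3)}_{\alpha_0-1}$ and set $e:=\dim Y^{(3)}_{\alpha_0-1}=3(d+1)-1-(n+n_0)$. The degeneration is realized by an explicit family over $\bD$ (the matrices $A^t_{\alpha_0-1}$), whose total space is irreducible of dimension $e+1$ because $Y^{(3)}_{\alpha_0-1}$ is irreducible; hence $\mathcal{Y}_0$, the fibre over $0$, is a Cartier divisor in that total space and so is pure of dimension $e$. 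Since the relative incidence loci are closed in $\Coll(V,W)\times\prod_i\Gr(\dim L_i,V)$ and the Grassmannians are proper, passing to the limit of the moving plane $L_{\alpha_0}^t$ gives $\mathcal{Y}_0\subset\mathcal{Z}:=\bigcap_{i=1}^n\Inc(L_i^0,M_i)$, where now $L_1^0,\dots,L_{\alpha_0}^0\supset\Lambda_1$ and the remaining $L_i^0$ are general. Thus every component of $\mathcal{Y}_0$ is a dimension-$e$ subvariety lying in some collineation stratum of $\mathcal{Z}$, and it suffices to locate the components of $\mathcal{Z}$ of dimension $\ge e$.

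On the open full-rank stratum $\Coll^\circ_{(3)}(V,W)$ --- an open subset of a projective space --- $\mathcal{Z}$ is cut out by the linear equations defining the $\Inc'(L_i^0,M_i)$, so $\mathcal{Z}\cap\Coll^\circ_{(3)}(V,W)$ is irreducible of dimension $e$ (as recorded before the statement) with closure $Y^{(3)}_{\alpha_0}$; consequently the only full-rank component of $\mathcal{Y}_0$ is $Y^{(3)}_{\alpha_0}$. For each boundary type $\overrightarrow{r}\in\{(2,1),(1,2),(1,1,1)\}$ I would relativize the incidence loci over the product of the spaces parametrizing the admissible positions of the $L_i^0$ --- a $\bP^1$ of planes through $\Lambda_1$ for each of $L_1^0,\dots,L_{\alpha_0}^0$ and the full Grassmannian for each remaining $L_i^0$ --- stratify $\Coll^\circ_{\overrightarrow{r}}(V,W)$ by the nested base-point sets of \S\ref{coll_to_count_sec} together with the incidence positions of the kernels $V_j$ relative to $\Lambda_1$, and on each resulting piece compute the dimension of the universal incidence correspondence from Proposition \ref{inc_set_theoretic} and Corollary \ref{inc_exp_dim}, exactly as in the proof of Proposition \ref{coll_genericity_points}. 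Restricting to a general point of the base then computes $\dim(\mathcal{Z}\cap\Coll^\circ_{\overrightarrow{r}})$ piece by piece, and the outcome to be verified is that every such dimension is $<e$ except on the single piece with $\overrightarrow{r}=(2,1)$, kernel line $V_1=\Lambda_1$, and base-point pattern as in the definition of $Y^{(2,1)}_{\alpha_0}$, on which the specialization of $L_1^0,\dots,L_{\alpha_0}^0$ recovers exactly one dimension and the piece is the smooth locus of $Y^{(2,1)}_{\alpha_0}$, of dimension $e$ (when $\alpha_0<3$ even this piece has dimension $<e$, matching the range of the definition). Granting this, any non-full-rank component of $\mathcal{Y}_0$ must be $Y^{(2,1)}_{\alpha_0}$; conversely the two explicit one-parameter families of Proposition \ref{prop:degen_first} (now with the incidence conditions from the lines $X_i$, $i>n_0$, carried passively along) limit to generic points of $Y^{(3)}_{\alpha_0}$ and $Y^{(2,1)}_{\alpha_0}$, so both do occur. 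Multiplicity one along $Y^{(3)}_{\alpha_0}$ is immediate since over $\bD$ the family is a pencil of linear subspaces of $\Coll^\circ_{(3)}(V,W)$ with reduced special member; multiplicity one along $Y^{(2,1)}_{\alpha_0}$ follows from checking reducedness of $\mathcal{Y}_0$ in a local chart on $\Coll_{(2,1)}(V,W)$ around the generic point produced by the explicit family, which is the same finite computation as in the torus-orbit case.

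The main obstacle is the stratum-by-stratum count just described. In the torus-orbit setting this step was subsumed by the subdivision of the matroid polytope of $Z$ into those of the degenerate pieces, but here $Z$ is not toric, so there is no polyhedral shortcut and one must rule out excess-dimensional families in $\mathcal{Z}$ on the $(2,1)$, $(1,2)$ and $(1,1,1)$ strata by hand. The delicate point is the $(2,1)$ stratum with kernel $V_1\ne\Lambda_1$: since two general planes through $\Lambda_1$ meet only in $\Lambda_1$, such a $V_1$ lies in at most one of $L_1^0,\dots,L_{\alpha_0}^0$, so for all but at most one $i\le\alpha_0$ the plane $L_i^0$ is transverse to $V_1$ and the condition $\Inc(L_i^0,M_i)$ cuts the fibre of the incidence correspondence down by a full codimension rather than one less; one must verify that the resulting count forces the dimension strictly below $e$, so that no spurious component survives and only $Y^{(2,1)}_{\alpha_0}$ (where $V_1=\Lambda_1$ and exactly one codimension is lost) remains.
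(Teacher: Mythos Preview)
Your plan for exhibiting the two components with multiplicity one (via explicit one-parameter matrix families, as in Proposition \ref{prop:degen_first}) matches the paper and is fine. The gap is in the ``no other components'' step: the containment $\mathcal{Y}_0\subset\mathcal{Z}=\bigcap_i\Inc(L_i^0,M_i)$ is correct but too weak, because the flat limit inherits a further \emph{closed} condition from the general fibre that is not an incidence condition and is invisible to your base-point-and-kernel stratification. On the general fibre every full-rank $\phi\in Y^{(3)}_{\alpha_0-1}$ has $\phi(\Lambda_1)\subset M_i$ for $i=1,\dots,\alpha_0-1$ (since $\Lambda_1\subset L_i$), so $\im(\phi)\cap M_{[1,\alpha_0-1]}\neq0$; this Schubert-type condition on $\im(\phi)$ is closed and hence holds on $\mathcal{Y}_0$.

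Without it your dimension count fails exactly on the piece you claim singles out $Y^{(2,1)}_{\alpha_0}$. On the $(2,1)$ stratum with $V_1=\Lambda_1$, the incidence conditions in $\mathcal{Z}$ impose $1+2+\alpha_0+2(n_0-\alpha_0)+(n-n_0)=n+n_0-\alpha_0+3$ conditions (the $\alpha_0$ planes through $\Lambda_1$ each give only one condition via $(\dagger)_0$, the remaining planes force $\im(\phi_0)\subset M_i$ for two each, and the lines give one each), so $\dim(\mathcal{Z}\cap\text{this piece})=e+\alpha_0-3>e$ once $\alpha_0\ge4$. The missing $\alpha_0-3$ conditions are precisely $\im(\phi)\cap M_{[1,\alpha_0-1]}\neq0$, i.e.\ property (iv) in the definition of $Y^{(2,1)}_{\alpha_0}$. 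The paper's proof records this inherited condition explicitly and uses it both here and in the $(1,1,1)$ analysis, where even with it a hands-on computation with higher-order terms of an explicit $\phi^t$ (e.g.\ when $V_1=L^0_{\alpha_0}$) is required to extract enough constraints on the limit; a dimension count on $\mathcal{Z}$ alone does not suffice there either.
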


\begin{proof}
The following matrices over $\bD$ exhibit a general point of either component as a limit of full rank collineations on the general fiber:
\begin{equation*}
A^t_{(3)}:=
\begin{bmatrix}
0 & a_{1,1}  & a_{1,2} \\
\vdots & \vdots & \vdots \\
0 & a_{\alpha_0-1,1} & a_{\alpha_0-1,2} \\
t & a_{\alpha_0,1}  & a_{\alpha_0,2} \\
a_{\alpha_0+1,0} & a_{\alpha_0+1,1}  & a_{\alpha_0+1,2} \\
\vdots & \vdots & \vdots \\
a_{n_0,0} & a_{n_0,1} & a_{n_0,2} \\
a_{n_0+1,0} & a_{n_0+1,1} & a_{n_0+1,2} \\
\vdots & \vdots & \vdots \\
a_{n,0} & a_{n,1} & a_{n,2} 
\end{bmatrix}
, A^t_{(2,1)}:=
\begin{bmatrix}
0 & a_{1,1}  & a_{1,2} \\
\vdots & \vdots & \vdots \\
0 & a_{\alpha_0-1,1} & a_{\alpha_0-1,2} \\
t & a_{\alpha_0,1}  & a_{\alpha_0,2} \\
ta_{\alpha_0+1,0} & ta_{\alpha_0+1,1}  & ta_{\alpha_0+1,2} \\
\vdots & \vdots & \vdots \\
ta_{n_0,0} & ta_{n_0,1} & ta_{n_0,2} \\
ta_{n_0+1,0}  & a_{n_0+1,1}+ta'_{n_0+1,1} & a_{n_0+1,2}+ta'_{n_0+1,1} \\
\vdots & \vdots & \vdots \\
ta_{n,0}  & a_{n,1}+ta'_{n,1} & a_{n,2}+ta'_{n,2} 
\end{bmatrix}
\end{equation*}
The first $n_0$ row vectors of each matrix are constrained to be scalar multiples of the given ones. For $i>n_0+1$, the $i$-th row is assumed implicitly to satisfy a single linear relation corresponding to the condition $\Inc(L_i,M_i)$. Taking $t=0$ in $A^t_{(3)}$ gives a general point of $Y^{(3)}_{\alpha_0}$. The limit in $\Coll(V,W)$ of $A^t_{(2,1)}$ as $t\to 0$ is given by the data of the map $\phi_0:V/\Lambda_1\to W$ obtained from setting $t=0$ in the right-most two columns, and the additional section of $\im(\phi)$ given by 
dividing the first column by $t$. In this way, we obtain in the limit a general point of $Y^{(2,1)}_{\alpha_0}$. The fact that both components appear with multiplicity 1 is reflected in the fact that both matrices have full rank over $k[t]/t^2$. %(Here, we use that $\alpha_0<n$ in the  and that $\alpha_0\ge3$ in the second.)

It remains to check that there are no other components in the limit. The details are straightforward, but somewhat cumbersome, so we do not give them all.

First, we rule out components $Y'\subset (Y^{(3)}_{\alpha_{0}-1})_0$ for which $V_\ell\neq \Lambda_1$, where $\ell\in\{1,2\}$ is the length of a collineation corresponding to a general point. Let $\overrightarrow{r}$ be the type of such a collineation. Then, one checks that the closed conditions $\Inc(L_i,M_i)$ (taking $L_\alpha^0\supset \Lambda_1$ when $i=\alpha$) impose the expected number of conditions on $\Coll_{\overrightarrow{r}}(V,W)$, and thus, too many on $\Coll(V,W)$. This in particular rules out all components whose generic $\phi$ has type $(1,2)$.

Next, when a generic $\phi\in Y'$ has type $(2,1)$ with $V_1=\Lambda_1$, then we must at least have the condition $\im(\phi)\cap M_{[1,\alpha_0-1]}\neq0$, as this condition is satisfied on the generic fiber and is closed. One checks that requiring additionally that $\im(\phi_0)\cap M_{[1,\alpha_0-1]}\neq0$ imposes too many conditions when combined with the conditions $\Inc(L_i,M_i)$. (In particular, at least $\alpha_0-2$ of the hyperplanes $M_1,\ldots,M_{\alpha_0-1}$ must contain $\im(\phi_0)$.) Similarly, if we assume property (iv) in the definition of $Y^{(2,1)}_{\alpha_0}$, then requiring additionally that $\im(\phi_0)$ be contained in any of $M_1,\ldots,M_{\alpha_0},M_{n_0+1},\ldots,M_n$ imposes too many conditions. We therefore get no additional components $Y'$ in this way.

It is left to consider $\phi$ of type $(1,1,1)$ and with $V_2=\Lambda_1$. We first observe that if $V_1$ is not equal to one of $L_1,\ldots,L_{\alpha_0}$ or $\langle\Lambda_1,L_{n_0+1}\rangle,\ldots,\langle\Lambda_1,L_{n}\rangle$, then $\im(\phi_0)$ will need to be contained in $M_{[1,n]}=0$ owing to the conditions $\Inc(L_i,M_i)$, a contradiction. If $V_1$ is equal to one of $L_1,\ldots,L_{\alpha_0-1}$ -- without loss of generality, we may take $V_1=L_1$ -- then we will have $\im(\phi_0)\subset M_{[2,n]}$. Now, combining with the requirements that $\im(\phi_1)\subset M_{[\alpha_0+1,n_0]}$ (coming from $\Inc(L_i,M_i)$ for $i=\alpha_0+1,\ldots,n_0$) and $\im(\phi)\cap M_{[1,\alpha_0-1]}\neq0$ imposes too many conditions on $\phi$.

Finally, suppose instead that $V_1=L_{\alpha_0}$ (where we take the plane $L_{\alpha_0}\supset\Lambda_1$ in its special position); the case $V_1=\langle L_i,\Lambda_1\rangle$ with $i>n_0$ can be handled similarly. We may change basis on $V$ in such a way that a degeneration to $\phi$ takes the form
\begin{equation*}
\phi^t=
\begin{bmatrix}
0 & ta_{1,1}  & ta_{1,2} \\
\vdots & \vdots & \vdots \\
0 & ta_{\alpha_0-1,1} & ta_{\alpha_0-1,2} \\
t & 0  & 1 \\
ta_{\alpha_0+1,0} & ta_{\alpha_0+1,1}  & ta_{\alpha_0+1,2} \\
\vdots & \vdots & \vdots \\
ta_{n_0,0} & ta_{n_0,1} & ta_{n_0,2} \\
ta_{n_0+1,0}  & ta_{n_0+1,1} & ta_{n_0+1,2} \\
\vdots & \vdots & \vdots \\
ta_{n,0}  & ta_{n,1} & ta_{n,2}
\end{bmatrix}
\end{equation*}
to first order. Indeed, possibly after a base change, the limit of $\phi$ as a linear map is assumed to be zero upon restriction to the first two columns, representing $V_1=L_{\alpha_0}=\langle v_0,v_1\rangle$.

The $a_{i,j}$, with $i>n_0$, are allowed to be zero as long as $\phi^t$ generically has rank 3. We must have $V_1=\langle v_0,v_1\rangle=L_{\alpha_0}$ and $V_2=\langle v_0\rangle=\Lambda_1$. Because $\phi_1(V_2)=0$, the column vectors $\phi^t(v_0)$ and $\phi^t(v_2)$ must be linearly dependent to first order, see \S\ref{sec_limits}. By computing the $2\times 2$ minor obtained from rows $\alpha_0$ and $i=\alpha_0+1,\ldots,n$, we find that, in fact, $a_{\alpha_0+1,0}=\cdots=a_{n,0}=0$.

Next, assume for the moment that $a_{1,1},\ldots,a_{\alpha_0-1,1},a_{\alpha_0+1,1},\ldots,a_{n,1}$ are not all zero. Then, in the limit of $\phi=\lim_{t\to 0}\phi^t$ as a complete collineation, we have
\begin{equation*}
\im(\phi_0)=\text{span}
\begin{bmatrix}
0\\
 \vdots \\
0 \\
1 \\
0 \\
\vdots \\
0
\end{bmatrix}
,
\im(\phi_1)=\text{span}
\begin{bmatrix}
0 & a_{1,1}   \\
\vdots & \vdots \\
0 & a_{\alpha_0-1,1}  \\
1 & 0  \\
0& a_{\alpha_0+1,1}   \\
\vdots & \vdots \\
0 & a_{n,1} 
\end{bmatrix}
.
\end{equation*}
If at least $\alpha_0-2$ of $a_{1,1},\ldots,a_{\alpha_0-1,1}$ are equal to zero, say,  $a_{1,1}=\cdots=,a_{\alpha_0-2,1}=0$, then we obtain the condition that $\im(\phi_1)$ is contained in $M_{[1,\alpha_0-2]}$. The conditions $\Inc(L_i,M_i)$ for $i=\alpha_0+1,\ldots,n_0$  force further that $\im(\phi_1)\subset M_{[\alpha_0+1,n_0]}$. Combining with the condition $\im(\phi_0)\subset M_{[1,\alpha_0-1]}\cap M_{[\alpha_0+1,n]}$, we obtain too many conditions on $\phi$. 

Thus, we may conclude that
\begin{equation*}
\im(\phi)=
\begin{bmatrix}
0 & a_{1,1}  & a_{1,2} \\
\vdots & \vdots & \vdots \\
0 & a_{\alpha_0-1,1} & a_{\alpha_0-1,2} \\
1 & 0  & 0 \\
0 & a_{\alpha_0+1,1}  & a'_{\alpha_0+1,2} \\
\vdots & \vdots & \vdots \\
0  & a_{n,1} & a'_{n,2}
\end{bmatrix}
\end{equation*}
where the entries $a'_{i,2}$ depend on the (undepicted) second derivatives of the $(i,0)$-entry of $\phi^t$. In particular, the $2\times(\alpha_0-1)$ matrix appearing in the top right has full rank, as at least two of its rows are non-zero, and all of its rows are constrained to be scalar multiples of the given ones.

We therefore read off the following condition on the limit collineation $\phi$: there exists a rank 2 map $\phi^\perp:\langle v_1,v_2\rangle\to W$ as above satisfying the conditions $\Inc(L_i\cap\langle v_1,v_2\rangle,M_i)$ for $i=1,\ldots,\alpha_0-1$, and furthermore, with $\phi^\perp(v_1)\subset\im(\phi_1)$ and $\im(\phi^\perp)\subset\im(\phi)$. A parameter count now shows that the space of $\phi$ satisfying both this condition and in addition the condition $\im(\phi_0)\subset M_{[1,\alpha_0-1]}\cap M_{[\alpha_0+1,n]}$ has too small a dimension.

Finally, if $a_{1,1},\ldots,a_{\alpha_0-1,1},a_{\alpha_0+1,1},\ldots,a_{n,1}$ are instead all zero, then the columns $\phi^t(v_1),\phi^t(v_2)$ are also dependent to first order. We may then repeat the argument by passing to second order in the first two columns, and iterate.
\end{proof}

\begin{cor}
We have
\begin{equation*}
[Y]=[Y^{(3)}_{n_0}]+\sum_{\alpha_0=3}^{n_0}[Y^{(2,1)}_{\alpha_0}]
\end{equation*}
as cycles on $\Coll(V,W)$. (If $n_0=n$, we take the first term to be zero.)
\end{cor}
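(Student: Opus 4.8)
The plan is to obtain the Corollary from Proposition \ref{flat_limit_21_case} by a telescoping induction, moving the special planes one at a time. Recall first that, in the notation of this section, $Y = Y^{(3)}_0$.

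Fix $\alpha_0 \in [1, n_0]$ and consider the one-parameter family over $\bD$ built in Proposition \ref{flat_limit_21_case}, in which $L_{\alpha_0}$ moves from a general plane to a general plane containing $\Lambda_1$ while all the other $L_i$ are held fixed. Its total space is flat over $\bD$; for $t\neq 0$ the fiber is precisely a copy of $Y^{(3)}_{\alpha_0-1}$ (only $L_{\alpha_0}$ has moved, and it is still general), hence has class $[Y^{(3)}_{\alpha_0-1}]$, while Proposition \ref{flat_limit_21_case} identifies the special fiber as the union of $Y^{(3)}_{\alpha_0}$ and $Y^{(2,1)}_{\alpha_0}$, each with multiplicity $1$ and with no further components. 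Since cycle classes are preserved under flat specialization (the generic and special fibers are rationally equivalent as cycles on $\Coll(V,W)$), this gives
\begin{equation*}
[Y^{(3)}_{\alpha_0-1}] = [Y^{(3)}_{\alpha_0}] + [Y^{(2,1)}_{\alpha_0}] \in H^{2(n+n_0)}(\Coll(V,W)),
\end{equation*}
with the convention that a term is dropped whenever the corresponding subscheme is out of range: $Y^{(2,1)}_{\alpha_0}$ is omitted for $\alpha_0 \le 2$, and $Y^{(3)}_{\alpha_0}$ is omitted exactly when $\alpha_0 = n$, i.e.\ when $n_0 = n$.

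Now apply this relation successively for $\alpha_0 = 1, 2, \ldots, n_0$, starting from $[Y] = [Y^{(3)}_0]$. For $\alpha_0 = 1$ and $\alpha_0 = 2$ there is no $Y^{(2,1)}$ contribution, so $[Y^{(3)}_0] = [Y^{(3)}_1] = [Y^{(3)}_2]$; for each $\alpha_0 \ge 3$ one picks up the extra summand $[Y^{(2,1)}_{\alpha_0}]$ while the $Y^{(3)}$ terms telescope. When $n_0 < n$ this yields $[Y] = [Y^{(3)}_{n_0}] + \sum_{\alpha_0 = 3}^{n_0} [Y^{(2,1)}_{\alpha_0}]$, and when $n_0 = n$ the final step $\alpha_0 = n$ carries no $Y^{(3)}_n$ term, so the identity holds with the first term read as zero, exactly as claimed.

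The substantive work is entirely contained in Proposition \ref{flat_limit_21_case} — in particular its assertion that the flat limit of $Y^{(3)}_{\alpha_0-1}$ contains \emph{only} the components $Y^{(3)}_{\alpha_0}$ and $Y^{(2,1)}_{\alpha_0}$, and each with multiplicity one. Granting that, the Corollary is a purely formal telescoping argument; the only point requiring a little care is the bookkeeping of which of the two limit components is defined at each stage $\alpha_0$, together with the observation that, since both appear with multiplicity one, the class of the flat limit of $Y^{(3)}_{\alpha_0-1}$ is genuinely the sum of the classes of its two reduced components.
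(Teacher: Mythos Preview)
Your proof is correct and is exactly the argument the paper has in mind: the Corollary is stated without proof as an immediate consequence of Proposition \ref{flat_limit_21_case}, and the intended justification is precisely the telescoping induction you wrote out. Your handling of the edge cases (no $Y^{(2,1)}$ term for $\alpha_0\le 2$, no $Y^{(3)}_{n}$ term when $n_0=n$) matches the conventions in the Proposition.
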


We next study the component $Y^{(3)}_{n_0}$ under further degeneration. Fix general planes $L_1,\ldots,L_{n_0}$ containing $\Lambda_1$. Fix now a \emph{general} plane $\Lambda_2$ (not containing $\Lambda_1$), and, for some integer $\alpha_0>n_0$, suppose that $L_{n_0+1},\ldots,L_{\alpha_0}\subset \Lambda_2$ are general lines, and that $L_{\alpha_0+1},\ldots,L_n$ are further general lines with no additional constraints.

As before, denote by $Y^{(3)}_{\alpha_0}$ the closure in $\Coll(V,W)$ of the locus of \emph{full rank} collineations $\phi:V\to W$ satisfying each of the conditions $\Inc(L_i,M_i)$. 

We now introduce additional parameter spaces.

\begin{defn}
Suppose that $\alpha_0\ge n_0+3$. Define the closed subvariety 
\begin{equation*}
\wt{Y}_{\alpha_0}\subset \bP W\times\Coll(\Lambda_2,W)\times\Gr(3,W)
\end{equation*}
to be the set of points $(w_0,\wt{\phi_1},U)$ satisfying:
\begin{enumerate}
\item $w_0\in M_{[1,n_0]}\cap M_{[\alpha_0+1,n]}$,
\item $\wt{\phi_1}$ satisfies the incidence conditions $\Inc(L_i\cap \Lambda_2,M_i)$ for $i=1,2,\ldots,n_0$ and $\Inc(L_i,M_i)$ for $i=n_0+1,\ldots,\alpha_0-1$.
\item $U\supset \langle w_0,\im(\wt{\phi_1})\rangle$.
\end{enumerate}
\end{defn}

It is routine to check that $\wt{Y}_{\alpha_0}$ is irreducible and generically smooth of the expected dimension 
\begin{equation*}
(\alpha_0-n_0-1)+[(2n-1)-(\alpha_0-1)]=(3n-1)-(n+n_0),
\end{equation*}
and that a general point satisfies all of the needed conditions ``generically.'' That is, the line $w_0$ lies in no other $M_i$, the collineation $\wt{\phi_1}$ is of type $(2)$ and has image contained in no $M_i$, and that $w_0\notin \im(\wt{\phi_1})$, so in fact $U=\langle w_0,\im(\wt{\phi_1})\rangle$. Define $\wt{\pi}:\wt{Y}_{\alpha_0}\to\Gr(3,W)$ by projection to the last factor.

There is a \emph{rational} map $\psi:\wt{Y}_{\alpha_0}\dashrightarrow\Coll_{(1,2)}(V,W)$ sending $(w_0,\wt{\phi_1},U)$ to the unique collineation $\phi$ with the properties:
\begin{enumerate}
\item $\phi$ has type $(1,2)$, and $V_1=\ker(\phi_0)=\Lambda_2$,
\item $\im(\phi_0)=w_0$,
\item $\phi_1$ is given by composing $\wt{\phi_1}$ with the quotient map $W\to W/w_0$.
\end{enumerate}
(iii) makes sense if $\wt{\phi_1}$ has type $(2)$ and $\langle w_0\rangle\notin \im(\wt{\phi_1})$. However, the definition of $\psi$ can be extended further.
\begin{itemize}
\item If $\wt{\phi_1}$ has type $(2)$ and $\langle w_0\rangle\in \im(\wt{\phi_1})$, then $\phi_1$ may be replaced by the collineation of type $(1,1)$ for which the rank 1 map $(\phi_1)_0$ is given by post-composition with the quotient $W\to W/w_0$, and $\im(\phi_1)$ is determined by $U$.
\item If instead $\wt{\phi_1}$ has type $(1,1)$ and $\langle w_0\rangle\notin \im(\wt{\phi_1})$, then one can make sense of the quotient of $\wt{\phi_1}$ by $w_0$, again making $\phi_1$ of type $(1,1)$.
\item More generally, if $\wt{\phi_1}$ has type $(1,1)$ and $\im((\wt{\phi_1})_{0})\neq w_0$, then one can define $(\phi_{1})_0$ by the quotient of $\langle(\wt{\phi_1})_{0},w_0\rangle$ by $w_0$, and $\im(\phi_1)$ by $U$.
\end{itemize}
Finally, if $\wt{\phi_1}$ has type $(1,1)$ and $\im((\wt{\phi_1})_{0})=w_0$, then $\psi$ is indeterminate, but this will not be a problem for us.
\begin{defn}
Define $Y^{(1,2)}_{\alpha_0}$ to be the closure in $\Coll(V,W)$ of the image of $\wt{Y}_{\alpha_0}$ under $\psi$.
\end{defn}

The map $\psi$ is birational onto its image. Indeed, if $(w_0,\wt{\phi_1},\langle w_0,\wt{\phi_1}\rangle)$ is a general point of $\wt{Y}_{\alpha_0}$, at which, in particular, $w_0$ and $\im(\wt{\phi_1})$ are not contained in either $M_{n_0+1}$ or $M_{n_0+2}$, then replacing $\wt{\phi_1}$ with a different lift of $\phi_1=\wt{\phi_1}/w_0$ will result in a collineation $\wt{\phi_1}'$ no longer satisfying the incidence conditions $\Inc(L_i,M_i)$. (Note here that we use $\alpha_0\ge n_0+3$.) In particular, $Y^{(1,2)}_{\alpha_0}$ is irreducible and generically smooth of codimension $n+n_0$ in $\Coll(V,W)$.

Take now a degeneration over $\bD$ in which, initially, $L_1,\ldots,L_{n_0}\supset \Lambda_1$, and $L_{n_0+1},\ldots,L_{\alpha_0-1}\subset\Lambda_2$, and then $L_{\alpha_0}$ is moved into $\Lambda_2$.

\begin{prop}\label{flat_limit_12_case}
The flat limit of the subscheme $Y^{(3)}_{\alpha_{0}-1}\subset\Coll(V,W)$ under the degeneration described above contains the components $Y^{(3)}_{\alpha_0}$ and $Y^{(1,2)}_{\alpha_0}$ (if $\alpha\ge n_0+3$). Both components appear with multiplicity 1, and there are no other components.
\end{prop}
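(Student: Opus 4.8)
The plan is to follow the template of the proof of Proposition~\ref{flat_limit_21_case}, the one genuinely new point being the multiplicity computation for the type-$(1,2)$ component. First I would record the basic facts: $Y^{(3)}_{\alpha_0}$, with the $L_i$ in the special position $L_1,\dots,L_{n_0}\supset\Lambda_1$ and $L_{n_0+1},\dots,L_{\alpha_0}\subset\Lambda_2$, and, when $\alpha_0\ge n_0+3$, $Y^{(1,2)}_{\alpha_0}$, are irreducible, generically smooth, of codimension $n+n_0$ in $\Coll(V,W)$, with generic point of the stated type satisfying all of the relevant incidence and base-point conditions ``generically.'' For $Y^{(3)}_{\alpha_0}$ this is the usual relative-incidence-correspondence count; for $Y^{(1,2)}_{\alpha_0}$ it is already established in the discussion preceding the statement of Proposition~\ref{flat_limit_12_case}, via the irreducibility and generic smoothness of $\wt Y_{\alpha_0}$ together with the birationality of $\psi$.

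\textbf{The two claimed components lie in the flat limit.} I would exhibit the degeneration over $\bD$ in which, for $t\neq 0$, only $L_1,\dots,L_{n_0}\supset\Lambda_1$ and $L_{n_0+1},\dots,L_{\alpha_0-1}\subset\Lambda_2$, while the line $L_{\alpha_0}$ is moved into $\Lambda_2$ as $t\to 0$, all other $L_i$ being held fixed; a generic point of $Y^{(3)}_{\alpha_0-1}$ over $\bD$ is written as a $(d+1)\times 3$ matrix with columns indexed by a vector $v_0$ spanning a complement of $\Lambda_2$ and by a basis $v_1,v_2$ of $\Lambda_2$, subject to the usual linear conditions. Then I would write down two explicit one-parameter families $A^t_{(3)}$ and $A^t_{(1,2)}$ of such matrices, generic for $t\neq 0$: taking $t=0$ in $A^t_{(3)}$ gives a generic point of $Y^{(3)}_{\alpha_0}$; in $A^t_{(1,2)}$ the two columns indexed by $v_1,v_2$ are taken of order $t$ while the column indexed by $v_0$ limits to a vector $w_0\in M_{[1,n_0]}\cap M_{[\alpha_0+1,n]}$, and dividing the first two columns by $t$ recovers, after reduction modulo $w_0$ (cf.\ the limit procedure of \S\ref{sec_limits}), the map $\phi_1=\wt\phi_1 \bmod w_0$ of a generic point $(w_0,\wt\phi_1,U)$ of $\wt Y_{\alpha_0}$. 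A direct check then shows that the limit of $[A^t_{(1,2)}]$ in $\Coll(V,W)$ is of type $(1,2)$ with $V_1=\Lambda_2$, equals $\psi(w_0,\wt\phi_1,U)$, and is generic in $Y^{(1,2)}_{\alpha_0}$, so that both $Y^{(3)}_{\alpha_0}$ and $Y^{(1,2)}_{\alpha_0}$ lie in the flat limit.

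\textbf{Multiplicities, and the main obstacle.} For $Y^{(3)}_{\alpha_0}$ the degeneration is the standard ``one matrix entry slides to $0$,'' and multiplicity one is immediate since $A^t_{(3)}$ has full rank over $k[t]/t^2$. For $Y^{(1,2)}_{\alpha_0}$ this shortcut is unavailable: because $\ker(\phi_0)=\Lambda_2$ is two-dimensional, two columns of $A^t_{(1,2)}$ vanish at $t=0$, so every maximal minor is $O(t^2)$ and the matrix does \emph{not} have full rank modulo $t^2$. Instead I would show that the total space $\cY$ of the degeneration is smooth at a generic point of $Y^{(1,2)}_{\alpha_0}\times\{0\}$ --- equivalently, since $\cY$ is irreducible and $Y^{(1,2)}_{\alpha_0}$ is a divisor in $\cY$, that $\cY$ is regular at the generic point of that divisor --- by means of the birational parametrization $\psi\colon\wt Y_{\alpha_0}\dashrightarrow Y^{(1,2)}_{\alpha_0}$ from the smooth variety $\wt Y_{\alpha_0}$, or by a local coordinate computation on the smooth divisor $\Coll_{(1,2)}(V,W)\subset\Coll(V,W)$, on which the equations defining $\cY$ become linear. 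Granting this, the curve $t\mapsto[A^t_{(1,2)}]$, which meets $Y^{(1,2)}_{\alpha_0}$ transversally and along which the disk coordinate $t$ vanishes to order exactly $1$, forces the multiplicity of $Y^{(1,2)}_{\alpha_0}$ in the flat limit to equal $1$. I expect this normality of $\cY$ along the new type-$(1,2)$ component to be the principal difficulty, being the one step without precedent in the type-$(2,1)$ situation of Proposition~\ref{flat_limit_21_case}.

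\textbf{No other components.} Finally, as in Proposition~\ref{flat_limit_21_case}, I would exclude every further component $Y'\subseteq(Y^{(3)}_{\alpha_0-1})_0$, necessarily of dimension $n-1$, by dimension counts on the strata $\Coll_{\overrightarrow{r}}(V,W)$ (using Corollary~\ref{inc_exp_dim}), combined with the closed conditions inherited by the limit from the generic fiber. A full-rank $Y'$ must be $Y^{(3)}_{\alpha_0}$. A $Y'$ whose generic point has type $(2,1)$ is impossible: for every position of the line $V_1$, the incidences $\Inc(L_i,M_i)$ together with the inherited condition $\im(\phi)\cap M_{[1,\alpha_0-1]}\neq 0$ cut out a locus of dimension $<n-1$. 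A $Y'$ of type $(1,2)$ with $V_1\neq\Lambda_2$, or with $V_1=\Lambda_2$ but $\im(\phi_0)\not\subseteq M_{[1,n_0]}\cap M_{[\alpha_0+1,n]}$, again imposes too many conditions; in the single remaining case, imposing the inherited closed conditions forces the extra incidences defining $\wt Y_{\alpha_0}$ and pins $Y'$ down to $Y^{(1,2)}_{\alpha_0}$ (and when $\alpha_0<n_0+3$ this same analysis drops below dimension $n-1$, which is why no such component occurs then). A $Y'$ of type $(1,1,1)$ is ruled out by analyzing the explicit first-order degeneration $\phi^t$ exactly as in the corresponding part of the proof of Proposition~\ref{flat_limit_21_case} --- constraining leading terms by $2\times 2$ minors, passing to higher order when these vanish, and finding in each subcase that the resulting constraints on $\im(\phi_0),\im(\phi_1),\im(\phi)$ are incompatible with dimension $n-1$. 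These counts are routine but lengthy, and, as in the model proof, I would only indicate them.
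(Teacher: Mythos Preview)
Your outline follows the paper's proof closely: the same two explicit one-parameter families of matrices, the same multiplicity-one argument for $Y^{(3)}_{\alpha_0}$, and the same strategy of stratified dimension counts (modelled on Proposition~\ref{flat_limit_21_case}) to exclude extraneous components. The one substantive divergence is your treatment of the multiplicity of $Y^{(1,2)}_{\alpha_0}$, which you flag as the ``principal difficulty'' and propose to handle by proving regularity of the total space $\cY$ along that divisor. The paper's argument here is considerably simpler, and your concern is based on looking at the wrong minors.

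You observe that every $3\times 3$ minor of $A^t_{(1,2)}$ is $O(t^2)$, so the ``full rank over $k[t]/t^2$'' shortcut fails. True, but irrelevant: $\Coll(V,W)$ is obtained from $\bP\Hom(V,W)$ by first blowing up the rank-$1$ locus (the Segre $\bP V^\vee\times\bP W$), and the local equations for that centre are the $2\times 2$ minors. Taking row $i\in[n_0+1,\alpha_0]$ and row $j\in[1,n_0]$ and columns $0,1$, one finds a $2\times 2$ minor equal to $t\cdot a_{i,0}a_{j,1}$, which vanishes to order exactly $1$. Hence the curve $t\mapsto[A^t_{(1,2)}]$ meets the Segre transversally in $\bP\Hom(V,W)$, so its proper transform meets the exceptional divisor of the first blow-up in a single reduced point; since the limit collineation has $\phi_1$ of full rank $2$, this point lies away from the centre of the second blow-up, and we are done. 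This is precisely the paper's one-sentence argument. Your smoothness-of-$\cY$ approach would also work, but is more labour than required.
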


\begin{proof}
The following matrices over $\bD$ exhibit a general point of either component as a limit of full rank collineations on the general fiber:
\begin{equation*}
\begin{bmatrix}
0 & a_{1,1}  & a_{1,2} \\
\vdots & \vdots & \vdots \\
0 & a_{n_0,1} & a_{n_0,2} \\
a_{n_0+1,0} & a_{n_0+1,1}  & a_{n_0+1,2} \\
\vdots & \vdots & \vdots \\
a_{\alpha_0-1,0} & a_{\alpha_0-1,1}  & a_{\alpha_0-1,2} \\
a_{\alpha_0,0} & a_{\alpha_0,1}+ta'_{\alpha_0,1}   &  a_{\alpha_0,2}+ta'_{\alpha_0,2}  \\
a_{\alpha_0+1,0} & a_{\alpha_0+1,1}  & a_{\alpha_0+1,2} \\
\vdots & \vdots & \vdots \\
a_{n,0} & a_{n,1} & a_{n,2} 
\end{bmatrix}
,
\begin{bmatrix}
0 & ta_{1,1}  & ta_{1,2} \\
\vdots & \vdots & \vdots \\
0 & ta_{n_0,1} & ta_{n_0,2} \\
a_{n_0+1,0} & ta_{n_0+1,1}  & ta_{n_0+1,2} \\
\vdots & \vdots & \vdots \\
a_{\alpha_0-1,0} & ta_{\alpha_0-1,1}  & ta_{\alpha_0-1,2} \\
a_{\alpha_0,0} & ta_{\alpha_0,1}   & ta_{\alpha_0,2}  \\
ta_{\alpha_0+1,0} & ta_{\alpha_0+1,1}  & ta_{\alpha_0+1,2} \\
\vdots & \vdots & \vdots \\
ta_{n,0} & ta_{n,1} & ta_{n,2} 
\end{bmatrix}
\end{equation*}
The first $n_0$ rows each satisfy a linear relation in the rightmost two columns in addition to the vanishing in the first column. Rows $n_0+1$ through $\alpha_0-1$ also satisfy a linear relation in the rightmost two columns. The last $n-\alpha_0$ rows each satisfy a single linear relation involving all three columns. Finally, row $\alpha_0$ satisfies a linear relation of the form $tv_0+\gamma_1 v_1+\gamma_2 v_2=0$.

The matrix on the left has full rank upon substituting $t=0$, which determines the limit of the full rank maps $\phi^t$ in $\Coll(V,W)$. The corresponding component $Y^{(3)}_{\alpha_0}$ appears with multiplicity 1 because it is cut out by linear equations on $\bP\Hom(V,W)^\circ$. On the right, the rightmost two columns become zero upon substituting $t=0$; substituting $t=0$ in the left-most column gives the vector $w_0=\im(\phi_0)$, and dividing the other two columns by $t$ gives the lift $\wt{\phi_1}:\Lambda_2\to W$. Here, the multiplicity 1 statement amounts to the fact that the 1-parameter family of \emph{matrices} defined by $t$ is transverse to the locus of rank 1 matrices (embedded in $\bP\Hom(V,W)$ by a Segre embedding) when $t=0$, hence the same is true in $\Coll(V,W)$ after blowing up this locus.

Finally, one needs to argue that there are no further components in the limit, by following the strategy of Proposition \ref{flat_limit_21_case}. Note in particular that one can rule out components where $V_\ell=\Lambda_1$ by the same calculations appearing there, except now that $\im(\phi)$ is now constrained to intersect $M_{[1,\alpha_0]}$ non-trivially, not just $M_{[1,\alpha_0-1]}$. Thus, one finds that the only components that can appear at the boundary of $\Coll(V,W)$ must have $V_1=\Lambda'_2$, and that all candidates other than $Y^{(1,2)}_{\alpha_0}$ will be over-constrained. The details are omitted.
\end{proof}

\begin{cor}
We have
\begin{equation*}
[Y]=[Y^{(3)}_{n}]+\sum_{\alpha_0=3}^{n_0}[Y^{(2,1)}_{\alpha_0}]+\sum_{\alpha_0=n_0+3}^{n}[Y^{(1,2)}_{\alpha_0}]
\end{equation*}
as cycles on $\Coll(V,W)$.
\end{cor}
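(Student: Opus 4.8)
The plan is to reduce the statement to the preceding corollary together with repeated applications of Proposition~\ref{flat_limit_12_case}; once those are in hand, only a telescoping bookkeeping argument and the conservation of cycle classes under flat specialization remain. The preceding corollary already gives
\begin{equation*}
[Y]=[Y^{(3)}_{n_0}]+\sum_{\alpha_0=3}^{n_0}[Y^{(2,1)}_{\alpha_0}],
\end{equation*}
in which $Y^{(3)}_{n_0}$ is formed with respect to a configuration where $L_1,\ldots,L_{n_0}$ are general planes through $\Lambda_1$ and $L_{n_0+1},\ldots,L_n$ are general lines, so it suffices to express $[Y^{(3)}_{n_0}]$ in terms of $[Y^{(3)}_n]$ and the classes $[Y^{(1,2)}_{\alpha_0}]$.

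First I would fix, once and for all, a general plane $\Lambda_2\subset V$ with $\Lambda_1\not\subset\Lambda_2$. Because $Y^{(3)}_{n_0}$ is the closure of an irreducible family of full-rank collineations fibered over the irreducible parameter space of admissible configurations $(L_1,\ldots,L_n)$, its cycle class is unchanged if we take the lines $L_{n_0+1},\ldots,L_n$ general subject to no further condition, and in particular $Y^{(3)}_{n_0}$ may be identified with the member $\alpha_0=n_0$ of the family considered in Proposition~\ref{flat_limit_12_case}. Then, for $\alpha_0=n_0+1,n_0+2,\ldots,n$ in succession, I would apply Proposition~\ref{flat_limit_12_case} to $Y^{(3)}_{\alpha_0-1}$: the indicated $1$-parameter degeneration takes place inside the smooth projective variety $\Coll(V,W)$, the flat limit is, as a cycle, exactly $Y^{(3)}_{\alpha_0}$ together with $Y^{(1,2)}_{\alpha_0}$ (the latter present only when $\alpha_0\ge n_0+3$), each with multiplicity one and with no other components, and the genericity statements recorded in the text ensure all of these are generically reduced of the expected dimension. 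Hence, by invariance of the rational-equivalence class under specialization,
\begin{equation*}
[Y^{(3)}_{\alpha_0-1}]=[Y^{(3)}_{\alpha_0}]+[Y^{(1,2)}_{\alpha_0}],
\end{equation*}
with the last term dropped when $n_0<\alpha_0<n_0+3$.

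Telescoping these identities over $\alpha_0=n_0+1,\ldots,n$ yields
\begin{equation*}
[Y^{(3)}_{n_0}]=[Y^{(3)}_{n}]+\sum_{\alpha_0=n_0+3}^{n}[Y^{(1,2)}_{\alpha_0}],
\end{equation*}
and substituting this into the identity from the preceding corollary produces precisely the asserted formula for $[Y]$.

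All the real content sits in Propositions~\ref{flat_limit_21_case} and~\ref{flat_limit_12_case}, above all in their ``no other components'' assertions, which I take as given; for the corollary itself the only points demanding care are (i) the independence of $[Y^{(3)}_{n_0}]$ from the auxiliary choice of $\Lambda_2$, which follows from the irreducibility of the relevant incidence correspondence; (ii) the fact that each intermediate $Y^{(3)}_{\alpha_0}$ is generically reduced of the expected dimension, so that the scheme-theoretic flat limit carries the expected cycle class with no excess or embedded contributions; and (iii) keeping the index ranges straight, namely $3\le\alpha_0\le n_0$ for the $(2,1)$-components and $n_0+3\le\alpha_0\le n$ for the $(1,2)$-components, with the degenerations at $\alpha_0\in\{n_0+1,n_0+2\}$ producing $Y^{(3)}_{\alpha_0}$ alone. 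I do not expect a genuine obstacle here beyond this bookkeeping; the mildly delicate point is (i), ensuring the configurations underlying the two corollaries glue coherently.
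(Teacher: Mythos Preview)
Your argument is correct and is exactly the approach implicit in the paper: the corollary is stated there without proof because it follows immediately by iterating Proposition~\ref{flat_limit_12_case} for $\alpha_0=n_0+1,\ldots,n$ and combining with the preceding corollary, precisely as you have written out. Your care about point (i) is unnecessary, since at $\alpha_0=n_0$ no lines are yet constrained to lie in $\Lambda_2$, so the two definitions of $Y^{(3)}_{n_0}$ literally coincide.
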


We will study the components $Y^{(2,1)}_{\alpha_0}$ and $Y^{(1,2)}_{\alpha_0}$ in the next two sections. However, we now observe that we can, for our purposes, ignore the full rank component.

\begin{prop}
The class $[Y^{(3)}_{n}]$ pushes forward to 0 under $\pi$.
\end{prop}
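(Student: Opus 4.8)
The plan is to show that $\pi$ contracts a positive-dimensional family through the general point of $Y^{(3)}_{n}$, so that $\dim\pi(Y^{(3)}_{n})<\dim Y^{(3)}_{n}$ and the pushforward vanishes (a proper morphism kills any cycle whose image has strictly smaller dimension).

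Recall that $Y^{(3)}_{n}$ is the closure of the locus of full rank collineations $\phi\colon V\to W$ satisfying every condition $\Inc(L_i,M_i)$, where at this last stage $L_i\supset\Lambda_1$ for $i=1,\ldots,n_0$ and $L_i\subset\Lambda_2$ for $i=n_0+1,\ldots,n$. Since $\Lambda_1\not\subset\Lambda_2$ we have $V=\Lambda_1\oplus\Lambda_2$, and the key point is that after this degeneration \emph{every} $L_i$ is compatible with this decomposition: $L_i=\Lambda_1\oplus(L_i\cap\Lambda_2)$ for $i\le n_0$, and $L_i\subset\Lambda_2$ for $i>n_0$. A full rank $\phi$ is determined by its restrictions $\phi|_{\Lambda_1}$ and $\phi|_{\Lambda_2}$; for $c\in\bC^{\times}$ let $\phi_c$ be the full rank collineation restricting to $c\cdot\phi|_{\Lambda_1}$ on $\Lambda_1$ and to $\phi|_{\Lambda_2}$ on $\Lambda_2$. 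Then $\phi_c(\Lambda_1)=\phi(\Lambda_1)$ as subspaces of $W$ (rescaling a subspace does not change it), so $\phi_c(L_i)=\phi(L_i)$ for every $i$ and $\phi_c(V)=\phi(V)$. Hence each $\phi_c$ again satisfies all the incidence conditions, i.e.\ $\phi_c\in Y^{(3)}_{n}$, while $\pi(\phi_c)=\im(\phi_c)=\im(\phi)=\pi(\phi)$.

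Next I would observe that $c\mapsto\phi_c$ is injective into $\Coll(V,W)$: a proportionality $\phi_c=\lambda\phi$ in $\bP\Hom(V,W)$ would force $\lambda=c$ on $\Lambda_1$ and $\lambda=1$ on $\Lambda_2$, hence $c=1$. Since the full rank collineations are dense in $Y^{(3)}_{n}$, the general point $\phi$ of any irreducible component may be taken full rank and outside the other components; the irreducible curve $\{\phi_c\}_{c\in\bC^{\times}}$ then lies in that component and is contracted by $\pi$. Thus $\pi$ has positive-dimensional generic fibres on every component of $Y^{(3)}_{n}$, so $\dim\pi(Y^{(3)}_{n})<\dim Y^{(3)}_{n}$ and $\pi_{*}[Y^{(3)}_{n}]=0$.

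There is essentially no obstacle beyond making the above precise: the content is simply that, once all point-incidences have been pushed onto a common line of $\bP^2$ and all line-incidences made to pass through a common point, the incidence conditions no longer see the scaling of the $\Lambda_1$-block of $\phi$, producing an extra $\bC^{\times}$ in the fibres of $\pi$ — a phenomenon absent on the honest locus $Y=Y^{(3)}_{0}$, where the general $L_i$ share no common compatible splitting of $V$ and this rescaling would break the incidence conditions.
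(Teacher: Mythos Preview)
Your argument is correct and is essentially the same as the paper's: both produce a $\bC^{\times}$-family in the fibre of $\pi$ through a general point of $Y^{(3)}_{n}$ by rescaling one summand of the decomposition $V=\Lambda_1\oplus\Lambda_2$ (you rescale $\Lambda_1$, the paper rescales $\Lambda_2$, which is equivalent after global scaling). Your extra care in checking injectivity of $c\mapsto\phi_c$ and in noting that each $L_i$ is compatible with the splitting is fine but not strictly needed beyond what the paper sketches.
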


\begin{proof}
It suffices to show that the restriction of $\pi$ to $Y^{(3)}_{n}$ has positive-dimensional fibers. Let $\phi:V\to W$ be a general point of $Y^{(3)}_{n}$. We may choose a basis $v_0,v_1,v_2$ of $V$ for which $\Lambda_1=\langle v_0\rangle$ and $\Lambda_2=\langle v_1,v_2\rangle$. The map $\phi$ is determined the triple $(\phi(v_0),\phi(v_1),\phi(v_2))\in W^3$. Now, multiplying $\phi(v_1),\phi(v_2)$ by the same scalar $\gamma\neq0$ gives a 1-dimensional locus of elements of $\Coll(V,W)$ that on the one hand still lie in $Y^{(3)}_{n}$, and on the other hand map to the same point of $\Gr(3,W)$ under $\pi$.
\end{proof}

 \subsection{Degenerations of $Y^{(2,1)}$}
 
We now study degenerations of $Y^{(2,1)}_{\alpha_0}$, defined in the previous section, in order to compute its integral in $\Gr(3,W)$. Recall that we have $L_1,\ldots,L_{\alpha_0}\supset \Lambda_1$, for some fixed $\alpha_0\in[3,n_0]$.
 
Fix a general plane $\Lambda'_2\supset\Lambda_1$ (we use the notation $\Lambda'_2$ to distinguish from $\Lambda_2$, used in the previous section). We will move $L_1,\ldots,L_{\alpha_0}$ successively to be equal to $\Lambda'_2$. Suppose that $L_1,\ldots,L_{\alpha_1}$ have all been made equal to $\Lambda'_2$. If $0\le\alpha_1\le \alpha_0-2$, we first define $Y^{(2,1)}_{(\alpha_0,\alpha_1)}\subset\Coll(V,W)$ exactly in the same way as $Y^{(2,1)}_{\alpha_0}=:Y^{(2,1)}_{(\alpha_0,0)}$, except that now the planes $L_1,\ldots,L_{\alpha_1}$ are all equal. The subscheme $Y^{(2,1)}_{(\alpha_0,\alpha_1)}$ is irreducible and generically smooth of the expected codimension $n+n_0$.

As soon as $\alpha_1=\alpha_0-1$, the condition that $\im(\phi_{0})\cap M_{[1,_{\alpha_0}-1]}=0$ can no longer be satisfied if $\phi_0$ remains of rank 2, because $\phi_0$ satisfies the conditions $\Inc(L_i,M_i)=\Inc(\Lambda'_2,M_i)$ for $i=1,2,\ldots,\alpha_1-1=\alpha_0-1$. Furthermore, a dimension count shows that requiring that $\im(\phi)$ contain \emph{two} sections in $M_{[1,\alpha_0-1]}$ (one of which is given by $\phi_0(\Lambda'_2)$) imposes too many conditions.

On the other hand, starting from $Y^{(2,1)}_{(\alpha_0,\alpha_0-2)}$, if $L_{\alpha_0-1}$ moves to contain $\Lambda_1$, then the section initially in $(\im(\phi)-\im(\phi_0))\cap M_{[1,\alpha_0-1]}$ may ``jump'' to $\im(\phi_0)$ in the limit, becoming the ``secant'' $\phi_0(\Lambda'_2)$. The general fiber necessarily has two sections in $\im(\phi)\cap M_{[1,\alpha_0-2]}$, so the special fiber must as well. In fact, the same phenomenon may occur upon degeneration of any $Y^{(2,1)}_{(\alpha_0,\alpha_1-1)}$ with $1\le \alpha_1\le\alpha_0-1$, but the conditions $\Inc(L_i,M_i)$ for $i=\alpha_1+1,\ldots,\alpha_0-1$ force $M_{\alpha_1+1},\ldots,M_{\alpha_0-1}$ to be bp-hyperplanes for $\phi_0$.

We therefore make the following definition.
\begin{defn}
If $2\le\alpha_1\le\alpha_0-1$, we define the subscheme $Y^{(2,1)-\sec}_{(\alpha_0,\alpha_1)}\subset\Coll(V,W)$ to be the closure of the locus of collineations $ \phi$ with the following properties.
\begin{enumerate}
\item $\phi$ has type $(2,1)$, and $V_1=\Lambda_{1}$ for $j=1,2,\ldots,m$,
\item $\phi_0:V/\Lambda_1\to W$ satisfies the incidence conditions $\Inc(L_i,M_i)$ for $i=1,\ldots,\alpha_1$ and $\alpha_0$, and the incidence conditions $\Inc(\langle L_i,\Lambda_1\rangle,M_i)$ for $i=n_0+1,\ldots,n$.
\item $\im(\phi_0)$ is contained in $M_{[\alpha_1+1,\alpha_0-1]}$ and $M_{[\alpha_0+1,n_0]}$, but no other $M_i$,
\item $\dim(\im(\phi)\cap M_{[1,\alpha_1-1]})=2$ and $\dim(\im(\phi_0)\cap M_{[1,\alpha_1-1]}=1)$ (in fact, $\dim(\im(\phi_0)\cap M_{[1,\alpha_1]})=1$, with the intersection given by $\phi_0(\Lambda'_2)$).
\end{enumerate}
\end{defn}
In the language of maps to $\bP^2$, such a $\phi$ corresponds to $f:\bP^1\to H\subset\bP^2$ with base-points at $p_{\alpha_1+1},\ldots,p_{\alpha_0-1},p_{\alpha_0+1},\ldots,p_{n_0}$, sending $p_1,\ldots,p_{\alpha_1}$ all to the same point $x$ (so that the divisor $p_1+\cdots+p_{\alpha_1}$ is a multi-secant), and with $f(p_i)=X_i$ for $i=\alpha_0,n_0+1,\ldots,n$. The collineation $\phi$ additionally retains the data of a 2-dimensional subspace of $M_{[1,\alpha_1-1]}$ in its image, which is not seen by the map $f$.

We give the na\"{i}ve dimension count:
\begin{itemize}
\item property (i) imposes $3$ conditions,
\item property (ii) imposes $\alpha_1+1+(n-n_0)$ conditions,
\item property (iii) imposes $2(n_0-\alpha_1-1)$ conditions, and
\item property (iv) imposes $\alpha_1-2$ \emph{additional} conditions.
\end{itemize}
In fact, $Y^{(2,1)-\sec}_{(\alpha_0,\alpha_1)}$ is irreducible and generically smooth of the expected codimension $n+n_0$, proven using the usual method.

These components make sense in the setting of the previous section, when $n_0=n$, and, in fact, do appear in the limit of the component $Y^{(2,1)}_{(\alpha_0,\alpha_1-1)}$ under the same degeneration. However, we will see later (Proposition \ref{21sec-0integral}) that the reason that they play no role there is that they push forward to zero under $\pi$.

We now define the further degenerate loci on $\Coll(V,W)$ whose integrals can be computed directly. In analogy with the loci $Y^{(1,1,1)}$ of the previous section, we have:
\begin{defn}
If $2\le \alpha_1\le \alpha_0-1$, define $Y^{(1,1,1)}_{(\alpha_0,\alpha_1)}$ to be the closure of the locus of collineations $\phi$ satisfying the following properties.
\begin{enumerate}\label{Y111_def}
\item $\phi$ has type $(1,1,1)$, with $V_1=\Lambda'_2$ and $V_2=\Lambda_1$,
\item $\im(\phi_0)\subset M_{[\alpha_1+1,n]}$ and $\im(\phi_1)\subset M_{[\alpha_0+1,n_0]}$, 
\item $\im(\phi_1)\cap M_{[1,\alpha_1-1]}\neq0$ and $\im(\phi)\cap M_{[1,\alpha_0-1]}\neq0$, but $\im(\phi_0)\cap M_{[1,\alpha_1-1]}=0$ and $\im(\phi_1)\cap M_{[1,\alpha_0-1]}=0$. (In particular, $\dim(\im(\phi)\cap M_{[1,\alpha_1-1]})=2$.)
\end{enumerate}
\end{defn}

We can now state:

\begin{prop}\label{21_degen_comps}
Fix some $\alpha_1\le \alpha_0-1$. Upon the degeneration of $L_{\alpha_1}\to \Lambda'_2$, the flat limit of $Y^{(2,1)}_{(\alpha_0,\alpha_1-1)}$ consists of the components: 
\begin{itemize}
\item $Y^{(2,1)}_{(\alpha_0,\alpha_1)}$, if $\alpha_1\le \alpha_0-2$,
\item $Y^{(2,1)-\sec}_{(\alpha_0,\alpha_1)}$, if $\alpha_1\ge 2$, and
\item $Y^{(1,1,1)}_{(\alpha_0,\alpha_1)}$, if $\alpha_1\ge2$.
\end{itemize}
Furthermore, all of these components appear with multiplicity 1, and there are no others.

In particular, we have, as cycles in $\Coll(V,W)$:
\begin{equation*}
[Y^{(2,1)}_{\alpha_0}]=[Y^{(2,1)}_{(\alpha_0,0)}]=\sum_{\alpha_1=2}^{\alpha_0-1}[Y^{(2,1)-\sec}_{(\alpha_0,\alpha_1)}]+\sum_{\alpha_1=2}^{\alpha_0-1}[Y^{(1,1,1)}_{(\alpha_0,\alpha_1)}].
\end{equation*}
\end{prop}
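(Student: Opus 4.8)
The plan is to follow the degeneration template established in Propositions~\ref{prop:degen_first}, \ref{flat_limit_21_case}, and \ref{flat_limit_12_case}: exhibit the advertised components as limits by writing down explicit families of matrices, verify that each appears with multiplicity one by checking full rank over $k[t]/t^2$, and then do the (harder) work of ruling out extra components. First I would set up the one-parameter family $Y^{(2,1)}_{(\alpha_0,\alpha_1-1),t}\subset\Coll(V,W)\times\bD$ in which, over $t\neq0$, the plane $L_{\alpha_1}$ is generic (subject to $L_{\alpha_1}\supset\Lambda_1$) and, at $t=0$, $L_{\alpha_1}=\Lambda'_2$; concretely this sends one coordinate of the $\alpha_1$-th row vector to $t$ times a generic value. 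The three limit components then correspond to three scalings of the ``free'' rows: keeping all entries $O(1)$ gives $Y^{(2,1)}_{(\alpha_0,\alpha_1)}$; scaling the rows below $n_0$ by $t$ (as in the proof of Proposition~\ref{flat_limit_21_case}, dividing the leftover column by $t$ to recover the extra section) gives $Y^{(2,1)-\sec}_{(\alpha_0,\alpha_1)}$, where the section of $\im(\phi)\cap M_{[1,\alpha_1-1]}$ that was not $\phi_0(\Lambda'_2)$ ``jumps into'' $\im(\phi_0)$; and scaling so that $\phi_0$ itself drops to rank $1$ produces $Y^{(1,1,1)}_{(\alpha_0,\alpha_1)}$, with $V_1=\Lambda'_2$, $V_2=\Lambda_1$. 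In each case I would write the corresponding matrix over $\bD$, verify it has full rank generically on the general fiber and full rank over $k[t]/t^2$ at $t=0$ (so multiplicity $1$), and read off that the $t=0$ collineation is a general point of the claimed component; the dimension counts recalled after each definition confirm the target components have the expected codimension $n+n_0$, so no component is ``lost.''

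The main obstacle, exactly as in Proposition~\ref{flat_limit_21_case}, is proving there are \emph{no other} components in the flat limit. The strategy is a stratified dimension count: for each collineation type $\overrightarrow{r}$ and each choice of which $V_j$ equals $\Lambda_1$ or $\Lambda'_2$ (or neither), show that imposing the closed conditions inherited from the general fiber -- the incidence conditions $\Inc(L_i,M_i)$ with $L_{\alpha_1}^{0}=\Lambda'_2$ in special position, together with the closure of the secancy conditions $\dim(\im(\phi)\cap M_{[1,\alpha_1-1]})\ge 2$ and the base-point conditions of property (iii) -- cuts out a locus of codimension strictly greater than $n+n_0$ in $\Coll_{\overrightarrow{r}}(V,W)$, hence does not contribute a component of the (pure-dimensional, by flatness) limit. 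By Corollary~\ref{inc_exp_dim} each $\Inc(L_i,M_i)$ meets any stratum in the expected codimension, so these counts are bookkeeping once one identifies which ``extra'' conditions are forced; the delicate point, as in the $(1,1,1)$ case of Proposition~\ref{flat_limit_21_case}, is when the limit collineation has $V_1$ equal to one of the $L_i$ or $\langle L_i,\Lambda_1\rangle$, where one must pass to higher order in the degenerating columns, and iterate, to see that the resulting constraints force either $\im(\phi_0)$ or $\im(\phi_1)$ into an intersection of too many $M_i$'s. I would handle the cases in decreasing order of $\dim V_1$: rule out $V_\ell\neq\Lambda_1$ first (forcing type $(2,1)$ with $V_1=\Lambda_1$ or type $(1,1,1)$ with $V_2=\Lambda_1$), then within type $(2,1)$ distinguish whether the two sections of $\im(\phi)\cap M_{[1,\alpha_1-1]}$ both lie in $\im(\phi_0)$ (over-constrained, since $\phi_0$ satisfies $\Inc(\Lambda'_2,M_i)$ for $i\le\alpha_1$) or not, and finally within type $(1,1,1)$ run the minor-vanishing argument from Proposition~\ref{flat_limit_21_case} to pin down $V_1=\Lambda'_2$, ruling out $V_1=L_i$ or $\langle L_i,\Lambda_1\rangle$ as over-constrained.

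Once the flat-limit identification is established, the displayed cycle identity
\begin{equation*}
[Y^{(2,1)}_{\alpha_0}]=[Y^{(2,1)}_{(\alpha_0,0)}]=\sum_{\alpha_1=2}^{\alpha_0-1}[Y^{(2,1)-\sec}_{(\alpha_0,\alpha_1)}]+\sum_{\alpha_1=2}^{\alpha_0-1}[Y^{(1,1,1)}_{(\alpha_0,\alpha_1)}]
\end{equation*}
follows by telescoping: applying the flat-limit statement successively for $\alpha_1=1,2,\ldots,\alpha_0-1$, the term $[Y^{(2,1)}_{(\alpha_0,\alpha_1)}]$ produced at stage $\alpha_1$ is itself degenerated at stage $\alpha_1+1$, and at the final stage $\alpha_1=\alpha_0-1$ the component $Y^{(2,1)}_{(\alpha_0,\alpha_0-1)}$ no longer exists (as noted, the condition $\im(\phi_0)\cap M_{[1,\alpha_0-1]}=0$ is incompatible with $\phi_0$ of rank $2$ satisfying $\Inc(\Lambda'_2,M_i)$ for all $i\le\alpha_0-1$), so only the $Y^{(2,1)-\sec}$ and $Y^{(1,1,1)}$ terms survive; flatness of each degeneration guarantees the cycle classes add up with no correction. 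I expect the bookkeeping of the telescoping and the multiplicity-one matrix checks to be routine, with essentially all the difficulty concentrated in the no-extra-components step, which I would write out in the same ``details are straightforward but cumbersome, so we do not give them all'' spirit as Proposition~\ref{flat_limit_21_case}, spelling out only the genuinely new case analyses (the secant-jump stratum and the $V_1=\Lambda'_2$ identification).
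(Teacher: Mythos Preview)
Your proposal is correct and follows exactly the approach the paper indicates: the paper's proof consists of the single sentence ``The proof follows the same strategy as that of Proposition~\ref{flat_limit_21_case} and is omitted,'' and your plan is precisely a fleshed-out execution of that strategy --- explicit matrix families for each component, multiplicity-one via full rank over $k[t]/t^2$, stratified dimension counts to exclude extra components, and telescoping for the cycle identity. Your outline is in fact more detailed than what the paper provides.
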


The proof follows the same strategy as that of Proposition \ref{flat_limit_21_case} and is omitted.

We finally consider degenerations of $Y^{(2,1)-\sec}_{(\alpha_0,\alpha_1)}$. Write $\oL_{i}:=\langle L_i,\Lambda_1\rangle/\Lambda_1$ for $i=n_0+1,\ldots,n$, and we abusively write $\Lambda'_2$ for $\Lambda'_2/\Lambda_1$. It will also be convenient to set $\oL_{n+1}:=L_{\alpha_0}/\Lambda_1$.

We now successively move the lines $\oL_{n_0+1},\ldots,\oL_{n+1}$ to be equal to $\Lambda'_2$; suppose that $\oL_{n_0+1},\ldots,\oL_{\alpha_2}=\Lambda'_2$ and the other $\oL_i$ are general. Define now $Y^{(2,1)-\sec}_{(\alpha_0,\alpha_1,\alpha_2)}$ in exactly the same way as $Y^{(2,1)-\sec}_{(\alpha_0,\alpha_1)}$, except with the new arrangement of $\oL_i$.

However, it may also happen that a generic $\phi$ on $Y^{(2,1)-\sec}_{(\alpha_0,\alpha_1,\alpha_2-1)}$ degenerates at this step to one of type $(1,1,1)$. 
\begin{defn}
If $\alpha_1\le \alpha_2\le n$, define $Y^{(1,1,1)-\sec}_{(\alpha_0,\alpha_1,\alpha_2)}$ to be the closure of the locus of collineations $\phi$ satisfying the following properties.
\begin{enumerate}
\item $\phi$ has type $(1,1,1)$, with $V_1=\Lambda'_2$ and $V_2=\Lambda_1$,
\item $\im(\phi_0)\subset M_{[\alpha_1+1,n_0]}\cap M_{[\alpha_2+1,n]}$ and $\im(\phi_1)\subset M_{[\alpha_1+1,\alpha_0-1]}\cap M_{[\alpha_0+1,n_0]}$, 
\item $\dim(\im(\phi)\cap M_{[1,\alpha_1-1]})=2$ and $\dim(\im(\phi_1)\cap (M_{[1,\alpha_1]}\cap M_{[n_0+1,\alpha_2-1]}))=1$, but $\im(\phi_0)\cap M_{[1,\alpha_1-1]}=0$.
\end{enumerate}
\end{defn}

Note that in the above definition, we have not allowed $\alpha_2=n+1$, that is, for $\oL_{n+1}$ to become equal to $\Lambda'_2$. This is explained by the following:

\begin{prop}\label{21sec-0integral}
The class $[Y^{(2,1)-\sec}_{(\alpha_0,\alpha_1,n)}]$ pushes forward to 0 under $\pi$.
\end{prop}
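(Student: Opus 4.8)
The plan is to prove this exactly as the analogous statement for the full-rank component $Y^{(3)}_{n}$ was proved above: it suffices to show that $\pi$ restricted to $Y^{(2,1)-\sec}_{(\alpha_0,\alpha_1,n)}$ has positive-dimensional fibers. Since $Y^{(2,1)-\sec}_{(\alpha_0,\alpha_1,n)}$ is irreducible, this forces $\dim\pi(Y^{(2,1)-\sec}_{(\alpha_0,\alpha_1,n)})<\dim Y^{(2,1)-\sec}_{(\alpha_0,\alpha_1,n)}$, and hence $\pi_{*}[Y^{(2,1)-\sec}_{(\alpha_0,\alpha_1,n)}]=0$. The mechanism producing the positive-dimensional fibers will be a one-parameter rescaling of $\phi_0$ in a direction that does not alter $\im(\phi)$, available precisely because $\alpha_2=n$.

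Concretely, let $\phi=\{\phi_0,\phi_1\}$ be a general point of $Y^{(2,1)-\sec}_{(\alpha_0,\alpha_1,n)}$, so $\phi$ has type $(2,1)$ with $\ker(\phi_0)=\Lambda_1$. The observation I would make is that every defining condition of $Y^{(2,1)-\sec}_{(\alpha_0,\alpha_1,n)}$ that constrains the map $\phi_0\colon V/\Lambda_1\to W$, as opposed to merely its image $\im(\phi_0)$, has one of two shapes: it is either $\Inc(L_i,M_i)$ with $L_i=\Lambda'_2$ (for $i=1,\dots,\alpha_1$) or $\Inc(\langle L_i,\Lambda_1\rangle,M_i)$ with $\langle L_i,\Lambda_1\rangle=\Lambda'_2$ (for $i=n_0+1,\dots,n$, using $\alpha_2=n$), each of which reads $\phi_0(\Lambda'_2/\Lambda_1)\subset M_i$; or it is the single condition $\Inc(L_{\alpha_0},M_{\alpha_0})$, which reads $\phi_0(\oL_{n+1})\subset M_{\alpha_0}$, where $\oL_{n+1}=L_{\alpha_0}/\Lambda_1$. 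Because $L_{\alpha_0}\neq\Lambda'_2$, the lines $\Lambda'_2/\Lambda_1$ and $\oL_{n+1}$ are distinct in the plane $V/\Lambda_1$, so I may pick a basis $v_0,v_1,v_2$ of $V$ with $v_0\in\Lambda_1$, with $v_1\in\Lambda'_2$ whose image $\overline{v_1}$ spans $\Lambda'_2/\Lambda_1$, and with $v_2\in L_{\alpha_0}$ whose image $\overline{v_2}$ spans $\oL_{n+1}$.

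For $s\in\bC^{\times}$ I would then define $\phi_0^{(s)}\colon V/\Lambda_1\to W$ (up to scalar) by $\phi_0^{(s)}(\overline{v_1})=s\,\phi_0(\overline{v_1})$ and $\phi_0^{(s)}(\overline{v_2})=\phi_0(\overline{v_2})$, and set $\phi^{(s)}=\{\phi_0^{(s)},\phi_1\}$. This is legitimate: $\phi_0^{(s)}$ is again injective on $V/\Lambda_1$ with kernel $\Lambda_1$ and with the \emph{same} image $\im(\phi_0)$, hence the same cokernel, so $\phi_1$ still makes sense and $\phi^{(s)}\in\Coll_{(2,1)}(V,W)$. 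The two families of incidence conditions above read $\phi_0^{(s)}(\overline{v_1})\in M_i$ and $\phi_0^{(s)}(\overline{v_2})\in M_{\alpha_0}$, both manifestly preserved by the rescaling, while all remaining defining conditions — properties (iii) and (iv) of $Y^{(2,1)-\sec}_{(\alpha_0,\alpha_1,n)}$ and the attendant genericity statements — involve only $\im(\phi_0)$, $\im(\phi)$ and $\phi_1$, none of which change. Since $\pi(\phi^{(s)})=\im(\phi^{(s)})=\im(\phi)$ and $\phi^{(s)}\neq\phi$ whenever $s\neq1$ (a scalar identifying $\phi_0^{(s)}$ with $\phi_0$ would force $s=1$), the curve $\{\phi^{(s)}\}_{s\in\bC^{\times}}$ is a one-dimensional subvariety of the fiber of $\pi$ through $\phi$, which is what we want.

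The step I expect to need the most care is the verification that the entire family $\{\phi^{(s)}\}$ genuinely lies inside $Y^{(2,1)-\sec}_{(\alpha_0,\alpha_1,n)}$ rather than only its closure — equivalently, that a general $\phi$ lies on the open stratum on which (i)–(iv) hold strictly (type exactly $(2,1)$, the listed containments and non-containments of $\im(\phi_0)$ and $\im(\phi)$ in the $M_i$, and the stated intersection dimensions), so that the same holds for each $\phi^{(s)}$; here one invokes the irreducibility and expected-dimension statements already recorded for $Y^{(2,1)-\sec}_{(\alpha_0,\alpha_1,n)}$. This is also exactly where the hypothesis $\alpha_2=n$ is used: if even one line $\oL_i$ with $i>n_0$ were still general rather than equal to $\Lambda'_2$ (as happens when $\alpha_2<n$), the condition $\phi_0(\oL_i)\subset M_i$ would single out a third line in the plane $V/\Lambda_1$ and could not survive a one-parameter rescaling, so the argument correctly applies only at this final step. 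All of these are routine checks of the same kind carried out in the preceding propositions, and I do not anticipate real difficulty.
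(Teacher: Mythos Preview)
Your proposal is correct and takes essentially the same approach as the paper's proof: both exhibit a one-parameter family of rescalings of $\phi_0$ that preserve $\im(\phi_0)$ and all the incidence conditions (which, precisely because $\alpha_2=n$, constrain only the two lines $\Lambda'_2/\Lambda_1$ and $L_{\alpha_0}/\Lambda_1$ in $V/\Lambda_1$), hence lie in the same fiber of $\pi$. The paper even gives the same geometric gloss, interpreting this rescaling as the $\bC^{*}$-action on the target of a map $f\colon\bP^1\to\bP^1$ sending $\alpha_1+(n-n_0)$ points to $0$ and one point to $\infty$.
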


\begin{proof}
Given a $\phi=(\phi_0,\phi_1)\in Y^{(2,1)-\sec}_{(\alpha_0,\alpha_1,n)}$, there exist infinitely many collineations in $Y^{(2,1)-\sec}_{(\alpha_0,\alpha_1,n)}$ in the same fiber of $\phi$. Indeed, one can replace $\phi_0$ with a 1-parameter family of maps still satisfying the conditions $\Inc(\Lambda'_2,M_i)$ for $i=1,2,\ldots,\alpha_1,n_0+1,\ldots,n$, in addition to the condition $\Inc(L_i,M_i)$ for $i=\alpha_0$, without changing $\im(\phi_0)$. Geometrically, this corresponds to the fact that a map $f:\bP^1\to\bP^1$ constrained to send $\alpha_1+(n-n_0)$ points to $0\in\bP^1$ and one point to $\infty\in\bP^1$ may be translated to infinitely many more by post-composing with the $\bC^{*}$-action on the target.
\end{proof}

In particular, when $n_0=n$, the class $[Y^{(2,1)-\sec}_{(\alpha_0,\alpha_1)}]$ already pushes forward to zero under $\pi$, so these subschemes do not contribute in the orbit closure calculation of the previous section (as we already saw, a posteriori).

We can now describe the degeneration of the $Y^{(2,1)-\sec}_{(\alpha_0,\alpha_1)}$ in a straightforward way.

\begin{prop}
Fix some $\alpha_2$ with $n_0\le\alpha_2\le n$. For the purposes of this statement, we write $Y^{(2,1)-\sec}_{(\alpha_0,\alpha_1,n_0-1)}:=Y^{(2,1)-\sec}_{(\alpha_0,\alpha_1)}$.

Then, the flat limit of the subscheme $Y^{(2,1)-\sec}_{(\alpha_0,\alpha_1,\alpha_2-1)}$ under the degeneration of $\oL_{\alpha_2}\mapsto\Lambda'_2$ contains the components:
\begin{itemize}
\item $Y^{(2,1)-\sec}_{(\alpha_0,\alpha_1,\alpha_2)}$, if $\alpha_2\le n-1$, and
\item  $Y^{(1,1,1)-\sec}_{(\alpha_0,\alpha_1,\alpha_2)}$,
\end{itemize}
both with multiplicity 1, and no other components with non-zero push-forward under $\pi$.
\end{prop}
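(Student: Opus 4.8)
The plan is to follow verbatim the strategy already used twice, in Proposition \ref{flat_limit_21_case} and Proposition \ref{21_degen_comps}: first exhibit the two claimed components explicitly as flat limits of families of full-rank collineations by writing down explicit matrices over $\bD$, then argue that no other component can appear.

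\textbf{Exhibiting the components.} First I would write down the one-parameter family of matrices representing a general point of $Y^{(2,1)-\sec}_{(\alpha_0,\alpha_1,\alpha_2-1)}$, in which the line $\oL_{\alpha_2}$ is general for $t\ne 0$ and equals $\Lambda'_2$ at $t=0$. As in the proof of Proposition \ref{flat_limit_12_case}, one produces \emph{two} specializations of this family. Setting $t=0$ after keeping all entries constant gives a general point of $Y^{(2,1)-\sec}_{(\alpha_0,\alpha_1,\alpha_2)}$ (valid only when $\alpha_2\le n-1$, since for $\alpha_2=n$ the constraint that $\im(\phi_0)$ lie in no $M_i$ with $i\le n$ becomes incompatible with rank $2$); this component appears with multiplicity $1$ because it is cut out by linear equations on $\bP\Hom(V,W)^{\circ}$. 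Scaling the appropriate rows and columns by $t$ before taking $t\to 0$ degenerates $\phi$ to type $(1,1,1)$: the surviving data is $\im(\phi_0)$ (obtained by substituting $t=0$), the lift $\phi_1$ on $\Lambda'_2/\Lambda_1$ (obtained after dividing the relevant columns by $t$), and the level-$2$ datum on $\Lambda_1$, and one reads off that the limit satisfies precisely properties (i)--(iii) of the definition of $Y^{(1,1,1)-\sec}_{(\alpha_0,\alpha_1,\alpha_2)}$ — in particular the ``jump'' of the secant section of $M_{[1,\alpha_1-1]}$ into $\im(\phi_1)$, together with the newly forced intersection with $M_{[n_0+1,\alpha_2-1]}$. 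The multiplicity-$1$ claim here again amounts to checking that the family of matrices is transverse to the locus of rank-drop at $t=0$ (a Segre locus, blown up in passing to $\Coll(V,W)$), exactly as in Proposition \ref{flat_limit_12_case}.

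\textbf{No other components.} This is the main obstacle, and is the part that must be done ``by hand'' since $Z$ is not toric. I would stratify the possible limiting $\phi$ by its type $\overrightarrow{r}$ and by the position of $V_\ell$, and for each stratum run a dimension count showing the incidence conditions $\Inc(L_i,M_i)$ (with $\oL_{\alpha_2}$ in its special position) impose more than the expected number of conditions on $\Coll_{\overrightarrow{r}}(V,W)$, using Corollary \ref{inc_exp_dim} for the expected codimensions. Types $(1,2)$ and $(3)$ are ruled out immediately; for type $(2,1)$ one must use that $\im(\phi)$ is closed-conditionally forced to meet $M_{[1,\alpha_1-1]}$, and that demanding additionally $\im(\phi_0)\cap M_{[1,\alpha_1-1]}\ne 0$, or $\im(\phi_0)$ in an extra $M_i$, over-constrains. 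For type $(1,1,1)$ one examines which line $V_1$ can be: if $V_1$ is none of the relevant $L_i$ or $\langle L_i,\Lambda_1\rangle$, then $\im(\phi_0)$ is forced into $M_{[1,n]}=0$; and in the remaining cases — as in the second half of the proof of Proposition \ref{flat_limit_21_case} — one passes to higher order in the degenerate columns when the first-order minors vanish, iterating, and finds the only non-over-constrained possibility is $Y^{(1,1,1)-\sec}_{(\alpha_0,\alpha_1,\alpha_2)}$. Crucially, any stray component that does survive set-theoretically in $\Coll(V,W)$ and is genuinely there must be handled, but by Proposition \ref{21sec-0integral} (and the parallel fact for a type-$(1,1,1)$ collineation in which $\phi_0$ can be post-composed with a torus translation of a $\bP^1$-target) any such component in which $\oL_{n+1}=L_{\alpha_0}/\Lambda_1$ is ``free to translate'' pushes forward to $0$ under $\pi$; this is exactly why the statement only claims ``no other components with non-zero push-forward under $\pi$'' rather than no other components at all, and this weaker conclusion is what lets me avoid a fully exhaustive classification. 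I would therefore present the explicit matrices and the multiplicity computation in full, and relegate the over-counting case analysis to a remark that it proceeds exactly as in Propositions \ref{flat_limit_21_case} and \ref{21_degen_comps}, spelling out only the one or two genuinely new sub-cases created by the extra hyperplanes $M_{n_0+1},\dots,M_{\alpha_2}$.
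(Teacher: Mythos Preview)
Your proposal is correct and follows precisely the approach the paper intends: the paper in fact omits the proof entirely, prefacing the proposition with the remark that the degeneration can now be described ``in a straightforward way,'' and having already noted for the analogous Proposition~\ref{21_degen_comps} that ``the proof follows the same strategy as that of Proposition~\ref{flat_limit_21_case} and is omitted.'' Your plan to exhibit the two components via explicit one-parameter families of matrices, verify multiplicity~$1$ by transversality to the rank-drop locus, and rule out remaining components by the stratified dimension count (invoking Proposition~\ref{21sec-0integral} to discard components with zero push-forward) is exactly the template the paper has established and is relying on here.
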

In particular, we have
\begin{equation*}
[Y^{(2,1)-\sec}_{(\alpha_0,\alpha_1)}]=\sum_{\alpha_2=n_0+1}^{n}[Y^{(1,1,1)-\sec}_{(\alpha_0,\alpha_1,\alpha_2)}]
\end{equation*}
\emph{after push-forward by $\pi$}.

Combining with Proposition \ref{21_degen_comps}, we conclude:

\begin{cor}
Modulo cycles pushing forward to 0 under $\pi$, we have:
\begin{equation*}
[Y^{(2,1)}_{\alpha_0}]=\sum_{\alpha_1=2}^{\alpha_0-1}[Y^{(1,1,1)}_{(\alpha_0,\alpha_1)}]+\sum_{\alpha_1=2}^{\alpha_0-1}\sum_{\alpha_2=n_0+1}^{n}[Y^{(1,1,1)-\sec}_{(\alpha_0,\alpha_1,\alpha_2)}].
\end{equation*}
for all $\alpha_0\in[3,n_0]$.
\end{cor}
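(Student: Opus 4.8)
The plan is to chain together the two immediately preceding results. First I would record the exact equality of cycle classes in $H^{*}(\Coll(V,W))$ furnished by Proposition \ref{21_degen_comps},
\begin{equation*}
[Y^{(2,1)}_{\alpha_0}]=\sum_{\alpha_1=2}^{\alpha_0-1}[Y^{(2,1)-\sec}_{(\alpha_0,\alpha_1)}]+\sum_{\alpha_1=2}^{\alpha_0-1}[Y^{(1,1,1)}_{(\alpha_0,\alpha_1)}],
\end{equation*}
and observe that the second sum is already expressed in terms of the length-$2$ collineation loci $Y^{(1,1,1)}_{(\alpha_0,\alpha_1)}$ appearing in the target formula, so it can be carried along untouched.

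Next I would treat the first sum term by term. For each fixed $\alpha_1\in[2,\alpha_0-1]$ I would invoke the preceding Proposition (the degeneration of $Y^{(2,1)-\sec}_{(\alpha_0,\alpha_1)}$ obtained by moving $\oL_{n_0+1},\ldots,\oL_{n}$ successively onto $\Lambda'_2$), which gives
\begin{equation*}
[Y^{(2,1)-\sec}_{(\alpha_0,\alpha_1)}]=\sum_{\alpha_2=n_0+1}^{n}[Y^{(1,1,1)-\sec}_{(\alpha_0,\alpha_1,\alpha_2)}]
\end{equation*}
after push-forward by $\pi$; the flat-limit components other than the ones listed --- in particular the would-be $\alpha_2=n+1$ term coming from moving $\oL_{n+1}=L_{\alpha_0}/\Lambda_1$ onto $\Lambda'_2$ --- have vanishing push-forward by Proposition \ref{21sec-0integral}. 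Substituting these identities into the first sum, nesting the $\alpha_2$-summation inside the $\alpha_1$-summation, and recording that every step after the exact identity of Proposition \ref{21_degen_comps} is valid only modulo $\ker(\pi_{*})$, I would arrive at
\begin{equation*}
[Y^{(2,1)}_{\alpha_0}]=\sum_{\alpha_1=2}^{\alpha_0-1}[Y^{(1,1,1)}_{(\alpha_0,\alpha_1)}]+\sum_{\alpha_1=2}^{\alpha_0-1}\sum_{\alpha_2=n_0+1}^{n}[Y^{(1,1,1)-\sec}_{(\alpha_0,\alpha_1,\alpha_2)}]
\end{equation*}
modulo cycles pushing forward to $0$ under $\pi$, which is the assertion.

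Since all of the geometric input --- the identification of the flat limits, the multiplicity-one statements, and the vanishing of the spurious push-forwards --- has already been established in the cited Propositions, there is no genuine obstacle remaining at this stage. The only point requiring care is bookkeeping: checking that the summation ranges match up (the boundary cases $\alpha_1=\alpha_0-1$ and $\alpha_2=n$ being precisely those covered by Propositions \ref{21_degen_comps} and \ref{21sec-0integral}), and making sure one works consistently in the quotient $H^{*}(\Coll(V,W))/\ker(\pi_{*})$ --- equivalently, pushes both sides forward to $H^{*}(\Gr(3,W))$ --- so that the two identities may legitimately be composed.
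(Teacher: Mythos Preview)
Your proposal is correct and matches the paper's approach exactly: the Corollary is obtained by substituting the identity $[Y^{(2,1)-\sec}_{(\alpha_0,\alpha_1)}]=\sum_{\alpha_2=n_0+1}^{n}[Y^{(1,1,1)-\sec}_{(\alpha_0,\alpha_1,\alpha_2)}]$ (valid after push-forward by $\pi$) into the exact decomposition from Proposition \ref{21_degen_comps}. One small point of exposition: the role of Proposition \ref{21sec-0integral} is not to kill an ``$\alpha_2=n+1$'' term, but rather to show that the residual component $Y^{(2,1)-\sec}_{(\alpha_0,\alpha_1,n)}$ left over after the last degeneration step already pushes forward to zero, so no further degeneration of $\oL_{n+1}$ is needed.
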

We will compute the pushforwards of the terms on the right hand side to $\Gr(3,W)$ in \S\ref{P2_degen_integrals}.

 \subsection{Degenerations of $Y^{(1,2)}$}
 
Because $Y^{(1,2)}_{\alpha_0}$ and $\wt{Y}_{\alpha_0}$ are birational over $\Gr(3,W)$, it suffices to consider the push-forwards of $\wt{Y}_{\alpha_0}$ under $\wt{\pi}$. We therefore study the $\wt{Y}_{\alpha_0}$ under degeneration. 

Recall that, for any $\alpha_0\ge n_0+3$, the subscheme $\wt{Y}_{\alpha_0}\subset \bP W\times\Coll(\Lambda_2,W)\times\Gr(3,W)$ is the closure of the locus of $(w_0,\wt{\phi_1},U)$ satisfying:
\begin{itemize}
\item $w_0\in M_{[1,n_0]}\cap M_{[\alpha_0+1,n]}$, and $w_0$ is contained in no other $M_{i}$,
\item $\wt{\phi_1}$ satisfies the incidence conditions $\Inc(L_i,M_i)$ for $i=1,2,\ldots,\alpha_0-1$, and $\im(\wt{\phi_1})\not\subset M_i$ for all such $i$,
\item $U=\langle w_0,\im(\wt{\phi_1})\rangle$ (in particular, $w_0\notin \im(\wt{\phi_1})$). 
\end{itemize}
We have abusively replaced $L_i\cap \Lambda_2$ with simply $L_i$ for $i=1,2,\ldots,n_0$, in order to simplify notation. 

Now, fix a general line $\Lambda'_1\subset\Lambda_2$. We will degenerate the $L_i$ to become equal to the $\Lambda'_i$. For some $\alpha_1\in[1,\alpha_0-1]$, suppose that $L_1=\cdots=L_{\alpha_1}=\Lambda'_1$, and the $L_i$ are otherwise general. Then, we define $\wt{Y}_{(\alpha_0,\alpha_1)}$ by the same properties as above, with the $L_i$ now in special position. The definition is the same whether $\alpha_1\le n_0$ or $\alpha_1> n_0$.

The $\wt{Y}_{(\alpha_0,\alpha_1)}$ are easily seen to be irreducible and generically smooth of the correct dimension $2n-n_0-1$. Furthermore, upon the degeneration $L_{\alpha_1}\to \Lambda'_1$, the component $\wt{Y}_{(\alpha_0,\alpha_1)}$ appears in the limit of $\wt{Y}_{(\alpha_0,\alpha_1-1)}$ with multiplicity 1.

\begin{lem}\label{tilde-zero-pushfoward}
$[\wt{Y}_{(\alpha_0,\alpha_0-2)}]$ and $[\wt{Y}_{(\alpha_0,\alpha_0-1)}]$ push forward to zero under $\wt{\pi}$.
\end{lem}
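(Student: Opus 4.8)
The plan is to show that, for $\alpha_1\in\{\alpha_0-2,\alpha_0-1\}$, the restriction of $\wt\pi$ to the subscheme $\wt Y_{(\alpha_0,\alpha_1)}$ --- which, as noted above, is irreducible of dimension $2n-n_0-1$ --- has positive-dimensional generic fibers. Then $\dim\wt\pi(\wt Y_{(\alpha_0,\alpha_1)})<\dim\wt Y_{(\alpha_0,\alpha_1)}$, so $\wt\pi_{*}[\wt Y_{(\alpha_0,\alpha_1)}]=0$. This is the same mechanism as in the proof of Proposition~\ref{21sec-0integral} and of the vanishing of $\pi_{*}[Y^{(3)}_{n}]$: one exhibits a positive-dimensional group acting along the fibers of the relevant projection.

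First I would pin down the group. When $\alpha_1=\alpha_0-1$, all of the lines $L_1,\dots,L_{\alpha_0-1}\subset\Lambda_2$ appearing in the incidence conditions on $\wt\phi_1$ coincide with $\Lambda'_1$; when $\alpha_1=\alpha_0-2$ we have $L_1=\cdots=L_{\alpha_0-2}=\Lambda'_1$ together with the single further general line $L_{\alpha_0-1}$. Let $G\subseteq GL(\Lambda_2)$ be the subgroup stabilizing $\Lambda'_1$ in the first case, and stabilizing both $\Lambda'_1$ and $L_{\alpha_0-1}$ in the second; these are respectively a Borel subgroup and a maximal torus, and in both cases $G$ strictly contains the scalars $\bC^{\times}\cdot\Id$, so that $G/(\bC^{\times}\cdot\Id)$ is positive-dimensional. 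Now $G$ acts on $\Coll(\Lambda_2,W)$ by precomposition, hence on $\bP W\times\Coll(\Lambda_2,W)\times\Gr(3,W)$ trivially on the outer two factors, and I claim it preserves $\wt Y_{(\alpha_0,\alpha_1)}$: the conditions on $w_0$ and the condition $U=\langle w_0,\im(\wt\phi_1)\rangle$ involve only $w_0$ and $\im(\wt\phi_1)$, which precomposition leaves unchanged, while each condition $\Inc(L_i,M_i)$ (and each open condition $\im(\wt\phi_1)\not\subset M_i$) is preserved because $g\in G$ stabilizes every $L_i$ with $i\le\alpha_0-1$. Since the generic $\wt\phi_1$ is injective --- of type $(2)$, which follows from the incidence correspondence proving irreducibility of $\wt Y_{(\alpha_0,\alpha_1)}$ --- the precomposition action factors through $PGL(\Lambda_2)$ and is free there, so a general $G$-orbit inside $\wt Y_{(\alpha_0,\alpha_1)}$ is positive-dimensional; as $\wt\pi$ forgets $\wt\phi_1$ it is $G$-invariant, and the conclusion follows.

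The hypothesis $\alpha_1\in\{\alpha_0-2,\alpha_0-1\}$ is exactly what makes this work: for $\alpha_1\le\alpha_0-3$ there are at least three distinct lines among $L_1,\dots,L_{\alpha_0-1}$, whose common stabilizer in $PGL(\Lambda_2)$ is trivial, and no extra fiber directions survive. The one point needing genuine care is verifying that $G$ preserves $\wt Y_{(\alpha_0,\alpha_1)}$ itself --- including the open conditions and the type of $\wt\phi_1$ --- rather than only the closed conditions cutting out some larger scheme; this is bookkeeping of the same kind already done for the other components of this section, and I would handle it identically.
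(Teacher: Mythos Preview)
Your proposal is correct and follows the same approach as the paper, which simply asserts that the restriction of $\wt\pi$ has positive-dimensional fibers. You supply the details the paper omits: the explicit group $G\subset GL(\Lambda_2)$ (a Borel or maximal torus, depending on $\alpha_1$) acting by precomposition on $\wt\phi_1$, preserving $\wt Y_{(\alpha_0,\alpha_1)}$ and the fibers of $\wt\pi$.
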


\begin{proof}
One verifies that the restriction of $\wt{\pi}$ to $\wt{Y}_{(\alpha_0,\alpha_0-2)}$ and $\wt{Y}_{(\alpha_0,\alpha_0-1)}$ have positive-dimensional fibers.
\end{proof}

We now describe the further components arising in this degeneration. There are two main ways in which a point $(w_0,\wt{\phi_1},U)$ can become degenerate: either $w_0$ can end up inside $\im(\wt{\phi_1})$, or $\wt{\phi_1}$ can degenerate into a collineation of type $(1,1)$ (It will follow from the considerations below that both cannot happen at once.)

Fix $\alpha_1\ge2$, and suppose further that $\alpha_1\le n_0$. We consider the conditions on $(w_0,\wt{\phi_1},U)$ in the limit of a general point of $\wt{Y}_{(\alpha_0,\alpha_1-1)}$, for which $w_0\in\im(\wt{\phi_1})$. We still have that $w_0\in M_{[1,n_0]}\cap M_{[\alpha_0+1,n]}$, and the incidence conditions $\Inc(L_i,M_i)$, for $i=1,2,\ldots,\alpha_0-1$, on $\wt{\phi_1}$. In particular, we have $\wt{\phi_1}(\Lambda'_2)\in M_{[1,\alpha_1]}$. However, we also have $w_0\in\im(\wt{\phi_1})$, so we should expect in fact $\wt{\phi_1}(\Lambda'_2)=w_0$ (up to scaling), and so $\wt{\phi_1}(\Lambda'_2)\subset M_{[1,n_0]}\cap M_{[\alpha_0+1,n]}$. (The only other possibility is that $\im(\wt{\phi_1})$ be contained in $M_{[1,\alpha_1]}$, but one can rule this out by a parameter count.)

On the other hand, because $L_{\alpha+1},\ldots,L_{n_0}\neq \Lambda'_2$, this forces $\im(\wt{\phi_1})\subset M_{[\alpha_1+1,n_0]}$. Finally, it is true on the general fiber that $\dim(U\cap M_{[1,\alpha_1-1]})=2$, so the same must be true on the special fiber. We now define:

\begin{defn}
Fix $\alpha_1\in[2,n_0]$. We define $\wt{Y}^{\sec}_{(\alpha_0,\alpha_1)} \subset \bP W\times\Coll(\Lambda_2,W)\times\Gr(3,W)$ to be the closure of the locus of $(w_0,\wt{\phi_1},U)$ satisfying:
\begin{enumerate}
\item $w_0\in M_{[1,n_0]}\cap M_{[\alpha_0+1,n]}$, and $w_0$ is contained in no other $M_{i}$,
\item $\wt{\phi_1}(\Lambda'_2)=w_0$, and $\wt{\phi_1}$ satisfies the further incidence conditions $\Inc(L_i,M_i)$ for $i=n_0+1,\ldots,\alpha_0-1$, as well as $\im(\wt{\phi_1})\subset M_{[\alpha_1+1,n_0]}$,
\item $\dim(U\cap M_{[1,\alpha_1-1]})=2$.
\end{enumerate}
\end{defn}

The space of $\wt{\phi_1}$ satisfying the first two properties has dimension
\begin{equation*}
(2n-1)-2(n_0-\alpha_1)-(n-1-n_0+\alpha_1).
\end{equation*}
Then, the space of $U$ containing a generic such $\im(\wt{\phi_1})$ and satisfying the last property has dimension $(n-3)-(\alpha_1-2)$. Therefore, $\wt{Y}^{\sec}_{(\alpha_0,\alpha_1)}$ has dimension $2n-n_0-1$. The usual incidence correspondence argument shows that it is in fact generically smooth and irreducible of this dimension. Moreover, $\wt{Y}^{\sec}_{(\alpha_0,\alpha_1)}$ appears in the flat limit of $\wt{Y}_{(\alpha_0,\alpha_1-1)}$ with multiplicity 1.

Suppose instead that we consider the analogous situation when $\alpha_1>n_0$. Then, we see on the one hand that $\wt{\phi_1}$ satisfies incidence conditions $\Inc(L_i,M_i)$ for $i=1,2,\ldots,\alpha_0-1$ and $\Inc(\Lambda'_2,M_i)$ for $i=\alpha_0+1,\ldots,n$ (as $\wt{\phi_1}=w_0$), and on the other that $\dim(U\cap M_{[1,n_0]})=2$. This imposes too many conditions on $\bP W\times\Coll(\Lambda_2,W)\times\Gr(3,W)$, and we see no such limit components.

We next consider the other degenerate possibility, that $\wt{\phi_1}$ degenerates into a collineation of type $(1,1)$, with $\ker((\wt{\phi_1})_{0})=\Lambda'_1$ (other possibilities are ruled out by via straightforward parameter counts). Suppose first that $\alpha_1\le n_0+1$. We consider the conditions imposed just on $U$. We have:
\begin{itemize}
\item $\dim(U\cap M_{[1,n_0]\cap [\alpha_0+1,n]}) \ge 1$, because $w_0\in U$,
\item $\dim(U\cap M_{[1,\alpha_0-1]}) \ge 2$, because the same is true on the general fiber,
\item $\dim(U\cap M_{[\alpha_1+1,\alpha_0-1]}) \ge 1$, due to the incidence conditions $\Inc(L_i,M_i)$ for $i=\alpha_1+1,\ldots,\alpha_0-1$.
\end{itemize}
The first two constraints impose $(n-\alpha_0+n_0-2)+(\alpha_1-2)$ conditions on $\Gr(3,W)$, and the last imposes $\alpha_0-\alpha_1-3$ more, for $n+n_0-7$ in total. As a result, one cannot obtain a non-trivial contribution in $H^{2(n+n_0-8)}(\Gr(3,W))$ after push-forward by $\wt{\pi}$.

Suppose on the other hand that $\alpha_1\ge n_0+2$. We may define:
\begin{defn}
Fix $\alpha_1\in[n_0+2,\alpha_0-1]$. We define $\wt{Y}^{(1,1)}_{(\alpha_0,\alpha_1)} \subset \bP W\times\Coll(\Lambda_2,W)\times\Gr(3,W)$ to be the closure of the locus of $(w_0,\wt{\phi_1},U)$ satisfying:
\begin{enumerate}
\item $w_0\in M_{[1,n_0]}\cap M_{[\alpha_0+1,n]}$,
\item $\wt{\phi_1}$ has type $(1,1)$ with $\ker((\wt{\phi_1})_0)=(\Lambda_2)_0=\Lambda'_1$ and $
\im((\wt{\phi_1})_0)\in M_{[\alpha_1+1,\alpha_0-1]}$,
\item $\im(\wt{\phi_1})\cap M_{[1,\alpha_1-1]}\neq0$,
\item $w_0\notin \im(\wt{\phi_1})$. In particular, $U=\langle w_0,\im(\wt{\phi_1})\rangle$ and $\dim(U\cap M_{[1,n_0]})\neq2$.
\end{enumerate}
\end{defn}

The subscheme $\wt{Y}^{(1,1)}_{(\alpha_0,\alpha_1)}$ is generically smooth and irreducible of dimension
\begin{equation*}
(\alpha_0-n_0-1)+(n-\alpha_0+\alpha_1-2)+(n-\alpha_1+2)=2n-n_0-1,
\end{equation*}
and appears in the limit of $\wt{Y}_{(\alpha_0,\alpha_1-1)}$ with multiplicity 1. One can also check that if it is required instead that $w_0\in \im(\wt{\phi_1})$, then we get too many conditions. Also, once $L_{\alpha_0-3}=\Lambda'_2$, the further components $\wt{Y}_{\alpha_0,\alpha_0-2},\wt{Y}_{\alpha_0,\alpha_0-1}$ do not contribute, by Lemma \ref{tilde-zero-pushfoward}.

The degeneration of $\wt{Y}_{(\alpha_0,\alpha_1-1)}$ is now summarized as follows.

\begin{prop}
Fix any $\alpha_0\in[n_0+3,n]$ and $\alpha_1\in[1,\alpha_0-2]$. Up to cycles pushing forward to zero under $\wt{\pi}$, in the limit as $L_{\alpha_1}\to \Lambda'_1$, the subscheme $\wt{Y}_{(\alpha_0,\alpha_1-1)}$ degenerates to a union of the components:
\begin{itemize}
\item $\wt{Y}_{(\alpha_0,\alpha_1)}$, if $\alpha_1\le \alpha_0-3$ (see Lemma \ref{tilde-zero-pushfoward}),
\item $\wt{Y}^{\sec}_{(\alpha_0,\alpha_1)}$, if $2\le \alpha_1\le n_0$,
\item $\wt{Y}^{(1,1)}_{(\alpha_0,\alpha_1)}$, if $\alpha_1\ge n_0+2$.
\end{itemize}
each with multiplicity 1.
\end{prop}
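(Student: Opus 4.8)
The plan is to mirror the proof of Proposition \ref{flat_limit_21_case}, now carried out inside the triple product $\bP W \times \Coll(\Lambda_2,W) \times \Gr(3,W)$. Work over a disk $\bD$ with parameter $t$: hold $L_1 = \cdots = L_{\alpha_1-1} = \Lambda'_1$ fixed, let $L_{\alpha_1}^t$ be a general line with $L_{\alpha_1}^0 = \Lambda'_1$, and keep all remaining $L_i$ general and constant. Forming the relative versions of the defining loci over $\bD$ produces a flat family $\wt{\cY} \to \bD$ with generic fibre $\wt{Y}_{(\alpha_0,\alpha_1-1)}$, and the task is to describe the flat limit $\wt{\cY}_0$ modulo components $Y'$ with $\dim \wt{\pi}(Y') < \dim Y'$ or with $\wt{\pi}(Y')$ of codimension exceeding $n + n_0 - 8$ in $\Gr(3,W)$.

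Producing the three named components with multiplicity one is essentially carried out in the discussion preceding the statement; I would organize it as follows. For each of $\wt{Y}_{(\alpha_0,\alpha_1)}$, $\wt{Y}^{\sec}_{(\alpha_0,\alpha_1)}$, and $\wt{Y}^{(1,1)}_{(\alpha_0,\alpha_1)}$, exhibit a general point as the $t \to 0$ limit of an explicit one-parameter family $(w_0^t,\wt{\phi_1}^t,U^t)$ in $\wt{\cY}|_{\bD^{*}}$. Multiplicity one then follows exactly as in Propositions \ref{flat_limit_21_case} and \ref{flat_limit_12_case}: for $\wt{Y}_{(\alpha_0,\alpha_1)}$ because the limit lies on a locus cut out by linear equations on $\bP\Hom(\Lambda_2,W)^{\circ}$, and for $\wt{Y}^{(1,1)}_{(\alpha_0,\alpha_1)}$ because the family is transverse at $t=0$ to the proper transform in $\Coll(\Lambda_2,W)$ of the Segre locus of rank-one maps.

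The core of the argument is to show nothing else survives after push-forward by $\wt{\pi}$. Let $Y' \subseteq \wt{\cY}_0$ be a component with general point $(w_0,\wt{\phi_1},U)$; the closed conditions inherited from the generic fibre are $w_0 \in M_{[1,n_0]} \cap M_{[\alpha_0+1,n]}$, $U \supseteq \langle w_0, \im(\wt{\phi_1})\rangle$, $\dim(U \cap M_{[1,\alpha_0-1]}) \ge 2$, and the incidence conditions $\Inc(L_i,M_i)$ on $\wt{\phi_1}$ for $i \le \alpha_0-1$, with $L_{\alpha_1}$ placed in special position. Stratify $Y'$ by the collineation type of $\wt{\phi_1}$ and by whether $w_0 \in \im(\wt{\phi_1})$. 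Flag lengths $\ge 2$ are over-constrained by the parameter count of Proposition \ref{flat_limit_21_case}, since by Corollary \ref{inc_exp_dim} the loci $\Inc(L_i,M_i)$ meet each stratum $\Coll_{\overrightarrow{r}}(\Lambda_2,W)$ in the expected codimension. For the remaining strata, applying Kleiman--Bertini on the open locus and counting conditions exactly as in the discussion preceding the statement, one obtains: for $\wt{\phi_1}$ of type $(2)$ with $w_0 \notin \im(\wt{\phi_1})$, precisely $\wt{Y}_{(\alpha_0,\alpha_1)}$ (discarded via Lemma \ref{tilde-zero-pushfoward} when $\alpha_1 \ge \alpha_0-2$); for type $(2)$ with $w_0 \in \im(\wt{\phi_1})$, precisely $\wt{Y}^{\sec}_{(\alpha_0,\alpha_1)}$ when $2 \le \alpha_1 \le n_0$ and an over-constrained locus when $\alpha_1 > n_0$; and for type $(1,1)$, necessarily with $\ker((\wt{\phi_1})_0) = \Lambda'_1$, precisely $\wt{Y}^{(1,1)}_{(\alpha_0,\alpha_1)}$ when $\alpha_1 \ge n_0+2$, while when $\alpha_1 \le n_0+1$ the constraints on $U$ alone force $\wt{\pi}(Y')$ to have codimension exceeding $n+n_0-8$ in $\Gr(3,W)$. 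Throughout, any component surviving as a genuine subvariety of the triple product but with $\wt{\pi}|_{Y'}$ of positive-dimensional fibre — obtained by rescaling $\wt{\phi_1}$ without changing $U$, as in Lemma \ref{tilde-zero-pushfoward} and Proposition \ref{21sec-0integral} — is likewise discarded.

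The main obstacle is the bookkeeping in this last step. One must verify that a type-$(1,1)$ degeneration of $\wt{\phi_1}$ and a degeneration $w_0 \to \im(\wt{\phi_1})$ cannot occur at once, so that the stratification is exhaustive; and one must handle the boundary indices $\alpha_1 = n_0$ and $\alpha_1 = n_0+1$, where exactly one of $\wt{Y}^{\sec}$, $\wt{Y}^{(1,1)}$ lies in range, taking care that the ``missing'' component genuinely fails to appear rather than being absorbed into another. Both points come down to sharp parameter counts of the kind already performed above, together with the observation that any limit component whose image in $\Gr(3,W)$ has too large a codimension is invisible to $\wt{\pi}_{*}$ in the degree that matters.
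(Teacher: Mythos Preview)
Your proposal is correct and follows essentially the same approach as the paper: the paper does not give a separate proof but treats the proposition as a summary of the preceding discussion, which already exhibits each of the three components as a limit with multiplicity one and rules out all other strata via parameter counts on $w_0$, the type of $\wt{\phi_1}$, and the constraints on $U$. One minor remark: since $\dim\Lambda_2=2$, the collineation $\wt{\phi_1}$ can only have type $(2)$ or $(1,1)$, so your clause about ``flag lengths $\ge 2$'' is vacuous rather than an actual case to check.
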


We finally, consider the $\wt{Y}^{\sec}_{(\alpha_0,\alpha_1)}$ under degeneration, where the $L_i$, $i=n_0+1,\ldots,\alpha_0-1$ are made equal to $\Lambda'_1$, one by one. We obtain the following degenerate components in the limits (each with multiplicity 1):

\begin{defn}
Fix $\alpha_0\in[n_0+3,n]$, $\alpha_1\in[2,n_0]$, and $\alpha_2\in[n_0+1,\alpha_0-1]$. Define $\wt{Y}^{(1,1)-\sec}_{(\alpha_0,\alpha_1,\alpha_2)} \subset \bP W\times\Coll(\Lambda_2,W)\times\Gr(3,W)$ to be the closure of the locus of $(w_0,\wt{\phi_1},U)$ satisfying:
\begin{enumerate}
\item $w_0\in M_{[1,\alpha_2-1]}\cap M_{[\alpha_0+1,n]}$, 
\item $\wt{\phi_1}$ has type $(1,1)$ with $\ker((\wt{\phi_1})_{0})=\Lambda'_1$ and $
\im((\wt{\phi_1})_0)=w_0$,
\item $\im(\wt{\phi_1})\subset M_{[\alpha_1+1,n_0]}$ and $\im(\wt{\phi_1})\cap M_{[\alpha_2+1,\alpha_0-1]}\neq0$,
\item $\dim(U\cap M_{[1,\alpha_1-1]})=2$.
\end{enumerate}
\end{defn}

Furthermore, once $L_{\alpha_0-2}=\Lambda'_1$, the locus of $(w_0,\wt{\phi_1},U)$ where the $\wt{\phi_1}$ still have rank 2 pushes forward to 0 under $\wt{\pi}$. We therefore conclude:

\begin{prop}
Modulo cycles pushing forward to 0 under $\wt{\pi}$, we have
\begin{equation*}
[\wt{Y}_{\alpha_0}]=\sum_{\alpha_1=n_0+2}^{\alpha_0-2} [\wt{Y}^{(1,1)}_{(\alpha_0,\alpha_1)}]+\sum_{\alpha_1=2}^{n_0}\sum_{\alpha_2=n_0+1}^{\alpha_0-2}[\wt{Y}^{(1,1)-\sec}_{(\alpha_0,\alpha_1,\alpha_2)}].
\end{equation*}
\end{prop}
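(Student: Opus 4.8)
The plan is to obtain this identity by iterating the two local degeneration statements established just above — the degeneration of $\wt{Y}_{(\alpha_0,\alpha_1-1)}$ as $L_{\alpha_1}\mapsto\Lambda'_1$, and the degeneration of $\wt{Y}^{\sec}_{(\alpha_0,\alpha_1)}$ as the $L_i$, $i=n_0+1,\ldots,\alpha_0-1$, are made equal to $\Lambda'_1$ one at a time — and then bookkeeping which limit components are annihilated by $\wt{\pi}$. The single general input is that a one-parameter flat degeneration inside $\bP W\times\Coll(\Lambda_2,W)\times\Gr(3,W)$ preserves the rational equivalence class of a cycle, hence its image under the proper projection $\wt{\pi}$ to $\Gr(3,W)$; consequently the property of pushing forward to $0$ under $\wt{\pi}$ is stable under taking further flat limits, so the relation ``modulo $\wt{\pi}$-null cycles'' may be propagated freely through the iteration.

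First I would run the chain of degenerations carrying $\wt{Y}_{\alpha_0}=\wt{Y}_{(\alpha_0,0)}$ successively to $\wt{Y}_{(\alpha_0,1)},\wt{Y}_{(\alpha_0,2)},\ldots,\wt{Y}_{(\alpha_0,\alpha_0-2)}$, applying the first local statement at each step. For $\alpha_1\le\alpha_0-3$ the main component $\wt{Y}_{(\alpha_0,\alpha_1)}$ reappears in the flat limit and is carried forward; for each step in range one peels off $\wt{Y}^{\sec}_{(\alpha_0,\alpha_1)}$ when $2\le\alpha_1\le n_0$ and $\wt{Y}^{(1,1)}_{(\alpha_0,\alpha_1)}$ when $\alpha_1\ge n_0+2$, each with multiplicity $1$; and at the final step $\alpha_1=\alpha_0-2$ the residual main component $\wt{Y}_{(\alpha_0,\alpha_0-2)}$ pushes forward to $0$ under $\wt{\pi}$ (Lemma \ref{tilde-zero-pushfoward}). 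Since $\alpha_0\ge n_0+3$ forces $\alpha_0-2\ge n_0+1$, the $\wt{Y}^{\sec}$ terms produced are exactly those with $2\le\alpha_1\le n_0$, and substituting step by step (discarding $\wt{\pi}$-null pieces) yields, modulo cycles killed by $\wt{\pi}$,
\begin{equation*}
[\wt{Y}_{\alpha_0}]=\sum_{\alpha_1=2}^{n_0}[\wt{Y}^{\sec}_{(\alpha_0,\alpha_1)}]+\sum_{\alpha_1=n_0+2}^{\alpha_0-2}[\wt{Y}^{(1,1)}_{(\alpha_0,\alpha_1)}].
\end{equation*}
Next I would apply the second local statement to each $\wt{Y}^{\sec}_{(\alpha_0,\alpha_1)}$ with $2\le\alpha_1\le n_0$: moving $L_{n_0+1},L_{n_0+2},\ldots$ to $\Lambda'_1$ one at a time peels off $\wt{Y}^{(1,1)-\sec}_{(\alpha_0,\alpha_1,\alpha_2)}$ with multiplicity $1$ at the step $\alpha_2$, while the rank-$2$ residual component becomes $\wt{\pi}$-null once $L_{\alpha_0-2}=\Lambda'_1$; hence modulo $\wt{\pi}$-null cycles
\begin{equation*}
[\wt{Y}^{\sec}_{(\alpha_0,\alpha_1)}]=\sum_{\alpha_2=n_0+1}^{\alpha_0-2}[\wt{Y}^{(1,1)-\sec}_{(\alpha_0,\alpha_1,\alpha_2)}].
\end{equation*}
Substituting into the previous display gives the asserted identity.

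The genuine content of the argument lies entirely in the two local degeneration statements, whose proofs (sketched only in the text) follow the template of Proposition \ref{flat_limit_21_case}: the hard part there is the ``no further components with nonzero push-forward'' assertion, i.e.\ showing that every component of the flat limit other than those listed either is over-constrained on the relevant stratum $\Coll_{\overrightarrow{r}}(V,W)$ or has positive-dimensional $\wt{\pi}$-fibers, together with the multiplicity-one checks obtained from first-order computations on the explicit matrix families. By comparison, the present proposition is purely combinatorial bookkeeping; the only thing to watch is that each degenerate subscheme is actually in range — nonempty, defined, and of the right dimension — at the stage where it is produced, which is precisely what the hypothesis $\alpha_0\in[n_0+3,n]$ (and the resulting inequality $\alpha_0-2\ge n_0+1$) guarantees.
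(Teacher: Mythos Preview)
Your proposal is correct and follows exactly the approach the paper intends: the proposition is stated in the text as an immediate consequence (``We therefore conclude:'') of the preceding local degeneration proposition for $\wt{Y}_{(\alpha_0,\alpha_1-1)}$ and the subsequent degeneration of each $\wt{Y}^{\sec}_{(\alpha_0,\alpha_1)}$, and you have simply written out the iterative bookkeeping that the paper leaves implicit. Your handling of the ranges---in particular the use of $\alpha_0\ge n_0+3$ to ensure the $\wt{Y}^{\sec}$ steps all fall within $\alpha_1\le\alpha_0-2$, and the termination via the $\wt{\pi}$-nullity of $\wt{Y}_{(\alpha_0,\alpha_0-2)}$ and of the rank-$2$ residual once $L_{\alpha_0-2}=\Lambda'_1$---is accurate.
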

  
 \subsection{Pushing forward to $\Gr(3,n)$}\label{P2_degen_integrals}
 
It is left to compute the pushforwards of the four families of components parametrizing totally degenerate collineations found above. In the entirety of this section, the SSYTs we will considered will always be filled with the entries $1,2,3$ (not all necessarily appearing).

\subsubsection{$Y^{(1,1,1)}$} As in the case $n=n_0$, the image $\pi\left(Y^{(1,1,1)}_{(\alpha_0,\alpha_1)}\right)$ is a generically transverse intersection of two Schubert cycles, of classes $\sigma_{n-\alpha_1-2,n_0-\alpha_0-1}$ (corresponding to property (ii) in Definition \ref{Y111_def}) and $\sigma_{\alpha_0-3,\alpha_1-2}$ (corresponding to property (iii)). Thus, we have
\begin{equation*}
\sum_{2\le \alpha_1<\alpha_0\le n_0}[Y^{(1,1,1)}_{(\alpha_0,\alpha_1)}]=\sum_{2\le \alpha_1<\alpha_0\le n_0}\sigma_{n-\alpha_1-2,n_0-\alpha_0-1}\cdot\sigma_{\alpha_0-3,\alpha_1-2}.
\end{equation*}

\begin{prop}\label{LR_strips}
Let $\lambda\subset(n-3)^3$ be a partition with $|\lambda|=n+n_0-8$. Then, the coefficient of $\sigma_\lambda$ in
\begin{equation*}
\sum_{2\le \alpha_1<\alpha_0\le n_0}\sigma_{n-\alpha_1-2,n_0-\alpha_0-1}\cdot\sigma_{\alpha_0-3,\alpha_1-2}.
\end{equation*}
is equal to the number of SSYTs of shape $\lambda$ such that
\begin{itemize}
\item $\lambda$ has no $(1,2)$-strip of length $n_0-3$, and
\item $\lambda$ has no $(2,3)$-strip of (maximal) length $n-3$.
\end{itemize}
\end{prop}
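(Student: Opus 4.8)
The plan is to interpret the sum $\sum_{2\le\alpha_1<\alpha_0\le n_0}\sigma_{n-\alpha_1-2,n_0-\alpha_0-1}\cdot\sigma_{\alpha_0-3,\alpha_1-2}$ via the Littlewood-Richardson rule in $\Gr(3,W)$, where each $\sigma_\mu$ for a two-row partition $\mu=(\mu_0,\mu_1)$ may be thought of as $s_\mu$ restricted to three variables. First I would fix a partition $\lambda\subset(n-3)^3$ with $|\lambda|=n+n_0-8$ and, for each pair $(\alpha_0,\alpha_1)$, compute $c^{\lambda}_{(n-\alpha_1-2,n_0-\alpha_0-1),(\alpha_0-3,\alpha_1-2)}$. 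Since one of the two factors, namely $\mu=(n-\alpha_1-2,n_0-\alpha_0-1)$, has at most two rows, these LR coefficients are $0$ or $1$: by the classical description, $c^{\lambda}_{\nu\mu}$ with $\mu$ a two-row shape counts LR skew tableaux of shape $\lambda/\nu$ and content $\mu$, and for a two-row content this is the indicator that $\lambda/\nu$ is a horizontal-strip-stacked-on-horizontal-strip of the right sizes with the column-strictness/lattice condition. So the sum over $(\alpha_0,\alpha_1)$ of these coefficients equals the number of ways to write $\lambda$ as an LR filling with the second factor playing the role of the ``inner'' partition $\nu=(\alpha_0-3,\alpha_1-2)$.

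The key step is then to set up an explicit bijection between (i) pairs consisting of a valid $\nu=(\alpha_0-3,\alpha_1-2)$ and an LR filling of $\lambda/\nu$ of content $(n-\alpha_1-2,n_0-\alpha_0-1)$, and (ii) SSYTs of shape $\lambda$ with entries in $\{1,2,3\}$ subject to the two strip-avoidance conditions. The natural correspondence: given an SSYT $T$ of shape $\lambda$, let $c_1,c_2,c_3$ be the multiplicities of $1,2,3$; the rows of $T$ determine the boundary of the sub-Young-diagram filled with $1$'s and $2$'s, and conjugately the $2$'s and $3$'s, which recovers $\nu$ (from the positions where the ``$3$-region'' and the ``secant'' conditions meet) and the content vector. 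Concretely, I expect $\alpha_1-2$ and $\alpha_0-3$ to be read off as the lengths of the bottom rows of the regions occupied by $\{2,3\}$ and $\{3\}$ respectively (matching the Schubert indices $\sigma_{\alpha_0-3,\alpha_1-2}$), while $n-\alpha_1-2$ and $n_0-\alpha_0-1$ record the sizes of the two horizontal strips forming $\lambda/\nu$ (matching $\sigma_{n-\alpha_1-2,n_0-\alpha_0-1}$). Under this dictionary the range conditions $2\le\alpha_1<\alpha_0\le n_0$ translate exactly into: the $(1,2)$-strip of $T$ has length $<n_0-3$ (i.e.\ $\alpha_0\le n_0$ and $\alpha_0>\alpha_1$, forcing the longest $(1,2)$-strip below $n_0-3$), and the $(2,3)$-strip has length $<n-3$ (from $\alpha_1\ge 2$, equivalently the $2$-or-$3$ boundary does not reach the full width $n-3$). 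I would verify both directions of the bijection are well-defined — in particular that the LR/lattice-word condition on the skew filling is automatically satisfied, precisely because the content has only two parts and stacks $1$'s above $2$'s within $\lambda/\nu$, which is exactly column-strictness of the associated SSYT.

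The main obstacle I anticipate is bookkeeping the precise translation of the index ranges into strip lengths, and making sure the edge cases (empty strips, $\alpha_0=\alpha_1+1$, $\alpha_1=2$, $\alpha_0=n_0$, partitions $\lambda$ with fewer than three rows) are handled so that no SSYT is double-counted or missed. In particular I would need to check carefully that the condition ``no $(1,2)$-strip of length $n_0-3$'' is equivalent to $\alpha_0\le n_0$ in the bijection rather than $\alpha_0\le n_0-1$, and similarly that ``no $(2,3)$-strip of maximal length $n-3$'' corresponds to the strict inequality forced by $\alpha_1\ge 2$; getting these off by one would break the statement. A clean way to organize this is to first establish, purely combinatorially, that for two-row $\mu$ the coefficient $c^\lambda_{\nu\mu}$ equals $1$ iff $\nu\subseteq\lambda$, $\lambda/\nu$ contains no two boxes in the same column among the $\mu_0$ ``first-row'' boxes, and a short interlacing inequality holds; then package the SSYT $T$ as the nested sequence $\nu\subseteq\nu^{(1)}\subseteq\lambda$ where $\nu^{(1)}$ is the shape of the $\{1,2\}$-region, and read off the four statistics. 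Summing the resulting indicator over all $(\alpha_0,\alpha_1)$ then visibly counts exactly the SSYTs of shape $\lambda$ avoiding a $(1,2)$-strip of length $n_0-3$ and a $(2,3)$-strip of length $n-3$, which is the claim.
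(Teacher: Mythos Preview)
Your plan rests on the assertion that the Littlewood--Richardson coefficients
$c^{\lambda}_{\nu,\mu}$ with $\nu=(\alpha_0-3,\alpha_1-2)$ and
$\mu=(n-\alpha_1-2,n_0-\alpha_0-1)$ are always $0$ or $1$, so that each summand
contributes an indicator and a bijection ``SSYT $\leftrightarrow$ (index
$(\alpha_0,\alpha_1)$, filling)'' makes sense with the index read off the
tableau. This is false. Take $n=n_0=7$ and $\lambda=(3,2,1)$; then the term
$(\alpha_0,\alpha_1)=(5,3)$ has $\nu=\mu=(2,1)$ and
$c^{(3,2,1)}_{(2,1),(2,1)}=2$: the two LR fillings of the skew shape
$\lambda/\nu=\{(1,3),(2,2),(3,1)\}$ with content $(2,1)$ are
$(1,1,2)$ and $(1,2,1)$ in reverse reading order. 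In this example the total
coefficient of $\sigma_{(3,2,1)}$ in the sum is $8$ (all eight SSYTs of that
shape, as the strip conditions are vacuous here), and this $8$ is obtained
from seven nonzero summands, one of which contributes $2$. So no map that
reads a single $(\alpha_0,\alpha_1)$ off an SSYT can realize the count, and
your proposed dictionary (``$\alpha_1-2$ and $\alpha_0-3$ are the lengths of
the bottom rows of the $\{2,3\}$- and $\{3\}$-regions'') cannot be a bijection.

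The paper circumvents this by \emph{not} attacking the product
$\sigma_{\mu}\cdot\sigma_{\nu}$ head-on. It passes to $\Gr(3,W^{+})$ with
$\dim W^{+}=n+1$ and uses Coskun's geometric Littlewood--Richardson rule to
write, for each $(\alpha_0,\alpha_1)$, the class
$\bigl[\pi(Y^{(1,1,1)}_{(\alpha_0,\alpha_1)})\bigr]$ as a difference
$[Z']-[Z'']$. Each of $[Z']$ and $[Z'']$ is then a product involving a
\emph{single-row} Schubert class, so the Pieri rule applies and the
coefficients genuinely are $0$ or $1$; summing over $(\alpha_0,\alpha_1)$,
$[Z']$ contributes the SSYTs with no long $(1,2)$-strip and $[Z'']$ exactly
those with a maximal-length $(2,3)$-strip. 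The subtraction, followed by
dividing out $\sigma_{1^3}$ to descend back to $\Gr(3,W)$, yields the stated
count. If you want a purely combinatorial argument, you would need a
bijection at the level of \emph{LR fillings} (not merely indices), e.g.\ a
two-step horizontal-strip decomposition keeping track of which of the two
$(2,1)$-fillings above corresponds to which SSYT; your outline does not
supply this, and the ``indicator'' heuristic is where it breaks.
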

The first condition is equivalent to the requirement that the total number of 1's and 2's in the first row of $\lambda$ is at most $n_0-3$.

\begin{proof}
This is an application of Coskun's geometric Littlewood-Richardson rule \cite{coskun} similar to the main calculation of \cite{l_torus}. Choose a basis $w_1,\ldots,w_n$ of $W$ dual to the basis of hyperplanes $M_1,\ldots,M_n$. A general point of $\pi\left(Y^{(1,1,1)}_{(\alpha_0,\alpha_1)}\right)$ has a basis $u_0,u_1,u_2$ with:
\begin{itemize}
\item $u_0\in\langle w_1,\ldots,w_{\alpha_1}\rangle$,
\item $u_1\in\langle w_{\alpha_1},\ldots,w_{\alpha_0},w_n,\ldots,w_{n_0+1} \rangle$,
\item $u_2\in \langle w_{\alpha_0},\ldots,w_n\rangle$.
\end{itemize}
The corresponding subscheme of $\Gr(3,W)$ is associated to the \emph{Mondrian tableau} $\cT$ depicted in Figure \ref{mondrian1}. The tableau $\cT$ consists of the data of three squares $S_0,S_1,S_2$ corresponding to the conditions on the vectors $u_0,u_1,u_2$, respectively, and the integers appearing in each square correspond to the basis vectors spanning the subspaces of $W$ prescribed to contain the $u_j$. Note that we have reversed the order of $\alpha_0+1,\ldots,n$ to make the squares contiguous.
\begin{figure}
\caption{The \emph{Mondrian tableau} $\mathcal{T}$ associated to the image of $Y^{(1,1,1)}_{(\alpha_0,\alpha_1)}$ in $\Gr(3,W)$.}\label{mondrian1}
\begin{tikzpicture} [xscale=0.35,yscale=0.35]
               
                \node at (2.5,2.5) {\tiny $1$};     
		 \node at (4,4) {\tiny $\iddots$};
                \node at (5.5,5.5) {\tiny $\alpha_1$};
                \node at (7,7) {\tiny $\iddots$};
                   \node at (8.5,8.5) {\tiny $\alpha_0$};
                 \node at (9.5,9.5) {\tiny $n$};
                 \node at (11,11) {\tiny $\iddots$};
		   \node at (12,12.5) {\tiny $n_0{+}1$};
                  \node at (13.5,13.5) {\tiny $n_0$};
                   \node at (15,15) {\tiny $\iddots$};
                    \node at (16,16.5) {\tiny $\alpha_0{+}1$};               

\draw (2,2) -- (6,2) -- (6,6) -- (2,6) -- (2,2);
\draw (5,5) -- (13,5) -- (13,13) -- (5,13) -- (5,5);
\draw (8,8) -- (17,8) -- (17,17) -- (8,17) -- (8,8);
\end{tikzpicture}
\end{figure}

Consider now the Mondrian tableaux $\mathcal{T}',\mathcal{T}''$ depicted in Figure \ref{mondrian2}. These define subschemes $Z',Z''$, respectively, of $\Gr(3,W^+)$, where $W^+$ is obtained from $W$ by adding an additional basis vector $u_0$.

\begin{figure}
\caption{Mondrian tableaux $\mathcal{T}',\mathcal{T}''$ defining subschemes $Z',Z''$ of $\Gr(3,W^+)$.}\label{mondrian2}
\begin{tikzpicture} [xscale=0.35,yscale=0.35]
               
                \node at (1.5,1.5) {\tiny $0$};     
		 \node at (3,3) {\tiny $\iddots$};
		  \node at (4,4.5) {\tiny $\alpha_1{-}1$};     
                \node at (5.5,5.5) {\tiny $\alpha_1$};
                \node at (7,7) {\tiny $\iddots$};
                   \node at (8.5,8.5) {\tiny $\alpha_0$};
                 \node at (9.5,9.5) {\tiny $n$};
                 \node at (11,11) {\tiny $\iddots$};
		   \node at (12,12.5) {\tiny $n_0{+}1$};
                  \node at (13.5,13.5) {\tiny $n_0$};
                   \node at (15,15) {\tiny $\iddots$};
                    \node at (16,16.5) {\tiny $\alpha_0{+}1$};               

\draw (1,1) -- (1,5) -- (5,5) -- (5,1) -- (1,1);
\draw (5,5) -- (13,5) -- (13,13) -- (5,13) -- (5,5);
\draw (8,8) -- (17,8) -- (17,17) -- (8,17) -- (8,8);
\end{tikzpicture}
\begin{tikzpicture} [xscale=0.35,yscale=0.35]
               
                \node at (1.5,1.5) {\tiny $0$};     
		 \node at (3,3) {\tiny $\iddots$};
		  \node at (4,4.5) {\tiny $\alpha_1{-}1$};     
                \node at (5.5,5.5) {\tiny $\alpha_1$};
                \node at (7,7) {\tiny $\iddots$};
                   \node at (8.5,8.5) {\tiny $\alpha_0$};
                 \node at (9.5,9.5) {\tiny $n$};
                 \node at (11,11) {\tiny $\iddots$};
		   \node at (12,12.5) {\tiny $n_0{+}1$};
                  \node at (13.5,13.5) {\tiny $n_0$};
                   \node at (15,15) {\tiny $\iddots$};
                    \node at (16,16.5) {\tiny $\alpha_0{+}1$};               

\draw (5,5) -- (6,5) -- (6,6) -- (5,6) -- (5,5);
\draw (1,1) -- (13,1) -- (13,13) -- (1,13) -- (1,1);
\draw (8,8) -- (17,8) -- (17,17) -- (8,17) -- (8,8);
\end{tikzpicture}
\end{figure}

Note that $\cT$ is obtained from $\cT'$ from by shifting $S_0$ to the northeast by one unit, and $\cT''$ is obtained from $\cT'$ by replacing $S_0$ and $S_1$ with the union (or span) $S_0\cup S_1$ in $\cT'$ and the intersection $S_0\cap S_1$ in $\cT$. The geometric Littlewood-Richardson rule \cite[Theorem 3.32]{coskun} implies that
\begin{equation*}
[Z']=[Z'']+\left[\pi\left(Y^{(1,1,1)}_{(\alpha_0,\alpha_1)}\right)\right],
\end{equation*}
on $\Gr(3,W^+)$, where the last term is pushed forward from $\Gr(3,W)$ to $\Gr(3,W^+)$. via the inclusion $W\subset W^+$. The geometric content of this relation is that, in the limit of the degeneration of the basis of $W^+$ sending $w_0\to tw_0+(1-t)w_{\alpha_1}$ over $t\in\bD$, the subvariety $Z'$ breaks into a union of $Z''$ and $\pi(Y^{(1,1,1)}_{(\alpha_0,\alpha_1)})$.

We now compute $[Z']$ and $[Z'']$. First, $Z'$ is the generically transverse intersection of a Schubert variety of class $\sigma_{n-\alpha_1-1}$ (corresponding to $S_0$) and a subscheme of class $(\sigma_{\alpha_0-2}\cdot\sigma_{\alpha_1+n_0-\alpha_0-2})_{\lambda_0\le n_0-3}$ (corresponding to $S_0$ and $S_1$). The computation of the latter class itself follows from the geometric Pieri rule, as the subscheme in question is defined by Schubert cycles of classes $\sigma_{\alpha_0-2},\sigma_{\alpha_1+n_0-\alpha_0-2}$, and applying \cite[Algorithm 3.12]{coskun} will result in an inner square of size at least $n-n_0+2$. Alternatively, one can view this subscheme as pushed forward from $\Gr(3,\langle w_{\alpha_1},\ldots,w_{n}\rangle)$, on which it is given by a \emph{transverse} intersection of Schubert cycles. 

Now, by the Pieri rule, the coefficient of $\sigma_\lambda$ in $(\sigma_{\alpha_0-2}\cdot\sigma_{\alpha_1+n_0-\alpha_0-2})_{\lambda_0\le n_0-3}\cdot\sigma_{n-\alpha_1-1}$ is given by the number of SSYTs of shape $\lambda$ with $(c_1,c_2,c_3)=(\alpha_0-2,\alpha_1+n_0-\alpha_0-2,n-\alpha_1-1)$, with the property that at most $n_0-3$ of the boxes in the first row are filled with 1's and 2's (equivalently, there is no $(1,2)$-strip of length $n_0-2$). Summing over all $\alpha_0,\alpha_1$ with $2\le \alpha_1<\alpha_0\le n_0$, we get exactly the SSYTs with $1,2,3$ each appearing any number of times, with no such $(1,2)$-strips.

On the other hand, $Z''$ is the generically transverse intersection of Schubert varieties of classes $\sigma_{(n-2,n_0-\alpha_0-1)}$ (corresponding to the two new squares replacing $S_0,S_1$) and $\sigma_{\alpha_0-2}$. By the same argument as in \cite[Proposition 11]{l_torus}, the coefficient of $\sigma_{\lambda}$ in the product of these two classes is equal to the number of SSYTs of shape $\lambda$ with $(c_1,c_2,c_3)=(\alpha_0-2,\alpha_1+n_0-\alpha_0-2,n-\alpha_1-1)$ and with a $(2,3)$-strip of maximal length $n-2$. Summing over all $\alpha_0,\alpha_1$ with $2\le \alpha_1<\alpha_0\le n_0$, we get exactly the SSYTs with a $(2,3)$-strip of maximal length.

Finally, to obtain $\left[\pi\left(Y^{(1,1,1)}_{(\alpha_0,\alpha_1)}\right)\right]$, we subtract $[Z'']$ from those of $[Z']$ and divide by $\sigma_{1^3}$. (One can see directly that the cofficient of $\sigma_\lambda$ in the difference is zero when $\lambda_2=0$, but this was known a priori.) The conclusion follows.
\end{proof}

\subsubsection{$Y^{(1,1,1)-\sec}$}

\begin{prop}
Let $\lambda\subset(n-3)^3$ be a partition with $|\lambda|=n+n_0-8$. If $\lambda_0\le n-4$, then, the coefficient of $\sigma_\lambda$ in
\begin{equation*}
\sum_{\alpha_0=3}^{n_0}\sum_{\alpha_1=2}^{\alpha_0-1}\sum_{\alpha_2=n_0+1}^{n}\pi_{*}\left([Y^{(1,1,1)-\sec}_{(\alpha_0,\alpha_1,\alpha_2)}]\right)
\end{equation*}
is equal to the number of SSYTs of shape $\lambda$ such that
\begin{itemize}
\item $c_2\le n_0-3$, and 
\item $\lambda$ has a $(1,2)$-strip of length (at least) $n_0-3$.
\end{itemize}

If instead $\lambda_0=n-3$, then the coefficient of $\sigma_\lambda$ is zero.
\end{prop}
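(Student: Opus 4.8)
The plan is to compute $\pi_*[Y^{(1,1,1)-\sec}_{(\alpha_0,\alpha_1,\alpha_2)}]$ exactly as in the previous proposition, by identifying $\pi(Y^{(1,1,1)-\sec}_{(\alpha_0,\alpha_1,\alpha_2)})$ with a subscheme of $\Gr(3,W)$ given by a Mondrian tableau and applying Coskun's geometric Littlewood-Richardson rule \cite{coskun}. First I would read off from the defining properties (i)--(iii) the three subspaces of $W$ forcing membership of the spanning vectors $u_0,u_1,u_2$: property (i) and (iii) put $u_0$ in $M_{[1,\alpha_2-1]}\cap M_{[\alpha_0+1,n]}$; property (ii)--(iii), after dividing out by $w_0=\im((\wt{\phi_1})_0)$, puts one further vector in $M_{[\alpha_1+1,n_0]}\cap M_{[\alpha_2+1,\alpha_0-1]}$; and the secancy condition $\dim(U\cap M_{[1,\alpha_1-1]})=2$ furnishes the third. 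As before one checks by the usual incidence-correspondence argument that each of these imposes independent, generically transverse conditions, so that $[\pi(Y^{(1,1,1)-\sec}_{(\alpha_0,\alpha_1,\alpha_2)})]$ is a transverse product of (at most) two Schubert classes, say $\sigma_{\mu}\cdot\sigma_{\nu}$ with $\mu,\nu$ read off explicitly from the nested intervals above, together with the extra ``jump'' encoded in the secancy part requiring a degeneration step à la Figure \ref{mondrian2}.

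The key technical point is to track the content $(c_1,c_2,c_3)$ of the resulting SSYTs as $(\alpha_0,\alpha_1,\alpha_2)$ vary. I would arrange the bookkeeping so that $c_2$ is governed by the interval $[\alpha_1+1,n_0]$ (hence $c_2\le n_0-\alpha_1\le n_0-3$, giving the first bullet), while the condition forcing $u_0$ to lie inside $M_{[1,\alpha_2-1]}$ with $\alpha_2\ge n_0+1$ is exactly what produces a $(1,2)$-strip of length at least $n_0-3$ (the second bullet) — this is the ``secant'' feature, the complementary phenomenon to the $(1,2)$-strip \emph{absence} in Proposition \ref{LR_strips}, and it should drop out of the same Littlewood-Richardson count after summing over $\alpha_2$. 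The Pieri/geometric-Pieri rule \cite[Algorithm 3.12]{coskun} then converts the product into the stated tableau count, and summing over all admissible $(\alpha_0,\alpha_1,\alpha_2)$ with $3\le\alpha_0\le n_0$, $2\le\alpha_1\le\alpha_0-1$, $n_0+1\le\alpha_2\le n$ ranges exactly over SSYTs with $c_2\le n_0-3$ and a $(1,2)$-strip of length $\ge n_0-3$.

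For the vanishing when $\lambda_0=n-3$: here one must show the coefficient of $\sigma_{(n-3,\lambda_1,\lambda_2)}$ in each $\pi_*[Y^{(1,1,1)-\sec}_{(\alpha_0,\alpha_1,\alpha_2)}]$ is zero. I would argue geometrically, as in the proof of Proposition \ref{vanishing_coeff} and the analogous vanishing statements in \S\ref{reduction_sec}: a partition with $\lambda_0=n-3$ corresponds, via the flag $F_M$, to requiring $\im(\phi)\subset M_i$ for some $i$ (equivalently, the full orbit degenerates into an ambient hyperplane), and then Lemma \ref{stab_exp_dim} produces a positive-dimensional family of automorphisms stabilizing the relevant $M_i$ and fixing a distinguished subspace but moving $\im(\phi)$, forcing the generic fiber count to vanish. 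Concretely: intersecting $\pi(Y^{(1,1,1)-\sec}_{(\alpha_0,\alpha_1,\alpha_2)})$ with a general Schubert cycle $\Sigma^F_{\overline\lambda}$ where $\overline\lambda$ is complementary to $\lambda$ in $(n-3)^3$ and $\overline\lambda_2\le 0$ (which is what $\lambda_0=n-3$ gives), one lands in $\Coll(V,F_{n-\overline\lambda_2})$ with $\dim F_{n-\overline\lambda_2}<n$, and a count of the relevant hyperplane constraints $M_i\cap F_{n-\overline\lambda_2}$ shows they are in too special position — there is a torus in $P_n(U')\setminus P_n(U)$ acting non-trivially — so the intersection is positive-dimensional and the contribution is $0$.

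The main obstacle I anticipate is the first paragraph: correctly matching the three subspace conditions of $Y^{(1,1,1)-\sec}$ to a valid Mondrian tableau and verifying that the secancy condition $\dim(U\cap M_{[1,\alpha_1-1]})=2$ interacts with Coskun's rule so as to produce a \emph{transverse} intersection (no hidden excess components) — this is precisely the kind of ``by hand'' check warned about in \S\ref{sec:P2outline}, analogous to the omitted details in Proposition \ref{flat_limit_21_case}. Once the tableau is pinned down correctly, the Littlewood-Richardson and Pieri manipulations are routine, and the $\lambda_0=n-3$ vanishing is a direct transcription of the Lemma \ref{stab_exp_dim} argument.
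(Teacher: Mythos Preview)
Your overall strategy---extract three subspace constraints on a basis $u_0,u_1,u_2$ of $\im(\phi)$, write the image as a product of Schubert cycles, and expand via Pieri---is the paper's strategy. But two concrete things are off.

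First, the three constraints you wrote down are not the ones governing $Y^{(1,1,1)-\sec}$; you have essentially copied the $\wt{Y}^{(1,1)-\sec}$ constraints. From the defining properties (ii)--(iii) of $Y^{(1,1,1)-\sec}_{(\alpha_0,\alpha_1,\alpha_2)}$ one reads off
\[
u_0\in M_{[\alpha_1+1,n_0]}\cap M_{[\alpha_2+1,n]},\quad
u_1\in M_{[1,\alpha_0-1]}\cap M_{[\alpha_0+1,\alpha_2-1]},\quad
u_2\in M_{[1,\alpha_1-1]},
\]
giving Schubert cycles of classes $\sigma_{(n-\alpha_2)+(n_0-\alpha_1)-2}$ and $\sigma_{\alpha_2-4,\alpha_1-2}$. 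In particular $c_2$ is governed by $\alpha_0$, not by the interval $[\alpha_1+1,n_0]$: the paper's bijection sets $(c_1,c_2)=(\alpha_2-\alpha_0+\alpha_1-3,\alpha_0-3)$, so $c_2\le n_0-3$ because $\alpha_0\le n_0$. Your bound $c_2\le n_0-\alpha_1$ would only give $c_2\le n_0-2$.

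Second, and more important, your transversality assumption is false, and this is exactly what drives the $\lambda_0=n-3$ vanishing. The hyperplane $M_{\alpha_1}$ does not appear in any of the three constraints above; dually, the vector $w_{\alpha_1}$ lies in every constraining subspace. The two Schubert cycles are therefore \emph{not} defined with respect to transverse flags. The paper's fix is to pass to the quotient $W/w_{\alpha_1}$: there the intersection becomes genuinely transverse, one computes $\sigma_{\alpha_2-4,\alpha_1-2}\cdot\sigma_{(n-\alpha_2)+(n_0-\alpha_1)-2}$ on $\Gr(3,W/w_{\alpha_1})$, and pushing forward along the inclusion automatically kills every $\sigma_\lambda$ with $\lambda_0>n-4$. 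So the $\lambda_0=n-3$ vanishing is a one-line consequence of the non-transversality, not a separate phenomenon. Your proposed route via Lemma \ref{stab_exp_dim} and a $P_n(U')\setminus P_n(U)$ argument is not only much heavier but also would require the uniform-matroid hypothesis on $U$, which there is no reason to expect for points of $\pi(Y^{(1,1,1)-\sec}_{(\alpha_0,\alpha_1,\alpha_2)})$.
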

Note that if $\lambda$ has a $(1,2)$-strip of length at least $n_0-3$, then such a strip already exists in the first row of $\lambda$. Thus, this condition is equivalent to the stipulation that the total number of 1's and 2's in the first row of $\lambda$ is at least $n_0-3$.

\begin{proof}
A general point of $\pi\left(Y^{(1,1,1)-\sec}_{(\alpha_0,\alpha_1,\alpha_2)}\right)$ has a basis $u_0,u_1,u_2$ with the property that:
\begin{itemize}
\item $u_0\in M_{[\alpha_1+1,n_0]}\cap M_{[\alpha_2+1,n]}$,
\item $u_1\in M_{[1,\alpha_0-1]} \cap M_{[\alpha_0+1,\alpha_2-1]}$,
\item $u_2\in M_{[1,\alpha_1-1]}$.
\end{itemize}
Furthermore, $\pi$ is generically injective on $Y^{(1,1,1)-\sec}_{(\alpha_0,\alpha_1,\alpha_2)}$.

The condition on $u_0$ corresponds to a Schubert subvariety of $\Gr(3,W)$ of class $\sigma_{(n-\alpha_2)+(n_0-\alpha_1)-2}$, and the conditions on $u_1,u_2$ correspond to one of class $\sigma_{\alpha_2-4,\alpha_1-2}$. These two subschemes are not in general position with respect to one another, due to the fact that $M_{\alpha_1}$ does not appear in any of the intersections constraining the $u_j$. Dually, if we choose a dual basis $w_1,\ldots,w_n$ to the $M_i$, then $w_{\alpha_1}$ appears in all of the subspaces constraining the $u_j$.

However, passing from $W$ to $W/w_j$ and taking the same conditions on $\Gr(3,W/W_j)$, one does obtain a generically transverse intersection of Schubert subvarieties of class $\sigma_{\alpha_2-4,\alpha_1-2}\cdot \sigma_{(n-\alpha_2)+(n_0-\alpha_1)-2}$. One can in particular apply the geometric Littlewood-Richardson algorithm to deform this intersection into a union of Schubert varieties $\Sigma_\lambda$ in $\Gr(3,W/w_{\alpha_1})$ defined with respect to the basis $w_1,\ldots,w_{\alpha_1-1},w_{\alpha_1+1},\ldots,w_n$. More precisely, the $\Sigma_\lambda$ are defined by degeneracy conditions $\dim(U\cap W'_j)\ge j$ for $j=1,2,3$, where the subspaces $W'_3\subset W'_2\subset W'_1\subset W/w_{\alpha_1}$ are each spanned by some subset of the $w_1,\ldots,w_{\alpha_1-1},w_{\alpha_1+1},\ldots,w_n$.

One can now run exactly the same algorithm in $\Gr(3,W)$ with the conditions on $u_0,u_1,u_2$ above, keeping track of the fact that the constraining subspaces now all contain the additional basis vector $w_{\alpha_1}$. In particular, one obtains in the end precisely the same union of Schubert varieties $\Sigma_\lambda$, now defined on $\Gr(3,W)$ by the conditions $\dim(U\cap W_j)\ge j$ for $j=1,2,3$, where $W_j$ is the pullback of $W'_j$ under the quotient by $w_{\alpha_1}$. The class in question is therefore given by computing the product $\sigma_{\alpha_2-4,\alpha_1-2}\cdot \sigma_{(n-\alpha_2)+(n_0-\alpha_1)-2}$ in $\Gr(3,W/W_j)$, and then applying the inclusion $$H^{2(n+n_0-8)}(\Gr(3,W/W_j))\to H^{2(n+n_0-8)}(\Gr(3,W))$$ defined by $\sigma_\lambda\mapsto\sigma_\lambda$. Equivalently, one computes
\begin{equation*}
(\sigma_{\alpha_2-4,\alpha_1-2}\cdot \sigma_{(n-\alpha_2)+(n_0-\alpha_1)-2})_{\lambda_0\le n-4},
\end{equation*}
by which we mean that the product is computed on $\Gr(3,W)$ and only the terms with $\sigma_\lambda$ with $\lambda_0\le n-4$ are kept.

It remains to match the terms in this product, as determined by the Pieri rule, with the described SSYTs after summing over all $(\alpha_0,\alpha_1,\alpha_2)$ with $2\le\alpha_1<\alpha_0\le n_0<\alpha_2\le n$. This is done as follows: given  $(\alpha_0,\alpha_1,\alpha_2)$ fixed, a term $\sigma_\lambda$ in the product $\sigma_{\alpha_2-4,\alpha_1-2}\cdot \sigma_{(n-\alpha_2)+(n_0-\alpha_1)-2}$ is given by an inclusion of Young tableaux $(\alpha_2-4,\alpha_1-2)\subset\lambda$, where the boxes of $\lambda-(\alpha_2-4,\alpha_1-2)$ are filled with 3's, no two in the same column. Then, the value of $\alpha_0$ further determines the filling of the remaining boxes corresponding to the shape $(\alpha_2-4,\alpha_1-2)$ with 1's and 2's: one takes the unique such filling with $(c_1,c_2)=(\alpha_2-\alpha_0+\alpha_1-3,\alpha_0-3)$.

Because $\alpha_2-4\ge n_0-3$, the condition on $(1,2)$ is automatically satisfied, and we also have $c_2\le \alpha_0-3$. Conversely, given a SSYT as in the statement of the Proposition, the values of $\alpha_0,\alpha_1,\alpha_2$ are uniquely determined and satisfy the needed chain of inequalities. (In particular, the assumption that $\lambda_0\le n-4$ gives that $\alpha_2\le n$.) This completes the proof.
\end{proof}

 \subsubsection{$\wt{Y}^{(1,1)}$}

\begin{prop}
Let $\lambda\subset(n-3)^3$ be a partition with $|\lambda|=n+n_0-8$. Then, the coefficient of $\sigma_\lambda$ in
\begin{equation*}
\sum_{\alpha_0=n_0+3}^{n}\sum_{\alpha_1=n_0+2}^{\alpha_0-2}\pi_{*}\left([\wt{Y}^{(1,1)}_{(\alpha_0,\alpha_1)}]\right)
\end{equation*}
is equal to the number of SSYTs of shape $\lambda$ with at least $n_0-1$ 2's in the second row.
\end{prop}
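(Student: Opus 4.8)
The plan is to compute, for each fixed pair $(\alpha_0,\alpha_1)$ in the stated range (which is empty unless $\alpha_0\ge n_0+4$), the pushforward $\wt{\pi}_{*}[\wt{Y}^{(1,1)}_{(\alpha_0,\alpha_1)}]$ as an effective combination of Schubert classes on $\Gr(3,W)$, and then to sum over $(\alpha_0,\alpha_1)$ and identify the total with the described family of tableaux.

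First I would make the image $\wt{\pi}(\wt{Y}^{(1,1)}_{(\alpha_0,\alpha_1)})$ explicit. Unwinding the definition, a general point of $\wt{Y}^{(1,1)}_{(\alpha_0,\alpha_1)}$ maps to $U=\langle u_0,u_1,u_2\rangle$ with $u_0=w_0\in M_{[1,n_0]}\cap M_{[\alpha_0+1,n]}$, $u_1=\im((\wt{\phi_1})_0)\in M_{[\alpha_1+1,\alpha_0-1]}$, and $u_2$ spanning $\im(\wt{\phi_1})\cap M_{[1,\alpha_1-1]}$. In a basis $w_1,\dots,w_n$ of $W$ dual to $M_1,\dots,M_n$, these conditions read $u_0\in\langle w_{n_0+1},\dots,w_{\alpha_0}\rangle$, $u_1\in\langle w_i:i\le\alpha_1\text{ or }i\ge\alpha_0\rangle$, $u_2\in\langle w_{\alpha_1},\dots,w_n\rangle$; since $\alpha_1-1>n_0$, the spaces carrying $u_0$ and $u_2$ both lie in $M_{[1,n_0]}$ and span it, so a general $U$ in the image has $\dim(U\cap M_{[1,n_0]})=2$. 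Next I would verify that $\wt{\pi}$ is generically injective on $\wt{Y}^{(1,1)}_{(\alpha_0,\alpha_1)}$: the lines $\langle u_0\rangle=U\cap M_{[\alpha_0+1,n]}$, $\langle u_1\rangle=U\cap M_{[\alpha_1+1,\alpha_0-1]}$ and $\langle u_2\rangle=U\cap M_{[1,\alpha_1-1]}$ can be recovered from $U$, and from them one reconstructs $w_0$ and the type-$(1,1)$ collineation $\wt{\phi_1}$, hence the full triple $(w_0,\wt{\phi_1},U)$. Consequently $\wt{\pi}_{*}[\wt{Y}^{(1,1)}_{(\alpha_0,\alpha_1)}]=[\wt{\pi}(\wt{Y}^{(1,1)}_{(\alpha_0,\alpha_1)})]$, unlike the components handled in Lemma~\ref{tilde-zero-pushfoward}.

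The class of the image then has to be computed. The hyperplanes defining $M_{[\alpha_1+1,\alpha_0-1]}$ are disjoint (as an index set) from those defining the $u_0$- and $u_2$-conditions, so the $u_1$-condition cuts transversally and contributes a factor $\sigma_{\alpha_0-\alpha_1-3}$ (and nothing, being vacuous, when $\alpha_1=\alpha_0-2$). The remaining locus --- $U$ containing independent lines in $A_0:=M_{[1,n_0]}\cap M_{[\alpha_0+1,n]}$ and $A_2:=M_{[1,\alpha_1-1]}$ --- is \emph{not} a transverse intersection of Schubert varieties: both $A_0$ and $A_2$ lie in $M_{[1,n_0]}$ (dually, their spanning sets of $w_i$'s both contain $w_{\alpha_1}$ and $w_{\alpha_0}$), and the scheme-theoretic intersection acquires an excess component along $\{\dim(U\cap(A_0\cap A_2))\ge1\}$. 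I would compute the class of the wanted ``span'' component following the pattern of the proof of Proposition~\ref{LR_strips} and of the $Y^{(1,1,1)-\sec}$ case: present the image by a Mondrian tableau with overlapping squares $S_0,S_2$, apply Coskun's geometric Littlewood--Richardson rule \cite{coskun} via the ``span and intersect'' degeneration on those squares, and keep track of the common basis vectors $w_{\alpha_1},w_{\alpha_0}$, which introduces a truncation on the Schubert expansion; alternatively one may present the span component as the image of a $\bP$-bundle over the transverse intersection $\{U'\in\Gr(2,M_{[1,n_0]}):\dim(U'\cap A_0)\ge1,\ \dim(U'\cap A_2)\ge1\}$ and push forward. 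Either way this expresses $\wt{\pi}_{*}[\wt{Y}^{(1,1)}_{(\alpha_0,\alpha_1)}]=\sum_{\lambda}\nu^{\lambda}_{(\alpha_0,\alpha_1)}\,\sigma_{\lambda}$, where, by the Pieri rule, $\nu^{\lambda}_{(\alpha_0,\alpha_1)}$ counts SSYTs of shape $\lambda$ with a content determined by $(\alpha_0,\alpha_1)$, with all $1$'s confined to the first row and the number of $2$'s in the second row governed by $\alpha_1$.

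Finally I would sum over the admissible $(\alpha_0,\alpha_1)$. The expectation is that each SSYT of shape $\lambda$ with at least $n_0-1$ twos in its second row is produced exactly once, the two parameters being read off from the content of the tableau and the break between its first and second rows, and that the inequalities $n_0+3\le\alpha_0\le n$, $n_0+2\le\alpha_1\le\alpha_0-2$ become precisely the single condition ``at least $n_0-1$ twos in the second row'' (the bound $\alpha_1\ge n_0+2$ being exactly what forces $\ge n_0-1$ ones into the first row, hence $\ge n_0-1$ columns available for twos in the second row) plus constraints automatic for a tableau of the given shape. I expect the hard part to be this combinatorial bookkeeping together with running geometric Littlewood--Richardson through the doubly-overlapping configuration: one must check that telescoping over $\alpha_1$ and $\alpha_0$ leaves exactly the bound on the number of $2$'s in the second row --- with none of the extraneous strip or $\lambda_0$ conditions appearing in the companion propositions --- and that the truncation forced by $w_{\alpha_1}$ and $w_{\alpha_0}$ is the one claimed.
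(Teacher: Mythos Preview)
Your outline follows the paper's proof closely: identify the conditions on $u_0,u_1,u_2$, check generic injectivity, peel off the $u_1$-factor $\sigma_{\alpha_0-\alpha_1-3}$, compute the $(u_0,u_2)$-locus, and then match the Pieri expansion with tableaux. Your $\bP$-bundle alternative over $\Gr(2,M_{[1,n_0]})$ is precisely what the paper does (tersely): one computes the class of the span component there and pushes forward, and the resulting truncation is $\lambda_1\ge n_0-1$, i.e.\ the paper writes the $(u_0,u_2)$-class as $\bigl(\sigma_{n-\alpha_0+n_0-2}\cdot\sigma_{\alpha_1-3}\bigr)_{\lambda_1\ge n_0-1}$.

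One point to correct: your description of the non-transversality as coming from just the two vectors $w_{\alpha_1},w_{\alpha_0}$ is misleading. In fact $A_0\cap A_2=\langle w_{\alpha_1},\ldots,w_{\alpha_0}\rangle$ is $(\alpha_0-\alpha_1+1)$-dimensional, and the decisive structural fact is that $A_0+A_2=M_{[1,n_0]}$, forcing $\langle u_0,u_2\rangle\subset M_{[1,n_0]}$. If you literally imitated the $Y^{(1,1,1)-\sec}$ argument by quotienting out two basis vectors you would obtain a $\lambda_0$-type truncation rather than the needed $\lambda_1\ge n_0-1$. Also, the paper notes separately that $\wt{\pi}$ fails to be generically injective when $\alpha_1=\alpha_0-2$ or $\alpha_0=n$ (so those summands vanish); in your formula this is absorbed by $c_3=\alpha_0-\alpha_1-3<0$ and $c_1=n_0-2$ failing the tableau constraints, but you should make that explicit rather than claim injectivity uniformly.
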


In particular, we need $\lambda_1\ge n_0-1$ in order for this number to be non-zero, or equivalently, $\lambda_0+\lambda_2\le n-7$.

\begin{proof}
A general point of $\wt{\pi}\left(Y^{(1,1)}_{(\alpha_0,\alpha_1)}\right)$ has a basis $u_0,u_1,u_2$ with the property that:
\begin{itemize}
\item $u_0\in M_{[1,n_0]}\cap M_{[\alpha_0+1,n]}$,
\item $u_1\in  M_{[\alpha_1+1,\alpha_0-1]}$,
\item $u_2\in M_{[1,\alpha_1-1]}$.
\end{itemize}
We see that if $\alpha_1\ge n_0+2$, $\alpha_0\ge \alpha_1+3$, and $\alpha_0<n$, then $\wt{\pi}$ is generically injective on $Y^{(1,1,1)-\sec}_{(\alpha_0,\alpha_1,\alpha_2)}$; otherwise, $\wt{\pi}$ has positive-dimensional fibers and the corresponding summand above is zero. In the former case, we again apply the geometric Littlewood-Richardson rule implicitly.

The subvariety of $\Gr(3,n)$ of $U\subset W$ having a 2-dimensional subspace of the form $\langle u_0,u_2\rangle$ as above has class $\left( \sigma_{n-\alpha_0+n_0-2}\cdot\sigma_{\alpha_1-3}\right)_{\lambda_1\ge n_0-1}$. Then, the locus of $U\subset W$ containing a vector $u_1$ as above is a Schubert variety of class $\sigma_{\alpha_0-\alpha_1-3}$. These two subvarieties of $\Gr(3,n)$ are defined with respect to disjoint subsets of the hyperplanes $M_1,\ldots,M_n$, so their intersection has class
\begin{equation*}
\left( \sigma_{n-\alpha_0+n_0-2}\cdot\sigma_{\alpha_1-3}\right)_{\lambda_1\ge n_0-1}\cdot \sigma_{\alpha_0-\alpha_1-3}.
\end{equation*}

For fixed $\alpha_0,\alpha_1$, the coefficient of $\sigma_\lambda$ is equal to the number of SSYT of shape $\lambda$ with $(c_1,c_2,c_3)=(n-\alpha_0+n_0-2,\alpha_1-3,\alpha_0-\alpha_1-3)$ with at least $n_0-1$ 2's in the second row. Note that $\alpha-3\ge n_0-1$. Conversely, given such a SSYT with $(c_1,c_2,c_3)$ arbitrary, we may uniquely recover $\alpha_1=c_2+3\ge n_0+2$ and $\alpha_0=c_3+\alpha_1+3=c_2+c_3+6\le n-1$. Thus, summing over all $\alpha_0,\alpha_1$ gives the claimed SSYTs.
\end{proof}

\subsubsection{$\wt{Y}^{(1,1)-\sec}$}

\begin{prop}
Let $\lambda\subset(n-3)^3$ be a partition with $|\lambda|=n+n_0-8$. If $\lambda_0\le n-5$, then, the coefficient of $\sigma_\lambda$ in
\begin{equation*}
\sum_{\alpha_0=n_0+3}^{n}\sum_{\alpha_1=2}^{n_0}\sum_{\alpha_2=n_0+1}^{\alpha_0-2}[\wt{Y}^{(1,1)-\sec}_{(\alpha_0,\alpha_1,\alpha_2)}].
\end{equation*}
is equal to the number of SSYTs of shape $\lambda$ with $c_2\ge n_0-2$, but with at most $n_0-2$ 2's in the second row.

If instead $\lambda_0\ge n-4$, then this coefficient is zero.
\end{prop}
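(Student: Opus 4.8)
The plan is to mirror the proofs of the three preceding propositions, computing the pushforward $\wt{\pi}_{*}[\wt{Y}^{(1,1)-\sec}_{(\alpha_0,\alpha_1,\alpha_2)}]$ to $\Gr(3,W)$ via Coskun's geometric Littlewood--Richardson rule \cite{coskun}, and then summing over the admissible triples and matching with the stated SSYT count. First I would verify, by the standard incidence-correspondence argument, that $\wt{\pi}$ is generically injective on $\wt{Y}^{(1,1)-\sec}_{(\alpha_0,\alpha_1,\alpha_2)}$ exactly for the triples in the stated range, and that otherwise $\wt{\pi}$ acquires positive-dimensional fibers so the corresponding summand vanishes, as in Lemma \ref{tilde-zero-pushfoward} and Proposition \ref{21sec-0integral}. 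For an admissible triple, a general point $U$ of the image $\wt{\pi}(\wt{Y}^{(1,1)-\sec}_{(\alpha_0,\alpha_1,\alpha_2)})$ then carries a basis $u_0,u_1,u_2$ with
\begin{align*}
u_0 &\in M_{[1,\alpha_2-1]}\cap M_{[\alpha_0+1,n]},\\
u_1 &\in M_{[\alpha_1+1,n_0]}\cap M_{[\alpha_2+1,\alpha_0-1]},\\
u_2 &\in M_{[1,\alpha_1-1]},
\end{align*}
where $\langle u_0,u_1\rangle=\im(\wt{\phi_1})$ and $\langle u_0,u_2\rangle=U\cap M_{[1,\alpha_1-1]}$; this image is irreducible, generically reduced, of codimension $n+n_0-8$.

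The key step is to identify $[\wt{\pi}(\wt{Y}^{(1,1)-\sec}_{(\alpha_0,\alpha_1,\alpha_2)})]$ with an appropriately truncated product of Schubert classes. Two coincidences among the defining subspaces must be handled: the $u_0$-subspace $M_{[1,\alpha_2-1]}\cap M_{[\alpha_0+1,n]}$ is contained in the $u_2$-subspace $M_{[1,\alpha_1-1]}$ (which packages the two incidences on $u_0,u_2$ as a single Schubert condition), and the hyperplanes $M_{\alpha_1+1},\dots,M_{n_0}$ appear in both the $u_0$- and $u_1$-constraints. Running \cite[Algorithm 3.12, Theorem 3.32]{coskun} as in the proof of Proposition \ref{LR_strips} and of the $Y^{(1,1,1)-\sec}$ and $\wt{Y}^{(1,1)}$ cases, the bounds $\alpha_2\le\alpha_0-2$ and $\alpha_0\le n$ force every Schubert variety surviving the degeneration to carry an inner square of size at least $n-4$; this is the source of the truncation $(\,\cdot\,)_{\lambda_0\le n-5}$ and, in particular, gives the vanishing of the coefficient of $\sigma_\lambda$ whenever $\lambda_0\ge n-4$, which is the final assertion of the proposition. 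For a fixed admissible triple the surviving class, by the Pieri rule, expands as a sum of $\sigma_\lambda$ weighted by the number of SSYTs of shape $\lambda$ with column-multiplicities $(c_1,c_2,c_3)$ read off from the three codimensions above, subject to justification/strip constraints coming from the truncation and from the shared block $M_{\alpha_1+1},\dots,M_{n_0}$.

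Finally I would sum over $n_0+3\le\alpha_0\le n$, $2\le\alpha_1\le n_0$, $n_0+1\le\alpha_2\le\alpha_0-2$ and set up the bijection, exactly as in the $Y^{(1,1,1)-\sec}$ case: from an SSYT $T$ of shape $\lambda$ one recovers $c_1,c_2,c_3$, and from the placement of the boundary strips one recovers $(\alpha_0,\alpha_1,\alpha_2)$ uniquely. The presence of the full block $M_{\alpha_1+1},\dots,M_{n_0}$ in the $u_1$-constraint forces $c_2\ge n_0-2$, while the residual part of the truncation $(\,\cdot\,)_{\lambda_0\le n-5}$ translates into the requirement that $T$ have at most $n_0-2$ twos in its second row; conversely every such $T$ comes from a unique admissible triple, and the chain $\alpha_1<\alpha_0$, $\alpha_2<\alpha_0-1$, $\alpha_2>n_0$ is automatic.

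The main obstacle is the middle step: pinning down precisely which truncated product of Schubert classes equals $[\wt{\pi}(\wt{Y}^{(1,1)-\sec}_{(\alpha_0,\alpha_1,\alpha_2)})]$, i.e.\ correctly resolving the nesting of the $u_0$- and $u_2$-subspaces together with the shared $u_0$/$u_1$ hyperplane block, and checking that this yields exactly the cutoff $\lambda_0\le n-5$ together with an effective Schubert expansion matching the claimed SSYT description. Once this is settled, the remaining summation and bijection are routine bookkeeping of the kind already carried out in Proposition \ref{LR_strips} and the $Y^{(1,1,1)-\sec}$ proof.
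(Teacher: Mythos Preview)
Your overall plan matches the paper's: describe a generic $U=\wt\pi(w_0,\wt{\phi_1},U)$ via a basis $u_0,u_1,u_2$ satisfying the three incidence constraints you wrote, package $u_0,u_2$ as a single Schubert cycle (using the nesting you noticed), intersect with the $u_1$-cycle, and then run the Pieri/bijection bookkeeping. The basis description and the nesting observation are both correct.

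The genuine gap is in your ``second coincidence.'' You locate the non-transversality in the \emph{shared hyperplanes} $M_{\alpha_1+1},\dots,M_{n_0}$ appearing in both the $u_0$- and $u_1$-constraints. That is the wrong direction: a hyperplane $M_i$ appearing in a constraint means the dual vector $w_i$ is \emph{absent} from the corresponding constraining subspace, so shared hyperplanes correspond to vectors absent from \emph{both} subspaces, which is irrelevant to transversality. What matters is the intersection of the constraining subspaces, i.e.\ the dual vectors present in \emph{all} of them. Here the hyperplanes $M_{\alpha_0}$ and $M_{\alpha_2}$ appear in \emph{none} of the three constraints, so $w_{\alpha_0},w_{\alpha_2}$ lie in every constraining subspace; this is exactly the obstruction, as in the $Y^{(1,1,1)-\sec}$ case (where it was the single vector $w_{\alpha_1}$). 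Passing to the quotient $W/\langle w_{\alpha_0},w_{\alpha_2}\rangle$ of dimension $n-2$ makes the $(u_0,u_2)$-cycle and the $u_1$-cycle generically transverse there, and this is what produces the truncation $\lambda_0\le n-5$ (since in $\Gr(3,n-2)$ one has $\lambda_0\le n-5$). Your attempt to derive the cutoff instead from ``$\alpha_2\le\alpha_0-2$, $\alpha_0\le n$ force an inner square of size $\ge n-4$'' is not the mechanism at work.

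Once this is fixed, the class is the truncated Pieri product
\[
\bigl(\sigma_{\,n-\alpha_0+\alpha_2-3,\ \alpha_1-2}\cdot\sigma_{\,\alpha_0-\alpha_2+n_0-\alpha_1-3}\bigr)_{\lambda_0\le n-5},
\]
and the bijection is completely explicit rather than the vague recipe you sketch: for a term $\sigma_\lambda$ one fills the inner shape $(n-\alpha_0+\alpha_2-3,\alpha_1-2)$ with $1$'s and $2$'s so that $(c_1,c_2)=(n-\alpha_0+\alpha_1-2,\ \alpha_2-3)$, and the added boxes with $3$'s. Conversely, from an SSYT one reads off $\alpha_1=(\text{number of }2\text{'s in row }2)+2$, $\alpha_2=c_2+3$, and $\alpha_0=n+\alpha_1-2-c_1$; the inequalities $2\le\alpha_1\le n_0$, $n_0+1\le\alpha_2$, $\alpha_2\le\alpha_0-2$, $\alpha_0\le n$ then correspond exactly to ``at most $n_0-2$ twos in row $2$,'' ``$c_2\ge n_0-2$,'' and $\lambda_0\le n-5$. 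So the condition $c_2\ge n_0-2$ comes from $\alpha_2\ge n_0+1$, not from the block $M_{\alpha_1+1},\dots,M_{n_0}$ as you suggest, and the row-$2$ bound comes from $\alpha_1\le n_0$.
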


\begin{proof}
A general point of $\wt{\pi}\left(Y^{(1,1)}_{(\alpha_0,\alpha_1)}\right)$ has a basis $u_0,u_1,u_2$ with the property that:
\begin{itemize}
\item $u_0\in M_{[1,\alpha_2-1]}\cap M_{[\alpha_0+1,n]}$,
\item $u_1\in M_{[\alpha_1+1,n_0]} \cap  M_{[\alpha_2+1,\alpha_0-1]}$,
\item $u_2\in M_{[1,\alpha_1-1]}$.
\end{itemize}
Moreover, the map $\wt{\pi}$ is generically injective on $Y^{(1,1,1)-\sec}_{(\alpha_0,\alpha_1,\alpha_2)}$.

The conditions on $u_0,u_2$ define a Schubert variety of class $\sigma_{n-\alpha_0+\alpha_2-3,\alpha_1-2}$, and that on $u_1$ defines a Schubert variety of class $\sigma_{\alpha_0-\alpha_2+n_0-\alpha_1-3}$. These Schubert varieties are not defined with respect to transverse flags, but this is true upon passing to the quotient of $W$ by vectors dual to the hyperplanes $M_{\alpha_2}$ and $M_{\alpha_0}$, which do not appear in any of the intersections above. Thus, the desired class is
\begin{equation*}
(\sigma_{n-\alpha_0+\alpha_2-3,\alpha_1-2}\cdot \sigma_{\alpha_0-\alpha_2+n_0-\alpha_1-3})_{\lambda_0\le n-5}.
\end{equation*}

If $\lambda_0\le n-5$ and $\alpha_0,\alpha_1,\alpha_2$ are fixed, then each term $\sigma_\lambda$ in the expansion of the above product corresponds to an inclusion of Young tableaux $(n-\alpha_0+\alpha_2-3,\alpha_1-2)\subset\lambda$, whose complement is filled by 3's, no two in the same column. Then, we fill of the remaining boxes in the unique semi-standard way with $(c_1,c_2)=(n-\alpha_0+\alpha_1-2,\alpha_2-3)$; this is possible because $n-\alpha_0+\alpha_1-2\le n-\alpha_0+\alpha_2-3$, as $\alpha_2>n_0\ge \alpha_1$. Furthermore, note that $c_2=\alpha_2-3\ge n_0-2$, and that the number of 2's in the second row is $\alpha_1-2\le n_0-2$.

Conversely, given an SSYT of shape $\lambda$ of the desired form, then we take $\alpha_1$ to be 2 more than the number of 2's in the second row, so that $2\le \alpha_1\le n_0$, we take $\alpha_2=c_2+3\ge n_0+1$, and $\alpha_0=n+\alpha_1-2-c_1$. Then, $\alpha_0\le n$ because $c_1$ at least the number of 2's in the second row, and the statement $\alpha_0\ge \alpha_2+2$ reduces to the fact that the total number of 1's and 2's is at most $n-5$. This completes the proof.
\end{proof}

\subsubsection{Proof of Theorem \ref{thm:countP2}} 
It suffices to assume, as we have throughout, that $d=n-1$. We combine the SSYTs underlying the contributions to $[Z]$ coming from components of the four types $Y^{(1,1,1)},Y^{(1,1,1)-\sec},\wt{Y}^{(1,1)},\wt{Y}^{(1,1)-\sec}$. We will see that the subsets of SSYTs of a given shape $\lambda$ from each case are disjoint, and that their union gives exactly the subsets described in Theorem \ref{thm:countP2}.

First, suppose that $\lambda_0\le n-5$. Then, we get from $\wt{Y}^{(1,1)}$ all SSYTs with at least $n_0-1$ 2's in the 2nd row. Of the remaining SSYTs, with at most $n_0-2$ 2's in the 2nd row, we then get from $\wt{Y}^{(1,1)-\sec}$ those with $c_2\ge n_0-2$. The remaining SSYTs are exactly those with $c_2\le n_0-3$. Of those, we get from $Y^{(1,1,1)-\sec}$ the SSYTs with a $(1,2)$-strip of length $n_0-3$, and the only remaining SSYTs are those without such a $(1,2)$-strip, obtained from $Y^{(1,1,1)}$. Note here that there are automatically no $(2,3)$-strips of length $n-3$, and also that an SSYT without a $(1,2)$-strip of length $n_0-3$ necessarily has $c_2\le n_0-3$. In all, we get exactly the set of all SSYTs, each appearing exactly once in one of the four families of components.

Next, suppose that  $\lambda_0= n-4$. We only get contributions from $Y^{(1,1,1)},Y^{(1,1,1)-\sec}$. In both cases, we get SSYTs with $c_2\le n_0-3$. Then, the SSYTs coming from $Y^{(1,1,1)-\sec}$ are those with a $(1,2)$-strip of length  $n_0-3$, and those coming from $Y^{(1,1,1)}$ are exactly those without such a strip.

Finally, suppose that $\lambda_0=n-3$. Then, the only SSYTs come from $Y^{(1,1,1)}$, and here we get precisely the claimed subset. $\square$\\

As a check, Proposition \ref{asymptotic_formula} under assumption (c) gives that $\Gamma_{2,\overrightarrow{n},d}^{\lambda}=|\SSYT_3(\lambda)|$ whenever $n\ge d+3$; by Proposition \ref{increase_degree}, this is equivalent to the statement that $\Gamma_{2,\overrightarrow{n},n-1}^{\lambda}=|\SSYT_3(\lambda)|$ when $\lambda_0\le n-5$.

\end{document}